\numberwithin{equation}{section}
\newcommand{\bbN}{\mathbb{N}}
\newcommand{\bbP}{\mathbb{P}}
\newcommand{\bbR}{\mathbb{R}}
\newcommand{\R}{\mathbb{R}}
\newcommand{\C}{\mathbb{C}}
\newcommand{\N}{\mathbb{N}}
\newcommand{\K}{\mathbb{K}}
\newcommand{\E}{\mathbb{E}}
\newcommand{\cA}{\mathcal{A}}
\newcommand{\cB}{\mathcal{B}}
\newcommand{\cD}{\mathcal{D}}
\newcommand{\cF}{\mathcal{F}}
\newcommand{\cI}{\mathcal{I}}
\newcommand{\cL}{\mathcal{L}}
\newcommand{\cM}{\mathcal{M}}
\newcommand{\cS}{\mathcal{S}}
\newcommand{\cT}{\mathcal{T}}
\newcommand{\cW}{\mathcal{W}}
\newcommand{\cX}{\mathcal{X}}
\newcommand{\norm}[2]{     \| #1       \|_{ #2 }}
\newcommand{\seminorm}[2]{  | #1        |_{ #2 }}
\newcommand{\scalar}[2]{     ( #1       )_{ #2 }}
\renewcommand{\Re}{\operatorname{Re}}
\renewcommand{\Im}{\operatorname{Im}}
\newcommand{\rd}{\mathop{}\!\mathrm{d}}
\newcommand{\from}{\colon} 
\newcommand{\clos}[1]{\overline{ #1 }} 
\newcommand{\dual}[1]{#1^*} 
\newcommand{\Hdot}[1]{\dot{H}^{#1}_A}
\newtheorem{lemma}{Lemma}[section]
\newtheorem{proposition}[lemma]{Proposition}
\newtheorem{theorem}[lemma]{Theorem}
\newtheorem{corollary}[lemma]{Corollary}
\theoremstyle{remark}
\newtheorem{remark}[lemma]{Remark}
\theoremstyle{definition}
\newtheorem{definition}[lemma]{Definition}
\newtheorem{assumption}[lemma]{Assumption}
\newtheorem{example}[lemma]{Example}
\begin{document}

\author{Kristin Kirchner \and Joshua Willems}

\address[Kristin Kirchner]{Delft Institute of Applied Mathematics\\
	Delft University of Technology\\
	P.O.~Box 5031 \\ 
	2600 GA Delft \\
	The Netherlands.}

\email{k.kirchner@tudelft.nl}

\address[Joshua Willems]{Delft Institute of Applied Mathematics\\
	Delft University of Technology\\
	P.O.~Box 5031 \\ 
	2600 GA Delft \\
	The Netherlands.}

\email{j.willems@tudelft.nl}


\title[Analysis of fractional parabolic stochastic evolution equations]{%
	Regularity theory for a new class of fractional \\ 
	parabolic stochastic evolution equations}


\keywords{Spatiotemporal Gaussian processes, 
	Mat\'{e}rn covariance,
	nonlocal space--time differential operators,  
	mild solution, 
	mean-square differentiability, 
	strongly continuous semigroups. 
}

\subjclass[2010]{Primary: 60G15, 60H15; secondary: 47D06, 35R11.}  

\date{}

\begin{abstract}
	A new class of fractional-order stochastic evolution equations
	of the form 
	$(\partial_t + A)^\gamma X(t) = \dot{W}^Q(t)$, 
	$t\in[0,T]$,
	$\gamma \in (0,\infty)$, 
	is introduced, 
	where $-A$ generates a $C_0$-semigroup 
	on a separable Hilbert space~$H$ 
	and 
	the spatiotemporal driving noise~$\dot{W}^Q$ is
	the formal time derivative of
	an $H$-valued cylindrical $Q$-Wiener process.
	Mild and weak solutions are defined; 
	these concepts are shown to be equivalent 
	and to lead to well-posed problems.
	Temporal and spatial regularity of the solution process 
	$X$ are investigated,
	the former being measured by
	mean-square or pathwise smoothness  
	and the latter by using
	domains of fractional powers of~$A$.
	In addition, the covariance of~$X$ 
	and its long-time behavior 
	are analyzed. 
	
	These abstract results are applied to 
	the cases when 
	$A := L^\beta$ and $Q:=\widetilde{L}^{-\alpha}$ 
	are fractional powers of 
	symmetric, 
	strongly elliptic second-order
	differential operators 
	defined on (i) bounded Euclidean domains 
	or (ii) smooth, compact surfaces.
	In these cases, the 
	Gaussian solution processes 
	can be seen as  
	generalizations of 
	merely spatial 
	{(Whittle--)}Mat\'ern fields 
	to space--time. 
\end{abstract}

\maketitle

\section{Introduction}\label{section:intro}

\subsection{Motivation and background} 

Gaussian processes play an important role 
for modeling in spatial statistics. 
Typical applications arise in the environmental sciences, 
where 
geographically indexed data is collected, 
including 
climatology~\cite{Alexeeff2018, Sang2011}, 
oceanography~\cite{Bertino2003}, 
meteorology~\cite{Handcock1994}, 
and 
forestry~\cite{Beguin2017, Jost2005, Matern1960spatial}. 
More generally, hierarchical models 
based on Gaussian processes 
have been used 
in various disciplines, 
where spatially dependent (or spatiotemporal) 
data is recorded,  
such as 
demography~\cite{DeIaco2015, Pereira2021},  
epidemiology~\cite{Lawson2014},  
finance~\cite{Fernandez2016},   
and 
neuroimaging~\cite{Mejia2020}.  

Since a Gaussian process $(X(j))_{j\in\cI}$ 
is fully characterized by its 
mean and its covariance function,  
\emph{second-order-based approaches}  
focus on the construction of 
appropriate covariance classes. 
In the case that the index set~$\cI$ is given by a 
spatial domain in the Euclidean space 
$\cI= \cD \subseteq \bbR^d$\!, 
the \emph{Mat\'ern covariance class} \cite{Matern1960spatial}
is an important and widely used model.  
The Mat\'ern covariance function is given by 
\begin{equation}\label{eq:Matern-Cov} 
\varrho(x,y)
=
2^{1-\nu}\sigma^2[\Gamma(\nu)]^{-1} 
(\kappa \norm{x-y}{\R^d})^\nu 
K_\nu(\kappa \norm{x-y}{\R^d}), 
\quad 
x,y\in \mathcal D, 
\end{equation}
where 
$K_\nu$ denotes 
the modified Bessel function of the second kind. 
It is indexed by  
the three interpretable parameters 
$\nu,\kappa,\sigma^2\in(0,\infty)$, 
which determine \emph{smoothness},
\emph{correlation length} and \emph{variance} 
of the process. 
It is this feature that 
renders the Mat\'ern class 
particularly suitable for making 
inference about spatial data~\cite{Stein1999interpolation}. 

When considering \emph{spatiotemporal} 
phenomena, the following two difficulties occur: 
\begin{enumerate}[label=\arabic*., leftmargin=1cm,topsep=1mm] 
	\item It is desirable to control the properties 
	of the stochastic process named above 
	(in particular, smoothness and correlation lengths) 
	separately in space and time. For this reason, 
	considering \eqref{eq:Matern-Cov} 
	in $d+1$ dimensions is not expedient 
	and it is a difficult task to construct appropriate 
	spatiotemporal covariance 
	models, see e.g.~\cite{Cressie1999, Fuentes2008, Gneiting2002,  
		Porcu2021, Porcu2007, Stein2005space}. 
	\item Second-order-based approaches require 
	the factorization of, in general, dense covariance matrices, 
	causing computational costs which 
	are
	cubic 
	in the number of observations. 
	The two common assumptions 
	imposed on spatiotemporal covariance models 
	to reduce the computational 
	costs---separability 
	(factorization into merely 
	spatial and temporal covariance functions) 
	and 
	stationarity 
	(invariance under translations)---have 
	proven unrealistic in many situations, 
	see~\cite{Cressie2011, Mateu2008, Stein2005space}. 
	In particular, Stein \cite{Stein2005space} 
	criticized the  
	behavior of separable covariance functions 
	with respect to their differentiability. 
\end{enumerate} 

Owing to these problems, 
the class of \emph{dynamical models} has 
gained popularity. 
The name originates from focusing on 
the dynamics of the stochastic process 
which are described 
either 
by means of 
conditional probability distributions 
or by representing 
the process as a solution 
of a stochastic partial differential equation (SPDE). 
The latter approach 
was originally proposed in the merely spatial case,  
motivated by the following observation made by 
Whittle \cite{Whittle1963}: 
A stationary process $(X(x))_{x\in\cD}$ 
indexed by the entire 
Euclidean space $\cD=\bbR^d$ 
which solves the 
SPDE 
\begin{equation}\label{eq:maternspde}
\bigl( \kappa^2-\Delta\bigr)^\beta X(x) 
= \cW(x), 
\quad 
x\in\cD, 
\end{equation}
has a covariance function 
of Mat\'ern type~\eqref{eq:Matern-Cov} 
with $\nu = 2\beta - \nicefrac{d}{2}$. 
Here, 
$\Delta$ denotes the
Laplacian and $\cW$ is Gaussian white noise.  
This relation gave rise to the SPDE approach
proposed by Lindgren, Rue, and Lindstr\"om~\cite{Lindgren2011explicit},  
where 
the SPDE~\eqref{eq:maternspde} is considered 
on a bounded domain $\cD\subsetneq\bbR^d$ 
and augmented with Dirichlet or Neumann boundary conditions.  
Besides enabling the applicability of efficient numerical 
methods available for (S)PDEs,  
such as 
finite element methods 
\cite{Bolin2020rational, Bolin2018weak, Bolin2020strong, 
	Cox2020regularity, Herrmann2020, Lindgren2011explicit} 
or wavelets \cite{Bolin2013wavelets, Harbrecht2021multilevel}, 
this approach has the 
advantage of allowing for 
\begin{enumerate}[label={(\alph*)}, leftmargin=1cm,topsep=1mm] 
	\item 
	nonstationary or anisotropic generalizations 
	by replacing the operator 
	$\kappa^2-\Delta$ 
	in \eqref{eq:maternspde} 
	with more general  
	strongly elliptic 
	second-order 
	differential operators, 
	\begin{equation}\label{eq:DO} 
	(L v)(x)
	= 
	\kappa^2(x) v(x)  
	- \nabla \cdot ( a(x) \, \nabla v(x) ), 
	\quad 
	x \in \cD, 
	\end{equation}
	where $\kappa\from \cD\to\bbR$ and  
	$a\from\cD\to\bbR_{\rm sym}^{d\times d}$ 
	are functions   
	\cite{Bakka2019nonstationary, 
		Bolin2020rational, Bolin2018weak, Bolin2020strong, 
		Cox2020regularity, 
		Fuglstad2015, Herrmann2020, Lindgren2011explicit}; 
	\item 
	more general domains,  
	such as surfaces~\cite{Bolin2011ozone, Herrmann2020} 
	or manifolds~\cite{Harbrecht2021multilevel}. 
\end{enumerate} 

In the SPDE \eqref{eq:maternspde} 
the fractional exponent 
$\beta$ defines the (spatial) differentiability
of its solution, see e.g.~\cite{Cox2020regularity}. A realistic  
description of spatiotemporal phenomena  
necessitates  
controllable differentiability in space and time. 
This motivates to consider 
the space--time fractional 
SPDE model 
\begin{equation}\label{eq:spde-fractional-parabolic} 
\begin{cases} 
\left( \partial_t + L^\beta \right)^\gamma X(t,x) 
=
\dot{\cW}(t,x), 
&	  
\; t \in [0,T], \quad x \in \cD, 
\\ 
\hspace{1.7cm}  
X(0,x) = X_0(x), 
&  
\; x\in\cD, 
\end{cases} 
\end{equation} 
where $L$ in \eqref{eq:DO} 
is augmented with 
boundary conditions on $\partial\cD$,  
$(X_0(x))_{x\in\cD}$ is the initial random field,   
$\dot{\mathcal{W}}$ denotes space--time Gaussian white noise, 
and $T \in (0,\infty)$ is the time horizon. 
Whenever $\beta=\gamma=1$, 
the SPDE \eqref{eq:spde-fractional-parabolic} simplifies to   
the stochastic heat equation 
and this spatiotemporal model 
had already been mentioned in~\cite{Lindgren2011explicit} 
and it was used for statistical inference 
in~\cite{Cameletti2013, Sigrist2015}.  
The novelty and sophistication of the  
SPDE model \eqref{eq:spde-fractional-parabolic} 
lies in the fractional power~$\gamma\in(0,\infty)$ 
of the parabolic operator. 
Notably, it is 
the interplay of the 
parameters 
$\beta$ and $\gamma$ 
that will facilitate controlling     
spatial and temporal smoothness 
of the solution process. 
For $\cD=\bbR^d$\!, 
this has recently been  
investigated  
via Fourier techniques 
in~\cite{Bakka2023diffusionbased}, 
see also~\cite{Angulo2008Spatiotemporal, Carrizo2018general, Kelbert2005}.  

Besides   
the aforementioned benefits 
of the SPDE approach and 
in contrast to 
the SPDE $\bigl( \partial_t^\gamma + L^\beta \bigr) X = \dot{\cW}$, 
considered for instance in~\cite{Bonaccorsi2009,Desch2011},  
the SPDE model~\eqref{eq:spde-fractional-parabolic}
furthermore exhibits a long-time behavior  
resembling the spatial model~\eqref{eq:maternspde}. 

\subsection{Contributions} 

We introduce a novel interpretation of \eqref{eq:spde-fractional-parabolic} 
with $X_0=0$ 
as a fractional parabolic stochastic evolution equation, 
and correspondingly define mild and weak solutions for it. 
To this end, we first give a meaning to fractional powers 
of an operator of the form $\partial_t + A$, 
where $-A$ generates a $C_0$-semigroup. 
Generalizing the approach taken for $\gamma=1$ 
in \cite[Chapter~5]{DaPrato2014}, 
we prove that mild and weak solutions 
are equivalent 
under natural assumptions, and we investigate 
their existence, uniqueness, 
regularity, and covariance. 
Our main findings are that 
the problem~\eqref{eq:spde-fractional-parabolic} 
is well-posed, and 
the properties of its solution $X$ 
with respect to  
smoothness   
and covariance structure 
generalize those 
of the spatial Whittle--Mat\'ern SPDE model \eqref{eq:maternspde} and 
relate to the parameters 
$\beta,\gamma\in(0,\infty)$ in the desired way. 
Restricting the analysis to a 
zero initial field is justified 
by our primary interest in 
regularity related to the dynamics 
of~\eqref{eq:spde-fractional-parabolic} and the 
long-time behavior of solutions. 

In comparison 
with~\cite{Biswas2021Harnack,Biswas2021,Litsgard2023,Nystrom2016,Stinga2017}---the 
only previous works on an equation 
of the form $(\partial_t + L)^\gamma u = f$ 
known to the authors---the main contributions 
of this work, besides considering 
a stochastic right-hand side, are 
the fractional 
power~$\beta$~in~\eqref{eq:spde-fractional-parabolic}  
and 
the method of proving regularity 
using semigroups. 
As opposed to the extension approach  
in \cite{Biswas2021Harnack,Biswas2021,Litsgard2023,Nystrom2016,Stinga2017}, 
this setting does not require a Euclidean structure. 

\subsection{Outline} 

Preliminary notation and theory will be introduced 
in Section~\ref{sec:prelims}. 
In Section~\ref{sec:analysis-zeroIC} 
we give a meaning to the parabolic operator $\partial_t + A$ 
and its fractional powers in order to introduce 
well-defined mild and 
weak solutions of~\eqref{eq:spde-fractional-parabolic} with ${X_0=0}$.
Subsequently, we analyze these
in terms of spatiotemporal regularity.
Section~\ref{sec:covariance-structure} 
is concerned with 
the covariance structure of solutions. 
Finally, in Section~\ref{sec:Whittle-Matern} 
we apply our results to 
the space--time 
Whittle--Mat\'ern SPDE~\eqref{eq:spde-fractional-parabolic}  
considered on a bounded Euclidean domain 
or on a surface. 
This article is supplemented by two appendices: 
Appendix~\ref{app:auxiliary} contains several technical auxiliary results 
used in the proofs of 
Section~\ref{sec:analysis-zeroIC}.  
Appendix~\ref{app:functional-calc} collects necessary definitions 
and results from functional calculus.

\section{Preliminaries}\label{sec:prelims}

\subsection{Notation} 

The sets $\N := \{1,2,3,\dots\}$ and $\N_0 := \N \cup \{0\}$ 
denote the positive and non-negative integers, 
respectively. 
We write $s\wedge t$ (or $s \vee t$) for the 
minimum (or maximum)  
of two real numbers $s,t\in\R$. 
The real and imaginary parts of 
a complex number $z \in \C$ 
are denoted by $\Re z$ and $\Im z$, respectively; 
its argument, denoted $\arg z$, 
takes its values in $(-\pi,\pi]$. 
We write $\mathbf 1_D$ for the indicator function 
of a set~$D$. 
The restriction of 
a function $f \colon D \to E$ to 
a subset $D_0 \subseteq D$ 
is denoted by $f|_{D_0}\from D_0 \to E$;  
the image of $D_0$ under a linear mapping $T$ 
is written as $T D_0$. 
Given two parameter sets  
$\mathscr{P},\mathscr{Q}$ 
and two mappings 
$\mathscr{F},\mathscr{G}\from \mathscr{P}\times\mathscr{Q}\to\R$, 
we use the expression 
$\mathscr{F} (p,q) \lesssim_{q} \mathscr{G} (p,q)$ 
to indicate that for each $q \in \mathscr{Q}$ 
there exists a constant ${C_q \in(0,\infty)}$ 
such that 
$\mathscr{F} (p,q) \leq C_q \, \mathscr{G} (p,q)$ 
for all ${p\in \mathscr{P}}$. 
We write $\mathscr{F} (p,q) \eqsim_q \mathscr{G} (p,q)$ 
if both relations, 
$\mathscr{F} (p,q) \lesssim_q \mathscr{G}(p,q)$ 
and $\mathscr{G} (p,q) \lesssim_q \mathscr{F}(p,q)$, 
hold simultaneously.

\subsection{Banach spaces and operators} 

If not specified otherwise, 
$E$ or $F$ denote separable Banach spaces. 
We instead write $H$ or $U$ 
if we work with separable Hilbert spaces 
and wish to emphasize this. 
The scalar field $\K$ is either given 
by the real numbers $\R$ 
or the complex numbers~$\C$. 
A norm on $E$ will be denoted  
by~$\norm{\,\cdot\,}{E}$ 
and an inner product on $H$ 
by $\scalar{\,\cdot\,, \,\cdot\,}{H}$. 
We write $I$ for the identity operator. 
The notation ${E \hookrightarrow F}$ 
indicates that $E$ is continuously embedded in $F$, 
i.e., there exists a bounded injective map from $E$ to~$F$. 
The dual space of $E$ is denoted by $\dual E$\!. 
We write $\overline{E_0}^E$ for the closure 
of a subset $E_0\subseteq E$ 
with respect to the norm on $E$; 
the superscript may be omitted 
when there is no risk of confusion. 
The Borel $\sigma$-algebra of $E$ 
is denoted by $\mathcal B(E)$.

We write ${T \in \mathscr L(E;F)}$ 
if the linear operator 
$T\from E \to F$ is bounded.  
Whenever $E=F$, we 
abbreviate 
$\mathscr L(E) := \mathscr L(E;E)$, 
and this convention holds also 
for all other spaces of operators to be introduced. 
The space $\mathscr L(E;F)$ is rendered 
a Banach space when equipped 
with the usual operator norm; 
the space of Hilbert--Schmidt operators 
$\mathscr L_2(U;H)\subseteq \mathscr L(U;H)$
is a Hilbert space 
with respect to 
the inner product 
$(T,S)_{\mathscr L_2(U;H)} := \sum_{j \in \N}(Te_j, Se_j)_H$, 
where
$(e_j)_{j\in\N}$ is any orthonormal basis for~$U$.  
We write ${\dual T \in \mathscr L(\dual F; \dual E)}$ 
for the adjoint operator of $T\in \mathscr L(E;F)$. 
In the case that $T \in \mathscr L(U;H)$,  
we identify $\dual U = U$ 
and $\dual H = H$ via the Riesz 
maps, so that  
$\dual T\in \mathscr L(H;U)$. 
An operator $T\in\mathscr L(H)$ 
is said to be 
self-adjoint 
if $\dual T = T$, 
non-negative 
if $\scalar{Tx,x}{H}\geq 0$ holds for all $x\in H$, 
and strictly positive if 
there exists a constant  $\theta\in(0,\infty)$ such that 
$\scalar{Tx,x}{H} \geq \theta \|x\|_H^2$ 
holds for all $x\in H$.  

A linear operator $A$ on $E$ with domain $\mathsf D(A)$ 
is denoted by ${A\from \mathsf D(A) \subseteq E \to E}$  
and its range by~$\mathsf R(A)$. 
We call $A$ closed if its graph 
$\mathsf G(A):= {\{(x,Ax): x \in \mathsf D(A)\}}$ 
is closed with respect to the graph norm 
$\norm{(x,Ax)}{\mathsf G(A)} := \norm{x}{E} + \norm{Ax}{E}$, 
and densely defined if $\mathsf D(A)$ is dense in $E$. 
The definition 
$\norm{x}{\mathsf D(A)} := \norm{(x,Ax)}{\mathsf G(A)}$ 
yields a norm on $\mathsf D(A)$. 
If $\mathsf G(A) \subseteq \mathsf G(\widetilde A)$ 
for another linear operator $\widetilde A$ on $E$, 
then $\widetilde A$ is called an extension of $A$ 
and we write $A \subseteq \widetilde A$. 
If $\clos{\mathsf G(A)}$ is the graph of a linear operator, 
then we call this operator the closure of $A$, denoted $\clos A$.

\subsection{Function spaces}

Let a measure space $(S, \cS, \mu)$ be given. 
We abbreviate the phrases 
``almost everywhere'' and 
``almost all'' 
by 
``a.e.''\ and ``a.a.''\!,  
respectively. 

We say that a function $f \from S \to E$ 
is strongly measurable if it is the $\mu$-a.e.\ limit 
of measurable simple functions. 
For $p \in [1,\infty]$, the Bochner space of 
(equivalence classes of)
strongly measurable, 
$p$-integrable functions is denoted by $L^p(S;E)$. 
It is equipped with the norm 
\[
\norm{f}{L^p(S;E)} 
:= 
\begin{cases} 
\left( \int_S \norm{f(t)}{E}^p \rd \mu(t) \right)^{\nicefrac{1}{p}} 
&\text{if $p \in [1,\infty)$}, \\
\operatorname{ess\,sup}_{t\in S} \norm{f(t)}{E}  
&\text{if $p = \infty$},
\end{cases}  
\]
where $\operatorname{ess\,sup}$ 
denotes the essential supremum. 
The norm on $L^2(S;H)$ 
is induced by the inner product 
$(f,g)_{L^2(S;H)} := \int_S (f(t),g(t))_H \rd \mu(t)$. 

Now let $S$ be an interval $S := J \subseteq \R$,  
equipped with the Borel $\sigma$-algebra and the Lebesgue measure. 
The space of continuous functions from $J$ to $E$ 
will be denoted by~$C(J;E)$ or $C^{0,0}(J;E)$ 
and be endowed with the supremum norm. 
For $\alpha\in(0,1]$, we consider the space 
$C^{0,\alpha}(J;E)$ of $\alpha$-H\"older continuous functions 
with norm 
\[
\norm{f}{C^{0,\alpha}(J;E)} 
:= \seminorm{f}{C^{0,\alpha}(J;E)} + \norm{f}{C(J;E)},  
\; \text{where} \;\; 
\seminorm{f}{C^{0,\alpha}(J;E)} 
:= \sup_{ t, s\in J, \, t\neq s } 
\tfrac{\norm{f(t)-f(s)}{E}}{|t-s|^\alpha} 
\]
is the $\alpha$-H\"older seminorm. 
For $n \in \N_0$ and $0 \leq \alpha \leq 1$, 
the space $C^{n,\alpha}(J;E)$ 
consists of functions whose $n$th derivative exists 
and belongs to $C^{0,\alpha}(J;E)$. 
On this space we use the norm
$\norm{f}{C^{n,\alpha}(J;E)} 
:= 
\norm{f^{(n)}}{C^{0,\alpha}(J;E)} 
+ 
\sum_{k=0}^{n-1} \norm{f^{(k)}}{C(J;E)}$, 
where $f^{(k)}$ denotes the $k$th derivative of $f$.
Moreover, we define 
$C^{\infty}(J;E) := \bigcap_{n\in\N} C^{n,0}(J;E)$. 
We say that $f \in C^{n,\alpha}(J;E)$ is 
compactly supported if the support of $f$, 
defined by 
$\operatorname{supp} f := \overline{\{t\in J: f(t)\neq0\}}^J$\!, 
is compact. The space consisting of such functions 
is denoted by $C_c^{n,\alpha}(J;E)$.
If $f$ vanishes at a point $t \in J$, 
then we use the notation ${f \in C^{n,\alpha}_{0,\{t\}}(J;E)}$. 
The spaces $C_c^\infty(J;E)$ 
and $C_{0,\{t\}}^\infty(J;E)$ are defined analogously. 

For an open interval $J$, 
we say that $u \in L^2(J;E)$ belongs to $H^1(J;E)$ 
if there exists a function $v \in L^2(J;E)$ 
such that 
$\int_J v(t) \phi(t)\rd t = -\int_J u(t) \phi'(t)\rd t$ 
holds for all $\phi\in C_c^\infty(J;\R)$. 
The function ${\partial_t u := v}$ 
is called the 
weak derivative of $u$  
and the norm on $H^1(J;E)$ is  
$\norm{u}{H^1(J;E)} 
:= 
\bigl( 
\norm{u}{L^2(J;E)}^2 
+ 
\norm{\partial_t u}{L^2(J;E)}^2 
\bigr)^{\nicefrac{1}{2}}$\!. 
The completion of $C_c^\infty((0,\infty);E)$   
with respect to $\norm{\,\cdot\,}{H^1(0,\infty;E)}$ 
defines the space 
$H^1_{0,\{0\}}(0,\infty;E)$.  
Elements of $H^1_{0,\{0\}}(J;E)$ 
are restrictions of functions in $H^1_{0,\{0\}}(0,\infty;E)$ 
to ${J\subseteq (0,\infty)}$. 

Whenever the function space contains 
functions mapping to $E=\R$, 
we omit the codomain, 
e.g., we write $L^p(S) := L^p(S;\R)$ 
for the Lebesgue spaces.

\subsection{Vector-valued stochastic processes} 

Throughout this article, 
$(\Omega,\cF,\bbP)$ denotes 
a complete probability space 
that is equipped 
with a normal filtration $(\cF_t)_{t\geq 0}$, i.e., 
$\cF_0$ contains all elements $B\in\cF$ with 
$\bbP(B)=0$ and  
$\cF_t = \bigcap_{s>t} \cF_s$ for all $t\geq 0$. 
Statements which 
hold $\bbP$-almost surely 
are marked with ``$\bbP$-a.s.''\!.  

We call every strongly measurable function 
$Z\from \Omega \to E$ 
a (vector-valued) random variable, 
and the expectation of $Z \in L^1(\Omega;E)$ 
is defined as the Bochner integral
$\E[Z] := \int_{\Omega} Z(\omega) \rd \bbP(\omega)$. 
An $E$-valued stochastic process 
$X=(X(t))_{t\in[0,T]}$ 
indexed by the interval $[0,T]$, 
$T\in(0,\infty)$, 
is called integrable if 
${(X(t))_{t\in[0,T]} \subseteq L^p(\Omega;E)}$ 
holds for $p=1$,  
and square-integrable 
if this inclusion is true for~$p=2$. 
It is said to be predictable 
if it is strongly measurable 
as a mapping from $[0,T]\times \Omega$ to~$E$, 
where the former set is equipped 
with the $\sigma$-algebra generated by the family
\[
\{ (s,t] \times F_s: 0\leq s < t\leq T,\, F_s\in \cF_s \} 
\cup 
\{ \{0\}\times F_0 : F_0\in \cF_0 \}.
\] 
Given another 
$E$-valued
process $\widetilde X := (\widetilde X(t))_{t\in[0,T]}$,
we call $\widetilde X$ a modification
of $X$, provided that  
$\bbP(X(t)=\widetilde X(t))=1$ 
holds for all $t\in[0,T]$. 
Moreover, $X$ and $\widetilde X$
are said to be indistinguishable if 
$\bbP(\forall t\in[0,T] : X(t) = \widetilde X(t))=1$. 

For a self-adjoint strictly positive 
operator $Q \in \mathscr L(H)$, 
$(W^Q(t))_{t\geq 0}$ denotes 
a cylindrical $Q$-Wiener process
with respect to $(\cF_t)_{t\geq 0}$ which takes
its values in $H$, cf.~\cite[Proposition~2.5.2]{LiuRockner2015};
if $Q=I$, we omit the superscript 
and call $(W(t))_{t\geq 0}$ a cylindrical Wiener process.

\section{Analysis of the fractional stochastic evolution equation}
\label{sec:analysis-zeroIC}

The aim of this section is to define and analyze solutions to 
the following stochastic evolution equation 
of the general fractional order $\gamma\in(0,\infty)$:  
\begin{equation}
\label{eq:fractional-parabolic-spde-zeroIC} 
(\partial_t + A)^\gamma X (t)
= 
\dot{W}^{Q}(t),  
\quad 
t\in[0,T], 
\qquad 
X(0)=0. 
\end{equation}
We interpret this as an abstraction of 
\eqref{eq:spde-fractional-parabolic} 
with $X_0 = 0$. 
As noted in the introduction, 
we restrict the discussion to a zero initial field, 
since we are primarily interested in 
properties resulting from the 
dynamics of the SPDE~\eqref{eq:spde-fractional-parabolic}, 
respectively~\eqref{eq:fractional-parabolic-spde-zeroIC}, 
and 
the long-time 
behavior 
for $0\ll T < \infty$ 
of its solution. 
We also note that imposing non-zero 
boundary data for fractional problems is, 
in general, highly non-trivial, 
see e.g.~the recent works~\cite{AbatangeloDupaigne2017,HarbirPfeffererRogovs2018} 
on the fractional Laplacian. 

In Subsection~\ref{subsec:analysis-zeroIC:parabolic-operator} 
we investigate the \emph{parabolic operator} $\cB$, 
which is defined as the closure of the sum operator $\partial_t + A$ 
on an appropriate domain. 
In particular, we consider the $C_0$-semigroup generated 
by $-\cB$, which is used to define 
fractional powers $\cB^\gamma$ for $\gamma \in \R$. 
Interpreting the expression 
$(\partial_t + A)^\gamma$ appearing 
in~\eqref{eq:fractional-parabolic-spde-zeroIC} 
as~$\cB^\gamma$\!, 
we use this result to define mild solutions 
in Subsection~\ref{subsec:analysis-zeroIC:solution-concepts}. 
In this part, we furthermore introduce  
a weak solution concept 
for \eqref{eq:fractional-parabolic-spde-zeroIC}, 
and prove equivalence 
of the two solution concepts  
as well as   
existence and uniqueness 
of mild and weak solutions. 
Spatiotemporal regularity 
of solutions 
is the subject of  
Subsection~\ref{subsec:analysis-zeroIC:existence-uniqueness-regularity}.

\subsection{The parabolic operator and its fractional powers}
\label{subsec:analysis-zeroIC:parabolic-operator}

In this subsection we define the parabolic operator $\cB$ 
and fractional powers $\cB^\gamma$\!. 
We start by formulating several assumptions on 
the linear operator $A$, 
to which we shall refer throughout the remainder of this work. 
For an overview of the theory of $C_0$-semigroups, 
we refer the reader to~\cite{Engel1999} 
or~\cite{Pazy1983}. 
The complexification of a normed space or operator
is indicated by the subscript $\C$; 
see Subsection~\ref{subsubsec:functional-calc:complexification} in 
Appendix~\ref{app:functional-calc} 
for details. 
\begin{assumption}\label{assumption:A}
	Let $H$ be a separable Hilbert space
	over the real scalar field~$\R$. 
	We assume that the linear operator
	${A\from \mathsf D(A)\subseteq H \to H}$ satisfies 
	\begin{enumerate}[leftmargin=1cm, label=(\roman*)]
		\item\label{assumption:A:semigroup}
		$-A$ generates a $C_0$-semigroup $(S(t))_{t\geq 0}$. 
	\end{enumerate}
	Sometimes we additionally require 
	one or more of the following conditions:
	\begin{enumerate}[leftmargin=1cm, label=(\roman*)] 
		\setcounter{enumi}{1}
		\item\label{assumption:A:bdd-analytic}  
		$(S(t))_{t\geq 0}$ is (uniformly) bounded analytic, 
		i.e., the mapping ${t \mapsto S_{\C}(t)}$, 
		where ${S_{\C}(t) := [S(t)]_{\C}}$, 
		extends to a bounded holomorphic function 
		on an open sector $\Sigma_\omega \subseteq \C$ 
		for some angle 
		$\omega\in(0,\pi)$ 
		(see Definition~\ref{def:sectorial} 
		in Appendix~\ref{app:functional-calc}); 
		\item\label{assumption:A:bdd-Hinfty}  
		$A_{\C}$ admits a bounded $H^\infty$-calculus 
		with $\omega_{H^\infty}(A_\C)<\tfrac{\pi}{2}$, 
		see Definition~\ref{def:bdd-Hinfty-calc}; 
		\item\label{assumption:A:bddly-inv}  
		$A$ has a bounded inverse. 
	\end{enumerate}
\end{assumption} 
Under Assumption~\ref{assumption:A}\ref{assumption:A:semigroup}, 
Lemma~\ref{lem:complexifications-semigroups} allows us to 
use several results from~\cite{Engel1999,Haase2006,Pazy1983} 
for $C_0$-semigroups and their generators 
on complex spaces 
also for $(S(t))_{t\geq 0}$ and $-A$. 
For instance, by~\cite[Theorem~II.1.4]{Engel1999} 
and~\cite[Chapter~1, Theorem~2.2]{Pazy1983}
the operator $A$ is closed and densely defined, 
and the $C_0$-semigroup $(S(t))_{t\geq 0}$ satisfies
\begin{equation}\label{eq:exp-estimate-semigroup} 
\exists 
M \in [1,\infty), \; 
w \in \R: 
\quad 
\norm{S(t)}{\mathscr L(H)}
= \norm{S_\C(t)}{\mathscr L(H_\C)}
\leq 
M e^{-wt} 
\quad 
\forall t \geq 0.   
\end{equation}
If the conditions  
\ref{assumption:A:semigroup}, \ref{assumption:A:bdd-analytic} and
\ref{assumption:A:bddly-inv} 
are satisfied, 
then \eqref{eq:exp-estimate-semigroup} 
holds for some $w \in (0,\infty)$,
see e.g.\ \cite[p.~70]{Pazy1983}. 
In this case, $(S(t))_{t\geq 0}$ is said to be \emph{exponentially stable}. 
Moreover, we note that 
Assumption~\ref{assumption:A}\ref{assumption:A:bdd-analytic} 
is equivalent to the operator $A_\C$ being sectorial 
with $\omega(A_\C) < \tfrac{\pi}{2}$ 
by Theorem~\ref{thm:sectoriality-analytic-semigroups},
and that consequently condition  
\ref{assumption:A:bdd-Hinfty} implies \ref{assumption:A:bdd-analytic} 
since $\omega(A_\C) \le \omega_{H^\infty}(A_\C)$ 
by Remark~\ref{rem:Hinfty-sectoriality-angles}.
Whenever the conditions  
\ref{assumption:A:semigroup} and~\ref{assumption:A:bdd-analytic}
are satisfied, we have 
the following useful estimate 
(see \cite[Proposition~3.4.3]{Haase2006}): 
\begin{equation}\label{eq:analytic-est-1}
\forall c \in [0,\infty): 
\quad 
\| A^c S(t) \|_{\mathscr L(H)} 
=
\| A_\C^c S_\C(t) \|_{\mathscr L(H_\C)} 
\lesssim_c 
t^{-c} 
\quad 
\forall 
t \in (0,\infty). 
\end{equation}

As a first step towards defining the parabolic operator $\cB$, 
we define the Bochner space counterpart 
$\cA\from \mathsf D(\cA) \subseteq L^2(0,T;H) \to L^2(0,T;H)$ 
of $A$ by
\begin{equation}\label{eq:def-cA} 
\begin{split}
&[\cA v](\vartheta)
:= Av(\vartheta), 
\quad\;\; 
v \in \mathsf D(\cA), 
\ 
\text{a.a.\ }\vartheta\in (0,T), 
\\
&\mathsf D(\cA) 
= L^2(0,T;\mathsf D(A)) 
:= 
\bigl\{ v \in L^2(0,T;H): \norm{ Av(\,\cdot\,)}{L^2(0,T;H)} < \infty \bigr\}.   
\end{split}
\end{equation}
The $C_0$-semigroup $(S(t))_{t\geq 0}$ on $H$, 
generated by $-A$, 
can be associated to a family of 
operators $(\cS(t))_{t\geq 0}$ on $L^2(0,T;H)$ 
in a similar way: 
\begin{equation}\label{eq:def-cS}
[\cS(t)v](\vartheta) 
:= 
S(t)v(\vartheta), 
\quad\;\; 
t \geq 0, 
\   
v \in L^2(0,T;H),
\ 
\text{a.a.\ }\vartheta \in (0,T).
\end{equation}
It turns out that 
$(\cS(t))_{t\geq 0}\subseteq\mathscr L(L^2(0,T;H))$ 
is again a $C_0$-semigroup,  
with infinitesimal generator $-\cA$, 
see Proposition~\ref{prop:curly-A-semigroup} 
in Appendix~\ref{app:auxiliary}.  

In addition, 
we consider the family of zero-padded 
right-translation operators 
$(\cT(t))_{t\geq 0}$ on $L^2(0,T; H)$, defined by
\begin{equation}\label{eq:def-cT} 
[\cT(t)v](\vartheta) 
:= 
\widetilde v(\vartheta - t), 
\quad\;\;   
t\geq 0, 
\ 
v \in L^2(0,T; H), 
\ 
\text{a.a.\ }\vartheta \in (0,T),
\end{equation}
where $\widetilde v \in L^2(-\infty,T;H)$ 
denotes the extension 
of $v$ by zero to $(-\infty,T)$. 
As shown in Proposition~\ref{prop:translation-semigroup} 
in Appendix~\ref{app:auxiliary}, 
also $(\cT(t))_{t\geq 0} \subseteq \mathscr L(L^2(0,T;H))$ 
is a $C_0$-semigroup 
and its infinitesimal generator 
is given by $-\partial_t$, 
where 
\begin{equation}\label{eq:def-partial-t} 
\partial_t \from \mathsf D(\partial_t) \subseteq L^2(0,T;H) \to L^2(0,T;H), 
\qquad 
\mathsf D(\partial_t) = H^1_{0,\{0\}}(0,T; H), 
\end{equation} 
denotes the Bochner--Sobolev 
vector-valued weak derivative. 
We point out that the domain 
$\mathsf D(\partial_t)= H^1_{0,\{0\}}(0,T; H)$ 
encodes the zero initial condition 
of the SPDE~\eqref{eq:fractional-parabolic-spde-zeroIC}.
Furthermore, note that 
it readily follows 
from the definitions in 
\eqref{eq:def-cS} and \eqref{eq:def-cT} that, 
for all $t\geq 0$,  
every $ v \in L^2(0,T;H)$, 
and a.a.\ $\vartheta \in (0,T)$,
\[ 
[\cS(t) \cT(t)v](\vartheta) 
= 
[\cT(t) \cS(t)v](\vartheta) 
= 
S(t) \widetilde v(\vartheta-t), 
\] 
i.e., the semigroups $(\cS(t))_{t\geq 0}$ 
and $(\cT(t))_{t\geq 0}$ commute.

We now define the sum operator 
$\partial_t + \cA 
\from 
\mathsf D(\partial_t+\cA)\subseteq L^2(0,T;H) 
\to L^2(0,T;H)$ 
on its natural domain, that is 
\begin{equation}\label{eq:sum-operator} 
( \partial_t+ \cA)v 
:= 
\partial_t v + \cA v, 
\quad 
v\in \mathsf D(\partial_t + \cA) 
= H^1_{0,\{0\}}(0,T; H) \cap L^2(0,T; \mathsf D(A)), 
\end{equation}
with $\cA$ and $\partial_t$ 
as given in \eqref{eq:def-cA} 
and \eqref{eq:def-partial-t}, respectively. 
The next proposition shows that 
the closure of $-(\partial_t + \cA)$ 
again generates a $C_0$-semigroup, 
namely the product semigroup 
of $(\cS(t))_{t\geq 0}$ and $(\cT(t))_{t\geq 0}$. 

\begin{proposition}\label{prop:parabolic-operator-semigroup}
	Let Assumption~\textup{\ref{assumption:A}\ref{assumption:A:semigroup}} 
	be satisfied. 
	The closure 
	$\cB := \clos{ \partial_t + \cA }$
	of the sum operator $\partial_t + \cA$ 
	defined in \eqref{eq:sum-operator} exists and $-\cB$
	generates the $C_0$-semigroup 
	$(\cS(t) \cT(t))_{t\geq 0}$ 
	on $L^2(0,T;H)$, 
	which satisfies 
	\[ 
	\norm{ \cS(t) \cT(t) }{\mathscr L(L^2(0,T;H))}  
	=
	\norm{ \cT(t) \cS(t) }{\mathscr L(L^2(0,T;H))}  
	= 
	\begin{cases}
	\norm{S(t)}{\mathscr L(H)}   
	& 
	\text{if } 
	0 \leq t < T, 
	\\
	0
	& 
	\text{if } 				
	t \geq T, 
	\end{cases} 
	\] 
	where $(\cS(t))_{t\geq 0}$ and $(\cT(t))_{t\geq 0}$ are defined 
	as in \eqref{eq:def-cS} and \eqref{eq:def-cT}, respectively.
\end{proposition}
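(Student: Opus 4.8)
The plan is to realize $\cB$ as the negative of the generator of the product family $(\cS(t)\cT(t))_{t\ge0}$. By Propositions~\ref{prop:curly-A-semigroup} and~\ref{prop:translation-semigroup}, $(\cS(t))_{t\ge0}$ and $(\cT(t))_{t\ge0}$ are $C_0$-semigroups on $L^2(0,T;H)$ with generators $-\cA$ and $-\partial_t$, respectively. The first observation is that these two semigroups commute \emph{for every pair} of parameters: reading off \eqref{eq:def-cS}--\eqref{eq:def-cT} and using that the zero-extension of $\cS(s)v$ is $S(s)\widetilde v$, one obtains $[\cS(s)\cT(t)v](\vartheta)=S(s)\widetilde v(\vartheta-t)=[\cT(t)\cS(s)v](\vartheta)$ for all $s,t\ge0$ and a.a.\ $\vartheta\in(0,T)$. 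From this, $(\cS(t)\cT(t))_{t\ge0}$ is a semigroup with $\cS(0)\cT(0)=I$ --- either invoking that the product of two commuting $C_0$-semigroups is a $C_0$-semigroup, or computing directly that $[\cS(t)\cT(t)\cS(s)\cT(s)v](\vartheta)=S(t+s)\widetilde v(\vartheta-(t+s))$ --- and it is strongly continuous at $0$ because $\norm{\cS(t)\cT(t)v-v}{L^2(0,T;H)}\le\norm{\cS(t)}{\mathscr L(L^2(0,T;H))}\norm{\cT(t)v-v}{L^2(0,T;H)}+\norm{\cS(t)v-v}{L^2(0,T;H)}$ and $t\mapsto\norm{\cS(t)}{\mathscr L(L^2(0,T;H))}$ is bounded near $0$. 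I denote its generator by $-\cC$.

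Next I would check that $\cC$ extends $\partial_t+\cA$ on $\mathsf D(\partial_t)\cap\mathsf D(\cA)$. For $v$ in this intersection, writing $t^{-1}(\cS(t)\cT(t)v-v)=\cS(t)\,t^{-1}(\cT(t)v-v)+t^{-1}(\cS(t)v-v)$ and letting $t\downarrow0$ gives $-\cC v=-\partial_t v-\cA v$, i.e.\ $\cC v=(\partial_t+\cA)v$; here the first summand converges because $\cS(t)w_t\to w$ whenever $w_t\to w$ and $\sup_{0\le t\le1}\norm{\cS(t)}{\mathscr L(L^2(0,T;H))}<\infty$. In particular the closed operator $\cC$ is an extension of $\partial_t+\cA$, whose natural domain is precisely $\mathsf D(\partial_t)\cap\mathsf D(\cA)=H^1_{0,\{0\}}(0,T;H)\cap L^2(0,T;\mathsf D(A))$, so the closure $\cB:=\clos{\partial_t+\cA}$ exists and $\cB\subseteq\cC$.

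The crux is the reverse inclusion, namely that $\mathsf D(\partial_t)\cap\mathsf D(\cA)$ is a core for $\cC$; I would deduce it from the standard criterion that a dense, $(\cS(t)\cT(t))_{t\ge0}$-invariant subspace of $\mathsf D(\cC)$ is a core (see, e.g., \cite{Engel1999}). Density is clear since the intersection contains $C_c^\infty((0,T))\otimes\mathsf D(A)$, which is dense in $L^2(0,T;H)$. For invariance, note that $\cS(t)$ maps $\mathsf D(\cA)=L^2(0,T;\mathsf D(A))$ into itself (as $S(t)$ commutes with $A$) and that $\cT(t)$ maps $\mathsf D(\partial_t)$ into itself (a $C_0$-semigroup leaves its generator's domain invariant); moreover, since $\cS(t)$ commutes with every $\cT(s)$ it also commutes with $-\partial_t$ and hence leaves $\mathsf D(\partial_t)$ invariant, and symmetrically $\cT(t)$ leaves $\mathsf D(\cA)$ invariant. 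Consequently $\cS(t)\cT(t)$ maps $\mathsf D(\partial_t)\cap\mathsf D(\cA)$ into itself, the core criterion applies, and $\cC=\clos{\partial_t+\cA}=\cB$; that is, $-\cB$ generates $(\cS(t)\cT(t))_{t\ge0}$. I expect this core step to be the main obstacle; the rest is bookkeeping with commuting semigroups.

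It remains to establish the norm identities. The first equality is immediate from $\cS(t)\cT(t)=\cT(t)\cS(t)$. For $t\ge T$, $[\cS(t)\cT(t)v](\vartheta)=S(t)\widetilde v(\vartheta-t)=0$ for all $\vartheta\in(0,T)$, so the norm vanishes. For $0\le t<T$, the identity $\int_0^T\norm{\widetilde v(\vartheta-t)}{H}^2\rd\vartheta=\int_0^{T-t}\norm{v(\sigma)}{H}^2\rd\sigma\le\norm{v}{L^2(0,T;H)}^2$ yields $\norm{\cS(t)\cT(t)v}{L^2(0,T;H)}\le\norm{S(t)}{\mathscr L(H)}\norm{v}{L^2(0,T;H)}$, while testing on $v=\phi(\,\cdot\,)x$ with $x\in H$, $\norm{x}{H}=1$, and $\phi\in L^2(0,T)$ supported in $(0,T-t)$ with $\norm{\phi}{L^2(0,T)}=1$ gives $\norm{\cS(t)\cT(t)v}{L^2(0,T;H)}=\norm{S(t)x}{H}$, whence $\norm{\cS(t)\cT(t)}{\mathscr L(L^2(0,T;H))}\ge\sup_{\norm{x}{H}=1}\norm{S(t)x}{H}=\norm{S(t)}{\mathscr L(H)}$. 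Combining the two bounds gives equality.
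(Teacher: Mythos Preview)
Your proof is correct and follows essentially the same route as the paper: both identify $-\cB$ as the generator of the product semigroup $(\cS(t)\cT(t))_{t\ge0}$ via the core criterion for commuting semigroups (the paper packages this as \cite[Example~II.2.7]{Engel1999} together with its Lemma~\ref{lem:generators}, while you spell out the invariance of $\mathsf D(\partial_t)\cap\mathsf D(\cA)$ directly and invoke \cite[Proposition~II.1.7]{Engel1999}), and both establish the norm identity by the same test-function argument on vectors supported in $(0,T-t)$.
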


\begin{proof}
	By the commutativity of the semigroups 
	$(\cS(t))_{t\geq 0}$ and $(\cT(t))_{t\geq 0}$,  
	we may conclude that $( \cT(t) \cS(t) )_{t\geq 0}$ 
	is a $C_0$-semigroup whose generator is an extension 
	of $-(\partial_t + \cA)$, 
	and the domain of the generator contains 
	$H^1_{0,\{0\}}(0,T;H) \cap L^2(0,T;\mathsf D(A))$ 
	as a subspace that is dense  
	with respect to the graph norm, 
	see~\cite[Example~II.2.7]{Engel1999}. 
	Subsequently, Lemma~\ref{lem:generators} 
	shows that the generator is 
	the closure of $-(\partial_t + \cA)$.
	
	Fix $t\in[0,T)$.  
	The inequality 
	$\norm{ \cT(t) \cS(t) }{\mathscr L(L^2(0,T;H))} 
	\leq \norm{S(t)}{\mathscr L(H)}$ 
	follows by the contractivity of $\cT(t)$ 
	and the operator norm isometry from    
	Lemma~\ref{lem:Bochner-space-operator}\ref{lem:Bochner-space-operator-a}. 
	Now we turn to the reverse inequality. 
	By definition of the operator norm on $\mathscr L(H)$, 
	there exists a normalized sequence 
	$(x_n)_{n\in\bbN}$ in $H$ 
	such that 
	$\norm{S(t)x_n}{H} \geq \norm{S(t)}{\mathscr L(H)} - \frac{1}{n}$  
	holds for all $n\in\bbN$. 
	Correspondingly, 
	define 
	the sequence $(v_n)_{n\in\bbN}$  
	in $L^2(0,T;H)$ by 
	$v_n(\vartheta) 
	:= 
	(T-t)^{-\nicefrac{1}{2}} \mathbf 1_{(0,T-t)}(\vartheta) x_n$ 
	for every $\vartheta\in(0,T)$ and all $n\in\N$. 
	Note that 
	$\norm{v_n}{L^2(0,T;H)} = 1$  
	for every $n \in \N$, 
	and 
	\[
	\norm{ \cT(t) \cS(t) v_n}{L^2(0,T;H)} 
	= 
	\norm{(T-t)^{-\nicefrac{1}{2}}\mathbf 1_{(t,T)}}{L^2(0,T)} 
	\norm{S(t)x_n}{H} 
	\geq 
	\norm{S(t)}{\mathscr L(H)} - \tfrac{1}{n}.
	\] 
	As this holds for all $n \in \N$, we conclude that 
	$\norm{ \cT(t) \cS(t)}{\mathscr L(L^2(0,T;H))} 
	\geq \norm{S(t)}{\mathscr L(H)}$.
	The final assertion for $t\geq T$ 
	follows from the fact 
	that $\cT(t) = 0$ for $t \geq T$.
\end{proof}

\begin{remark}
	The closure $\cB =\clos{\partial_t + \cA}$ 
	appearing in Proposition~\ref{prop:parabolic-operator-semigroup} 
	raises the question of when the sum operator itself is closed. 
	The answer is intimately related 
	to the subject of maximal $L^p$-regularity;
	we refer the reader to~\cite{DRH2003} or~\cite{KunstmannWeis2004} for detailed accounts of this theory.
	In the Hilbert~space setting, 
	the sum turns out to be closed under 
	Assumptions~\ref{assumption:A}\ref{assumption:A:semigroup},\ref{assumption:A:bdd-analytic}. 
	Indeed,~$[\partial_t]_\C$ has a bounded $H^\infty$-calculus 
	with ${\omega_{H^\infty}([\partial_t]_\C) \leq \tfrac{\pi}{2}}$ 
	since $(\cT(t))_{t\geq 0}$ and $(\cT_\C(t))_{t\ge0}$ 
	are contractive, see 
	Definition~\ref{def:bdd-Hinfty-calc} 
	in Appendix~\ref{app:functional-calc} and 
	\cite[Theorem~10.2.24]{AnalysisInBanachSpacesII}. 
	By Assumption~\ref{assumption:A}\ref{assumption:A:bdd-analytic} 
	and Theorem~\ref{thm:sectoriality-analytic-semigroups},  
	we have $\omega(A_\C) < \tfrac{\pi}{2}$, 
	and the same follows for $\cA_\C$  
	by applying 
	Lemma~\ref{lem:Bochner-space-operator}\ref{lem:Bochner-space-operator-a} 
	to its resolvent operators. 
	Thus, we may conclude 
	with~\cite[Theorem~12.13]{KunstmannWeis2004} 
	that $[\partial_t + \cA]_\C$ is closed, so that 
	the same holds for $\partial_t + \cA$.
\end{remark}

We are now in the position to define fractional powers 
of the parabolic operator. 
For $\gamma \in(0,\infty)$ 
we work with the 
following representation 
(see Appendix~\ref{subsec:appendix-functional-calc:frac-powers}): 
\begin{equation}\label{eq:def-neg-fractional-parabolic} 
\cB^{-\gamma} 
:= 
\frac{1}{\Gamma(\gamma)} 
\int_0^\infty s^{\gamma-1} \cS(s) \cT(s) \rd s 
= 
\frac{1}{\Gamma(\gamma)} 
\int_0^T s^{\gamma-1} \cS(s) \cT(s) \rd s.
\end{equation}
Note that, for any $\gamma\in(0,\infty)$, 
this definition yields a well-defined bounded linear operator 
on $L^2(0,T;H)$, since the product semigroup 
$(\cS(t)\cT(t))_{t\geq 0}$ was seen to be exponentially 
stable (in fact, eventually zero) in 
Proposition~\ref{prop:parabolic-operator-semigroup}. 

The next result shows that the pointwise evaluation 
of $\cB^{-\gamma}f$ at $t\in[0,T]$ is meaningful,  
provided that $\gamma>\frac{1}{2}$. 

\begin{proposition}\label{prop:neg-frac-C00}
	Suppose Assumption~\textup{\ref{assumption:A}\ref{assumption:A:semigroup}} 
	and let $p\in(1,\infty), \gamma\in(\nicefrac{1}{p},\infty)$.~Then 
	\begin{equation}\label{eq:def:Bgammap} 
	f \mapsto \mathscr{B}_{\gamma,p} f, 
	\quad\;\;  
	[\mathscr{B}_{\gamma,p} f](t) 
	:= 
	\frac{1}{\Gamma(\gamma)} 
	\int_0^t (t-s)^{\gamma-1} S(t-s)f(s) \rd s 
	\quad 
	\forall t \in [0,T], 
	\end{equation} 
	defines a bounded linear operator, 
	mapping $f\in L^p(0,T;H)$ 
	into $C_{0,\{0\}}([0,T];H)$. 
	
	In particular, if $\gamma\in(\nicefrac{1}{2}, \infty)$, 
	we have for 
	the negative fractional parabolic operator 
	${\cB^{-\gamma}}$ defined 
	by \eqref{eq:def-neg-fractional-parabolic}  
	when acting on $f\in L^2(0,T;H)$ 
	the pointwise formula
	\begin{equation}\label{eq:neg-parabolic-formula} 
	[\cB^{-\gamma} f](t)
	= 
	[\mathscr{B}_{\gamma,2} f](t) 
	= 
	\frac{1}{\Gamma(\gamma)} 
	\int_0^t (t-s)^{\gamma-1} S(t-s)f(s) \rd s 
	\quad 
	\forall t \in [0,T]. 
	\end{equation}
\end{proposition}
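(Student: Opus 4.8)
The plan is to prove the claim for $\mathscr{B}_{\gamma,p}$ in three stages — (a) well-definedness of the integral pointwise, (b) boundedness $L^p(0,T;H)\to C([0,T];H)$ together with continuity in $t$, (c) vanishing at $t=0$ — and then to deduce the identity \eqref{eq:neg-parabolic-formula} by specializing $p=2$ and matching with \eqref{eq:def-neg-fractional-parabolic}. Throughout I would use the exponential estimate \eqref{eq:exp-estimate-semigroup}, which under Assumption~\ref{assumption:A}\ref{assumption:A:semigroup} gives $\norm{S(t-s)}{\mathscr L(H)}\leq Me^{-w(t-s)}\leq Me^{|w|T}=:C_T$ for $0\leq s\leq t\leq T$, so that $s\mapsto (t-s)^{\gamma-1}S(t-s)f(s)$ is dominated in $H$-norm by $C_T(t-s)^{\gamma-1}\norm{f(s)}{H}$.

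For (a) and (b) simultaneously: by Hölder's inequality with conjugate exponent $p'$, for any $t\in[0,T]$,
\[
\Norm{\int_0^t (t-s)^{\gamma-1}S(t-s)f(s)\rd s}{H}
\leq C_T \Bigl(\int_0^t (t-s)^{(\gamma-1)p'}\rd s\Bigr)^{1/p'} \norm{f}{L^p(0,T;H)},
\]
and the inner integral is finite precisely because $\gamma>\nicefrac1p$ forces $(\gamma-1)p'>-1$; it is bounded by $\frac{T^{(\gamma-1)p'+1}}{(\gamma-1)p'+1}$ uniformly in $t$. This gives both the pointwise absolute convergence of the Bochner integral and the uniform bound $\norm{\mathscr{B}_{\gamma,p}f}{C([0,T];H)}\lesssim_{\gamma,p,T}\norm{f}{L^p(0,T;H)}$. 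Continuity of $t\mapsto[\mathscr{B}_{\gamma,p}f](t)$ I would obtain by a standard splitting: write $[\mathscr{B}_{\gamma,p}f](t)=\frac1{\Gamma(\gamma)}\int_0^T \mathbf 1_{(0,t)}(s)(t-s)^{\gamma-1}S(t-s)f(s)\rd s$ and argue that the integrand converges in $H$ for a.a.\ $s$ as $t'\to t$ (using strong continuity of $S$ and continuity of $r\mapsto r^{\gamma-1}$ on $(0,\infty)$), with an $L^1(0,T)$-dominating function coming from the Hölder estimate applied on a slightly enlarged interval, so dominated convergence applies; alternatively, first establish the bound for $f$ in the dense subspace $C([0,T];H)$ where continuity of the convolution is elementary, then extend by the uniform bound just proved. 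The vanishing (c) is immediate: taking $t=0$ in the Hölder estimate gives $\norm{[\mathscr{B}_{\gamma,p}f](0)}{H}=0$, so $\mathscr{B}_{\gamma,p}f\in C_{0,\{0\}}([0,T];H)$.

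For the final identity: fix $\gamma>\nicefrac12$ and $f\in L^2(0,T;H)$. By \eqref{eq:def-neg-fractional-parabolic}, $\cB^{-\gamma}f=\frac1{\Gamma(\gamma)}\int_0^T s^{\gamma-1}\cS(s)\cT(s)f\rd s$ as an element of $L^2(0,T;H)$, where the integral converges in $L^2(0,T;H)$ because $s\mapsto s^{\gamma-1}\norm{S(s)}{\mathscr L(H)}$ is integrable on $(0,T)$ (again by \eqref{eq:exp-estimate-semigroup} and $\gamma>\nicefrac12>0$, hence certainly $\gamma-1>-1$). To identify the a.e.-representative, I would commute the Bochner integral with pointwise evaluation: for a.a.\ $\vartheta\in(0,T)$, $[\cS(s)\cT(s)f](\vartheta)=S(s)\widetilde f(\vartheta-s)=\mathbf 1_{(0,\vartheta)}(s)S(s)f(\vartheta-s)$ by \eqref{eq:def-cS}–\eqref{eq:def-cT}, so — justified either by a Fubini-type argument for Bochner integrals or by testing against $L^2(0,T;H)$ functions — $[\cB^{-\gamma}f](\vartheta)=\frac1{\Gamma(\gamma)}\int_0^\vartheta s^{\gamma-1}S(s)f(\vartheta-s)\rd s$ for a.a.\ $\vartheta$, and the substitution $s\mapsto \vartheta-s$ turns this into $\frac1{\Gamma(\gamma)}\int_0^\vartheta(\vartheta-s)^{\gamma-1}S(\vartheta-s)f(s)\rd s=[\mathscr{B}_{\gamma,2}f](\vartheta)$. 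Since the right-hand side is the continuous function constructed in the first part and it agrees a.e.\ with $\cB^{-\gamma}f$, it is the canonical continuous representative, which is what \eqref{eq:neg-parabolic-formula} asserts.

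The main obstacle is the rigorous justification that pointwise-in-$\vartheta$ evaluation commutes with the $L^2(0,T;H)$-valued Bochner integral defining $\cB^{-\gamma}$ — i.e.\ passing from the operator identity in $\mathscr L(L^2(0,T;H))$ to the pointwise convolution formula. One clean way is to note that for each fixed $\vartheta$ the evaluation functional is not bounded on $L^2(0,T;H)$, so one cannot simply pull it through; instead I would test $\cB^{-\gamma}f$ against an arbitrary $g\in L^2(0,T;H)$, apply the scalar Fubini theorem to $\int_0^T\int_0^T s^{\gamma-1}(\mathbf 1_{(0,\vartheta)}(s)S(s)f(\vartheta-s),g(\vartheta))_H\rd s\rd\vartheta$ (integrability of the majorant following from the Hölder/exponential bounds above), and conclude that $\cB^{-\gamma}f$ and $\vartheta\mapsto[\mathscr{B}_{\gamma,2}f](\vartheta)$ have the same action on all of $L^2(0,T;H)$, hence coincide a.e. Everything else is routine real analysis; the hypothesis $\gamma>\nicefrac1p$ enters exactly once, to make $(\gamma-1)p'>-1$ so that $C([0,T];H)$-valued, rather than merely $L^p$-valued, output is obtained.
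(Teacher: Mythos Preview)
Your proposal is correct and follows essentially the same route as the paper. The only difference is one of packaging: the paper dispatches the first assertion (that $\mathscr{B}_{\gamma,p}\colon L^p(0,T;H)\to C_{0,\{0\}}([0,T];H)$ is bounded) by citing \cite[Proposition~5.9]{DaPrato2014}, whereas you supply the H\"older/dominated-convergence argument directly; and for the pointwise identity the paper writes the chain $[\cB^{-\gamma}f](t)=\frac{1}{\Gamma(\gamma)}\int_0^\infty s^{\gamma-1}[\cS(s)\cT(s)f](t)\rd s=\cdots=[\mathscr{B}_{\gamma,2}f](t)$ for a.a.\ $t$ without further comment, while you make explicit the Fubini-type justification needed to pass from the $L^2(0,T;H)$-valued Bochner integral to its a.e.\ representative. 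Your extra care on the second point is well placed, since pointwise evaluation is not a bounded functional on $L^2(0,T;H)$; testing against $g\in L^2(0,T;H)$ as you propose is exactly the clean way to close this.
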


\begin{proof} 
	By \cite[Proposition~5.9]{DaPrato2014}, 
	for $p\in(1,\infty)$ and $\gamma\in(\nicefrac{1}{p},\infty)$,  
	the operator $\mathscr{B}_{\gamma,p}$ 
	defined by~\eqref{eq:def:Bgammap} 
	maps continuously from $L^p(0,T;H)$ 
	to $C_{0,\{0\}}([0,T];H)$. 
	
	Next, note that for all $f \in L^2(0,T;H)$ 
	and a.a.\ $t \in [0,T]$, we obtain 
	by \eqref{eq:def-neg-fractional-parabolic}  
	\begin{align*}
	[\cB^{-\gamma}f ](t) 
	&= 
	\frac{1}{\Gamma(\gamma)} 
	\int_0^\infty s^{\gamma-1} [ \cS(s) \cT(s) f ](t)\rd s
	= 
	\frac{1}{\Gamma(\gamma)} \int_0^t s^{\gamma-1} S(s) f(t-s) \rd s 
	\\
	&= 
	\frac{1}{\Gamma(\gamma)} \int_0^t (t-s)^{\gamma-1} S(t-s) f(s) \rd s 
	= 
	[\mathscr{B}_{\gamma,2} f](t).
	\end{align*}
	Thus, by the first part of this proposition, 
	for every $\gamma\in(\nicefrac{1}{2},\infty)$, 
	we have that $\mathsf R(\cB^{-\gamma})\subseteq C_{0,\{0\}}([0,T];H)$ 
	and the above identities hold pointwise in $t\in[0,T]$. 
\end{proof}

\begin{remark}
	Propositions~\ref{prop:parabolic-operator-semigroup} 
	and~\ref{prop:neg-frac-C00} require only 
	Assumption~\ref{assumption:A}\ref{assumption:A:semigroup}, i.e., 
	that $-A$ generates the $C_0$-semigroup $(S(t))_{t\geq 0}$. 
	Exponential stability or uniform boundedness 
	of $(S(t))_{t\geq 0}$ are not needed, 
	since we consider linear operators on $L^2(0,T;H)$ 
	(instead of $L^2(0,\infty;H)$), 
	allowing us to use uniform boundedness 
	of $(S(t))_{t\geq 0}$ on the compact interval $[0,T]$ 
	to derive exponential stability of $(\cS(t)\cT(t))_{t\geq 0}$. 
\end{remark} 

In what follows, we may also consider the  
operator $\cB^{-\gamma*} := (\cB^{-\gamma})^*$\!. 
More specifically, we will use it 
in the next section 
to define a weak solution  
to the fractional parabolic 
SPDE~\eqref{eq:fractional-parabolic-spde-zeroIC}. 
The following lemma 
provides useful results 
for the adjoint $\cB^{-\gamma*}$ 
which are analogous to those 
for $\cB^{-\gamma}$ in 
Proposition~\ref{prop:neg-frac-C00}.  
For ease of presentation, the 
proof has been moved to 
Subsection~\ref{app:subsec:proof-lemma-adjoint} 
of Appendix~\ref{app:auxiliary}. 

\begin{lemma}\label{lem:adjoint-neg-frac}
	Suppose Assumption~\textup{\ref{assumption:A}\ref{assumption:A:semigroup}}  
	and let $\gamma \in (\nicefrac{1}{2},\infty)$. 
	The adjoint negative fractional parabolic operator ${\cB^{-\gamma*}}$ 
	maps $g\in L^2(0,T;H)$ into $C_{0,\{T\}}([0,T];H)$, and  
	\begin{equation}\label{eq:adjoint-neg-parabolic-formula} 
	[ \cB^{-\gamma*}g ](s) 
	= 
	\frac{1}{\Gamma(\gamma)} 
	\int_s^T (t-s)^{\gamma-1} \dual{ [S(t-s)] } g(t) \rd t
	\quad 
	\forall s \in [0,T]. 
	\end{equation}
\end{lemma}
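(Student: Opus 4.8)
The plan is to obtain the formula for $\cB^{-\gamma*}$ by dualizing the pointwise formula \eqref{eq:neg-parabolic-formula} for $\cB^{-\gamma}$ already established in Proposition~\ref{prop:neg-frac-C00}, and then deduce the mapping property $\mathsf R(\cB^{-\gamma*}) \subseteq C_{0,\{T\}}([0,T];H)$ either directly from the formula or by a symmetry/time-reversal argument. The starting point is that for $f, g \in L^2(0,T;H)$ we have $(\cB^{-\gamma}f, g)_{L^2(0,T;H)} = (f, \cB^{-\gamma*}g)_{L^2(0,T;H)}$. Plugging in \eqref{eq:neg-parabolic-formula} and using Fubini's theorem on the region $\{(s,t) : 0 \le s \le t \le T\}$, together with the identity $(S(t-s)x, y)_H = (x, \dual{[S(t-s)]}y)_H$ (valid since $S(t-s) \in \mathscr L(H)$), one moves the semigroup over to the second slot and reads off that $\cB^{-\gamma*}g$ must agree a.e.\ with the function $s \mapsto \tfrac{1}{\Gamma(\gamma)}\int_s^T (t-s)^{\gamma-1}\dual{[S(t-s)]}g(t)\rd t$. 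Since two $L^2$ functions agreeing a.e.\ are equal, this pins down $\cB^{-\gamma*}g$ as an element of $L^2(0,T;H)$.

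Next I would establish that the right-hand side of \eqref{eq:adjoint-neg-parabolic-formula} is genuinely a continuous $H$-valued function on $[0,T]$ vanishing at $s = T$, i.e.\ lies in $C_{0,\{T\}}([0,T];H)$. The cleanest route is to exploit the time-reversal symmetry: define the reflection isometry $R \in \mathscr L(L^2(0,T;H))$ by $[Rv](t) := v(T-t)$, and observe that substituting $s \mapsto T - s'$, $t \mapsto T - t'$ turns the operator $g \mapsto \bigl(s \mapsto \tfrac{1}{\Gamma(\gamma)}\int_s^T (t-s)^{\gamma-1}\dual{[S(t-s)]}g(t)\rd t\bigr)$ into $\tfrac{1}{\Gamma(\gamma)}\int_0^{s'}(s'-t')^{\gamma-1}\dual{[S(s'-t')]}(Rg)(t')\rd t'$, which is precisely $\mathscr{B}_{\gamma,2}^{\dual{}}$ applied to $Rg$ — the operator $\mathscr{B}_{\gamma,p}$ of Proposition~\ref{prop:neg-frac-C00} built from the $C_0$-semigroup $(\dual{[S(t)]})_{t\ge0}$ (which is a $C_0$-semigroup on $H$ because $S(t)$ is self-adjoint-ly dualized on a Hilbert space, or more directly because $-\dual A$ generates it). Proposition~\ref{prop:neg-frac-C00} applied with $p = 2$ and $\gamma \in (\nicefrac12,\infty)$ to this semigroup then gives that $\mathscr{B}_{\gamma,2}^{\dual{}}(Rg) \in C_{0,\{0\}}([0,T];H)$, and composing with $R$ (which maps $C_{0,\{0\}}$ to $C_{0,\{T\}}$ isometrically) yields $\cB^{-\gamma*}g \in C_{0,\{T\}}([0,T];H)$, with the pointwise formula \eqref{eq:adjoint-neg-parabolic-formula} holding for \emph{every} $s \in [0,T]$.

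The main obstacle I anticipate is bookkeeping rather than conceptual: one must justify the Fubini interchange (absolute integrability of $(s,t)\mapsto (t-s)^{\gamma-1}\norm{S(t-s)}{\mathscr L(H)}\norm{f(s)}{H}\norm{g(t)}{H}$ over the triangle follows from $\gamma > \nicefrac12 > 0$, boundedness of $\norm{S(\cdot)}{\mathscr L(H)}$ on $[0,T]$, the Cauchy--Schwarz inequality, and integrability of $r \mapsto r^{\gamma - 1}$ near $0$), and one must confirm that $(\dual{[S(t)]})_{t\ge0}$ really satisfies Assumption~\ref{assumption:A}\ref{assumption:A:semigroup} so that Proposition~\ref{prop:neg-frac-C00} is applicable to it — this is standard since the adjoint semigroup of a $C_0$-semigroup on a Hilbert space is again a $C_0$-semigroup, generated by $-\dual A$. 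A small care point is that $\mathscr{B}_{\gamma,2}$ in \eqref{eq:def:Bgammap} is defined via $S(t-s)f(s)$, so after the reflection substitution the argument of the semigroup must come out as $s' - t'$ with $t' \le s'$, matching the template exactly; tracking the variable names through the two substitutions is where errors would creep in, but there is no analytic difficulty. An alternative to the reflection trick is to verify continuity and the vanishing at $s=T$ of \eqref{eq:adjoint-neg-parabolic-formula} by hand, splitting the difference $[\cB^{-\gamma*}g](s_1) - [\cB^{-\gamma*}g](s_2)$ into a term controlled by strong continuity of $t \mapsto \dual{[S(t)]}$ and a term controlled by continuity of $s \mapsto \int_s^T (t-s)^{\gamma-1}(\cdots)\rd t$ in the integration limit and the weight; this works but is messier, so I would present the reflection argument as the primary proof.
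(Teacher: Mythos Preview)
Your proposal is correct and takes essentially the same approach as the paper: both establish the formula via Fubini applied to $(\cB^{-\gamma}f, g)_{L^2(0,T;H)}$ (with the absolute-integrability check relying on $\gamma > \nicefrac{1}{2}$ and uniform boundedness of $S$ on $[0,T]$), and both obtain the mapping property into $C_{0,\{T\}}([0,T];H)$ from Proposition~\ref{prop:neg-frac-C00} (or its source \cite[Proposition~5.9]{DaPrato2014}). The only difference is presentational: the paper simply writes that the mapping property follows ``analogously'' to that result, whereas you spell out the reduction explicitly via the time-reversal isometry $R$ and an application of Proposition~\ref{prop:neg-frac-C00} to the adjoint $C_0$-semigroup $(\dual{[S(t)]})_{t\ge0}$.
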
 

Finally, we note that $\cB^{-\gamma*} = (\cB^*)^{-\gamma}$\!. 
To see that the fractional power on the right-hand side 
is indeed well-defined, we use 
\cite[Chapter~1, Corollary 10.6]{Pazy1983}
and conclude that $-\cB^*$ is the generator 
of the $C_0$-semigroup 
$([\cS(t)\cT(t)]^*)_{t\geq 0}$, 
which clearly inherits the exponential stability 
from $( \cS(t) \cT(t))_{t\geq 0}$ 
since their norms are equal. 
The identity is then obtained as follows, 
\[
\cB^{-\gamma*} 
= 
\biggl(\frac{1}{\Gamma(\gamma)} 
\int_0^\infty s^{\gamma-1} \cS(s) \cT(s) \rd s\biggr)^* 
= 
\frac{1}{\Gamma(\gamma)} \int_0^\infty
s^{\gamma-1} [ \cS(s) \cT(s)]^*\rd s 
= 
(\cB^*)^{-\gamma}\!,
\]
where the first and last identities are 
due to \eqref{eq:def-neg-fractional-parabolic} and the second 
is a consequence of the general ability 
to interchange Bochner integrals and duality pairings.

\subsection{Solution concepts, existence and uniqueness}
\label{subsec:analysis-zeroIC:solution-concepts}

We now turn towards defining solutions 
to \eqref{eq:fractional-parabolic-spde-zeroIC} 
for fractional powers $\gamma\in(0,\infty)$. 
Recall from Section~\ref{sec:prelims} 
that $(\Omega, \cF, \bbP)$ is a complete probability space  
equipped with a normal filtration $(\cF_t)_{t\geq 0}$,
and that $(W^Q(t))_{t\ge0}$ is a cylindrical $Q$-Wiener process  on $H$
with respect to $(\cF_t)_{t\geq 0}$,
where
$Q \in \mathscr L(H)$ is self-adjoint 
and strictly positive. 

Having defined and investigated the parabolic operator $\cB$, 
its domain and its fractional powers, 
we are now in particular able to invert 
the fractional parabolic operator $\cB^\gamma$\!.
Equation \eqref{eq:neg-parabolic-formula} suggests 
the following definition of a fractional stochastic convolution 
as a \emph{mild solution} 
to~\eqref{eq:fractional-parabolic-spde-zeroIC}. 

\begin{definition}\label{def:frac-mild-sol-zeroIC} 
	Let Assumption~\ref{assumption:A}\ref{assumption:A:semigroup} 
	hold and define, for $\gamma \in (0,\infty)$,
	the stochastic convolution 
	\begin{equation}\label{eq:stoch-conv} 
	\widetilde{Z}_\gamma(t)
	:=
	\frac{1}{\Gamma(\gamma)}
	\int_0^t 
	(t-s)^{\gamma-1} S(t-s)
	\rd W^Q(s), 
	\quad 
	t\in[0,T]. 
	\end{equation} 
	A predictable $H$-valued stochastic process 
	$Z_\gamma := (Z_\gamma(t))_{t\in[0,T]}$ is called a \emph{mild solution} 
	to \eqref{eq:fractional-parabolic-spde-zeroIC}  
	if, for all $t\in[0,T]$, it satisfies 
	$Z_\gamma(t) 
	=
	\widetilde{Z}_\gamma(t)$, 
	$\bbP$-a.s.
\end{definition} 

We first address 
existence and mean-square continuity 
of mild solutions. 
Furthermore, 
we adapt the  
Da Prato--Kwapie\'{n}--Zabczyk 
factorization method 
(see \cite{DaPratoKwapienZabczyk1987}, \cite[Section~5.3]{DaPrato2014}) 
to establish the existence of 
a pathwise continuous modification.  

\begin{theorem}\label{thm:existence-mild}  
	Let Assumption~\textup{\ref{assumption:A}\ref{assumption:A:semigroup}}  
	be satisfied and let $\gamma \in (0,\infty)$ be such that
	\begin{equation}\label{eq:mildsol-cond-fractional} 
	\exists \, \delta \in [0,\gamma): 
	\quad 
	\int_0^T 
	\bigl\| t^{\gamma-1-\delta} S(t) Q^{ \frac{1}{2} } \bigr\|_{\mathscr L_2(H)}^2 
	\rd t 
	< \infty. 
	\end{equation} 
	The stochastic convolution 
	$\widetilde{Z}_\gamma(t)$ in \eqref{eq:stoch-conv}
	belongs to  
	$L^2(\Omega;H)$  
	for all $t\in [0,T]$ if and only if~\eqref{eq:mildsol-cond-fractional}
	holds with $\delta=0$.
	In this case,
	the mapping 
	$t\mapsto \widetilde{Z}_\gamma(t)$ 
	is an element of $C([0,T];L^p(\Omega;H))$ 
	for all $p\in[1,\infty)$; in particular, 
	there exists a mild solution  
	in the sense of Definition~\textup{\ref{def:frac-mild-sol-zeroIC}}, 
	and it is mean-square continuous. 
	
	Whenever~\eqref{eq:mildsol-cond-fractional}
	holds for some $\delta\in(0,\gamma)$, 
	then for every $p \in [1,\infty)$ 
	there exists a modification 
	of $\widetilde Z_\gamma$ with continuous sample paths 
	belonging 
	to $L^p(\Omega;C([0,T];H))$. 
	In particular, the mild solution has a modification 
	with continuous sample paths.
\end{theorem}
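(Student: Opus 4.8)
The plan is to prove the three assertions in turn, the main tools being the It\^o isometry for stochastic integrals with respect to the cylindrical $Q$-Wiener process, the Gaussianity of the convolution $\widetilde Z_\gamma$ in \eqref{eq:stoch-conv}, and---for the pathwise statement---the factorization method together with Proposition~\ref{prop:neg-frac-C00}.

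First I would settle existence in $L^2(\Omega;H)$. For fixed $t$, the integrand $s\mapsto\frac1{\Gamma(\gamma)}(t-s)^{\gamma-1}S(t-s)\mathbf 1_{(0,t)}(s)$ in \eqref{eq:stoch-conv} is deterministic and strongly measurable, hence predictable, so $\widetilde Z_\gamma(t)$ is a well-defined element of $L^2(\Omega;H)$ precisely when the composed integrand $s\mapsto(t-s)^{\gamma-1}S(t-s)Q^{\nicefrac12}\mathbf 1_{(0,t)}(s)$ belongs to $L^2(0,T;\mathscr L_2(H))$; after substituting $r=t-s$ this reads $\int_0^t\norm{r^{\gamma-1}S(r)Q^{\nicefrac12}}{\mathscr L_2(H)}^2\rd r<\infty$. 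Since the integrand here is non-negative, this quantity is non-decreasing in $t$, so finiteness for all $t\in[0,T]$ is equivalent to finiteness at $t=T$, i.e.\ to \eqref{eq:mildsol-cond-fractional} with $\delta=0$.

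Next, for the continuity statements under $\delta=0$, I would write $\widetilde Z_\gamma(t)=\int_0^T\Phi_t(s)\rd W^Q(s)$ with $\Phi_t(s):=\frac1{\Gamma(\gamma)}(t-s)^{\gamma-1}S(t-s)Q^{\nicefrac12}\mathbf 1_{(0,t)}(s)$; by the It\^o isometry, $\E\norm{\widetilde Z_\gamma(t)-\widetilde Z_\gamma(t')}{H}^2=\norm{\Phi_t-\Phi_{t'}}{L^2(0,T;\mathscr L_2(H))}^2$, so it suffices to show $t\mapsto\Phi_t$ is continuous from $[0,T]$ into $L^2(0,T;\mathscr L_2(H))$. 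The key point is that $\Phi_t(s)=g(t-s)$ for all $s,t\in(0,T)$, where $g\colon\R\to\mathscr L_2(H)$ is defined by $g(u):=\frac1{\Gamma(\gamma)}u^{\gamma-1}S(u)Q^{\nicefrac12}$ for $u\in(0,T)$ and $g(u):=0$ otherwise, and $g\in L^2(\R;\mathscr L_2(H))$ holds exactly by \eqref{eq:mildsol-cond-fractional} with $\delta=0$ (the semigroup law $S(u)=S(u-u_0)S(u_0)$ showing $S(u)Q^{\nicefrac12}\in\mathscr L_2(H)$ for each $u>0$). As $t\mapsto\Phi_t$ is thus obtained from a reflection--translation family on $L^2(\R;\mathscr L_2(H))$ by restriction to $(0,T)$, and translation is strongly continuous there, mean-square continuity follows; since $\widetilde Z_\gamma(t)-\widetilde Z_\gamma(t')$ is a centered Gaussian random variable in $H$, the $p$th moment is controlled by the $p$th power of the second moment, yielding continuity in $C([0,T];L^p(\Omega;H))$ for every $p\in[1,\infty)$. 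Finally, $\widetilde Z_\gamma$ is adapted and stochastically continuous, hence admits a predictable modification, which is the sought mild solution and is mean-square continuous.

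For the pathwise-continuous modification when $\delta\in(0,\gamma)$, I would adapt the factorization argument: fix $\alpha\in(0,\delta]$ with $\alpha<\gamma$ and set $Y(r):=\frac1{\Gamma(\gamma-\alpha)}\int_0^r(r-s)^{\gamma-\alpha-1}S(r-s)\rd W^Q(s)$. Using $u^{\gamma-\alpha-1}\le T^{\delta-\alpha}u^{\gamma-1-\delta}$ for $u\in(0,T)$ with the It\^o isometry and \eqref{eq:mildsol-cond-fractional}, one obtains $\sup_{r\in[0,T]}\E\norm{Y(r)}{H}^2<\infty$, hence $\sup_{r\in[0,T]}\E\norm{Y(r)}{H}^p<\infty$ for every $p$ by Gaussianity, so $Y\in L^p(0,T;L^p(\Omega;H))$ and $Y(\omega,\cdot)\in L^p(0,T;H)$ for $\bbP$-a.a.\ $\omega$; moreover $Y$ is itself a convolution of the form \eqref{eq:stoch-conv} (with $\gamma$ replaced by $\gamma-\alpha$) and so, by the previous step, may be taken predictable. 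The Beta-function identity $\int_s^t(t-r)^{\alpha-1}(r-s)^{\gamma-\alpha-1}\rd r=B(\alpha,\gamma-\alpha)(t-s)^{\gamma-1}$ and a stochastic Fubini theorem (whose integrability hypothesis follows from \eqref{eq:mildsol-cond-fractional} with $\delta=0$) then give, for each fixed $t$, $\widetilde Z_\gamma(t)=[\mathscr B_{\alpha,p}Y](t)$ $\bbP$-a.s., with $\mathscr B_{\alpha,p}$ as in \eqref{eq:def:Bgammap}. Choosing in addition $\alpha>\nicefrac1p$---possible whenever $p>\max\{1,\nicefrac1\delta\}$---Proposition~\ref{prop:neg-frac-C00} shows that $t\mapsto[\mathscr B_{\alpha,p}Y(\omega,\cdot)](t)$ is continuous for $\bbP$-a.a.\ $\omega$ and that $\E\norm{\mathscr B_{\alpha,p}Y}{C([0,T];H)}^p\lesssim\E\norm{Y}{L^p(0,T;H)}^p<\infty$, producing a modification of $\widetilde Z_\gamma$ with continuous paths in $L^p(\Omega;C([0,T];H))$ for every such $p$. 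Any two such modifications are indistinguishable, so fixing one gives a single continuous modification lying in $L^{p_0}(\Omega;C([0,T];H))$ for all $p_0>\max\{1,\nicefrac1\delta\}$ and hence, by $L^{p_0}(\Omega;\,\cdot\,)\hookrightarrow L^p(\Omega;\,\cdot\,)$ for $p\le p_0$ (as $\bbP$ is finite), in $L^p(\Omega;C([0,T];H))$ for every $p\in[1,\infty)$; being continuous and adapted, it is predictable and therefore a mild solution. The hard part will be this last step---fixing the exponents in the factorization, verifying the integrability hypothesis of the stochastic Fubini theorem, and keeping $Y$ in $L^p(0,T;L^p(\Omega;H))$ with $\alpha>\nicefrac1p$ so that Proposition~\ref{prop:neg-frac-C00} applies---complicated throughout by the fact that $Q^{\nicefrac12}$ need not be Hilbert--Schmidt, so that all $\mathscr L_2(H)$-valued statements must be carried out for the smoothed operators $S(u)Q^{\nicefrac12}$.
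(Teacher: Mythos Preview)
Your proposal is correct and follows essentially the paper's route: It\^o isometry for $L^2$-existence, strong continuity of translations in $L^2(0,T;\mathscr L_2(H))$ for mean-square/$L^p$-continuity (the paper defers this to Proposition~\ref{prop:continuity-in-time}, using Burkholder--Davis--Gundy rather than Gaussianity, but the underlying translation argument is the same), and the Da~Prato--Kwapie\'n--Zabczyk factorization $\widetilde Z_\gamma=\mathscr B_{\alpha,p}\widetilde Z_{\gamma-\alpha}$ together with Proposition~\ref{prop:neg-frac-C00} for the pathwise-continuous modification. Two minor points: your auxiliary parameter $\alpha$ is unnecessary---the paper simply takes $\alpha=\delta$ and $p>\max\{1,\nicefrac1\delta\}$---and the integrability hypothesis of the stochastic Fubini theorem is checked from~\eqref{eq:mildsol-cond-fractional} with the \emph{positive} $\delta$ (bounding $\int_0^s\|r^{\gamma-1-\delta}S(r)Q^{\nicefrac12}\|_{\mathscr L_2(H)}^2\rd r$), not from the $\delta=0$ case as you state.
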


\begin{proof} 
	We first consider the case $\delta=0$ 
	in \eqref{eq:mildsol-cond-fractional}.
	By the It\^o isometry 
	(see e.g.\ \cite[Proposition 2.3.5 and p.~32]{LiuRockner2015}), 
	we obtain the identity  
	\[
	\sup_{t\in[0,T]} 
	\| \widetilde{Z}_\gamma(t) \|_{L^2(\Omega;H)}^2
	=  
	\frac{1}{|\Gamma(\gamma)|^2}
	\int_0^T 
	\bigl\| t^{\gamma-1} S(t) Q^{ \frac{1}{2} } \bigr\|_{\mathscr L_2(H)}^2 
	\rd t . 
	\]
	Therefore, 
	$\widetilde{Z}_\gamma(t)\in L^2(\Omega;H)$  
	holds 
	for all $t\in[0,T]$  
	if and only if \eqref{eq:mildsol-cond-fractional} 
	is satisfied with $\delta=0$. 
	The fact  
	that $t\mapsto \widetilde{Z}_\gamma(t)$ 
	belongs to $C([0,T];L^p(\Omega;H))$ 
	for all $p\in[1,\infty)$ 
	will be shown in greater generality 
	in Proposition~\ref{prop:continuity-in-time}, 
	see Subsection~\ref{subsubsec:proof-temporal-reg}.  
	
	Moreover, note that 
	$\widetilde{Z}_\gamma \from [0,T]\times\Omega \to H$  
	is measurable and 
	$(\cF_t)_{t\in[0,T]}$-adapted, 
	and that mean-square continuity implies 
	continuity in probability, so that 
	we may apply 
	\cite[Proposition~3.21]{Peszat_zabczyk_2007} 
	to conclude that 
	there exists a 
	predictable modification $Z_\gamma$ 
	of $\widetilde{Z}_\gamma$. 
	Then, $Z_\gamma$ is    
	a mild solution to \eqref{eq:fractional-parabolic-spde-zeroIC} 
	in the sense of Definition~\ref{def:frac-mild-sol-zeroIC}. 
	
	Now suppose that~\eqref{eq:mildsol-cond-fractional}
	holds for some $\delta \in (0,\gamma)$ 
	and let $p\in( \nicefrac{1}{\delta} \vee 1, \infty)$. 
	By the above considerations,
	$\widetilde Z_{\gamma-\delta}$ and $\widetilde Z_{\gamma}$ exist
	as elements of $C([0,T];L^p(\Omega;H))$. 
	In particular, we have
	\[
		\widetilde Z_{\gamma-\delta} \in L^p(0,T; L^p(\Omega; H)) \cong L^p(\Omega; L^p(0,T;H)),
	\]
	where the latter identification holds by Fubini's theorem. 
	For this reason, there exists a set 
	$\Omega_0\in\cF$ 
	with $\bbP(\Omega_0)=0$ 
	such that 
	\[
		\forall\omega \in \Omega_0^c = \Omega\setminus\Omega_0 : \quad 
		\widetilde Z_{\gamma-\delta} (\,\cdot\,, \omega) 
		\in L^p(0,T;H).
	\]
	We recall the linear operator 
	\[
		\mathscr{B}_{\delta,p} \from L^p(0,T;H) \to C_{0,\{0\}}([0,T];H)
	\]
	from \eqref{eq:def:Bgammap} and 
	claim that the process~$\widehat{Z}_\gamma$ 
	defined for $t\in[0,T]$ and $\omega\in\Omega$ by  
	\[
	\widehat{Z}_\gamma(t, \omega) 
	:= 
	\begin{cases} 
	\bigl[\mathscr{B}_{\delta,p} 
	\widetilde Z_{\gamma-\delta}\bigr](t, \omega) 
	&\text{if }(t,\omega) \in [0,T]\times \Omega_0^c , 
	\\
	0 
	&\text{if }(t,\omega) \in [0,T]\times \Omega_0 , 
	\end{cases} 
	\]
	is
	the desired continuous modification of $\widetilde Z_\gamma$.
	To this end, firstly note that for every $\omega\in\Omega$ 
	the mapping $t \mapsto \widehat{Z}_\gamma(t, \omega)$ 
	indeed is continuous and  
	$\widehat{Z}_\gamma\in L^p(\Omega; C([0,T];H))$; 
	this follows 
	from Proposition~\ref{prop:neg-frac-C00} since 
	$\delta \in( \nicefrac{1}{p}, \infty)$. 
	In order to show that $\widehat{Z}_\gamma$ 
	is a modification of $\widetilde{Z}_\gamma$, 
	we fix $t \in [0,T]$ and employ formulas~\eqref{eq:def:Bgammap}
	and~\eqref{eq:stoch-conv} along with the semigroup property to obtain that 
	\begin{align}
	&\widehat{Z}_{\gamma}(t)
	= 
	\bigl[\mathscr{B}_{\delta,p} 
	\widetilde Z_{\gamma-\delta}\bigr](t) 
	= 
	\frac{1}{\Gamma(\delta)} 
	\int_0^t (t-s)^{\delta-1} S(t-s) \widetilde Z_{\gamma-\delta}(s) 
	\rd s
	\notag
	\\
	&= 
	\frac{1}{\Gamma(\delta)\Gamma(\gamma-\delta)} 
	\int_0^t 
	(t-s)^{\delta-1} S(t-s) 
	\biggl[
	\int_0^s (s-r)^{\gamma-\delta-1}S(s-r)\rd W^Q(r)
	\biggr]
	\rd s 
	\notag
	\\
	&= 
	\frac{1}{\Gamma(\delta)\Gamma(\gamma-\delta)} 
	\int_0^t \int_0^s
	(t-s)^{\delta-1} 
	(s-r)^{\gamma-\delta-1} S(t-r) \rd W^Q(r)
	\rd s , 
	\quad 
	\bbP\text{-a.s.} 
	\label{eq:factorization-identity-1}
	\end{align}
	We set   
	$\widetilde M_T := \sup_{t\in[0,T]} \norm{S(t)}{\mathscr L(H)}$,  
	$K_T := \int_0^T 
	\bigl\| t^{\gamma-1-\delta} S(t) Q^{ \frac{1}{2} } \bigr\|_{\mathscr L_2(H)}^2 
	\rd t$
	and find 
	\begin{align*}
	&\int_0^t 
	\biggl[
	\int_0^s
	\bigl\| 
	(t-s)^{\delta-1} 
	(s-r)^{\gamma-\delta-1} S(t-r)Q^{\frac{1}{2}}
	\bigr\|_{\mathscr L_2(H)}^2
	\rd r
	\biggr]^{\nicefrac{1}{2}}
	\rd s 
	\\
	&\quad \leq 
	\widetilde M_T
	\int_0^t 
	(t-s)^{\delta-1}
	\biggl[
	\int_0^s
	\bigl\| 
	(s-r)^{\gamma-\delta-1} S(s-r)Q^{\frac{1}{2}}
	\bigr\|_{\mathscr L_2(H)}^2
	\rd r
	\biggr]^{\nicefrac{1}{2}}
	\rd s 
	\\
	&\quad
	=
	\widetilde M_T
	\int_0^t 
	(t-s)^{\delta-1}
	\biggl[
	\int_0^s
	\bigl\| 
	r^{\gamma-1-\delta} S(r)Q^{\frac{1}{2}}
	\bigr\|_{\mathscr L_2(H)}^2
	\rd r
	\biggr]^{\nicefrac{1}{2}}
	\rd s
	\leq 
	\frac{\widetilde M_T T^\delta \sqrt{ K_T } }{ \delta } < \infty.
	\end{align*}
	This estimate shows that 
	\[
	s\mapsto 
	\mathbf 1_{(0,t)}(s)
	\mathbf 1_{(0,s)}(\,\cdot\,)
	(t-s)^{\delta-1} 
	(s-\,\cdot\,)^{\gamma-\delta-1} 
	S(t-\,\cdot\,)Q^{\frac{1}{2}} 
	\in L^1(0,T;L^2(0,T;\mathscr L_2(H))) , 
	\] 
	and the stochastic Fubini theorem 
	\cite[Theorem~8.14]{Peszat_zabczyk_2007} 
	may be used
	in~\eqref{eq:factorization-identity-1}, yielding
	\[ 
	\widehat Z_{\gamma}(t)
	= 
	\frac{1}{\Gamma(\delta)\Gamma(\gamma-\delta)} 
	\int_0^t \biggl[\int_r^t
	(t-s)^{\delta-1} 
	(s-r)^{\gamma-\delta-1}  
	\rd s\biggr] S(t-r) \rd W^Q(r),
	\quad 
	\bbP\text{-a.s.} 
	\] 
	Using the change of variables $u(s) := \frac{s-r}{t-r}$
	and \cite[Formula~5.12.1]{Olver2010}, we derive that 
	\begin{align*}
	(t-r)^{1-\gamma}\int_r^t
	(t-s)^{\delta-1} 
	(s-r)^{\gamma-\delta-1}  
	\rd s
	=
	\int_0^1
	(1-u)^{\delta-1} 
	u^{\gamma-\delta-1}  
	\rd u
	=
	\frac{\Gamma(\gamma-\delta)\Gamma(\delta)}{\Gamma(\gamma)},
	\end{align*}
	which shows that 
	$\widehat{Z}_{\gamma}(t) = \widetilde{Z}_{\gamma}(t)$ 
	holds $\bbP$-a.s. 
	Since $t \in [0,T]$ was arbitrary this implies 
	that $\widehat{Z}_{\gamma}$ 
	is a modification of $\widetilde{Z}_{\gamma}$ 
	and completes the proof  
	for $p \in ( \nicefrac{1}{\delta} \vee 1,\infty)$. 
	Finally, the case $p\in [1, \nicefrac{1}{\delta} \vee 1 ]$ follows  
	from the nestedness of the $L^p(\Omega;C([0,T];H))$ spaces.
\end{proof}

In order to provide a more rigorous justification for 
the Definition~\textup{\ref{def:frac-mild-sol-zeroIC}}
of a mild solution 
to \eqref{eq:fractional-parabolic-spde-zeroIC}, 
we proceed as follows: 
We seek a further suitable solution concept 
of a \emph{weak solution}, 
which follows ``naturally'' 
from~\eqref{eq:fractional-parabolic-spde-zeroIC} 
using $L^2(0,T;H)$ inner products, 
and show that weak and mild solutions 
are equivalent. 
For this, we first define the \emph{weak stochastic It\^o integral} 
for $f\colon (0,T)\to \mathscr L(H)$ and 
$g\colon (0,T) \to H$
by 
\[ 
\int_0^t 
\bigl( f(s) \rd W^Q(s), g(s) \bigr)_{H} 
:=
\int_0^t \widetilde{f}_g(s) \rd W^Q(s), 
\quad t \in [0,T],
\]
where 
$\int_0^T \bigl\| Q^{\frac{1}{2}} \dual{[f(s)]} g(s) \bigr\|_{H}^2 \rd s < \infty$  
and 
$\widetilde{f}_g \from (0,T) \to \mathscr L(H;\R)$ 
is defined by
\[ 
\widetilde{f}_g(s) x 
:= 
( f(s) x, g(s) )_{H}  
\quad  
\forall 
x \in H, 
\quad 
\forall 
s\in(0,T), 
\]
cf.~\cite[Lemma~2.4.2]{LiuRockner2015}.

\begin{definition}\label{def:frac-weak-sol-zeroIC} 
	Let Assumption~\textup{\ref{assumption:A}\ref{assumption:A:semigroup}}  
	hold and let $\gamma \in (0,\infty)$.
	A predictable $H$-valued stochastic process 
	$Y_\gamma:=(Y_\gamma(t))_{t\in[0,T]}$ is called a \emph{weak solution} 
	to \eqref{eq:fractional-parabolic-spde-zeroIC} if 
	it is mean-square continuous  
	and, in addition, 
	\begin{equation}\label{eq:fractional-weaksol-zeroIC}  
	\forall 
	\psi \in \mathsf D(\cB^{\gamma*}) : 
	\quad 
	( Y_\gamma, \cB^{\gamma*}\psi)_{L^2(0,T;H)} 
	= 
	\int_0^T \bigl( \rd W^Q(t), \psi(t) \bigr)_H, 
	\quad 
	\bbP\text{-a.s.} 
	\end{equation}
\end{definition}

\begin{remark} For $\gamma=1$, a natural weak solution concept 
	is the formulation given in 
	\cite[Definition~9.11]{Peszat_zabczyk_2007}: 
	A predictable $H$-valued process $(Y_1(t))_{t\in[0,T]}$ 
	is a weak solution to \eqref{eq:fractional-parabolic-spde-zeroIC} 
	if $\sup_{t\in[0,T]} \norm{ Y_1(t) }{L^2(\Omega;H)} < \infty$ 
	and, for all $t \in [0,T]$ and $y \in \mathsf D(\dual A)$, 
	\[ 
	(Y_1(t), y)_H 
	= 
	- 
	\int_0^t (Y_1(s), \dual A y)_H \rd s 
	+ 
	\bigl( W^Q(t), y \bigr)_H, 
	\quad 
	\text{$\bbP$-a.s.}
	\] 
	Provided that Assumption~\textup{\ref{assumption:A}\ref{assumption:A:semigroup}}
	and \eqref{eq:mildsol-cond-fractional} are satisfied, 
	by \cite[Theorem~9.15]{Peszat_zabczyk_2007} 
	an $H$-valued stochastic process is 
	a weak solution in this sense 
	if and only if it 
	is a mild solution 
	in the sense of 
	Definition~\textup{\ref{def:frac-mild-sol-zeroIC}} 
	with $\gamma=1$. 
\end{remark} 

In the next proposition 
we generalize this result 
to an arbitrary fractional power~$\gamma$ 
and show that, 
under the same conditions, 
the mild solution 
in the sense of Definition~\textup{\ref{def:frac-mild-sol-zeroIC}}  
is equivalent to the weak solution 
in the sense of Definition~\textup{\ref{def:frac-weak-sol-zeroIC}}. 

\begin{proposition}\label{prop:fractional-mild-equiv-weak-zeroIC}
	Suppose that Assumption~\textup{\ref{assumption:A}\ref{assumption:A:semigroup}} 
	holds and let $\gamma \in (0,\infty)$ be such that
	\eqref{eq:mildsol-cond-fractional} is satisfied. 
	Then, a stochastic process 
	is a mild solution 
	in the sense of Definition~\textup{\ref{def:frac-mild-sol-zeroIC}} 
	if and only if 
	it is a weak solution 
	in the sense of Definition~\textup{\ref{def:frac-weak-sol-zeroIC}}. 
	Moreover, mild and weak solutions are unique 
	up to modification. 
	If one requires continuity  
	of the sample paths, 
	mild and weak solutions 
	are unique up to indistinguishability.  
\end{proposition}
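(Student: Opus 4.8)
The plan is to prove the equivalence of mild and weak solutions by a density argument on test functions, then extract uniqueness from linearity. First I would establish the key identity that for $\psi \in \mathsf D(\cB^{\gamma*})$, writing $g := \cB^{\gamma*}\psi$, we have $\psi = \cB^{-\gamma*}g$ by the definition of fractional powers (recalling $\cB^{-\gamma*} = (\cB^*)^{-\gamma}$ and that $\cB^\gamma$, hence $\cB^{\gamma*}$, is boundedly invertible with inverse $\cB^{-\gamma}$, resp. $\cB^{-\gamma*}$, from Proposition~\ref{prop:parabolic-operator-semigroup} and the exponential stability of the product semigroup). Using Lemma~\ref{lem:adjoint-neg-frac}, for $\gamma > \nicefrac{1}{2}$ this gives the pointwise formula $\psi(s) = \frac{1}{\Gamma(\gamma)}\int_s^T (t-s)^{\gamma-1} \dual{[S(t-s)]} g(t)\rd t$. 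Plugging the mild solution $\widetilde Z_\gamma(s) = \frac{1}{\Gamma(\gamma)}\int_0^s (s-r)^{\gamma-1} S(s-r)\rd W^Q(r)$ into the left-hand side of \eqref{eq:fractional-weaksol-zeroIC}, I would use the stochastic Fubini theorem (\cite[Theorem~8.14]{Peszat_zabczyk_2007}, as in the proof of Theorem~\ref{thm:existence-mild}) to interchange the $L^2(0,T;H)$ inner product integral with the It\^o integral, then compute the resulting deterministic $s$-integral via the Beta-function identity $\int_r^t (t-s)^{\gamma-\delta-1}(s-r)^{\delta-1}\rd s$ — here with the two fractional kernels from $\widetilde Z_\gamma$ and from $\psi$ combining to collapse the $\gamma$-power, mirroring the factorization computation already carried out in Theorem~\ref{thm:existence-mild}. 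The result is $\int_0^T (\rd W^Q(t), g(t)\cdot(\text{Beta factor}))_H$, which after simplification is exactly $\int_0^T (\rd W^Q(t), \psi(t))_H$ — wait, more carefully: one finds $(\widetilde Z_\gamma, g)_{L^2} = \int_0^T (\widetilde{[\,\cdot\,]}(t)\rd W^Q(t))$ reducing to the right-hand side of \eqref{eq:fractional-weaksol-zeroIC} using $\psi = \cB^{-\gamma*}g$. Combined with the mean-square continuity established in Theorem~\ref{thm:existence-mild}, this shows every mild solution is a weak solution.

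For the converse, suppose $Y_\gamma$ is a weak solution. The strategy is to show $Y_\gamma - \widetilde Z_\gamma$ satisfies $(Y_\gamma - \widetilde Z_\gamma, \cB^{\gamma*}\psi)_{L^2(0,T;H)} = 0$ $\bbP$-a.s.\ for all $\psi \in \mathsf D(\cB^{\gamma*})$, which since $\widetilde Z_\gamma$ is already known to be a weak solution (by the first part) follows immediately from the defining relation \eqref{eq:fractional-weaksol-zeroIC} for both processes. Then the point is that $\mathsf R(\cB^{\gamma*}) = L^2(0,T;H)$: indeed $\cB^{\gamma*}$ is boundedly invertible (its inverse being $\cB^{-\gamma*}$, which is bounded and injective with dense range, being the adjoint of the boundedly invertible $\cB^{-\gamma}$; more directly $\cB^{\gamma*} = (\cB^{-\gamma*})^{-1}$ so its range is the full space $\mathsf D(\cB^{-\gamma*}) \supseteq$ — actually $\mathsf R(\cB^{\gamma*}) = \mathsf D(\cB^{-\gamma*})^{-1}\!$, and since $\cB^{-\gamma*}\in\mathscr L(L^2(0,T;H))$ is injective, $\mathsf R(\cB^{\gamma*})$ is dense). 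So for each fixed $h \in L^2(0,T;H)$ in the range, $(Y_\gamma - \widetilde Z_\gamma, h)_{L^2(0,T;H)} = 0$ $\bbP$-a.s.; taking a countable dense subset of $\mathsf R(\cB^{\gamma*})$ and using separability of $L^2(0,T;H)$, we get $Y_\gamma = \widetilde Z_\gamma$ in $L^2(0,T;H)$ $\bbP$-a.s., and since both are mean-square continuous, in fact $Y_\gamma(t) = \widetilde Z_\gamma(t)$ in $L^2(\Omega;H)$ for each $t$, whence $Y_\gamma$ is a mild solution.

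Uniqueness then follows: if $Y_\gamma^{(1)}, Y_\gamma^{(2)}$ are two weak solutions, the above shows both equal $\widetilde Z_\gamma$ up to modification, so they are modifications of each other. If additionally the sample paths are continuous, then two modifications of each other with continuous paths are indistinguishable by the standard argument (agreement on a countable dense subset of $[0,T]$ extends to all $t$ by continuity, on a $\bbP$-full event). The same applies to mild solutions via the equivalence.

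The main obstacle I anticipate is the careful bookkeeping needed to apply the stochastic Fubini theorem in the first direction — specifically verifying the integrability condition $s \mapsto \mathbf{1}_{(0,s)}(\cdot)(s-\cdot)^{\gamma-1}\dual{[S(s-\cdot)]}Q^{\frac12}(\ldots) \in L^1(\ldots; L^2(\ldots;\mathscr L_2(H)))$ uniformly, which requires combining \eqref{eq:mildsol-cond-fractional} with boundedness of $(S(t))_{t\in[0,T]}$ and of $\psi$, and handling the two kernel singularities at $s=r$ and $s=t$ simultaneously; the estimate is structurally the same as the one displayed in the proof of Theorem~\ref{thm:existence-mild} but with the adjoint semigroup and the extra factor $g$. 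A secondary subtlety is that the clean duality $\psi = \cB^{-\gamma*}g$ and the pointwise formula of Lemma~\ref{lem:adjoint-neg-frac} are only available for $\gamma > \nicefrac{1}{2}$; for $\gamma \in (0,\nicefrac{1}{2}]$ one must either argue purely at the level of $L^2(0,T;H)$ without pointwise representatives (using $\cB^{-\gamma*}$ as a bounded operator and the abstract It\^o isometry / weak integral identity $\cite[\text{Lemma~2.4.2}]{LiuRockner2015}$), or reduce to the regular case by a semigroup composition $\cB^{-\gamma} = \cB^{-\gamma_1}\cB^{-\gamma_2}$ with $\gamma_i$ chosen conveniently — I would use the former, since the weak formulation \eqref{eq:fractional-weaksol-zeroIC} is already posed in $L^2$ and does not require pointwise evaluation.
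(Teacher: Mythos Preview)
Your approach matches the paper's: stochastic Fubini combined with the adjoint identity $\cB^{-\gamma*}\cB^{\gamma*}\psi = \psi$ for the direction mild $\Rightarrow$ weak, testing with $\psi := \cB^{-\gamma*}\phi$ for arbitrary $\phi \in L^2(0,T;H)$ (which is precisely your surjectivity-of-$\cB^{\gamma*}$ argument) for the converse, and the standard modification/indistinguishability argument for uniqueness. One simplification relative to your sketch: no Beta-function computation is needed---after Fubini there is only a \emph{single} fractional kernel in play (you are pairing $\widetilde Z_\gamma$ against $g = \cB^{\gamma*}\psi$, not against $\psi$ itself), and the resulting $s$-integrand is recognized directly as $[\cB^{-\gamma*}g](s) = \psi(s)$ via Lemma~\ref{lem:adjoint-neg-frac}. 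Your flag on $\gamma \leq \nicefrac{1}{2}$ is apt: the paper's proof also invokes Lemma~\ref{lem:adjoint-neg-frac} (stated only for $\gamma > \nicefrac{1}{2}$) at exactly this step, so the $L^2$-level argument you outline---identifying the post-Fubini integrand with $\cB^{-\gamma*}g$ as elements of $L^2(0,T;H)$ rather than pointwise---is indeed what is needed to cover the full range $\gamma \in (0,\infty)$.
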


\begin{proof}
	First, we show 
	that a mild solution $Z_\gamma$ 
	is a weak solution. 
	Note that mean-square continuity follows 
	from Theorem~\ref{thm:existence-mild}. 
	Fix an arbitrary $\psi \in \mathsf D(\cB^{\gamma*})$. 
	Then, 
	\begin{align}
	( Z_\gamma&, \cB^{\gamma*}\psi )_{L^2(0,T;H)} 
	= 
	\frac{1}{\Gamma(\gamma)} 
	\int_0^T 
	\biggl (\int_0^t (t-s)^{\gamma-1} S(t-s) \rd W^Q(s), 
	[\cB^{\gamma*}\psi](t) \biggr)_H \rd t 
	\notag 
	\\
	&= \frac{1}{\Gamma(\gamma)} 
	\int_0^T \int_0^T 
	\bigl( \mathbf 1_{(0,t)}(s) (t-s)^{\gamma-1}S(t-s) \rd W^Q(s), 
	[\cB^{\gamma*}\psi](t) \bigr)_H \rd t 
	\label{eq:proof:prop:fractional-mild-equiv-weak-zeroIC}
	\end{align} 
	holds $\bbP$-a.s. 
	Here, we used that  
	$\bigl( \int_0^T f(s) \rd W^Q(s), x \bigr)_H 
	= \int_0^T (f(s) \rd W^Q(s), x)_H$ 
	for all 
	$f \from (0,T) \to \mathscr L(H)$ 
	and $x \in H$, which
	readily is derived from the definition 
	of the weak stochastic integral 
	and the continuity of inner products. 
	We now would like to apply the stochastic Fubini theorem, 
	see e.g.\ \cite[Theorem~8.14]{Peszat_zabczyk_2007}, 
	in order to interchange 
	the inner weak stochastic integral 
	and the outer deterministic integral. 
	Again by the definition of the weak stochastic integral 
	we have, 
	for a.a.\ $t\in(0,T)$, 
	\[ 
	\int_0^T 
	\bigl( \mathbf 1_{(0,t)}(s) (t-s)^{\gamma-1} S(t-s) \rd W^Q(s), 
	[\cB^{\gamma*}\psi](t) \bigr)_H 
	= \int_0^T \Psi(s,t) \rd W^Q(s), 
	\quad 
	\bbP\text{-a.s.},  
	\] 
	where the integrand 
	$\Psi(s,t)\from H \to \R$ 
	is deterministic and, 
	for $s,t \in (0,T)$,  
	defined~by 
	\begin{equation}\label{eq:def-weak-integrand} 
	\Psi(s,t) x 
	:= 
	\bigl( \mathbf 1_{(0,t)}(s) (t-s)^{\gamma-1} S(t-s) x, 
	[\cB^{\gamma*}\psi](t) \bigr)_H 
	\quad 
	\forall x\in H.   
	\end{equation}
	Thus, the usage of the stochastic Fubini theorem 
	is justified if  
	$t\mapsto \Psi(\,\cdot\,, t)Q^{\frac{1}{2}}$ 
	is in  
	$L^1(0,T; L^2(0,T;\mathscr L_2(H;\R)))$. 
	Given an orthonormal basis 
	$(g_j)_{j\in\N}$ for $H$, we obtain  
	\begin{align*}
	\bigl\| \Psi(s,t) Q^{\frac{1}{2}} \bigr\|_{\mathscr L_2(H;\R)}^2 
	&= 
	\sum_{j=1}^\infty 
	\bigl| \bigl( \mathbf 1_{(0,t)}(s) (t-s)^{\gamma-1}S(t-s) Q^\frac{1}{2} g_j, 
	[\cB^{\gamma*}\psi](t) \bigr)_H \bigr|^2 
	\\
	&\leq 
	\bigl\| \mathbf 1_{(0,t)}(s) (t-s)^{\gamma-1} S(t-s) Q^\frac{1}{2} 
	\bigr\|_{\mathscr L_2(H)}^2 
	\bigl\| [\cB^{\gamma*}\psi](t) \bigr\|_{H}^2 
	\end{align*}
	by the Cauchy--Schwarz inequality  
	on $H$. 
	From this, it follows that
	\begin{align*}
	\bigl\| 
	t\mapsto \Psi(\,\cdot\,,t) &Q^{\frac{1}{2}} \bigr\|_{
		L^1(0,T;L^2(0,T;\mathscr{L}_2(H;\R)))} 
	= 
	\int_0^T 
	\biggl( \int_0^T 
	\bigl\| \Psi(s,t) Q^{\frac{1}{2}} \bigr\|_{\mathscr L_2(H;\R)}^2 
	\rd s \biggr)^{\nicefrac{1}{2}}
	\rd t 
	\\
	&\leq 
	\int_0^T 
	\biggl( \int_0^t 
	\bigl\| (t-s)^{\gamma-1}S(t-s) Q^{\frac{1}{2}} \bigr\|_{\mathscr L_2(H)}^2
	\bigl\| [\cB^{\gamma*}\psi](t) \bigr\|_{H}^2 \rd s 
	\biggr)^{\nicefrac{1}{2}} 
	\rd t 
	\\
	&= 
	\int_0^T 
	\biggl(\int_0^t 
	\bigl\| s^{\gamma-1}S(s)Q^\frac{1}{2} \bigr\|_{\mathscr L_2(H)}^2  
	\rd s\biggr)^{\nicefrac{1}{2}} 
	\bigl\| [\cB^{\gamma*}\psi](t) \bigr\|_{H} \rd t 
	\\
	&\leq 
	T^{\nicefrac{1}{2}} 
	\norm{ \cB^{\gamma*}\psi }{L^2(0,T;H)}
	\biggl( 
	\int_0^T 
	\bigl\| s^{\gamma-1} S(s) Q^{\frac{1}{2}} \bigr\|_{\mathscr L_2(H)}^2  
	\rd s \biggr)^{ \nicefrac{1}{2} }  
	< \infty, 
	\end{align*}
	where we used the Cauchy--Schwarz inequality on $L^2(0,T)$ 
	in the last step. 
	Owing to \eqref{eq:mildsol-cond-fractional}, 
	the integral in the final expression is finite. 
	Applying the stochastic Fubini theorem 
	to \eqref{eq:proof:prop:fractional-mild-equiv-weak-zeroIC}, 
	taking adjoints in \eqref{eq:def-weak-integrand} 
	and using the continuity of $(\,\cdot\,,\,\cdot\,)_H$ 
	gives  
	\begin{align*}
	(Z_\gamma, \cB^{\gamma*}\psi &)_{L^2(0,T;H)} 
	= 
	\frac{1}{\Gamma(\gamma)}
	\int_0^T\int_0^T \Psi(s,t) \rd t\rd W^Q(s)
	\\
	&= 
	\int_0^T 
	\biggl( \rd W^Q(s), 
	\frac{1}{\Gamma(\gamma)}
	\int_s^T (t-s)^{\gamma-1}[S(t-s)]^*[\cB^{\gamma*}\psi](t) 
	\rd t\biggr)_H 
	\\
	&= 
	\int_0^T  
	\bigl( 
	\rd W^Q(s), 
	[\cB^{-\gamma*}\cB^{\gamma*}\psi](s) 
	\bigr)_H 
	= 
	\int_0^T \bigl( \rd W^Q(s), \psi(s) \bigr)_H, 
	\qquad 
	\bbP\text{-a.s.}, 
	\end{align*}
	where we used \eqref{eq:adjoint-neg-parabolic-formula} 
	in the third line. 
	Therefore, $Z_\gamma$ is a weak~solution.
	
	Conversely, suppose that $Y_\gamma$ is a weak solution, 
	let an arbitrary $\phi \in L^2(0,T;H)$ be given and 
	set $\psi := \cB^{-\gamma*} \phi \in \mathsf D(\cB^{\gamma*})$. 
	Substituting this into \eqref{eq:fractional-weaksol-zeroIC} 
	gives 
	\[ 
	( Y_\gamma, \phi )_{L^2(0,T;H)} 
	=  
	\int_0^T 
	\bigl( \rd W^Q(t), [\cB^{-\gamma*} \phi](t) \bigr)_H, 
	\quad 
	\bbP\text{-a.s.} 
	\] 
	Let $(\widetilde{Z}_\gamma(t))_{t\in[0,T]}$ be the stochastic convolution 
	in~\eqref{eq:stoch-conv}. 
	Since the condition for the stochastic Fubini theorem 
	still holds after replacing $\cB^{\gamma*}\psi$ 
	by $\phi$ in \eqref{eq:def-weak-integrand}, 
	the proof of the previous implication 
	can be read backwards to see that
	\[ 
	\forall 
	\phi \in L^2(0,T;H): 
	\quad 
	\bbP\bigl( 
	( Y_\gamma, \phi )_{L^2(0,T;H)} 
	= 
	( \widetilde{Z}_\gamma, \phi)_{L^2(0,T;H)} 
	\bigr) 
	=1. 
	\] 
	By separability of $H$, 
	also $\bbP( Y_\gamma = \widetilde{Z}_\gamma$ in $L^2(0,T;H) ) = 1$ 
	holds so that 
	by Fubini 
	$Y_\gamma = \widetilde{Z}_\gamma$ in $L^2(0,T;L^2(\Omega;H))$ 
	follows. 
	Since both $Y_\gamma$ and $\widetilde{Z}_\gamma$ 
	are mean-square continuous, this shows that, 
	for all $t\in[0,T]$, 
	$Y_\gamma(t) = \widetilde{Z}_\gamma(t)$ in $L^2(\Omega;H)$. 
	Therefore, for all $t\in[0,T]$, we have~that 
	$Y_\gamma(t) = \widetilde{Z}_\gamma(t)$, $\bbP$-a.s., 
	i.e., $Y_\gamma$ is a mild solution.  
	
	It thus suffices to prove uniqueness 
	only for mild solutions. 
	By Definition~\ref{def:frac-mild-sol-zeroIC},
	mild solutions are modifications of the stochastic convolution
	$\widetilde Z_\gamma$ in \eqref{eq:stoch-conv},
	hence of each other. If two mild solutions are
	moreover known to have continuous sample paths, then they are indistinguishable by
	\cite[Proposition~3.17]{Peszat_zabczyk_2007}. 
\end{proof}

\subsection{Spatiotemporal regularity of solutions}
\label{subsec:analysis-zeroIC:existence-uniqueness-regularity} 

We now investigate 
spatiotemporal regularity 
of the mild solution $Z_\gamma$ 
in Definition~\ref{def:frac-mild-sol-zeroIC}. 
We start by stating our 
main results, 
Theorem~\ref{thm:temporal-regularity} 
and Corollary~\ref{cor:temporal-regularity-pathwise}, 
in Subsection~\ref{subsubsec:main-results}. 
In Subsection~\ref{subsubsec:simplified} 
we derive 
a simplified condition 
for spatiotemporal regularity, 
which is easier to check in applications 
and 
sufficient whenever $A$ satisfies 
Assumptions~\ref{assumption:A}\ref{assumption:A:semigroup},\ref{assumption:A:bdd-Hinfty},\ref{assumption:A:bddly-inv}, 
see Proposition~\ref{prop:easier-condition-H-infty}. 
In addition, we explicitly discuss the setting of 
a Gelfand triple $V\hookrightarrow H \cong \dual H \hookrightarrow \dual V$ 
in the case that the operator $A$ is induced by a 
(not necessarily symmetric) bilinear form $\mathfrak{a} \from V\times V\to \bbR$
which is continuous and satisfies a G{\aa}rding inequality. 
Subsection~\ref{subsubsec:proof-temporal-reg} 
is devoted to 
the proof of 
Theorem~\ref{thm:temporal-regularity}. 

\subsubsection{Main results} 
\label{subsubsec:main-results} 

In Theorem~\ref{thm:temporal-regularity} below, 
temporal regularity is measured by the differentiability~$n\in\bbN_0$ 
as well as the H\"older exponent 
$\tau\in[0,1)$. 
Spatial regularity is expressed 
by means of vector spaces which are  
defined in terms of fractional powers of $A$
(see Subsection~\ref{subsec:appendix-functional-calc:frac-powers} 
in Appendix~\ref{app:functional-calc}) 
as follows:  
\[
\dot H_A^\sigma := \mathsf D\bigl( A^{\nicefrac{\sigma}{2}} \bigr), 
\qquad 
(x, y)_{\dot H_A^\sigma} 
:= 
\bigl( A^{\nicefrac{\sigma}{2}} x, A^{\nicefrac{\sigma}{2}} y \bigr)_H, \quad \sigma\in[0,\infty).
\] 
For $\sigma\in(0,\infty)$, $\dot H_A^\sigma$ is a Hilbert space 
provided that  
Assumptions~\ref{assumption:A}\ref{assumption:A:semigroup},\ref{assumption:A:bdd-analytic},\ref{assumption:A:bddly-inv} 
are satisfied. 
In this case, we have the embeddings   
$\Hdot{\sigma'} \hookrightarrow \Hdot\sigma\hookrightarrow H$ 
for all $\sigma'\geq\sigma\geq 0$. 
Note, in particular, that we do not need to assume 
that $A$ is self-adjoint. 

\begin{theorem}\label{thm:temporal-regularity}
	Suppose that 
	Assumptions~\textup{\ref{assumption:A}\ref{assumption:A:semigroup},\ref{assumption:A:bdd-analytic}} 
	are satisfied and let  
	$n \in \N_0$,  
	$\sigma\in[0,\infty)$  
	and  
	$\gamma \in \bigl( \frac{\sigma-r}{2} +n ,\infty \bigr)$, 
	where $r\in[0,\sigma]$ is such that 
	$Q^{\frac{1}{2}} \in \mathscr L(H;\dot H_A^r)$. 
	In the case that $\sigma\in(0,\infty)$, 
	suppose furthermore that 
	Assumption~\textup{\ref{assumption:A}\ref{assumption:A:bddly-inv}} 
	is fulfilled. 
	Under the condition  
	\begin{equation}\label{eq:stoch-conv-diffb-assump} 
	\int_0^T
	\bigl\| t^{\gamma-1-n} S(t) Q^{\frac{1}{2}} \bigr\|_{ 
		\mathscr L_2(H; \Hdot{\sigma})}^2 \rd t 
	< \infty,
	\end{equation} 
	the mild solution $Z_\gamma$ 
	(or, equivalently, the weak solution~$Y_\gamma$) 
	in the sense of 
	Definition~\textup{\ref{def:frac-mild-sol-zeroIC}} 
	(or~\textup{\ref{def:frac-weak-sol-zeroIC}}) 
	belongs to 
	$C^{n,0}([0,T];L^p(\Omega;\dot H^\sigma_A))$ 
	for every $p \in [1,\infty)$. 
	
	If additionally 
	$\gamma\geq n + \tau + \frac{1}{2}$ and 
	$A^{n+\tau+\frac{1}{2} -\gamma} Q^{\frac{1}{2}} \in \mathscr L_2(H; \dot H_A^\sigma)$ 
	hold for some $\tau\in (0,1)$, 
	then we have 
	$Z_\gamma\in C^{n,\tau}([0,T];L^p(\Omega;\dot H^\sigma_A))$ 
	for every $p \in [1,\infty)$. 
\end{theorem}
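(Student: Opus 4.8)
The plan is to prove the theorem for the stochastic convolution $\widetilde{Z}_\gamma$ in \eqref{eq:stoch-conv}, first as a map from $[0,T]$ into $L^2(\Omega;\dot{H}^\sigma_A)$ and then, by Gaussianity, into $L^p(\Omega;\dot{H}^\sigma_A)$ for every $p\in[1,\infty)$; since the mild solution $Z_\gamma$ (and, by Proposition~\ref{prop:fractional-mild-equiv-weak-zeroIC}, the weak solution $Y_\gamma$) is a modification of $\widetilde{Z}_\gamma$ that agrees with it in $L^p(\Omega;\dot{H}^\sigma_A)$ at each fixed time, the stated regularity transfers to $Z_\gamma$ and $Y_\gamma$. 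We record that $\gamma>\tfrac{\sigma-r}{2}+n$ forces $\gamma>n$, and that when $\sigma>0$ Assumption~\ref{assumption:A}\ref{assumption:A:bddly-inv} makes $\dot{H}^\sigma_A$ a Hilbert space on which $S(t)$ acts boundedly with $\|AS(\tau)\|_{\mathscr L(\dot{H}^\sigma_A)}=\|AS(\tau)\|_{\mathscr L(H)}\lesssim\tau^{-1}$ by \eqref{eq:analytic-est-1}; these facts, together with the semigroup identity $A^{k}S(\tau)=\bigl(AS(\tfrac{\tau}{k+1})\bigr)^{k}S(\tfrac{\tau}{k+1})$, underlie all the operator estimates below.

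For the $C^{n,0}$-assertion, I would argue by induction on $j=0,1,\dots,n$ that $t\mapsto\widetilde{Z}_\gamma(t)$ is $j$ times continuously differentiable into $L^2(\Omega;\dot{H}^\sigma_A)$ with
\[
\frac{\rd^j}{\rd t^j}\widetilde{Z}_\gamma(t)
=\frac{1}{\Gamma(\gamma)}\sum_{a=0}^{j}c_{j,a}\int_0^t(t-s)^{\gamma-1-a}A^{j-a}S(t-s)\,\rd W^Q(s),
\qquad t\in[0,T],
\]
for suitable constants $c_{j,a}$. Each stochastic integral is well defined with uniformly bounded second moment: by the It\^o isometry it equals $\int_0^t\|(t-s)^{\gamma-1-a}A^{j-a}S(t-s)Q^{1/2}\|_{\mathscr L_2(H;\dot{H}^\sigma_A)}^2\rd s$, and splitting off $j-a$ copies of $AS$ via the identity above and \eqref{eq:analytic-est-1} bounds the integrand by a constant times $(t-s)^{2(\gamma-1-j)}\|S(c(t-s))Q^{1/2}\|_{\mathscr L_2(H;\dot{H}^\sigma_A)}^2$ with $c=\tfrac{1}{j-a+1}\le1$; since $j\le n$ and $t-s\le T$, one has $(t-s)^{2(\gamma-1-j)}\le T^{2(n-j)}(t-s)^{2(\gamma-1-n)}$, so after the substitution $\tau=c(t-s)$ this is dominated by a constant multiple of the finite integral in \eqref{eq:stoch-conv-diffb-assump}, uniformly in $t$. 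In the inductive step one differentiates a typical term $\int_0^t(t-s)^{\beta-1}A^mS(t-s)\rd W^Q(s)$ with $\beta=\gamma-a$, $m=j-a$, $a\le j\le n-1$, obtaining the two terms $(\beta-1)\int_0^t(t-s)^{\beta-2}A^mS(t-s)\rd W^Q(s)-\int_0^t(t-s)^{\beta-1}A^{m+1}S(t-s)\rd W^Q(s)$; the boundary contribution at $s=t$ vanishes because $\gamma>n$ makes every exponent $\beta-1=\gamma-a-1$ occurring in the induction strictly positive. The convergence of the difference quotients in $L^2(\Omega;\dot{H}^\sigma_A)$ is then established by the It\^o isometry and dominated convergence, the dominating kernel being $(t-s)^{\gamma-1-n}\|S(c(t-s))Q^{1/2}\|_{\mathscr L_2(H;\dot{H}^\sigma_A)}$ (square-integrable by \eqref{eq:stoch-conv-diffb-assump}); continuity of each term in $t$ follows from Proposition~\ref{prop:continuity-in-time} or the same It\^o-isometry estimate. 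This yields $\widetilde{Z}_\gamma\in C^{n,0}([0,T];L^2(\Omega;\dot{H}^\sigma_A))$.

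For the $C^{n,\tau}$-assertion, put $\rho:=\gamma-n-\tau-\tfrac12\ge0$, so that the extra hypothesis reads $A^{-\rho}Q^{1/2}\in\mathscr L_2(H;\dot{H}^\sigma_A)$; together with \eqref{eq:analytic-est-1} this gives the key bound $\|S(v)Q^{1/2}\|_{\mathscr L_2(H;\dot{H}^\sigma_A)}=\|A^{\rho}S(v)A^{-\rho}Q^{1/2}\|_{\mathscr L_2(H;\dot{H}^\sigma_A)}\lesssim v^{-\rho}$ for $v>0$ (in particular \eqref{eq:stoch-conv-diffb-assump} holds, so the $n$-th derivative formula from the previous step is available). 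It then suffices to show that each term $\Phi(t):=\int_0^t(t-s)^{\beta-1}A^mS(t-s)\rd W^Q(s)$ of $\tfrac{\rd^n}{\rd t^n}\widetilde{Z}_\gamma$, for which $\beta-1-m=\gamma-1-n$, is $\tau$-H\"older into $L^2(\Omega;\dot{H}^\sigma_A)$. For $0\le t'<t\le T$ write $\Phi(t)-\Phi(t')=I_1+I_2$ with $I_2:=\int_{t'}^{t}(t-s)^{\beta-1}A^mS(t-s)\rd W^Q(s)$ and $I_1$ the integral over $(0,t')$ of the increment of the kernel; by the It\^o isometry and the bounds above, $\|I_2\|_{L^2(\Omega;\dot{H}^\sigma_A)}^2\lesssim\int_{t'}^{t}(t-s)^{2(\gamma-1-n-\rho)}\rd s=\int_{t'}^{t}(t-s)^{2\tau-1}\rd s\eqsim(t-t')^{2\tau}$, while writing the kernel increment as $\int_{t'-s}^{t-s}\tfrac{\rd}{\rd u}\bigl[u^{\beta-1}A^mS(u)Q^{1/2}\bigr]\rd u$, differentiating, and estimating as before shows the integrand is $O(u^{\gamma-2-n-\rho})=O(u^{\tau-3/2})$ in $\mathscr L_2(H;\dot{H}^\sigma_A)$, so the It\^o isometry and the substitutions $v=t'-s$, $v=(t-t')w$ give $\|I_1\|_{L^2(\Omega;\dot{H}^\sigma_A)}^2\lesssim(t-t')^{2\tau}\int_0^\infty\bigl((w+1)^{\tau-1/2}-w^{\tau-1/2}\bigr)^2\rd w$, and the last integral converges precisely because $\tau\in(0,1)$ (at $0$ one needs $\tau>0$, at $\infty$ one needs $\tau<1$). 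Hence $\tfrac{\rd^n}{\rd t^n}\widetilde{Z}_\gamma\in C^{0,\tau}([0,T];L^2(\Omega;\dot{H}^\sigma_A))$. Finally, each $\tfrac{\rd^j}{\rd t^j}\widetilde{Z}_\gamma(t)$ is an $L^2(\Omega)$-limit of stochastic integrals of deterministic operators against $W^Q$, hence a Gaussian random variable in the separable Hilbert space $\dot{H}^\sigma_A$, so by Fernique's theorem all its $L^p(\Omega;\dot{H}^\sigma_A)$-norms with $p<\infty$ are comparable to its $L^2$-norm — likewise for the time-increments of these derivatives — and the $C^{n,0}$ and $C^{n,\tau}$ bounds transfer to $L^p(\Omega;\dot{H}^\sigma_A)$ for every $p\in[1,\infty)$.

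I expect the main obstacle to be the rigorous justification of the term-by-term differentiation, i.e. proving the displayed formula for $\tfrac{\rd^j}{\rd t^j}\widetilde{Z}_\gamma$: the formal Leibniz rule for stochastic convolutions produces a boundary term at $s=t$, whose vanishing must be argued (it is guaranteed exactly by $\gamma>n$, which keeps all exponents $(t-s)^{\gamma-1-a}$ appearing in the induction positive), and the convergence of the difference quotients has to be carried out in the $L^2(\Omega;\dot{H}^\sigma_A)$-topology through the It\^o isometry and a dominated-convergence argument whose dominating kernel is exactly the one furnished by \eqref{eq:stoch-conv-diffb-assump}. Everything else reduces to the analyticity estimate \eqref{eq:analytic-est-1}, the semigroup property, and elementary Beta-type integrals.
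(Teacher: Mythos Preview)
Your overall architecture matches the paper's: an induction on $j$ producing the explicit formula for the $j$-th derivative as a sum of stochastic integrals of kernels $u^{\gamma-1-a}A^{j-a}S(u)Q^{1/2}$, followed by termwise continuity and H\"older estimates for the $n$-th derivative. Your H\"older argument (the $I_1,I_2$ splitting) is essentially the content of Lemmas~\ref{lemma:I-1} and~\ref{lemma:I-2}, and lifting from $L^2$ to $L^p$ via Gaussianity and Fernique is a valid alternative to the paper's direct use of Burkholder--Davis--Gundy inside Propositions~\ref{prop:continuity-in-time}--\ref{prop:stoch-conv-Psi-diffb-under-int}.

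There is, however, a real gap in the inductive step. You claim (twice) that the boundary contribution at $s=t$ vanishes ``because $\gamma>n$ makes every exponent $\beta-1=\gamma-a-1$ strictly positive''. Positivity of the power of $(t-s)$ is not enough: the kernel is $\Psi(u)=u^{\beta-1}A^{m}S(u)Q^{1/2}$ with $m=j-a$, and $A^{m}S(u)$ blows up like $u^{-m}$ in operator norm. In the difference-quotient computation the boundary piece $\tfrac{1}{h}\int_t^{t+h}\Psi(t+h-s)\,\rd W^Q(s)$ has squared $L^2(\Omega;\dot H^\sigma_A)$-norm $\tfrac{1}{h^2}\int_0^h\|\Psi(u)\|_{\mathscr L_2(H;\dot H^\sigma_A)}^2\,\rd u$, which tends to $\|\Psi(0^+)\|^2$ rather than zero unless $\Psi(0^+)=0$. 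Getting $\Psi(u)\to 0$ requires $\beta-1-m-\tfrac{\sigma-r}{2}>0$, i.e.\ the \emph{full} hypothesis $\gamma>n+\tfrac{\sigma-r}{2}$ together with $Q^{1/2}\in\mathscr L(H;\dot H^r_A)$; your write-up never invokes the latter assumption anywhere. This is precisely the role these two hypotheses play in the paper: Lemma~\ref{lem:diffbty-of-Phi} (second assertion) uses them to show $\Psi$ extends continuously to $C_{0,\{0\}}([0,T];\mathscr L(H;\dot H^\sigma_A))$, and Lemma~\ref{lem:embedding-weak-derivs} then transfers the zero trace to $H^1_{0,\{0\}}(0,T;\mathscr L_2(H;\dot H^\sigma_A))$, which is the condition Proposition~\ref{prop:stoch-conv-Psi-diffb-under-int} needs. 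Once you insert this argument, your sketch goes through.
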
 

An application of the Kolmogorov--Chentsov 
continuity theorem, 
see e.g.~\cite[Theorem~3.9]{Cox2021local}, 
allows us to (partially) transport 
the temporal regularity result of 
Theorem~\ref{thm:temporal-regularity} 
to the pathwise setting, 
as seen in the next corollary. 

\begin{corollary}\label{cor:temporal-regularity-pathwise}
	Suppose that 
	Assumptions~\textup{\ref{assumption:A}\ref{assumption:A:semigroup},\ref{assumption:A:bdd-analytic}} 
	are satisfied. Let  
	$\sigma\in[0,\infty)$, 
	$r\in[0,\sigma]$, 
	$\gamma \in \bigl(\frac{\sigma-r}{2},\infty\bigr)$ 
	and  
	$\tau \in (0,1)$ 
	be such that 
	$Q^{\frac{1}{2}} \in \mathscr L(H;\dot H_A^r)$ 
	and 
	$\gamma \geq \tau + \frac{1}{2}$. 
	If ${\sigma\in(0,\infty)}$, suppose
	also that 
	Assumption~\textup{\ref{assumption:A}\ref{assumption:A:bddly-inv}} 
	holds.  
	If the condition 
	\[ 
	\bigl\| A^{\tau+\frac{1}{2}-\gamma} Q^{\frac{1}{2}} 
	\bigr\|_{\mathscr L_2(H;\dot H_A^\sigma)}
	+ 
	\int_0^T
	\bigl\| t^{\gamma-1} S(t) Q^{\frac{1}{2}} 
	\bigr\|_{\mathscr L_2(H; \Hdot{\sigma})}^2 \rd t 
	< 
	\infty 
	\] 
	is satisfied, then 
	for all $p \in [1,\infty)$ 
	and every $\tau'\in[0,\tau)$ 
	there exists a modification 
	$\widehat{Z}_\gamma$ of 
	the mild solution $Z_\gamma$ 
	(or, equivalently, of the weak solution~$Y_\gamma$) 
	in the sense of 
	Definition~\textup{\ref{def:frac-mild-sol-zeroIC}}  
	(or~\textup{\ref{def:frac-weak-sol-zeroIC}}) 
	such that 
	$\widehat{Z}_\gamma$ 
	has $\tau'$-H\"older continuous 
	sample paths and 
	belongs to 
	$L^p\bigl( \Omega; C^{0,\tau'}([0,T]; \dot H^\sigma_A) \bigr)$. 
\end{corollary}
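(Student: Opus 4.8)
The plan is to reduce the statement to the case $n=0$ of Theorem~\ref{thm:temporal-regularity} and then to feed the resulting moment bounds into a Banach-space-valued Kolmogorov--Chentsov argument. First I would verify that the hypotheses of Theorem~\ref{thm:temporal-regularity} are met with differentiability parameter $n=0$: the lower bound on $\gamma$ then reads $\gamma\in\bigl(\tfrac{\sigma-r}{2},\infty\bigr)$, the mapping property $Q^{\frac12}\in\mathscr L(H;\dot H_A^r)$ is assumed, Assumption~\textup{\ref{assumption:A}\ref{assumption:A:bddly-inv}} is assumed whenever $\sigma\in(0,\infty)$, and condition~\eqref{eq:stoch-conv-diffb-assump} with $n=0$ is precisely the integral term in the hypothesis of the corollary. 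For the H\"older part, the requirements $\gamma\geq 0+\tau+\tfrac12$ and $A^{0+\tau+\frac12-\gamma}Q^{\frac12}\in\mathscr L_2(H;\dot H_A^\sigma)$ are exactly $\gamma\geq\tau+\tfrac12$ together with the finiteness of the first term in the corollary's hypothesis. Theorem~\ref{thm:temporal-regularity} then yields that the mild solution $Z_\gamma$ exists and belongs to $C^{0,\tau}([0,T];L^p(\Omega;\dot H_A^\sigma))$ for every $p\in[1,\infty)$; the equivalent statement for the weak solution $Y_\gamma$ follows from Proposition~\ref{prop:fractional-mild-equiv-weak-zeroIC}, whose hypothesis~\eqref{eq:mildsol-cond-fractional} holds here with $\delta=0$ since $\dot H_A^\sigma\hookrightarrow H$ gives $\norm{\,\cdot\,}{\mathscr L_2(H)}\lesssim\norm{\,\cdot\,}{\mathscr L_2(H;\dot H_A^\sigma)}$.

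Next I would extract from this the increment estimate
\[
\norm{ Z_\gamma(t) - Z_\gamma(s) }{ L^q(\Omega; \dot H_A^\sigma) } \leq C_q \, |t-s|^\tau, \qquad s,t \in [0,T],
\]
valid for every $q\in[1,\infty)$. Fix now $p\in[1,\infty)$ and $\tau'\in[0,\tau)$. Since $\tau>0$, I may choose $q\in[p,\infty)$ large enough that $q\tau>1$ and $\tau-\tfrac1q>\tau'$. Applying the Kolmogorov--Chentsov continuity theorem for Banach-space-valued processes, e.g.~\cite[Theorem~3.9]{Cox2021local}, with exponent $q$ and the separable Hilbert space $\dot H_A^\sigma$ (a Hilbert space under the standing assumptions, being equal to $H$ when $\sigma=0$) then produces a modification $\widehat Z_\gamma$ of $Z_\gamma$ whose sample paths are $\tau'$-H\"older continuous for a.a.\ $\omega$ and which satisfies $\widehat Z_\gamma\in L^q(\Omega;C^{0,\tau'}([0,T];\dot H_A^\sigma))$. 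Redefining $\widehat Z_\gamma(\,\cdot\,,\omega):=0$ on the $\bbP$-null set of $\omega$ for which the path fails to be $\tau'$-H\"older affects neither the modification property nor the $L^q$-integrability, so we may assume all sample paths are $\tau'$-H\"older. Finally, since $(\Omega,\cF,\bbP)$ is a probability space and $q\geq p$, the embedding $L^q(\Omega;C^{0,\tau'}([0,T];\dot H_A^\sigma))\hookrightarrow L^p(\Omega;C^{0,\tau'}([0,T];\dot H_A^\sigma))$ gives the asserted membership.

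I do not expect a serious obstacle: the analytic content is already contained in Theorem~\ref{thm:temporal-regularity}, and the rest is bookkeeping. The two points that will require a little care are (i) matching the parameters so that the $n=0$ instance of Theorem~\ref{thm:temporal-regularity} applies and delivers exactly the H\"older exponent $\tau$, and (ii) handling the quantifier over $p$: the Kolmogorov--Chentsov theorem a priori produces finite moments only up to the integrability exponent used in the increment bound, so one must choose $q\geq p$ before invoking it and only afterwards descend to $L^p$ by nestedness. If one wishes, one can additionally note that any two modifications with continuous sample paths are indistinguishable by \cite[Proposition~3.17]{Peszat_zabczyk_2007}, so the modification obtained is in fact independent of the auxiliary exponent $q$.
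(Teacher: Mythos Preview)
Your proposal is correct and follows essentially the same approach as the paper: invoke Theorem~\ref{thm:temporal-regularity} with $n=0$ to obtain $Z_\gamma\in C^{0,\tau}([0,T];L^q(\Omega;\dot H_A^\sigma))$ for all $q\in[1,\infty)$, then choose $q$ large enough and apply the Kolmogorov--Chentsov theorem together with nestedness of the $L^p$ spaces. Your write-up is more explicit about the parameter matching and the choice of $q$, but the strategy is identical.
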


\begin{proof}
	We first invoke Theorem \ref{thm:temporal-regularity} 
	with $n = 0$ and 
	$\tau\in(0,1)$ 
	to establish that $Z_\gamma$ belongs to 
	$C^{0,\tau}([0,T]; L^q(\Omega;\dot H_A^\sigma))$ 
	for every $q \in[1,\infty)$. 
	The result then follows by choosing $q \geq 1$ 
	sufficiently large, applying the 
	Kolmogorov--Chentsov continuity theorem 
	(see e.g.\ \cite[Theorem~3.9]{Cox2021local}), 
	and using nestedness of the $L^p$ spaces. 
\end{proof}

\subsubsection{A simplified condition and its application to the G{\aa}rding inequality case} 
\label{subsubsec:simplified} 

Whenever also  
Assumption~\ref{assumption:A}\ref{assumption:A:bdd-Hinfty} holds,  
it is possible to replace 
the condition in~\eqref{eq:stoch-conv-diffb-assump} 
by one which is simpler to check in practice.
In this case, the operator $A$ satisfies 
\emph{square function estimates} 
(see Subsection~\ref{subsubsec:square-estimate} 
in Appendix~\ref{app:functional-calc}), 
one of which is used to prove the next result.
\begin{proposition}\label{prop:easier-condition-H-infty}
	Suppose that 
	Assumptions~\textup{\ref{assumption:A}\ref{assumption:A:semigroup},\ref{assumption:A:bdd-Hinfty},\ref{assumption:A:bddly-inv}} 
	are satisfied.
	Let  
	$\sigma, \delta \in [0, \infty)$
	and $\gamma \in \bigl(\frac{1}{2}+\delta,\infty\bigr) 
	\cap 
	\bigl[\frac{1}{2}+\delta+\frac{\sigma-r}{2},\infty\bigr)$, 
	where $r \in [0,\sigma]$ 
	is taken such that $Q^{\frac{1}{2}} \in \mathscr L(H; \dot H_A^r)$. 
	Then, 
	\[
	\int_0^\infty 
	\bigl\| t^{\gamma-1-\delta} S(t) Q^{\frac{1}{2}} \bigr\|_{
		\mathscr L_2(H; \Hdot{\sigma})}^2 \rd t 
	\eqsim_{(\gamma,\delta)} 
	\bigl\| A^{ \delta + \frac{1}{2} - \gamma } Q^{\frac{1}{2}} 
	\bigr\|_{\mathscr L_2(H;\Hdot \sigma)}^2.
	\] 
\end{proposition}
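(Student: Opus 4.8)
The plan is to apply, for each basis vector separately, the square function estimate available under Assumption~\ref{assumption:A}\ref{assumption:A:bdd-Hinfty} (see Subsection~\ref{subsubsec:square-estimate} in Appendix~\ref{app:functional-calc}), and then to recombine the resulting estimates via Tonelli's theorem. Throughout one works with the complexification $A_\C$ and transfers back to $H$, which is harmless since Hilbert--Schmidt norms and the holomorphic functional calculus pass between $H$ and $H_\C$.

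First I would set $\mu := \gamma - \tfrac12 - \delta$, so that $\mu \in (0,\infty)$ by the first constraint on $\gamma$, while the second constraint reads $\tfrac{\sigma}{2} - \mu \le \tfrac{r}{2}$. Since $Q^{\frac12} \in \mathscr L(H;\Hdot{r})$ and the spaces $\Hdot{s}$, $s\ge0$, embed downward by Assumption~\ref{assumption:A}\ref{assumption:A:bddly-inv}, the operator $A^{\frac{\sigma}{2}-\mu}Q^{\frac12}$ is everywhere defined on $H$, and the composition rule for fractional powers gives
\[
\bigl\| A^{\frac{\sigma}{2}-\mu}Q^{\frac12}x \bigr\|_H = \bigl\| A^{\delta+\frac12-\gamma}Q^{\frac12}x \bigr\|_{\Hdot{\sigma}}, \qquad x\in H.
\]
Fixing an orthonormal basis $(e_k)_{k\in\N}$ of $H$, using $2(\gamma-1-\delta) = 2\mu-1$ together with the isometry $\| T \|_{\mathscr L_2(H;\Hdot{\sigma})} = \| A^{\sigma/2}T \|_{\mathscr L_2(H)}$, each summand on the left-hand side becomes
\[
\int_0^\infty t^{2(\gamma-1-\delta)} \bigl\| S(t)Q^{\frac12}e_k \bigr\|_{\Hdot{\sigma}}^2 \rd t = \int_0^\infty \bigl\| \psi(tA)\,A^{\frac{\sigma}{2}-\mu}Q^{\frac12}e_k \bigr\|_H^2\,\frac{\rd t}{t},
\]
with $\psi(z) := z^\mu e^{-z}$; here I use that $S(t)$ corresponds to $z\mapsto e^{-tz}$ in the functional calculus, so that $t^\mu A^{\sigma/2}S(t) = A^{\frac{\sigma}{2}-\mu}(tA)^\mu e^{-tA}$ on the range of the analytic semigroup, where the relevant fractional powers compose and commute. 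The function $\psi$ is holomorphic and bounded on every sector $\Sigma_\nu$ with $\nu \in (0,\tfrac\pi2)$, with $|\psi(z)| \lesssim |z|^\mu$ as $z\to0$ (using $\mu>0$) and $|\psi(z)| \lesssim e^{-|z|\cos\nu}$ on $\Sigma_\nu$, so $\psi$ lies in the decay class for which the square function estimate applies.

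Next I would use Assumption~\ref{assumption:A}\ref{assumption:A:bdd-Hinfty} to pick $\nu \in (\omega_{H^\infty}(A_\C),\tfrac\pi2)$; the square function estimate then yields, with a constant depending only on $\psi$ (hence on $(\gamma,\delta)$) and on $A$, but not on the argument,
\[
\int_0^\infty \bigl\| \psi(tA)\,y \bigr\|_H^2\,\frac{\rd t}{t} \eqsim_{(\gamma,\delta)} \| y \|_H^2, \qquad y\in H,
\]
the lower estimate relying on $A$ being injective with dense range, which follows from Assumption~\ref{assumption:A}\ref{assumption:A:bddly-inv}. Taking $y := A^{\frac{\sigma}{2}-\mu}Q^{\frac12}e_k$, summing over $k\in\N$, and interchanging the sum with the $t$-integral by Tonelli's theorem (all integrands being non-negative) produces
\[
\int_0^\infty \bigl\| t^{\gamma-1-\delta}S(t)Q^{\frac12} \bigr\|_{\mathscr L_2(H;\Hdot{\sigma})}^2 \rd t \eqsim_{(\gamma,\delta)} \bigl\| A^{\delta+\frac12-\gamma}Q^{\frac12} \bigr\|_{\mathscr L_2(H;\Hdot{\sigma})}^2,
\]
which is the assertion, both sides being understood in $[0,\infty]$. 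The main obstacle is the technical verification that $\psi(z)=z^\mu e^{-z}$ belongs to the required decay class all the way down to angle $\tfrac\pi2$ --- precisely where the bound $\omega_{H^\infty}(A_\C) < \tfrac\pi2$ is needed --- together with the bookkeeping of fractional powers (the identities $A^{\frac{\sigma}{2}-\mu}A^{\mu} = A^{\sigma/2}$ and the compatibility with $S(t)\colon H \to \bigcap_{n\in\N}\mathsf D(A^n)$), which is supplied by Subsection~\ref{subsec:appendix-functional-calc:frac-powers} in Appendix~\ref{app:functional-calc}.
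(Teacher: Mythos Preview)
Your proposal is correct and follows essentially the same route as the paper: both set $a=\mu:=\gamma-\delta-\tfrac12>0$, apply the square function estimate for $\psi(z)=z^{a}e^{-z}$ (packaged in the paper as Lemma~\ref{lem:square-func-est}) to the vectors $A^{\frac{\sigma}{2}+\delta+\frac12-\gamma}Q^{\frac12}e_k$, and then sum over an orthonormal basis using Tonelli. The only cosmetic difference is that you spell out the decay properties of $\psi$ and the fractional-power bookkeeping explicitly, whereas the paper defers this to the appendix lemma.
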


\begin{proof}
	Applying Lemma~\ref{lem:square-func-est}, 
	see Appendix~\ref{app:functional-calc}, 
	with $a := \gamma - \delta - \frac{1}{2} \in (0,\infty)$ and
	$x := A^{ \frac{\sigma}{2}+\delta+\frac{1}{2}-\gamma }Q^\frac{1}{2} y \in H$ 
	for $y\in H$ shows that  
	\[ 
	\int_0^\infty 
	\bigl\| t^{\gamma-1-\delta} A^{ \gamma-\delta-\frac{1}{2} } 
	S(t) A^{ \delta+\frac{1}{2}-\gamma } Q^\frac{1}{2} y \bigr\|_{\Hdot\sigma}^2 \rd t 
	\eqsim_{(\gamma,\delta)}
	\bigl\| A^{ \delta+\frac{1}{2}-\gamma } Q^\frac{1}{2} y \bigr\|_{\Hdot\sigma}^2  
	\quad 
	\forall y \in H.
	\]
	Summing both sides over an orthonormal basis for $H$ 
	and using the Fubini--Tonelli theorem to interchange integration 
	and summation on the left-hand side yields
	the desired conclusion.
\end{proof}

\begin{remark}\label{rem:easier-condition-H-infty} 
	Proposition~\ref{prop:easier-condition-H-infty} 
	shows that under the additional 
	assumption that $A_\C$ admits a 
	bounded $H^\infty$-calculus 
	with $\omega_{H^\infty}(A_\C)<\tfrac{\pi}{2}$,  
	which e.g.\ is satisfied whenever  
	$A$ is self-adjoint and strictly positive, 
	it suffices to check 
	that 
	$\gamma > n + \frac{ (\sigma-r) \vee 1 }{2} $,  
	$\gamma \geq n+\frac{1+ (\sigma-r)\vee (2\tau) }{2}$ 
	and that 
	the Hilbert--Schmidt 
	norm 
	$\| A^{ n+ \tau + \frac{1}{2} - \gamma } Q^{\frac{1}{2}} 
	\|_{\mathscr L_2(H;\Hdot \sigma)}$
	is bounded to conclude 
	the regularity results 
	of Theorem~\ref{thm:temporal-regularity}. 
	This condition coincides with 
	the one imposed in 
	\cite[Section~4,~Theorem~6]{Kovacs2013} 
	to derive regularity 
	in the non-fractional case 
	$\gamma=1$ for $p=2$, $\sigma=0$, 
	$n=0$~and~$\tau \in [0, \nicefrac{1}{2}]$. 
\end{remark} 

\begin{corollary}\label{cor:easier-condition-H-infty-Aplusdelta}
	Let $\delta \in [0,\infty)$
	and $\gamma \in\bigl(\frac{1}{2}+\delta , \infty\bigr)$.
	Suppose that $A$ satisfies
	Assumption~\textup{\ref{assumption:A}\ref{assumption:A:semigroup}} 
	and that there exists a constant $\eta \in[0,\infty)$
	such that $\widehat A := A + \eta I$ 
	satisfies 
	Assumptions~\textup{\ref{assumption:A}\ref{assumption:A:semigroup},\ref{assumption:A:bdd-Hinfty},\ref{assumption:A:bddly-inv}} 
	and 
	$\widehat A^{\delta + \frac{1}{2}-\gamma} Q^{\frac{1}{2}} 
	\in \mathscr L_2(H)$. 
	Then, the mild solution
	$Z_\gamma$ in the sense of Definition~\textup{\ref{def:frac-mild-sol-zeroIC}} 
	exists and belongs to $C([0,T];L^p(\Omega;H))$ 
	for every $p\in[1,\infty)$.
	If $\delta>0$, then 
	for every $p \in [1,\infty)$ 
	there exists a modification of $Z_\gamma$ 
	in $L^p(\Omega;C([0,T];H))$  
	which has continuous sample paths. 
\end{corollary}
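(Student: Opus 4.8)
The plan is to deduce the statement from Theorem~\ref{thm:existence-mild} by checking its hypothesis~\eqref{eq:mildsol-cond-fractional}. The only real work is to establish the Hilbert--Schmidt integrability bound appearing there, and for this I would transfer the computation to the shifted operator $\widehat A = A + \eta I$, for which Proposition~\ref{prop:easier-condition-H-infty} is available. First I would note that the $C_0$-semigroup generated by $-\widehat A$ (which exists by hypothesis) is exactly $(\widehat S(t))_{t\geq 0}$ with $\widehat S(t) := e^{-\eta t}S(t)$: the family $(e^{-\eta t}S(t))_{t\geq 0}$ is a $C_0$-semigroup whose generator is $-A - \eta I = -\widehat A$ by a routine rescaling computation, and uniqueness of the generator identifies the two; in particular $S(t) = e^{\eta t}\widehat S(t)$ for all $t\geq 0$.

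Since $\widehat A$ satisfies Assumptions~\textup{\ref{assumption:A}\ref{assumption:A:semigroup},\ref{assumption:A:bdd-Hinfty},\ref{assumption:A:bddly-inv}}, I would apply Proposition~\ref{prop:easier-condition-H-infty} to $\widehat A$ with $\sigma = r = 0$ and with the parameter $\delta$ of that proposition set equal to the $\delta$ appearing in the statement: then $Q^{\frac{1}{2}} \in \mathscr L(H;\dot H_{\widehat A}^{0}) = \mathscr L(H)$ holds trivially (and $\dot H_{\widehat A}^{0} = H$ isometrically, since $\widehat A^{0} = I$), while the constraint on $\gamma$ collapses to $\gamma > \tfrac{1}{2}+\delta$, which is assumed. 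This yields
\[
\int_0^\infty \bigl\| t^{\gamma-1-\delta}\widehat S(t)Q^{\frac{1}{2}} \bigr\|_{\mathscr L_2(H)}^2 \rd t
\;\eqsim_{(\gamma,\delta)}\;
\bigl\| \widehat A^{\,\delta + \frac{1}{2}-\gamma}Q^{\frac{1}{2}} \bigr\|_{\mathscr L_2(H)}^2
\;<\;\infty ,
\]
the finiteness being precisely the assumption $\widehat A^{\delta + \frac{1}{2}-\gamma}Q^{\frac{1}{2}} \in \mathscr L_2(H)$. Using $S(t) = e^{\eta t}\widehat S(t)$ together with $e^{2\eta t}\leq e^{2\eta T}$ on $[0,T]$, I then obtain
\[
\int_0^T \bigl\| t^{\gamma-1-\delta}S(t)Q^{\frac{1}{2}} \bigr\|_{\mathscr L_2(H)}^2 \rd t
\;\leq\;
e^{2\eta T}\int_0^\infty \bigl\| t^{\gamma-1-\delta}\widehat S(t)Q^{\frac{1}{2}} \bigr\|_{\mathscr L_2(H)}^2 \rd t
\;<\;\infty .
\]

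Since $\gamma > \tfrac{1}{2}+\delta$ forces $\delta\in[0,\gamma)$, the last bound is exactly condition~\eqref{eq:mildsol-cond-fractional} with its bound variable equal to $\delta$; moreover, when $\delta > 0$ it also holds with bound variable $0$, because $t^{\gamma-1}\leq T^{\delta}t^{\gamma-1-\delta}$ on $(0,T]$. Theorem~\ref{thm:existence-mild}, which requires only Assumption~\textup{\ref{assumption:A}\ref{assumption:A:semigroup}} (valid for $A$), now gives the existence of the mild solution $Z_\gamma$ and the membership $t\mapsto\widetilde Z_\gamma(t)\in C([0,T];L^p(\Omega;H))$ for every $p\in[1,\infty)$; as $Z_\gamma$ is a modification of $\widetilde Z_\gamma$ it inherits this continuity, so $Z_\gamma\in C([0,T];L^p(\Omega;H))$. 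If in addition $\delta > 0$, the same theorem supplies, for each $p\in[1,\infty)$, a modification of $\widetilde Z_\gamma$ --- hence of $Z_\gamma$ --- with continuous sample paths lying in $L^p(\Omega;C([0,T];H))$, which is the final claim.

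The argument is essentially bookkeeping once the reduction is in place; the single substantive point --- and the reason the shift $\eta I$ is built into the hypothesis --- is that $A$ itself need not admit a bounded $H^\infty$-calculus of angle below $\tfrac{\pi}{2}$ nor a bounded inverse (for instance if $0$ lies in its spectrum), so Proposition~\ref{prop:easier-condition-H-infty} cannot be invoked for $A$ directly. Passing to $\widehat A$ and absorbing the harmless factor $e^{\eta t}$ into a constant over the compact time interval $[0,T]$ is what circumvents this.
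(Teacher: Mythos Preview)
Your proof is correct and follows essentially the same approach as the paper: relate $S(t)=e^{\eta t}\widehat S(t)$, apply Proposition~\ref{prop:easier-condition-H-infty} to $\widehat A$ with $\sigma=r=0$, absorb $e^{2\eta t}\leq e^{2\eta T}$, and invoke Theorem~\ref{thm:existence-mild}. Your version is slightly more explicit in two places the paper leaves implicit---checking that the hypotheses of Proposition~\ref{prop:easier-condition-H-infty} collapse correctly when $\sigma=r=0$, and noting that the $\delta>0$ bound also yields the $\delta=0$ case of~\eqref{eq:mildsol-cond-fractional} via $t^{\gamma-1}\leq T^{\delta}t^{\gamma-1-\delta}$ on $(0,T]$---but the route is identical.
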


\begin{proof}
	Note that 
	$S(t) = e^{\eta t} \widehat S(t)$ 
	holds for every $t\geq 0$, 
	where $( \widehat S(t) )_{t\geq 0}$ denotes
	the $C_0$-semigroup
	generated by $-\widehat A$. 
	Hence, by Proposition~\ref{prop:easier-condition-H-infty} 
	we find that 
	\begin{align*}
	\int_0^T 
	\bigl\| t^{\gamma-1-\delta} S(t) Q^{\frac{1}{2}} \bigr\|_{
		\mathscr L_2(H)}^2 \rd t 
	&\le
	e^{2 \eta T}
	\int_0^T 
	\bigl\| t^{\gamma-1-\delta} \widehat S(t) Q^{\frac{1}{2}} \bigr\|_{
		\mathscr L_2(H)}^2 \rd t
	\\ 
	&\lesssim_{(\gamma,\delta)} 
	e^{2 \eta T}
	\bigl\| \widehat A^{ \delta+\frac{1}{2}-\gamma } Q^{\frac{1}{2}} 
	\bigr\|_{\mathscr L_2(H)}^2 < \infty.
	\end{align*}
	The claim now follows from 
	Theorem~\ref{thm:existence-mild}.
\end{proof}

We illustrate the utility of Corollary~\ref{cor:easier-condition-H-infty-Aplusdelta}
in the following example. 
It is concerned with the 
case that the operator $A$ is induced by a continuous 
bilinear form $\mathfrak{a} \from V\times V\to \bbR$, 
where $V\hookrightarrow H$ is 
dense in $H$,  
and $\mathfrak{a}$ is not necessarily coercive on $V$;  
see also~\cite[Section 7.3.2]{Haase2006}. 
We note that this setting applies to a variety of  
important applications, including symmetric and non-symmetric 
differential operators of even orders. 

\begin{example}
	Let $(V, (\,\cdot\,, \,\cdot\,)_V)$ be a Hilbert space
	which is densely and continuously embedded in $H$.
	Suppose that $A \from \mathsf D(A)\subseteq H\to H$
	is induced by a bilinear form $\mathfrak a \from V\times V\to \R$ which
	is bounded and satisfies a G{\aa}rding inequality, i.e.,
	there exist constants $\alpha_0, \alpha_1 \in (0,\infty)$ and
	$\eta\in[0,\infty)$
	such that
	\begin{align}
	\qquad\quad  
	|\mathfrak a(u,v)| 
	&\leq 
	\alpha_1 \norm{u}{V}\norm{v}{V}
	&&
	\forall u,v\in V, \qquad\quad 
	\label{eq:bounded-form} 
	\\ 
	\qquad\quad 
	\mathfrak a(u,u) 
	&\geq 
	\alpha_0 \norm{u}{V}^2 - \eta \norm{u}{H}^2
	&&
	\forall u \in V. \qquad\quad 
	\label{eq:garding} 
	\end{align}
	The G{\aa}rding inequality~\eqref{eq:garding} can be interpreted 
	as coercivity of 
	the bilinear form $\hat{\mathfrak a}(u,v) := \mathfrak a(u,v) + \eta (u,v)_H$ on $V$, 
	associated with $\widehat A = A + \eta I$,
	while~\eqref{eq:bounded-form}
	implies that $\hat{\mathfrak a}$ is bounded. 
	The complexified sesquilinear form 
	$\hat{\mathfrak a}_{\C} \from V_\C \times V_\C \to \C$,
	which is defined analogously to~\eqref{eq:inner-product-HC} 
	and induces the operator 
	$\widehat A_\C$, inherits the boundedness and coercivity from $\hat{\mathfrak a}$. Thus,
	there exist $\widehat{\alpha}_0, \widehat{\alpha}_1 \in(0,\infty)$ such that 
	\begin{align*}
	\qquad\quad  
	|\hat{\mathfrak a}_\C(u,v)| 
	&\leq 
	\widehat{\alpha}_1 \norm{u}{V_\C}\norm{v}{V_\C}
	&&
	\forall u,v\in V_\C, 
	\qquad\quad  
	\\
	\qquad\quad  
	\Re{\hat{\mathfrak a}_\C(u,u)} 
	&\geq 
	\widehat{\alpha}_0 \norm{u}{V_\C}^2
	&&
	\forall u \in V_\C.
	\qquad\quad  
	\end{align*}
	Therefore, 
	$\widehat{\alpha}_0 \norm{u}{V_\C}^2 
	\leq 
	\Re{\hat{\mathfrak a}_\C(u,u)}
	\leq 
	|\hat{\mathfrak a}_\C (u,u)|
	\le
	\widehat{\alpha}_1 \norm{u}{V_\C}^2
	\leq
	\tfrac{\widehat{\alpha}_1}{\widehat{\alpha}_0} 
	\Re{\hat{\mathfrak a}_\C(u,u)}$
	follows for every $u \in V_\C$.
	If $V_\C \neq \{0\}$, these estimates imply 
	that $\widehat{\alpha}_0 \leq \widehat{\alpha}_1$
	and
	\[
	|\Im{\hat{\mathfrak a}_\C}(u,u)| 
	= 
	\sqrt{ |\hat{\mathfrak a}_\C(u,u)|^2 - | \Re{\hat{\mathfrak a}_\C}(u,u)|^2 }
	\leq 
	\Bigl( \tfrac{\widehat{\alpha}_1^2}{\widehat{\alpha}_0^2} - 1 \Bigr)^{\nicefrac{1}{2}} 
	\Re{\hat{\mathfrak a}_\C}(u,u)
	\quad
	\forall u \in V_\C.
	\]
	This shows that $-\widehat A_\C$ generates a bounded
	analytic
	$C_0$-semigroup $(\widehat{S}_\C(t))_{t\geq 0}$ 
	of contractions on $H_\C$,
	cf.\ \cite[Theorem~1.54]{Ouhabaz2005}, 
	where we used that
	$(-\infty,0) \subseteq \rho(\widehat A_\C)$ by \cite[Proposition~1.22]{Ouhabaz2005}.
	Applying \cite[Theorems~10.2.24 and~10.4.21]{AnalysisInBanachSpacesII} 
	and using that $\omega(\widehat A_\C) \in \bigl[0,\tfrac{\pi}{2} \bigr)$, 
	since $(\widehat{S}_\C(t))_{t\geq 0}$ is bounded analytic 
	(see Theorem~\ref{thm:sectoriality-analytic-semigroups}), 
	we find that $\widehat A_\C$ admits
	a bounded $H^\infty$-calculus of angle 
	$\omega_{H^\infty}(\widehat A_\C) 
	= 
	\omega(\widehat A_\C) \in \bigl[0,\tfrac{\pi}{2} \bigr)$.  
	Thus, we are in the setting 
	of Corollary~\ref{cor:easier-condition-H-infty-Aplusdelta}. 
	In particular,  
	the existence of a mean-square continuous  
	mild solution to \eqref{eq:fractional-parabolic-spde-zeroIC} 
	for $\gamma> \frac{1}{2}$
	follows if 
	$\| \widehat A^{\frac{1}{2}-\gamma} Q^{\frac{1}{2}} 
	\|_{\mathscr L_2(H)}<\infty$. 
\end{example}

\subsubsection{The proof of 
	\texorpdfstring{Theorem~\ref{thm:temporal-regularity}}{Theorem 3.12}} 
\label{subsubsec:proof-temporal-reg} 

We split the proof 
of Theorem~\ref{thm:temporal-regularity} 
into several intermediate results. 
Before stating and proving these, 
we introduce the following function, 
which generalizes the integrand  
in~\eqref{eq:stoch-conv} 
used to define mild solutions. 
Given 
$a\in\bbR$, $b \in [0,\infty)$ and 
$\sigma \in [0,\infty)$, 
define  
$\Phi_{a,b}\from (0,\infty) \to \mathscr L(H; \dot H_A^\sigma)$  
by
\begin{equation}\label{eq:def:Phi-ab} 
\Phi_{a,b} (t) := t^{a} A^b S(t)Q^\frac{1}{2}, 
\qquad 
t\in(0,\infty). 
\end{equation}
Note that   
a mild solution $Z_\gamma$ in the 
sense of Definition~\ref{def:frac-mild-sol-zeroIC} 
satisfies the relation 
\[
\forall t\in[0,T]: 
\quad 
Z_\gamma(t)
= 
\frac{1}{\Gamma(\gamma)} 
\int_0^t 
\Phi_{\gamma-1,0} (t-s) 
\rd \widehat{W}(s), 
\quad 
\text{$\bbP$-a.s.}, 
\]
where $\widehat{W}(t):=Q^{-\frac{1}{2}} W^Q(t)$, $t\geq 0$, 
is a cylindrical Wiener process. 

The first result quantifies  
spatial regularity of the  
continuous-in-time stochastic 
convolution with~$\Phi_{a,b}$ 
in $L^p(\Omega; \Hdot{\sigma})$-sense. 
Recall from Section~\ref{sec:prelims} that 
$(W(t))_{t\geq 0}$ 
denotes an (arbitrary) $H$-valued cylindrical Wiener process
with respect to
$(\cF_t)_{t\geq 0}$. 

\begin{proposition}\label{prop:continuity-in-time}
	Let 
	Assumption~\textup{\ref{assumption:A}\ref{assumption:A:semigroup}} 
	hold, and 
	let $a \in \R$, $b,\sigma \in [0,\infty)$  
	and $T \in (0,\infty)$ be given.
	If $\sigma\neq0$, then suppose moreover that 
	Assumptions~\textup{\ref{assumption:A}\ref{assumption:A:bdd-analytic},\ref{assumption:A:bddly-inv}}  
	are satisfied. 
	If the function $\Phi_{a,b}$
	defined in \eqref{eq:def:Phi-ab} 
	belongs to $L^2(0,T; \mathscr L_2(H;\dot{H}^\sigma_A))$, 
	i.e., 
	\[
	\int_0^T
	\norm{\Phi_{a,b}(t)}{\mathscr L_2(H;\dot H_A^\sigma)}^2 \rd t 
	< \infty, 
	\] 
	then $t\mapsto \int_0^t \Phi_{a,b}(t-s)\rd W(s)$ 
	belongs to $C([0,T]; L^p(\Omega;\dot H_A^\sigma))$ 
	for all $p \in [1,\infty)$. 
\end{proposition}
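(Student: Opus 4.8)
The aim is a standard stochastic-convolution continuity argument, broken into two parts: first show that $Z(t) := \int_0^t \Phi_{a,b}(t-s)\rd W(s)$ is a well-defined element of $L^p(\Omega;\dot H_A^\sigma)$ for each fixed $t$, and then establish continuity of $t\mapsto Z(t)$ into this space. Throughout I write $E := \dot H_A^\sigma$ for brevity (a Hilbert space under Assumptions~\ref{assumption:A}\ref{assumption:A:semigroup},\ref{assumption:A:bdd-analytic},\ref{assumption:A:bddly-inv} when $\sigma\neq 0$, and $E=H$ otherwise). First I would observe that for fixed $t\in[0,T]$ the integrand $s\mapsto \mathbf 1_{(0,t)}(s)\Phi_{a,b}(t-s)$ lies in $L^2(0,T;\mathscr L_2(H;E))$ by the hypothesis (after the substitution $s\mapsto t-s$ and $\mathbf 1_{(0,t)}\le\mathbf 1_{(0,T)}$), hence the It\^o integral is defined and, by the It\^o isometry,
\[
\|Z(t)\|_{L^2(\Omega;E)}^2 = \int_0^t \|\Phi_{a,b}(t-s)\|_{\mathscr L_2(H;E)}^2\rd s \le \int_0^T \|\Phi_{a,b}(r)\|_{\mathscr L_2(H;E)}^2\rd r < \infty.
\]
Since $Z(t)$ is a centered Gaussian random variable in the Hilbert space $E$, all $L^p(\Omega;E)$ norms are equivalent (Kahane--Khintchine / Fernique), so $Z(t)\in L^p(\Omega;E)$ for every $p\in[1,\infty)$ with $\|Z(t)\|_{L^p(\Omega;E)}\lesssim_p \|Z(t)\|_{L^2(\Omega;E)}$.

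\textbf{Continuity.} For $0\le t\le t'\le T$ I would split the increment $Z(t')-Z(t)$ in the usual way into
\[
Z(t')-Z(t) = \int_0^{t}\bigl(\Phi_{a,b}(t'-s)-\Phi_{a,b}(t-s)\bigr)\rd W(s) + \int_{t}^{t'}\Phi_{a,b}(t'-s)\rd W(s),
\]
and estimate the $L^2(\Omega;E)$ norm of each piece via the It\^o isometry. The second term is $\int_0^{t'-t}\|\Phi_{a,b}(r)\|_{\mathscr L_2(H;E)}^2\rd r$, which tends to $0$ as $t'-t\to 0$ by absolute continuity of the integral (using $\Phi_{a,b}\in L^2(0,T;\mathscr L_2(H;E))$). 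For the first term, after substituting $r=t-s$ one gets $\int_0^{t}\|\Phi_{a,b}(r+(t'-t))-\Phi_{a,b}(r)\|_{\mathscr L_2(H;E)}^2\rd r$; continuity of translation in $L^2(0,T;\mathscr L_2(H;E))$ forces this to $0$ as $t'-t\to 0$, provided one first extends $\Phi_{a,b}$ by zero outside $(0,T)$ and checks that the shifted function still lies in $L^2$ — this is where the $b=0$ versus $b>0$ distinction and the analyticity estimate~\eqref{eq:analytic-est-1} may be needed to control $\|\Phi_{a,b}(r)\|_{\mathscr L_2(H;E)}$ near $r=0$, but in fact the bare $L^2$ integrability hypothesis already suffices since translation continuity in $L^2$ is purely measure-theoretic. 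Combining, $\|Z(t')-Z(t)\|_{L^2(\Omega;E)}\to 0$, and then equivalence of Gaussian moments upgrades this to $L^p(\Omega;E)$ for all $p\in[1,\infty)$, giving $Z\in C([0,T];L^p(\Omega;E))$.

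\textbf{Main obstacle.} The only genuinely delicate point is justifying the translation-continuity step cleanly: one must be careful that $\Phi_{a,b}$ (with its $t^a$ factor, possibly singular at $0$ when $a<0$, and the operator factor $A^b S(t)$ which is only well-behaved as $t\downarrow 0$ through~\eqref{eq:analytic-est-1}) genuinely defines an element of $L^2(0,T;\mathscr L_2(H;E))$ so that the abstract continuity of shifts applies — but this is precisely the hypothesis of the proposition, so no extra work beyond a zero-extension is required. A secondary bookkeeping point is ensuring strong measurability of $t\mapsto \Phi_{a,b}(t)$ as an $\mathscr L_2(H;E)$-valued map and of $s\mapsto \Phi_{a,b}(t-s)$ as a stochastic integrand; this follows from strong continuity of $(S(t))_{t\ge 0}$ on $(0,\infty)$ together with boundedness of $A^b S(t)$ for $t>0$. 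I would invoke Proposition~\ref{prop:parabolic-operator-semigroup} / standard semigroup facts for these measurability claims and otherwise keep the argument to the two-term increment estimate above.
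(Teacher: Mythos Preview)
Your proposal is correct and follows essentially the same route as the paper: the same two-term splitting of the increment, the same use of absolute continuity of the integral for the tail term, and the same appeal to continuity of translations in $L^2(0,T;\mathscr L_2(H;E))$ (the paper isolates this as Lemma~\ref{lem:translations}) for the main term. The only cosmetic difference is that the paper uses the Burkholder--Davis--Gundy inequality directly for $p\geq 2$ (then embeds for $p<2$), whereas you use the It\^o isometry plus Gaussian hypercontractivity; both are valid here since the integrand is deterministic.
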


\begin{proof} 
	We first note that 
	the assumption 
	$\Phi_{a,b} \in L^2(0,T;\mathscr L_2(H;\dot{H}^\sigma_A))$, 
	combined with the 
	Burkholder--Davis--Gundy inequality 
	(see \cite[Theorem~6.1.2]{LiuRockner2015})
	and the continuous embedding 
	\begin{equation}\label{eq:Lp-embedding} 
	L^2(\Omega;\Hdot{\sigma}) \hookrightarrow 
	L^p(\Omega;\Hdot{\sigma}), 
	\qquad 
	p\in[1,2), \; 
	\sigma\in[0,\infty),
	\end{equation} 
	imply that 
	$\int_0^t \Phi_{a,b}(t-s)\rd W(s)$ 
	indeed is a well-defined element 
	of $L^p(\Omega;\dot H_A^\sigma)$ 
	for all $t\in [0,T]$ 
	and every $p\in[1,\infty)$.  
	
	It remains to check the $L^p(\Omega; \Hdot{\sigma})$-continuity 
	of $t \mapsto \int_0^t \Phi_{a,b}(t-s)\rd W(s)$. 
	For fixed $t \in [0,T)$ and $h \in (0, T-t]$, we split the 
	stochastic integrals as follows:
	\begin{align*} 
	&\int_0^{t+h} \Phi_{a,b}(t+h-s)\rd W(s) - \int_0^t \Phi_{a,b}(t-s)\rd W(s) 
	\\
	&\quad 
	= 
	\int_t^{t+h} \Phi_{a,b}(t+h-s) \rd W(s) 
	+ 
	\int_0^{t} [\Phi_{a,b}(t+h-s) - \Phi_{a,b}(t-s)] \rd W(s). 
	\end{align*}
	For $p \in [2,\infty)$, the Burkholder--Davis--Gundy inequality yields 
	\begin{align*} 
	&\biggl\| 
	\int_t^{t+h} \Phi_{a,b}(t+h-s) \rd W(s) 
	+ \int_0^{t} [\Phi_{a,b}(t+h-s) - \Phi_{a,b}(t-s)] \rd W(s) 
	\biggr\|_{L^p(\Omega;\Hdot\sigma)} 
	\\
	&\lesssim_p 
	\biggl[\int_t^{t+h} \norm{\Phi_{a,b}(t+h-s)}{
		\mathscr L_2(H;\dot H_A^\sigma)}^2 \rd s 
	\biggr]^{\nicefrac{1}{2}}  
	\\
	&\qquad + 
	\biggl[ \int_0^t 
	\norm{ \Phi_{a,b}(t+h-s) - \Phi_{a,b}(t-s) }{ 
		\mathscr L_2(H;\dot H_A^\sigma)}^2 \rd s 
	\biggr]^{\nicefrac{1}{2}} 
	\\
	&= 
	\biggl[ \int_0^h 
	\norm{\Phi_{a,b}(u)}{\mathscr L_2(H;\dot H_A^\sigma)}^2 \rd u 
	\biggr]^{\nicefrac{1}{2}} 
	+ 
	\biggl[ \int_0^t 
	\norm{\Phi_{a,b}(r+h) - \Phi_{a,b}(r)}{ 
		\mathscr L_2(H;\dot H_A^\sigma)}^2 \rd r 
	\biggr]^{\nicefrac{1}{2}}, 
	\end{align*} 
	where $u := t+h-s$ and $r := t-s$.
	Since $\Phi_{a,b}\in L^2(0,T;\mathscr L_2(H;\dot H_A^\sigma))$ 
	the first integral tends to zero as $h \downarrow 0$ 
	by dominated convergence. 
	The second term tends to zero by Lemma~\ref{lem:translations}, 
	see Appendix~\ref{app:auxiliary}. 
	
	For $t\in(0,T]$ and $h\in[-t, 0)$, 
	the difference of stochastic integrals 
	can be rewritten using $\int_0^t = \int_0^{t+h} + \int_{t+h}^t$. 
	Thus, we obtain, for every $p\in[2,\infty)$, the bound 
	\begin{align*}
	&\biggl\| 
	\int_0^{t+h} \Phi_{a,b}(t+h-s) \rd W(s) 
	- 
	\int_0^t \Phi_{a,b}(t-s)\rd W(s) 
	\biggr\|_{L^p(\Omega;\Hdot\sigma)} 
	\\ 
	&\lesssim_p 
	\biggl[ \int_0^{-h} 
	\norm{\Phi_{a,b}(r)}{\mathscr L_2(H;\dot H_A^\sigma)}^2 \rd r 
	\biggr]^{\nicefrac{1}{2}} 
	+ 
	\biggl[ \int_{-h}^t 
	\norm{\Phi_{a,b}(r+h) - \Phi_{a,b}(r)}{\mathscr L_2(H;\dot H_A^\sigma)}^2 
	\rd r \biggr]^{\nicefrac{1}{2}}, 
	\end{align*}
	where we again used the change of 
	variables $r := t-s$. 
	Both terms on the last line tend to zero, 
	again by dominated convergence and 
	Lemma~\ref{lem:translations}, respectively.
	
	Finally, we note 
	that the result for $p=2$ implies that 
	for $p\in[1,2)$ by~\eqref{eq:Lp-embedding}. 
\end{proof}

Furthermore, we obtain the following result 
regarding the temporal H\"older continuity of 
the stochastic 
convolution with the 
function~$\Phi_{a,b}$ 
in~\eqref{eq:def:Phi-ab}. 

\begin{proposition}\label{prop:regularity-stoch-conv}
	Let 
	Assumptions~\textup{\ref{assumption:A}\ref{assumption:A:semigroup},\ref{assumption:A:bdd-analytic}} 
	hold, 
	let $T \in (0,\infty)$, 
	${a \in \bigl(-\frac{1}{2},\infty\bigr)}$, ${b, \sigma \in [0,\infty)}$ 
	and $\tau\in\bigl( 0,a + \frac{1}{2} \bigr] \cap (0,1)$. 
	If $\sigma \neq 0$, then suppose also that
	Assumption~\textup{\ref{assumption:A}\ref{assumption:A:bddly-inv}} holds. 
	If
	$A^{-a-\frac{1}{2} +b+\tau} Q^{\frac{1}{2}} \in \mathscr L_2(H; \dot H_A^\sigma)$ 
	and $\Phi_{a,b}$ is defined  
	by~\eqref{eq:def:Phi-ab},
	then
	$t \mapsto \int_0^t \Phi_{a,b}(t-s)\rd W(s)$
	belongs to $C^{0,\tau}([0,T];L^p(\Omega;\dot H_A^\sigma))$ 
	for all $p \in [1,\infty)$. 
\end{proposition}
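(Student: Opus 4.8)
Throughout, write $\mathcal{R}:=A^{-a-\frac{1}{2}+b+\tau}Q^{\frac{1}{2}}$, which by hypothesis belongs to $\mathscr L_2(H;\Hdot{\sigma})$. The plan is to prove, for every $p\in[2,\infty)$ and all $t\in[0,T)$, $h\in(0,T-t]$, the increment bound
\begin{align*}
\Bigl\| \int_0^{t+h} &\Phi_{a,b}(t+h-s)\rd W(s) - \int_0^t \Phi_{a,b}(t-s)\rd W(s) \Bigr\|_{L^p(\Omega;\Hdot{\sigma})}
\\
&\lesssim_{p,(a,\tau)} h^{\tau}\, \|\mathcal{R}\|_{\mathscr L_2(H;\Hdot{\sigma})},
\end{align*}
and then to descend to $p\in[1,2)$ via the embedding~\eqref{eq:Lp-embedding}, exactly as in the last step of the proof of Proposition~\ref{prop:continuity-in-time}. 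Since the pairs $(t,t+h)$ with $t\in[0,T)$, $h\in(0,T-t]$ exhaust all ordered pairs of points in $[0,T]$, this bound is precisely the asserted $\tau$-Hölder continuity.

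First I would record two pointwise bounds. Because $S(u)$ commutes with the fractional powers of $A$ (see Appendix~\ref{subsec:appendix-functional-calc:frac-powers}; here Assumption~\ref{assumption:A}\ref{assumption:A:bddly-inv} is invoked when $\sigma\neq0$ to give meaning to $\Hdot{\sigma}$), the law of exponents gives $\Phi_{a,b}(u)=u^{a}A^{a+\frac{1}{2}-\tau}S(u)\,\mathcal{R}$ with $a+\frac{1}{2}-\tau\geq0$ since $\tau\leq a+\frac{1}{2}$; moreover $A^{c}S(u)$ commutes with $A^{\nicefrac{\sigma}{2}}$, so $\|A^{c}S(u)\|_{\mathscr L(\Hdot{\sigma})}\leq\|A^{c}S(u)\|_{\mathscr L(H)}\lesssim_{c}u^{-c}$ for $c\geq0$ by~\eqref{eq:analytic-est-1}. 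Hence
\[
\|\Phi_{a,b}(u)\|_{\mathscr L_2(H;\Hdot{\sigma})} \lesssim_{(a,\tau)} u^{\tau-\frac{1}{2}}\,\|\mathcal{R}\|_{\mathscr L_2(H;\Hdot{\sigma})}, \qquad u\in(0,\infty),
\]
and since $\tau>0$ this already gives $\Phi_{a,b}\in L^2(0,T;\mathscr L_2(H;\Hdot{\sigma}))$, so Proposition~\ref{prop:continuity-in-time} yields $t\mapsto\int_0^t\Phi_{a,b}(t-s)\rd W(s)\in C([0,T];L^p(\Omega;\Hdot{\sigma}))$; it then remains only to upgrade this continuity to $\tau$-Hölder continuity. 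Differentiating~\eqref{eq:def:Phi-ab} on $(0,\infty)$ gives $\Phi_{a,b}'(v)=a\,v^{a-1}A^{b}S(v)Q^{\frac{1}{2}}-v^{a}A^{b+1}S(v)Q^{\frac{1}{2}}$, and treating each term exactly as above (with the positive exponent $a+\frac{3}{2}-\tau\geq1$ in the second) gives $\|\Phi_{a,b}'(v)\|_{\mathscr L_2(H;\Hdot{\sigma})}\lesssim_{(a,\tau)}v^{\tau-\frac{3}{2}}\|\mathcal{R}\|_{\mathscr L_2(H;\Hdot{\sigma})}$ for $v\in(0,\infty)$.

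Next I would split the increment exactly as in the proof of Proposition~\ref{prop:continuity-in-time}: for $p\in[2,\infty)$ the Burkholder--Davis--Gundy inequality~\cite[Theorem~6.1.2]{LiuRockner2015} and the It\^o isometry give
\begin{align*}
\Bigl\| \int_0^{t+h}&\Phi_{a,b}(t+h-s)\rd W(s) - \int_0^t\Phi_{a,b}(t-s)\rd W(s) \Bigr\|_{L^p(\Omega;\Hdot{\sigma})}
\\
&\lesssim_p
\Bigl[ \int_0^h \|\Phi_{a,b}(u)\|_{\mathscr L_2(H;\Hdot{\sigma})}^2 \rd u \Bigr]^{\nicefrac{1}{2}}
+
\Bigl[ \int_0^t \|\Phi_{a,b}(r+h) - \Phi_{a,b}(r)\|_{\mathscr L_2(H;\Hdot{\sigma})}^2 \rd r \Bigr]^{\nicefrac{1}{2}}.
\end{align*}
The first bracket is bounded by $\bigl(\int_0^h u^{2\tau-1}\rd u\bigr)^{\nicefrac{1}{2}}\|\mathcal{R}\|_{\mathscr L_2(H;\Hdot{\sigma})}\lesssim_\tau h^{\tau}\|\mathcal{R}\|_{\mathscr L_2(H;\Hdot{\sigma})}$. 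For the second I would split $\int_0^t=\int_0^{h\wedge t}+\int_{h\wedge t}^{t}$. On $(0,h\wedge t)$ the bound $\|\Phi_{a,b}(r+h)-\Phi_{a,b}(r)\|\leq\|\Phi_{a,b}(r+h)\|+\|\Phi_{a,b}(r)\|\lesssim\|\mathcal{R}\|_{\mathscr L_2(H;\Hdot{\sigma})}\bigl((r+h)^{\tau-\frac{1}{2}}+r^{\tau-\frac{1}{2}}\bigr)$ together with integration gives a contribution $\lesssim_\tau h^{2\tau}\|\mathcal{R}\|_{\mathscr L_2(H;\Hdot{\sigma})}^2$; on $(h\wedge t,t)$ (nonempty only if $h<t$) I would use $\Phi_{a,b}(r+h)-\Phi_{a,b}(r)=\int_r^{r+h}\Phi_{a,b}'(v)\rd v$, the Cauchy--Schwarz inequality in $v$, and the monotonicity of $v\mapsto v^{2\tau-3}$ (note $2\tau-3<-1$ because $\tau<1$) to obtain $\|\Phi_{a,b}(r+h)-\Phi_{a,b}(r)\|^2\leq h\int_r^{r+h}\|\Phi_{a,b}'(v)\|^2\rd v\lesssim h^{2}r^{2\tau-3}\|\mathcal{R}\|_{\mathscr L_2(H;\Hdot{\sigma})}^2$, whence $\int_h^{t}h^{2}r^{2\tau-3}\rd r\leq h^{2}\int_h^{\infty}r^{2\tau-3}\rd r\lesssim_\tau h^{2\tau}$. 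Assembling these estimates yields the increment bound, hence membership in $C^{0,\tau}([0,T];L^p(\Omega;\Hdot{\sigma}))$ for $p\in[2,\infty)$, and then for $p\in[1,2)$ by~\eqref{eq:Lp-embedding}.

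I expect the main obstacle to be exponent bookkeeping rather than anything conceptual: one must verify that the weights $u^{\tau-\frac{1}{2}}$ and $v^{\tau-\frac{3}{2}}$, singular at the origin, are exactly absorbed by the analytic decay~\eqref{eq:analytic-est-1} uniformly---whether $\tau$ is close to $a+\frac{1}{2}$ or small, and whether $h$ is small or comparable to $t$---and one must justify the operator identity $A^{b}S(v)Q^{\frac{1}{2}}=A^{a+\frac{1}{2}-\tau}S(v)\mathcal{R}$, which involves a negative fractional power of $A$ when $b<a+\frac{1}{2}-\tau$, within the functional calculus of Appendix~\ref{app:functional-calc}, along with the differentiability of $v\mapsto\Phi_{a,b}(v)$ in $\mathscr L_2(H;\Hdot{\sigma})$ that underlies the fundamental theorem of calculus step.
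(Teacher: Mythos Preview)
Your proof is correct and follows essentially the same approach as the paper, which packages the two increment pieces into Lemmas~\ref{lemma:I-1} and~\ref{lemma:I-2} and likewise uses the Burkholder--Davis--Gundy inequality, the analytic estimate~\eqref{eq:analytic-est-1}, and the fundamental-theorem-of-calculus representation of $\Phi_{a,b}(u+h)-\Phi_{a,b}(u)$ via $\Phi'_{a,b}$. The only execution difference is that the paper handles the resulting double integral with Minkowski's integral inequality (swapping the $r$- and $u$-integrals and then integrating $r^{\tau-1}$ over $(0,h)$), whereas you split the $u$-integral at $h$ and apply Cauchy--Schwarz on the far piece together with a crude triangle-inequality bound on the near piece; both routes yield the same $h^\tau$ bound with the same use of $\tau<1$ and $\tau\le a+\tfrac12$.
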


\begin{proof}
	For $t \in [0,T)$ and $h \in (0,T-t]$, we 
	obtain
	\begin{align*}
	\biggl\| 
	&\int_0^{t+h} \Phi_{a,b}(t+h -s) \rd W(s) - \int_0^t \Phi_{a,b}(t-s)\rd W(s) 
	\biggr\|_{L^p(\Omega; \Hdot \sigma)} 
	\\
	&\leq  
	\biggl\| \int_0^t \bigl[ \Phi_{a,b}(t+h-s) - \Phi_{a,b}(t-s) \bigr] \rd W(s) 
	\biggr\|_{L^p(\Omega;\Hdot \sigma)} 
	\\
	&\quad + 
	\biggl\| \int_t^{t+h} \Phi_{a,b}(t+h-s) \rd W(s) \biggr\|_{L^p(\Omega; \Hdot \sigma)} 
	\lesssim_{(p,a,\tau)} 
	h^\tau 
	\bigl\| A^{-a-\frac{1}{2} +b+\tau} Q^{\frac{1}{2}} \bigr\|_{\mathscr L_2(H;\Hdot \sigma)} 
	\end{align*}
	by Lemmas~\ref{lemma:I-1}~and~\ref{lemma:I-2}, 
	see Appendix~\ref{app:auxiliary}. 
	The analogous result for the case that 
	$t\in(0,T]$ and $h\in[-t,0)$  
	follows upon splitting 
	$\int_0^t = \int_0^{t+h} + \int_{t+h}^t$ 
	and applying the lemmas with 
	$\bar t := t+h \in [0,T)$ and 
	$\bar h := -h \in (0,T-\bar t\,]$.
\end{proof}

We now investigate temporal mean-square differentiability. 
To this end, we need the following estimate which 
is implied by \eqref{eq:analytic-est-1}: 
For all $a\in\R$, $b\in[0,\infty)$, we have  
\begin{equation}\label{eq:analytic-est-Phi} 
\forall c \in [0,\infty): 
\quad 
\norm{\Phi_{a,b}(t)x}{H}
\lesssim_{c} 
t^{a-c} 
\bigl\| A^{b-c} Q^{\frac{1}{2}} x \bigr\|_{H}
\quad 
\forall x\in \mathsf D\bigl( A^{b-c} Q^{\frac{1}{2}} \bigr). 
\end{equation}
The next lemma records some information 
about the derivatives of~$\Phi_{a,b}$ in~\eqref{eq:def:Phi-ab}.

\begin{lemma}\label{lem:diffbty-of-Phi}
	Let Assumptions~\textup{\ref{assumption:A}\ref{assumption:A:semigroup},\ref{assumption:A:bdd-analytic}} 
	be satisfied, 
	and let $a\in\R$, $b,\sigma\in [0,\infty)$.
	If $\sigma\in(0,\infty)$, 
	suppose furthermore that 
	Assumption~\textup{\ref{assumption:A}\ref{assumption:A:bddly-inv}} holds.
	Then, 
	the function $\Phi_{a,b}$ 
	defined by~\eqref{eq:def:Phi-ab}
	belongs 
	to $C^\infty((0,\infty); \mathscr L(H; \dot H_A^\sigma))$  
	with $k$th derivative 
	\begin{equation}\label{eq:deriv-operator-suggestive} 
	\frac{\mathrm d^k}{\mathrm d t^k} \, 
	\Phi_{a,b} (t)
	= 
	\sum_{j=0}^k C_{a,j,k} t^{a -(k-j)} A^{b+j} S(t)Q^\frac{1}{2} 
	= 
	\sum_{j=0}^k C_{a,j,k} \Phi_{a-(k-j),b+j}(t),  
	\end{equation} 
	where 
	$C_{a,j,k} := 
	(-1)^j \binom{k}{j} \prod_{i=1}^{k-j} (a-(k-j)+i)$ 
	for $a\in\bbR$, $j,k\in\bbN_0$, $j\leq k$. 

	Moreover, if $r\in[0, 2b+\sigma ]$ is such that 
	$Q^{\frac{1}{2}} \in \mathscr L(H;\dot H_A^r)$ and 
	$n\in\N_0$ satisfies $n < a - b - \tfrac{\sigma-r}{2}$, 
	then $\Phi_{a,b}$ has 
	a continuous extension in 
	$C^n( [0,\infty); \mathscr L(H;\Hdot\sigma))$
	with all $n$ derivatives vanishing at zero.
\end{lemma}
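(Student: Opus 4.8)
\emph{Plan.} The statement splits into (a) smoothness on $(0,\infty)$ together with the derivative formula~\eqref{eq:deriv-operator-suggestive}, and (b) the continuous extension up to $t=0$ under the index condition $n<a-b-\tfrac{\sigma-r}{2}$. For (a) I would first observe that $\Phi_{a,b}(t)=\bigl(t^{a}A^{b}S(t)\bigr)Q^{\frac12}$ and that right composition with the fixed operator $Q^{\frac12}\in\mathscr L(H)$ is a bounded linear map on $\mathscr L(H;\Hdot\sigma)$ which commutes with $\tfrac{\mathrm d}{\mathrm d t}$, so it suffices to treat $t\mapsto t^{a}A^{b}S(t)$. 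Here the scalar factor $t\mapsto t^{a}$ is $C^\infty$ on $(0,\infty)$ with $\tfrac{\mathrm d^{m}}{\mathrm d t^{m}}t^{a}=\bigl(\prod_{i=1}^{m}(a-m+i)\bigr)t^{a-m}$, and---since $(S(t))_{t\ge0}$ is bounded analytic, and $\Hdot\sigma$ is a Hilbert space by Assumption~\ref{assumption:A}\ref{assumption:A:bddly-inv} when $\sigma>0$---the operator-valued factor $t\mapsto A^{b}S(t)$ is norm-$C^\infty$ on $(0,\infty)$ into $\mathscr L(H;\Hdot\sigma)$ with $j$th derivative $(-1)^{j}A^{b+j}S(t)$. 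This last point reduces to the standard fact for analytic semigroups that $t\mapsto A^{c}S(t)$ is norm-smooth on $(0,\infty)$ with $\tfrac{\mathrm d}{\mathrm d t}A^{c}S(t)=-A^{c+1}S(t)$ (cf.~\cite[Chapter~2]{Pazy1983}, \cite[Proposition~3.4.3]{Haase2006}), transported to the target $\mathscr L(H;\Hdot\sigma)$ via the isometry $\|T\|_{\mathscr L(H;\Hdot\sigma)}=\|A^{\nicefrac\sigma2}T\|_{\mathscr L(H)}$ and the identity $A^{\nicefrac\sigma2}A^{b}S(t)=A^{b+\nicefrac\sigma2}S(t)$, which is bounded for $t>0$ by~\eqref{eq:analytic-est-1}. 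The Leibniz rule applied to this scalar-times-operator product then gives~\eqref{eq:deriv-operator-suggestive}, the coefficients matching $C_{a,j,k}=(-1)^{j}\binom{k}{j}\prod_{i=1}^{k-j}(a-(k-j)+i)$ by the formula for $\tfrac{\mathrm d^{k-j}}{\mathrm d t^{k-j}}t^{a}$.

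For (b), I would use~\eqref{eq:deriv-operator-suggestive} to reduce matters to showing that, for each $0\le k\le n$, the function $\tfrac{\mathrm d^{k}}{\mathrm d t^{k}}\Phi_{a,b}$ tends to $0$ in $\mathscr L(H;\Hdot\sigma)$ as $t\downarrow0$. Each summand of the $k$th derivative is $C_{a,j,k}\Phi_{a-(k-j),b+j}$; writing $a':=a-(k-j)$, $b':=b+j$ and noting $b'+\tfrac\sigma2-\tfrac r2\ge b+\tfrac\sigma2-\tfrac r2\ge0$ (since $r\le2b+\sigma$), the bound~\eqref{eq:analytic-est-Phi} applied with $c:=b'+\tfrac\sigma2-\tfrac r2$---together with $\|\Phi_{a',b'}(t)x\|_{\Hdot\sigma}=\|\Phi_{a',b'+\nicefrac\sigma2}(t)x\|_{H}$ and $Q^{\frac12}\in\mathscr L(H;\Hdot r)$---yields $\|\Phi_{a',b'}(t)\|_{\mathscr L(H;\Hdot\sigma)}\lesssim_{c}t^{a'-b'-\frac{\sigma-r}{2}}\|A^{\nicefrac r2}Q^{\frac12}\|_{\mathscr L(H)}$ for $t\in(0,\infty)$, with the constant independent of $t$. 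Since $a'-b'=a-b-k$, the exponent equals $a-b-k-\tfrac{\sigma-r}{2}$, which is strictly positive for every $k\le n$ precisely by the hypothesis $n<a-b-\tfrac{\sigma-r}{2}$; hence $\tfrac{\mathrm d^{k}}{\mathrm d t^{k}}\Phi_{a,b}(t)\to0$ as $t\downarrow0$. Finally, a standard one-sided calculus argument---induct on $k$: for $0<t_{0}<t$ write $\tfrac{\mathrm d^{k}}{\mathrm d t^{k}}\Phi_{a,b}(t)=\tfrac{\mathrm d^{k}}{\mathrm d t^{k}}\Phi_{a,b}(t_{0})+\int_{t_{0}}^{t}\tfrac{\mathrm d^{k+1}}{\mathrm d s^{k+1}}\Phi_{a,b}(s)\,\mathrm d s$, let $t_{0}\downarrow0$, and use that $\tfrac1t\int_{0}^{t}g(s)\,\mathrm d s\to L$ whenever $g(s)\to L$ as $s\downarrow0$---shows that $\Phi_{a,b}$ extends to an element of $C^{n}([0,\infty);\mathscr L(H;\Hdot\sigma))$ with all derivatives up to order $n$ vanishing at $0$.

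The only steps requiring genuine care are the norm-differentiability of $t\mapsto A^{b}S(t)$ as a map into $\mathscr L(H;\Hdot\sigma)$---which is really just the classical analytic-semigroup smoothing result composed with the defining isometry of $\Hdot\sigma$, hence painless---and the bookkeeping that the index hypothesis $n<a-b-\tfrac{\sigma-r}{2}$ is exactly what forces every exponent $a-b-k-\tfrac{\sigma-r}{2}$ to be strictly positive for $0\le k\le n$, so that all relevant one-sided limits at $0$ exist and vanish. Everything else is the routine Leibniz computation and a standard real-analysis extension lemma.
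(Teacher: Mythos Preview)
Your proposal is correct and follows essentially the same route as the paper: both factor the problem into differentiating $t\mapsto A^{b}S(t)$ in the $\mathscr L(H;\Hdot\sigma)$ norm (the paper does this via the splitting $A^{b+\sigma/2}S(t)=S(t-\varepsilon)A^{b+\sigma/2}S(\varepsilon)$, which is just an explicit way of proving the analytic-semigroup fact you cite), apply the Leibniz rule to obtain~\eqref{eq:deriv-operator-suggestive}, and then use the smoothing estimate~\eqref{eq:analytic-est-1}/\eqref{eq:analytic-est-Phi} with exponent $c=b+j+\tfrac{\sigma-r}{2}$ to show each derivative up to order $n$ vanishes at~$0$. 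Your inductive fundamental-theorem-of-calculus argument for the extension is a slightly more explicit version of the paper's one-line ``inductively it follows'' remark.
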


\begin{proof}
	Since $(S(t))_{t\ge0}$ is assumed
	to be analytic, 
	$S(\,\cdot\,)$ is infinitely differentiable
	from $(0,\infty)$ 
	to $\mathscr L(H)$,
	with $j$th derivative $(-A)^j S(\,\cdot\,)$ and, 
	for $t \in (0,\infty)$, $\varepsilon := \frac{t}{2}$, 
	\begin{align*}
	\bigl[ A^{ b + \frac{\sigma}{2} } S(\,\cdot\,) \bigr]^{(j)}(t) 
	&= 
	\bigl[ S( \,\cdot\, - \varepsilon) A^{ b+ \frac{\sigma}{2} } 
	S( \varepsilon ) \bigr]^{(j)}(t) 
	\\ 
	&= 
	(-A)^j S(t-\varepsilon) A^{ b + \frac{\sigma}{2} } S(\varepsilon)  
	= 
	(-1)^j A^{ j + b + \frac{\sigma}{2} } S(t).  
	\end{align*}
	Here, the limits for the derivatives 
	are taken in the $\mathscr L(H)$ norm. 
	This is equivalent to  
	$[ A^b S(\,\cdot\,) ]^{(j)}(t) 
	= (-1)^j A^{j+b} S(t)$
	with respect to the 
	$\mathscr L(H;\dot H_A^\sigma)$ norm. 
	The expression 
	for the $k$th derivative of $\Phi_{a,b}$ 
	thus follows from the Leibniz rule. 
	
	Now let 
	$r\in[0, 2b+\sigma ]$, 
	$n\in\N_0$ be such that $n < a-b-\frac{\sigma-r}{2}$ and 
	$Q^{\frac{1}{2}} \in \mathscr L(H;\dot H_A^r)$. 
	To prove the second claim, 
	we derive that for all $k \in \{0,1,\dots,n\}$ 
	and $t \in (0,\infty)$
	\begin{align*} 
	\biggl\| 
	\frac{\mathrm d^k}{\mathrm d t^k} \, 
	\Phi_{a,b} (t) 
	\biggr\|_{\mathscr L(H;\Hdot\sigma)} 
	&= 
	\biggl\| 
	\sum_{j=0}^k C_{a,j,k} t^{a -(k-j)} 
	A^{b+j+\frac{\sigma-r}{2}} S(t) A^{\frac{r}{2}} Q^\frac{1}{2} 
	\biggr\|_{\mathscr L(H)} 
	\\
	&\lesssim_{(a,b,k,r,\sigma)} 
	t^{a-k-b-\frac{\sigma-r}{2}}  
	\bigl\| Q^{\frac{1}{2}} \bigr\|_{\mathscr L(H; \dot H_A^r)} 
	\end{align*} 
	by 
	applying \eqref{eq:analytic-est-Phi} to each summand 
	with $c:=b+j+\frac{\sigma-r}{2}\geq 0$. 
	Furthermore, since 
	$a-k-b-\frac{\sigma-r}{2}\geq a-n-b-\frac{\sigma-r}{2}>0$, 
	the above quantity tends to zero as $t \downarrow 0$. 
	Hence, extending 
	$t\mapsto \frac{\mathrm d^k}{\mathrm d t^k} \, \Phi_{a,b}(t)$ 
	by zero at $t=0$  
	gives a function in $C([0,\infty); \mathscr L(H; \Hdot\sigma))$ 
	for all $k \in \{0,1,\dots,n\}$. 
	Inductively 
	it follows then 
	that the $k$th derivative of the zero extension is 
	the zero extension of the original $k$th derivative. 
\end{proof}

\begin{proposition}\label{prop:stoch-conv-Psi-diffb-under-int}
	Let $\sigma\in[0,\infty)$, and 
	whenever ${\sigma\in(0,\infty)}$ 
	require additionally 
	Assumptions~\textup{\ref{assumption:A}\ref{assumption:A:semigroup},\ref{assumption:A:bdd-analytic},\ref{assumption:A:bddly-inv}}. 
	Suppose that 
	$\Psi \in H^1_{0,\{0\}}(0, T; \mathscr L_2(H; \Hdot{\sigma}))$ 
	and let $\Psi'$ denote its weak derivative.
	Then, for every $p \in [1,\infty)$, the stochastic convolution  
	$t \mapsto \int_0^t \Psi(t-s)\rd W(s)$ is differentiable 
	from $[0,T]$ to $L^p(\Omega; \Hdot \sigma)$, with derivative 
	\begin{align}
	\frac{\rd}{\rd t}\int_0^{t} \Psi(t-s)\rd W(s) 
	= 
	\int_0^t \Psi'(t-s) \rd W(s) 
	\quad 
	\forall \,
	t \in [0,T].
	\label{eq:stoch-conv-Psi-diffb-under-int-sign}
	\end{align}
\end{proposition}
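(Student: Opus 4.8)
The plan is to reduce the asserted differentiation-under-the-integral formula to the vector-valued fundamental theorem of calculus, with the stochastic Fubini theorem performing the key rearrangement. Write $E := \Hdot{\sigma}$, which is a separable Hilbert space under the stated assumptions. Since $\Psi \in H^1_{0,\{0\}}(0,T;\mathscr L_2(H;E))$ admits a continuous representative on $[0,T]$ that vanishes at $0$, the fundamental theorem of calculus gives $\Psi(u) = \int_0^u \Psi'(v)\rd v$ for all $u\in[0,T]$. Substituting this into the stochastic convolution and changing variables then yields, for each fixed $t\in[0,T]$,
\[
	\int_0^t \Psi(t-s)\rd W(s)
	=
	\int_0^t \biggl(\int_s^t \Psi'(r-s)\rd r\biggr)\rd W(s),
	\quad \bbP\text{-a.s.}
\]

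Next I would invoke the stochastic Fubini theorem \cite[Theorem~8.14]{Peszat_zabczyk_2007} to interchange the inner Lebesgue integral with the outer It\^o integral. Its hypothesis reduces, after a change of variables in the inner integral, to the finiteness of $\int_0^t \bigl(\int_0^r \norm{\Psi'(u)}{\mathscr L_2(H;E)}^2 \rd u\bigr)^{\nicefrac{1}{2}}\rd r$, which is bounded by $T\,\norm{\Psi'}{L^2(0,T;\mathscr L_2(H;E))} < \infty$; this is the only verification genuinely required. Consequently, $\int_0^t\Psi(t-s)\rd W(s) = \int_0^t G(r)\rd r$ holds $\bbP$-a.s., hence as elements of $L^p(\Omega;E)$, where $G(r) := \int_0^r \Psi'(r-s)\rd W(s)$.

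It then remains to show that $G \in C([0,T];L^p(\Omega;E))$ for every $p\in[1,\infty)$ and to conclude. Since $\Psi'\in L^2(0,T;\mathscr L_2(H;E))$, the continuity of $G$ follows by the same argument as in the proof of Proposition~\ref{prop:continuity-in-time}, whose only ingredients are the Burkholder--Davis--Gundy inequality, dominated convergence, the continuity of translations in $L^2$ (after extending $\Psi'$ by zero outside $(0,T)$), and the embedding~\eqref{eq:Lp-embedding}. As $t\mapsto \int_0^t\Psi(t-s)\rd W(s)$ therefore coincides with the primitive $t\mapsto\int_0^t G(r)\rd r$ of a continuous $L^p(\Omega;E)$-valued map, the vector-valued fundamental theorem of calculus shows that it is continuously differentiable on $[0,T]$ (with one-sided derivatives at the endpoints) and that its derivative equals $G(t) = \int_0^t \Psi'(t-s)\rd W(s)$, which is precisely~\eqref{eq:stoch-conv-Psi-diffb-under-int-sign}. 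This scheme presents no serious obstacle; the points requiring care are the stochastic Fubini hypothesis above and the observation that the vanishing of $\Psi$ at $0$ (encoded in $H^1_{0,\{0\}}$ rather than merely $H^1$) is genuinely needed here, since a nonzero $\Psi(0)$ would contribute a summand $\Psi(0)W(t)$ to the convolution, which is nowhere differentiable in $t$.
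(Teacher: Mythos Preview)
Your proof is correct and takes a genuinely different route from the paper. The paper computes the difference quotient directly: it writes
\[
\frac{1}{h}\Bigl[\int_0^{t+h}\Psi(t+h-s)\rd W(s)-\int_0^t\Psi(t-s)\rd W(s)\Bigr]-\int_0^t\Psi'(t-s)\rd W(s)
\]
as a sum of two or three pieces (depending on the sign of $h$), bounds each via the Burkholder--Davis--Gundy inequality, and drives them to zero using dominated convergence together with an auxiliary result (Proposition~\ref{prop:Dh-converges-in-Lp}) on $L^2$-convergence of difference quotients $D_h\Psi\to\Psi'$.

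Your approach is more structural: you use the identity $\Psi(u)=\int_0^u\Psi'(v)\rd v$ and the stochastic Fubini theorem to rewrite the convolution as the primitive $\int_0^t G(r)\rd r$ of the continuous $L^p(\Omega;E)$-valued map $G(r)=\int_0^r\Psi'(r-s)\rd W(s)$, after which the vector-valued fundamental theorem of calculus finishes immediately. This is shorter and more conceptual, and it yields the slightly stronger conclusion of \emph{continuous} differentiability for free. The paper's approach, by contrast, avoids invoking stochastic Fubini and the identification of the pathwise integral $\int_0^t G(r)\rd r$ with the Bochner integral in $L^p(\Omega;E)$, proceeding instead by entirely elementary estimates; it also isolates the difference-quotient lemma (Proposition~\ref{prop:Dh-converges-in-Lp}) as a reusable tool. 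Your closing remark explaining why the zero trace $\Psi(0)=0$ is essential---since otherwise a nondifferentiable term $\Psi(0)W(t)$ appears---is a nice observation that the paper does not make explicit.
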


\begin{proof}
	For $t \in [0,T)$ and $h \in (0,T-t]$, 
	we can write 
	\begin{align*}
	&\frac{1}{h} \biggl[ 
	\int_0^{t+h} \Psi(t+h-s)\rd W(s) - \int_0^{t} \Psi(t-s)\rd W(s) \biggr] 
	- \int_0^t \Psi'(t-s) \rd W(s) 
	\\
	&= 
	\int_0^t \biggl[ 
	\frac{\Psi(t+h-s) - \Psi(t-s)}{h} - \Psi'(t-s) \biggr] \rd W(s) 
	+ 
	\frac{1}{h}\int_t^{t+h} \Psi(t+h-s)\rd W(s) 
	\\
	&=: 
	I^{h^+}_{1} + I^{h^+}_{2}\!.
	\end{align*}
	For $t\in(0,T]$ and $h \in [-t,0)$, we instead have
	\begin{align*}
	\frac{1}{h}\biggl[ 
	&\int_0^{t+h} \Psi(t+h-s)\rd W(s) - \int_0^{t} \Psi(t-s)\rd W(s)\biggr] 
	- \int_0^t \Psi'(t-s) \rd W(s) 
	\\
	&= 
	\int_0^{t+h} 
	\biggl[ \frac{\Psi(t+h-s)-\Psi(t-s)}{h} - \Psi'(t-s)\biggr] \rd W(s) 
	\\ 
	&\quad 
	- \frac{1}{h}\int_{t+h}^t \Psi(t-s) \rd W(s) - \int_{t+h}^t \Psi'(t-s)\rd W(s) 
	=: I_1^{h^-} + I_2^{h^-} + I_3^{h^-}\!.
	\end{align*}
	We first deal with the terms $I_2^{h^\pm}$\!.
	Note that
	$\Psi \in H^1_{0,\{0\}}(0, T; \mathscr L_2(H; \Hdot{\sigma}))$
	implies $\Psi(u) = \int_0^u \Psi'(r)\rd r$ for 
	all $u \in (0, |h|)$, see \cite[\S5.9.2, Theorem~2]{Evans2010}.
	In conjunction with the Burkholder--Davis--Gundy inequality 
	(combined with the embedding~\eqref{eq:Lp-embedding}
	if $p\in[1,2)$) and the Cauchy--Schwarz inequality,
	this leads to 
	\begin{align*}
	\bigl\| I_2^{h^\pm} 
	&\bigr\|_{L^p(\Omega;\Hdot\sigma)} 
	\lesssim_{p} 
	\frac{1}{|h|} 
	\biggl[ \int_0^{|h|} 
	\norm{\Psi(u)}{\mathscr L_2(H;\Hdot\sigma)}^2 \rd u \biggr]^{\nicefrac{1}{2}}  
	\\
	&\leq 
	\frac{1}{|h|} 
	\biggl[ \int_0^{|h|} 
	\biggl(\int_0^u \norm{\Psi'(r)}{\mathscr L_2(H;\Hdot\sigma)} \rd r\biggr)^2 
	\rd u\biggr]^{\nicefrac{1}{2}} 
	\leq
	\norm{\Psi'}{L^2(0,|h|;\mathscr L_2(H;\Hdot\sigma))}.
	\end{align*}
	Moreover, we find that 
	\begin{align*} 
	\bigl\| I_3^{h^-} \bigr\|_{L^p(\Omega;\Hdot \sigma)} 
	&\lesssim_p 
	\biggl[\int_{t+h}^t 
	\norm{\Psi'(t-s)}{\mathscr L_2(H; \Hdot{\sigma})}^2 \rd s\biggr]^{\nicefrac{1}{2}} 
	\\
	&= 
	\biggl[\int_{0}^{|h|} 
	\norm{\Psi'(u)}{\mathscr L_2(H; \Hdot{\sigma})}^2 \rd u 
	\biggr]^{\nicefrac{1}{2}} 
	= 
	\norm{\Psi'}{L^2(0,|h|;\mathscr L_2(H; \Hdot{\sigma}))}.
	\end{align*}
	Since $\Psi' \in L^2(0,T; \mathscr L_2(H; \Hdot{\sigma}))$, 
	we have that $\norm{\Psi'}{L^2(0,|h|;\mathscr L_2(H; \Hdot{\sigma}))} \to 0$ 
	as $h \to 0$ by dominated convergence. 
	Thus, it remains to deal with the $I_1^{h^\pm}$ terms. 
	For the case of positive $h$, we find 
	using the definition of the difference quotient $D_h$ 
	(see Equation~\eqref{eq:def:Dh} in Subsection~\ref{app:subsec:orbital} 
	of Appendix~\ref{app:auxiliary}) 
	that 
	\begin{align*} 
	\bigl\| I_1^{h^+} \bigr\|_{L^p(\Omega;\Hdot \sigma)} 
	&\lesssim_p 
	\biggl[ \int_0^t 
	\biggl\| \frac{\Psi(t+h-s) - \Psi(t-s)}{h} - \Psi'(t-s) 
	\biggr\|_{\mathscr L_2(H; \Hdot{\sigma})}^2 
	\rd s\biggr]^{\nicefrac{1}{2}} 
	\\
	&= \biggl[\int_0^t 
	\biggl\| \frac{\Psi(u+h) - \Psi(u)}{h} - \Psi'(u) 
	\biggr\|_{\mathscr L_2(H; \Hdot{\sigma})}^2 \rd u \biggr]^{\nicefrac{1}{2}} 
	\\
	&= \norm{D_h \Psi - \Psi'}{L^2(0,t; \mathscr L_2(H; \Hdot{\sigma}))}.
	\end{align*} 
	For the case of negative $h$, we arrive at
	\begin{align*}
	\bigl\| I_1^{h^-}\bigr\|_{L^p(\Omega;\Hdot \sigma)} 
	&\lesssim_p 
	\biggl[\int_0^{t+h} 
	\biggl\| \frac{\Psi(t+h-s) - \Psi(t-s)}{h} - \Psi'(t-s) 
	\biggr\|_{\mathscr L_2(H; \Hdot{\sigma})}^2\rd s \biggr]^{\nicefrac{1}{2}} 
	\\
	&= \biggl[\int_{-h}^t 
	\biggl\| \frac{\Psi(u+h) - \Psi(u)}{h} - \Psi'(u) 
	\biggr\|_{\mathscr L_2(H; \Hdot{\sigma})}^2\rd u \biggr]^{\nicefrac{1}{2}} 
	\\
	&= \norm{D_h \Psi - \Psi'}{L^2(-h,t; \mathscr L_2(H; \Hdot{\sigma}))}.
	\end{align*}
	The convergence 
	$\lim_{h\to 0}\norm{I_1^{h^\pm}}{L^p(\Omega;\Hdot \sigma)}=0$ 
	follows then from Proposition~\ref{prop:Dh-converges-in-Lp}. 
\end{proof}

We are now ready to prove Theorem \ref{thm:temporal-regularity}.

\begin{proof}[Proof of Theorem \ref{thm:temporal-regularity}]
	We first claim that 
	the mild solution, 
	interpreted as a mapping 
	$Z_\gamma\from[0,T]\to L^p(\Omega;\Hdot{\sigma})$,
	is $n$ times 
	differentiable 
	and that, for every $k \in \{0,1,\dots,n\}$ 
	and all $t\in[0,T]$, 
	its $k$th derivative satisfies 
	\begin{equation}\label{eq:stoch-conv-diffb-under-int-sign}
	Z_\gamma^{(k)}(t) 
	= 
	\frac{1}{\Gamma(\gamma)} 
	\int_0^t \Phi_{\gamma-1,0}^{(k)}(t-s) \rd \widehat{W}(s), 
	\quad 
	\text{$\bbP$-a.s.}, 
	\end{equation} 
	where $\Phi_{\gamma-1,0}^{(k)}$ 
	is the $k$th derivative of $\Phi_{\gamma-1,0}$ 
	given by~\eqref{eq:deriv-operator-suggestive}, 
	and $\widehat{W}$ 
	is the cylindrical Wiener process
	$\widehat{W}(t) := Q^{-\frac{1}{2}} W^Q(t)$, $t\geq 0$. 
	We prove this by induction with respect to $k$.
	For $k=0$, the 
	identity
	\eqref{eq:stoch-conv-diffb-under-int-sign} follows from 
	Definition~\ref{def:frac-mild-sol-zeroIC} 
	and 
	\eqref{eq:def:Phi-ab}. 
	Now let 
	${k \in \{0,1,\dots,n-1\}}$ 
	and 
	suppose that  
	$Z_\gamma$ is $k$ times differentiable and  \eqref{eq:stoch-conv-diffb-under-int-sign} 
	holds. 
	Then, the induction hypothesis 
	and Lemma~\ref{lem:diffbty-of-Phi} show that, 
	for all $t\in[0,T]$,
	\begin{align*}
	\frac{\mathrm d^{k+1}}{\rd t^{k+1}} \, 
	Z_\gamma(t) 
	&=
	\frac{\mathrm d}{\rd t} \, 
	Z_\gamma^{(k)}(t) 
	= 
	\frac{\rd}{\rd t} 
	\biggl[  
	\frac{1}{\Gamma(\gamma)}
	\int_0^t \Phi^{(k)}_{\gamma-1,0}(t-s) \rd \widehat{W}(s) 
	\biggr] 
	\\
	&= 
	\frac{1}{\Gamma(\gamma)} 
	\frac{\rd}{\rd t} 
	\int_0^t \sum_{j=0}^k 
	C_{\gamma-1,j,k} \Phi_{\gamma-1-(k-j),j}(t-s) \rd \widehat{W}(s),
	\quad 
	\text{$\bbP$-a.s.} 
	\end{align*}
	Fixing an arbitrary $j \in \{0,1,\dots,k\}$, 
	it suffices to verify that $\Psi := \Phi_{\gamma-1-(k-j),j}$  
	satisfies the conditions of 
	Proposition~\ref{prop:stoch-conv-Psi-diffb-under-int}, 
	so that \eqref{eq:stoch-conv-Psi-diffb-under-int-sign} holds 
	for the cylindrical Wiener process $\widehat{W}$. 
	Indeed, having proved this for an arbitrary $j$, 
	by linearity 
	\begin{align*}
	\frac{\mathrm d^{k+1}}{\rd t^{k+1}} \, 
	Z_\gamma(t) 
	&= 
	\frac{1}{\Gamma(\gamma)}
	\int_0^t \sum_{j=0}^k C_{\gamma-1,j,k} \Phi_{\gamma-1-(k-j),j}'(t-s) \rd \widehat{W}(s) 
	\\
	&= 
	\frac{1}{\Gamma(\gamma)}
	\int_0^t\Phi_{\gamma-1,0}^{(k+1)}(t-s) \rd \widehat{W}(s), 
	\quad 
	\text{$\bbP$-a.s.}, 
	\end{align*} 
	follows, 
	where the latter identity is an equality of the operator-valued integrands. 
	
	Using
	\eqref{eq:analytic-est-1} with $c:=b$, 
	the identity 
	$A^{\frac{\sigma}{2}} \Phi_{a,b}(t) 
	= 
	2^a (t/2)^a A^b S(t/2) A^{\frac{\sigma}{2}} S(t/2) Q^{\frac{1}{2}}$
	and a change of variables $u:=t/2$, we observe that
	\begin{equation}\label{eq:moving-b-to-a}
	\begin{split} 
	\| \Phi_{a,b} 
	&\|_{L^2(0,T;\mathscr L_2(H;\dot H_A^\sigma))} 
	\lesssim_{(a,b)}  
	\biggl[ 
	\int_0^T
	(t/2)^{2(a-b)} 
	\bigl\| A^{\frac{\sigma}{2}} S(t/2) Q^{\frac{1}{2}} \bigr\|_{\mathscr L_2(H)}^2 
	\frac{\rd t}{2} 
	\biggr]^{\nicefrac{1}{2}} 
	\\
	&= 
	\biggl[ 
	\int_0^T
	\| \Phi_{a-b,0}(t/2) \|_{\mathscr L_2(H;\dot H_A^\sigma)}^2 
	\frac{\rd t}{2}
	\biggr]^{\nicefrac{1}{2}} 
	\le 
	\norm{\Phi_{a-b,0}}{L^2(0,T;\mathscr L_2(H;\dot H_A^\sigma))} 
	\end{split} 
	\end{equation}
	holds 
	for all 
	$a\in\R$ and 
	$b\in[0,\infty)$. 
	For $\Psi=\Phi_{\gamma-1-(k-j),j}$ 
	we use \eqref{eq:moving-b-to-a} to obtain
	\[
	\norm{\Psi}{L^2(0,T;\mathscr L_2(H;\dot H_A^\sigma))} 
	\lesssim _{(\gamma,k,j)} 
	\norm{\Phi_{\gamma-1-k,0}}{L^2(0,T;\mathscr L_2(H;\dot H_A^\sigma))}.
	\] 
	The norm on the right-hand side is finite 
	by~\eqref{eq:stoch-conv-diffb-assump}, since 
	$k \leq n - 1 < n$.
	Next, noting that 
	$\gamma-1-k-\frac{\sigma-r}{2} 
	\geq 
	\gamma - n - \frac{\sigma-r}{2} > 0$, 
	the second assertion of Lemma~\ref{lem:diffbty-of-Phi} 
	implies that $t \mapsto \Psi(t)$ 
	has a continuous extension in 
	$C_{0,\{0\}}([0,T]; \mathscr L(H;\dot H_A^\sigma))$.
	Furthermore, also by Lemma~\ref{lem:diffbty-of-Phi},
	$\Psi$ is differentiable from 
	$(0,T)$ to $\mathscr L(H;\Hdot\sigma)$, 
	with derivative 
	\[
	\Psi' 
	= 
	(\gamma-1-(k-j))\Phi_{\gamma-1-(k-j)-1, j} 
	- 
	\Phi_{\gamma-1-(k-j), j+1}.
	\] 
	Applying the 
	triangle inequality and \eqref{eq:moving-b-to-a} 
	then shows that 
	\[ 
	\norm{\Psi'}{L^2(0,T;\mathscr L_2(H;\dot H_A^\sigma))} 
	\lesssim _{(\gamma,k,j)}
	\norm{\Phi_{\gamma-1-(k+1),0}}{L^2(0,T;\mathscr L_2(H;\dot H_A^\sigma))}, 
	\] 
	where the norm on the right-hand side is finite 
	by~\eqref{eq:stoch-conv-diffb-assump},  
	as ${k + 1 \leq n }$. 
	Since 
	$\mathscr L_2(H;\Hdot\sigma) 
	\hookrightarrow
	\mathscr L(H;\Hdot\sigma)$, 
	Lemma~\ref{lem:embedding-weak-derivs} implies that 
	$\Psi \in H^1_{0,\{0\}}(0,T; \mathscr L_2(H;\Hdot\sigma))$.
	Thus, we may indeed use 
	Proposition~\ref{prop:stoch-conv-Psi-diffb-under-int}, 
	and the differentiability follows.  
	
	It remains to show that the $n$th derivative 
	$Z_\gamma^{(n)}$  
	is (H\"older) continuous, i.e., 
	$Z_\gamma^{(n)} \in C^{0,\tau}([0,T]; L^p(\Omega; \dot H_A^\sigma))$. 
	To this end, we use \eqref{eq:stoch-conv-diffb-under-int-sign} 
	and \eqref{eq:deriv-operator-suggestive},  and write
	\[ 
	\forall t\in [0,T]: 
	\quad 
	Z_\gamma^{(n)}(t) 
	= 
	\frac{1}{\Gamma(\gamma)}
	\sum_{j=0}^n 
	C_{\gamma-1,j,n} \int_0^t \Phi_{\gamma-1-(n-j),j}(t-s) \rd \widehat{W}(s), 
	\quad 
	\bbP\text{-a.s.} 
	\]  
	The case $\tau=0$ 
	(i.e., continuity) follows after applying, 
	for all $j \in \{0,1,\ldots,n\}$, 
	Proposition~\ref{prop:continuity-in-time} 
	with $a=\gamma-1-(n-j)$ and $b = j$.
	Note that  
	$\Phi_{\gamma-1-(n-j),j}$ indeed 
	is an element of 
	$L^2(0,T;\mathscr L_2(H;\dot H_A^\sigma))$ 
	for all $j \in \{0,\ldots,n\}$ by \eqref{eq:stoch-conv-diffb-assump} 
	and \eqref{eq:moving-b-to-a}. 
	For $\tau \in \bigl(0,\gamma-n-\frac{1}{2}\bigr] \cap (0,1)$, 
	the H\"older continuity of 
	$Z_\gamma^{(n)}$ follows from 
	Proposition~\ref{prop:regularity-stoch-conv} 
	which we may apply, for all $j \in \{0,1,\ldots,n\}$, 
	with $a=\gamma-1-(n-j)$ and $b = j$,  
	since 
	$ A^{n+\tau+\frac{1}{2}-\gamma} Q^{\frac{1}{2}} \in \mathscr L_2(H;\dot H_A^\sigma)$ 
	is assumed. 
\end{proof}

\section{Covariance structure}
\label{sec:covariance-structure}

In this section, we study the covariance 
structure of solutions 
to~\eqref{eq:fractional-parabolic-spde-zeroIC}. 
More specifically, 
we consider the mild solution process
$(Z_\gamma(t))_{t\in[0,T]}$ 
from Definition~\ref{def:frac-mild-sol-zeroIC}.
The covariance structure
of $Z_\gamma$ will be expressed in terms
of the family of 
\emph{covariance operators} 
$(Q_{Z_\gamma}(s,t))_{s,t\in [0,T]} \subseteq \mathscr L(H)$  
which satisfies, for all $s,t\in[0,T]$, that 
\[ 
( Q_{Z_\gamma}(s,t) x, y)_H   
= 
\E[ (Z_\gamma(s)-\E[Z_\gamma(s)], x)_H( Z_\gamma(t)-\E[Z_\gamma(t)], y)_H ] 
\quad 
\forall x,y\in H. 
\] 
Note that this family is well-defined  
whenever $Z_\gamma$ is square-integrable, e.g., 
under the assumptions made in Theorem~\ref{thm:existence-mild}. 
Note also that $\E[Z_\gamma(t)] = 0$ 
for all $t\in [0,T]$. 

We present three results on the covariance operators
of the mild solution $Z_\gamma$. 
The most general result is
Proposition~\ref{prop:covariance-mild-solution},
which provides an explicit 
integral representation of $Q_{Z_\gamma}(s,t)$. 
Corollary~\ref{cor:asymptotic-marginal-spatial-covariance} 
is concerned with the asymptotic behavior of the covariance 
operator $Q_{Z_\gamma}(t,t)$ as $t\to\infty$. 
Subsequently, in Corollary~\ref{cor:separable-cov} 
we consider 
a situation in which the covariance is separable 
in time and space, 
and prove that the temporal part is asymptotically
of Mat\'ern type.

\begin{proposition}\label{prop:covariance-mild-solution}
	Let 
	Assumption~\textup{\ref{assumption:A}\ref{assumption:A:semigroup}} 
	be satisfied and $\gamma \in (0,\infty)$ be such that~\eqref{eq:mildsol-cond-fractional} holds.
	The covariance operators $(Q_{Z_\gamma}(s,t))_{s,t\in[0,T]}$ 
	of $Z_\gamma$ admit 
	the representation 
	\begin{equation}\label{eq:covariance-operator-Zt-Zs}
	Q_{Z_\gamma}(s,t) 
	= 
	\frac{1}{\Gamma(\gamma)^2}
	\int_0^{s\wedge t}
	[(s-r)(t-r)]^{\gamma-1} S(t-r) Q [S(s-r)]^*
	\rd r. 
	\end{equation}
\end{proposition}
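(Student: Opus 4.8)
The plan is to reduce everything to the explicit stochastic convolution and a single application of the It\^o isometry. First I would note that the hypothesis \eqref{eq:mildsol-cond-fractional}, for any admissible $\delta\in[0,\gamma)$, already forces it to hold with $\delta=0$: on the bounded interval $(0,T)$ one has $t^{\gamma-1}=t^{\delta}\,t^{\gamma-1-\delta}\le T^{\delta}\,t^{\gamma-1-\delta}$, so the integral with $\delta=0$ is finite whenever the one with $\delta$ is. Hence Theorem~\ref{thm:existence-mild} applies and tells us that the mild solution $Z_\gamma$ exists, is square-integrable (so that the family $(Q_{Z_\gamma}(s,t))_{s,t}$ is well defined and unaffected by the choice of modification), coincides up to modification with the stochastic convolution $\widetilde Z_\gamma$ of \eqref{eq:stoch-conv}, and has mean zero. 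It therefore suffices to compute $\E\bigl[(\widetilde Z_\gamma(s),x)_H(\widetilde Z_\gamma(t),y)_H\bigr]$ for arbitrary $x,y\in H$ and to read off $\bigl(Q_{Z_\gamma}(s,t)x,y\bigr)_H$.

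Next I would rewrite each scalar as a real-valued weak stochastic integral driven by $W^Q$: viewing the convolution integrand of $\widetilde Z_\gamma(s)$ as a function of the integration variable $r$ and pairing with $x$ gives $(\widetilde Z_\gamma(s),x)_H=\frac{1}{\Gamma(\gamma)}\int_0^s (s-r)^{\gamma-1}\bigl([S(s-r)]^*x\bigr)\cdot\rd W^Q(r)$, and analogously for $(\widetilde Z_\gamma(t),y)_H$ with $(t,y)$ in place of $(s,x)$; the $\mathscr L_2$-integrability needed to make these weak integrals (and the covariance computation below) legitimate is exactly $\int_0^T\norm{t^{\gamma-1}S(t)Q^{1/2}}{\mathscr L_2(H)}^2\rd t<\infty$, i.e.\ \eqref{eq:mildsol-cond-fractional} with $\delta=0$, after the substitution $u=s-r$ (resp.\ $u=t-r$) and using $(0,s\wedge t)\subseteq(0,T)$. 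Then I would apply the covariance identity for scalar It\^o integrals with respect to $W^Q$ — a consequence of the It\^o isometry, cf.\ \cite[Proposition~2.3.5 and Lemma~2.4.2]{LiuRockner2015}, by polarization — to the two integrands. The product of the cutoff indicators collapses to $\mathbf 1_{(0,s\wedge t)}$, which is where $s\wedge t$ enters, and moving $Q$ and the adjoint semigroup operators across the inner product (using only linearity of the Bochner integral and continuity of $(\cdot,\cdot)_H$) yields $\E\bigl[(\widetilde Z_\gamma(s),x)_H(\widetilde Z_\gamma(t),y)_H\bigr]=\frac{1}{\Gamma(\gamma)^2}\int_0^{s\wedge t}[(s-r)(t-r)]^{\gamma-1}\bigl(S(t-r)Q[S(s-r)]^*x,\,y\bigr)_H\,\rd r$. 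Since $x,y\in H$ were arbitrary, this is precisely \eqref{eq:covariance-operator-Zt-Zs}; the $\mathscr L(H)$-valued Bochner integral on the right converges because its integrand is bounded in operator norm by $\norm{Q}{\mathscr L(H)}\bigl(\sup_{t\in[0,T]}\norm{S(t)}{\mathscr L(H)}\bigr)^2[(s-r)(t-r)]^{\gamma-1}$, which is integrable on $(0,s\wedge t)$ for every $\gamma>0$ by \eqref{eq:exp-estimate-semigroup}.

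The only genuinely delicate point is the bookkeeping around the weak stochastic integral: one must take adjoints carefully so that $[S(s-r)]^*$, not $S(s-r)$, ends up on the right in \eqref{eq:covariance-operator-Zt-Zs}, and one must verify that the $H$-valued integrand $r\mapsto\mathbf 1_{(0,s)}(r)(s-r)^{\gamma-1}[S(s-r)]^*x$ satisfies the integrability requirement of the weak integral, which is where \eqref{eq:mildsol-cond-fractional} with $\delta=0$ is used; beyond this, the argument is a direct substitution and I anticipate no further obstacle. As an alternative I would bypass the scalars entirely and invoke the operator-valued form of the covariance, $\E\bigl[\int_0^T\Phi_1\,\rd W^Q\otimes\int_0^T\Phi_2\,\rd W^Q\bigr]=\int_0^T\Phi_1(r)\,Q\,\Phi_2(r)^*\,\rd r$, applied with $\Phi_s(r):=\frac{1}{\Gamma(\gamma)}\mathbf 1_{(0,s)}(r)(s-r)^{\gamma-1}S(s-r)$, which produces \eqref{eq:covariance-operator-Zt-Zs} in essentially one line.
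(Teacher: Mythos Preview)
Your approach is essentially the paper's: write $(Z_\gamma(s),x)_H$ and $(Z_\gamma(t),y)_H$ as scalar weak stochastic integrals, apply the It\^o isometry combined with polarization, and then pull the inner product with $y$ outside the $\rd r$-integral. The paper does exactly this (introducing the functionals $f(s,r;x)z:=[\Gamma(\gamma)]^{-1}(z,(s-r)^{\gamma-1}[S(s-r)]^*x)_H$ and computing $\int_0^{s\wedge t}(f(s,r;x)Q^{1/2},f(t,r;y)Q^{1/2})_{\mathscr L_2(H;\R)}\rd r$).

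There is, however, one incorrect claim in your final step. You assert that the $\mathscr L(H)$-valued integrand is bounded by $\norm{Q}{\mathscr L(H)}\bigl(\sup_{t\in[0,T]}\norm{S(t)}{\mathscr L(H)}\bigr)^2[(s-r)(t-r)]^{\gamma-1}$ and that this is integrable on $(0,s\wedge t)$ for \emph{every} $\gamma>0$. This fails on the diagonal: if $s=t$, the bound becomes $C(s-r)^{2(\gamma-1)}$, which is integrable on $(0,s)$ only when $\gamma>\tfrac{1}{2}$. Since the proposition is stated for all $\gamma\in(0,\infty)$ satisfying \eqref{eq:mildsol-cond-fractional}, your crude operator-norm bound does not cover $\gamma\in(0,\tfrac12]$. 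The fix (and this is what the paper does) is to use \eqref{eq:mildsol-cond-fractional} itself: since $\norm{\,\cdot\,}{\mathscr L(H)}\le\norm{\,\cdot\,}{\mathscr L_2(H)}$, one has
\[
\norm{S(t-r)Q[S(s-r)]^*x}{H}
\le
\bigl\|S(t-r)Q^{\frac12}\bigr\|_{\mathscr L_2(H)}
\bigl\|S(s-r)Q^{\frac12}\bigr\|_{\mathscr L_2(H)}\norm{x}{H},
\]
and then Cauchy--Schwarz on the $\rd r$-integral together with the substitution $u=s-r$ (resp.\ $u=t-r$) reduces integrability to $\int_0^T u^{2(\gamma-1)}\|S(u)Q^{1/2}\|_{\mathscr L_2(H)}^2\rd u<\infty$, which is precisely \eqref{eq:mildsol-cond-fractional} with $\delta=0$. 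With this correction your argument is complete and matches the paper's.
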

\begin{proof}
	Square-integrability of $Z_\gamma$ 
	is a consequence of Theorem~\ref{thm:existence-mild} 
	and~\eqref{eq:mildsol-cond-fractional}. 
	In order to prove 
	the integral representation~\eqref{eq:covariance-operator-Zt-Zs}, 
	for $s\in[0,T]$, $r\in(0,s)$ and $x\in H$, we 
	define $f(s,r;x) \in \mathscr L(H;\R)$ by 
	\[
	f(s,r;x) z 
	:= 
	[\Gamma(\gamma)]^{-1} 
	(z, (s-r)^{\gamma-1} [S(s-r)]^* x)_H, 
	\quad 
	z\in H. 
	\]
	We proceed 
	similarly as in~\cite[Lemma~3.10]{KirchnerLangLarsson2017} 
	and obtain~\eqref{eq:covariance-operator-Zt-Zs}  
	from the It\^o isometry combined with the 
	polarization identity: 
	\begin{align*}
	\E[ (Z_\gamma(s), x)_H &( Z_\gamma(t), y)_H ] 
	= 
	\E\left[  
	\int_0^s f(s,r;x) \rd W^Q(r)  
	\int_0^t  f(t,\tau;y) \rd W^Q(\tau) 
	\right] 
	\\
	&= 
	\int_0^{s\wedge t} 
	\bigl( 
	f(s,r;x) Q^{\frac{1}{2}}, 
	f(t,r;y) Q^{\frac{1}{2}} 
	\bigr)_{\mathscr L_2(H;\R)} \rd r 
	\\
	&= 
	\frac{1}{\Gamma(\gamma)^2} 
	\int_0^{s\wedge t} 
	[(s-r)(t-r)]^{\gamma-1} 
	( S(t-r) Q [S(s-r)]^* x, y )_H 
	\rd r . 
	\end{align*}
	Then,~\eqref{eq:covariance-operator-Zt-Zs} follows 
	from exchanging the order of integration 
	and taking the inner product, 
	which is justified since 
	$(0,s\wedge t)\ni r \mapsto [(s-r)(t-r)]^{\gamma-1} 
	S(t-r) Q [S(s-r)]^* x$ is integrable 
	by \eqref{eq:mildsol-cond-fractional}. 
\end{proof}
By imposing more assumptions  
on the operator $A$, 
one can obtain explicit representations 
of the asymptotic covariance
structure of 
$Z_\gamma$ 
as $t\to\infty$, 
as the next~two corollaries show.
Note that, 
if \eqref{eq:mildsol-cond-fractional} 
holds for $\delta=0$ and $T=\infty$, 
in Definition~\ref{def:frac-mild-sol-zeroIC} 
the stochastic convolution 
$\widetilde Z_\gamma$  
and the mild solution $Z_\gamma$ 
are well-defined on 
the infinite time interval $[0,\infty)$.  
It is thus 
meaningful to consider the asymptotic behavior.  
\begin{corollary}\label{cor:asymptotic-marginal-spatial-covariance}
	Let 
	Assumptions~\textup{\ref{assumption:A}\ref{assumption:A:semigroup},\ref{assumption:A:bdd-analytic},\ref{assumption:A:bddly-inv}} 
	be satisfied 
	and let 
	${\gamma \in (\nicefrac{1}{2},\infty)}$. 
	Suppose that \eqref{eq:mildsol-cond-fractional} 
	holds for $\delta=0$ and $T=\infty$. 
	If for every $t \in [0,\infty)$ 
	the operator $S(t)$ is self-adjoint and commutes with
	the covariance operator 
	$Q$ of $W^Q$\!, we have  
	\[
	\lim_{t\to\infty} Q_{Z_\gamma}(t,t) 
	= 
	\Gamma(\gamma-\nicefrac{1}{2})
	\bigl[ 2\sqrt{\pi} \Gamma(\gamma) \bigr]^{-1} 
	A^{1-2\gamma} Q 
	\quad 
	\text{in }
	\mathscr L(H). 
	\]
\end{corollary}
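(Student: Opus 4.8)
The plan is to compute the limit directly from the explicit integral representation \eqref{eq:covariance-operator-Zt-Zs} for the covariance operators. Setting $s=t$ there yields
\[
Q_{Z_\gamma}(t,t) = \frac{1}{\Gamma(\gamma)^2}\int_0^t (t-r)^{2\gamma-2}\, S(t-r)\, Q\, [S(t-r)]^*\, \rd r.
\]
Since each $S(t-r)$ is self-adjoint and commutes with $Q$, the semigroup law gives $S(t-r)\,Q\,[S(t-r)]^* = Q\,S(2(t-r))$, and the substitution $u := t-r$ turns this into $Q_{Z_\gamma}(t,t) = \Gamma(\gamma)^{-2}\, Q \int_0^t u^{2\gamma-2}\, S(2u)\, \rd u$, where I have used that the bounded operator $Q$ may be pulled out of the Bochner integral.

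Next I would pass to the limit $t\to\infty$ inside the integral. Under Assumptions~\textup{\ref{assumption:A}\ref{assumption:A:semigroup},\ref{assumption:A:bdd-analytic},\ref{assumption:A:bddly-inv}} the semigroup $(S(t))_{t\ge0}$ is exponentially stable, so $\norm{S(2u)}{\mathscr L(H)} \le M e^{-2wu}$ for some $M\in[1,\infty)$ and $w\in(0,\infty)$; since $\gamma > \nicefrac{1}{2}$ forces $2\gamma-2 > -1$, the map $u \mapsto u^{2\gamma-2}S(2u)$ is Bochner integrable on $(0,\infty)$ with values in $\mathscr L(H)$. Dominated convergence then shows that $Q_{Z_\gamma}(t,t) \to \Gamma(\gamma)^{-2}\, Q \int_0^\infty u^{2\gamma-2}\, S(2u)\, \rd u$ in $\mathscr L(H)$, with the rate controlled by $\norm{\int_t^\infty u^{2\gamma-2}S(2u)\,\rd u}{\mathscr L(H)} \le M\int_t^\infty u^{2\gamma-2}e^{-2wu}\,\rd u \to 0$.

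The third step is to evaluate the remaining integral using the representation of negative fractional powers recorded in Appendix~\ref{subsec:appendix-functional-calc:frac-powers}, namely $A^{-\alpha} = \Gamma(\alpha)^{-1}\int_0^\infty s^{\alpha-1}S(s)\,\rd s$ for $\alpha\in(0,\infty)$, which is available here precisely because exponential stability makes the integral converge. Taking $\alpha := 2\gamma-1 \in (0,\infty)$ and substituting $s = 2u$ gives $\int_0^\infty u^{2\gamma-2}S(2u)\,\rd u = 2^{1-2\gamma}\Gamma(2\gamma-1)\,A^{1-2\gamma}$, so that, using once more that $Q$ commutes with every $S(t)$ and hence with $A^{1-2\gamma}$,
\[
\lim_{t\to\infty} Q_{Z_\gamma}(t,t) = 2^{1-2\gamma}\,\Gamma(2\gamma-1)\,\Gamma(\gamma)^{-2}\, A^{1-2\gamma}\, Q \quad\text{in } \mathscr L(H).
\]
It then remains only to reconcile the constant: the Legendre duplication formula $\Gamma(z)\Gamma(z+\nicefrac{1}{2}) = 2^{1-2z}\sqrt{\pi}\,\Gamma(2z)$ with $z = \gamma-\nicefrac{1}{2}$ rearranges to $\Gamma(2\gamma-1) = 2^{2\gamma-2}\,\pi^{-1/2}\,\Gamma(\gamma-\nicefrac{1}{2})\,\Gamma(\gamma)$, whence $2^{1-2\gamma}\Gamma(2\gamma-1)\Gamma(\gamma)^{-2} = \Gamma(\gamma-\nicefrac{1}{2})\bigl[2\sqrt{\pi}\,\Gamma(\gamma)\bigr]^{-1}$, which is exactly the asserted prefactor.

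I do not expect a genuine analytic obstacle: exponential stability of $(S(t))_{t\ge0}$ is what simultaneously makes the integral defining $A^{1-2\gamma}$ converge and justifies the dominated-convergence interchange of limit and integral, while the hypothesis $\gamma > \nicefrac{1}{2}$ secures integrability near $u = 0$. The one point to handle carefully is the bookkeeping in the change of variables $s = 2u$ together with the Gamma-function identity, since the powers of $2$ and the duplication formula must be tracked exactly to land on the stated constant.
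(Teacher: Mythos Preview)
Your proposal is correct and follows essentially the same route as the paper's proof: both start from the integral representation~\eqref{eq:covariance-operator-Zt-Zs} at $s=t$, use self-adjointness and commutativity to rewrite $S(t-r)Q[S(t-r)]^* = QS(2(t-r))$, change variables to recognize the fractional-power integral~\eqref{eq:fractional-power-exp-bdd-semigroup} with $\alpha = 2\gamma-1$, and finish with the Legendre duplication formula. The only cosmetic difference is that the paper substitutes $u := 2(t-r)$ in one step whereas you substitute $u := t-r$ first and then $s = 2u$ later, and you spell out the dominated-convergence justification for the limit a bit more explicitly than the paper does.
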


\begin{proof}
	Starting from 
	the identity~\eqref{eq:covariance-operator-Zt-Zs}
	for a fixed $t = s \in [0,\infty)$, 
	we recall self-adjointness of the operators 
	$(S(t))_{t\geq 0}$ and 
	the commutativity with $Q$ 
	to obtain~that 
	\begin{align*}
	Q_{Z_\gamma}(t,t) 
	&= 
	\frac{1}{\Gamma(\gamma)^2}
	\int_0^t 
	(t-r)^{2(\gamma-1)} S(t-r) Q S(t-r) 
	\rd r 
	\\
	&= 
	\frac{1}{\Gamma(\gamma)^2}
	\int_0^t 
	(t-r)^{2\gamma-2} Q S(2t-2r) 
	\rd r
	=
	\frac{2^{1-2\gamma}}{\Gamma(\gamma)^2}
	\int_0^{2t} u^{2\gamma-2} Q S(u) \rd u, 
	\end{align*}
	where we also used  
	the semigroup property and 
	the change of variables ${u := 2(t-r)}$. 
	Now we interchange the bounded
	linear operator $Q$ 
	with the integral, 
	and pass to the limit $t \to \infty$
	in $\mathscr L(H)$, which 
	by~\eqref{eq:fractional-power-exp-bdd-semigroup}
	with $\alpha := 2\gamma - 1 \in (0,\infty)$ gives 
	\[
	\lim_{t\to\infty} Q_{Z_\gamma}(t,t) 
	= 
	2^{1-2\gamma}\Gamma(2\gamma-1)
	[\Gamma(\gamma)]^{-2} 
	A^{1-2\gamma} Q
	= 
	\Gamma(\gamma-\nicefrac{1}{2})
	\bigl[ 2\sqrt{\pi} \Gamma(\gamma) \bigr]^{-1}
	A^{1-2\gamma} Q.
	\] 
	The last equality follows by applying the 
	Legendre duplication formula for the gamma 
	function (see e.g.~\cite[Formula~5.5.5]{Olver2010})
	to $\Gamma(2\gamma-1) = \Gamma(2[\gamma-\nicefrac{1}{2}])$.
\end{proof}

\begin{corollary}\label{cor:separable-cov}
	Suppose the setting of 
	Corollary~\textup{\ref{cor:asymptotic-marginal-spatial-covariance}} and 
	let $A := \kappa I$ for ${\kappa \in (0,\infty)}$. 
	Then the covariance function of $Z_\gamma$ is 
	separable and its temporal part 
	is asymptotically of Matérn type, 
	i.e., there is a function 
	$\varrho_{Z_\gamma}\colon [0,\infty)\times[0,\infty) \to \R$ 
	such that  
	\begin{align}
	&\forall s,t \in [0,\infty):
	&&
	Q_{Z_\gamma}(s,t) 
	=
	\varrho_{Z_\gamma}(s,t)\, Q , 
	\notag
	\\ 
	&\forall h \in \bbR\setminus\{0\}: 
	&&
	\lim_{t\to\infty}\varrho_{Z_\gamma}(t,t+h)
	= 
	\frac{2^{\frac{1}{2}-\gamma}\kappa^{1-2\gamma}}
	{\sqrt{\pi}\Gamma(\gamma)}
	(\kappa |h|)^{\gamma-\frac{1}{2}} 
	K_{\gamma-\frac{1}{2}}(\kappa |h|) . 
	\label{eq:temporal-asymp-matern-Q}  
	\end{align}
\end{corollary}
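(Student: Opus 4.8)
The plan is to leverage the explicit integral representation of the covariance operators established in Proposition~\ref{prop:covariance-mild-solution} (which is available here, since the hypotheses inherited via Corollary~\ref{cor:asymptotic-marginal-spatial-covariance} include \eqref{eq:mildsol-cond-fractional} for $\delta=0$ and $T=\infty$) together with the observation that $A=\kappa I$ forces $S(t)=e^{-\kappa t}I$ for every $t\geq0$.

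First I would insert $S(t-r)=e^{-\kappa(t-r)}I$ and $[S(s-r)]^{*}=e^{-\kappa(s-r)}I$ into \eqref{eq:covariance-operator-Zt-Zs}. Since these operators are scalar multiples of the identity, $Q$ factors out of the integral and one obtains $Q_{Z_\gamma}(s,t)=\varrho_{Z_\gamma}(s,t)\,Q$ with
\[
\varrho_{Z_\gamma}(s,t)
:=
\frac{1}{\Gamma(\gamma)^2}\int_0^{s\wedge t}[(s-r)(t-r)]^{\gamma-1}\,e^{-\kappa[(s-r)+(t-r)]}\rd r .
\]
This scalar integral is finite for all $s,t\geq0$ because $\gamma>0$ renders the integrand locally integrable near $r=s\wedge t$; this already proves separability. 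As the integrand is symmetric in $s$ and $t$, so is $\varrho_{Z_\gamma}$, hence it suffices to prove the asymptotic identity for $h>0$, the case $h<0$ following by applying the result to $\varrho_{Z_\gamma}(t+h,t)$ and noting that $t+h\to\infty$.

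Next, for fixed $h>0$ I would set $s=t$ and replace the second argument by $t+h$ (so that $s\wedge(t+h)=t$), and substitute $u:=t-r$ to obtain
\[
\varrho_{Z_\gamma}(t,t+h)
=
\frac{e^{-\kappa h}}{\Gamma(\gamma)^2}\int_0^t [u(u+h)]^{\gamma-1}\,e^{-2\kappa u}\rd u .
\]
Since $\gamma>0$ and $h>0$, the integrand is bounded above by the fixed integrable function $u\mapsto[u(u+h)]^{\gamma-1}e^{-2\kappa u}$ on $(0,\infty)$, so by monotone (or dominated) convergence $\lim_{t\to\infty}\varrho_{Z_\gamma}(t,t+h)=\frac{e^{-\kappa h}}{\Gamma(\gamma)^2}\int_0^\infty[u(u+h)]^{\gamma-1}e^{-2\kappa u}\rd u$.

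Finally, I would identify the remaining integral with a modified Bessel function. Writing $u(u+h)=(u+\tfrac h2)^2-(\tfrac h2)^2$, substituting $w:=u+\tfrac h2$ and then $w=:\tfrac h2 v$ turns it into $e^{\kappa h}(\tfrac h2)^{2\gamma-1}\int_1^\infty(v^2-1)^{\gamma-1}e^{-\kappa h v}\rd v$, and the integral representation \cite[Formula~10.32.8]{Olver2010} of $K_{\gamma-\frac12}$ (valid since $\gamma>0$) gives $\int_1^\infty(v^2-1)^{\gamma-1}e^{-\kappa h v}\rd v=\tfrac{\Gamma(\gamma)}{\sqrt\pi}\bigl(\tfrac{\kappa h}{2}\bigr)^{\frac12-\gamma}K_{\gamma-\frac12}(\kappa h)$. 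Substituting back, using $(\tfrac h2)^{2\gamma-1}(\tfrac{\kappa h}{2})^{\frac12-\gamma}=2^{\frac12-\gamma}\kappa^{1-2\gamma}(\kappa h)^{\gamma-\frac12}$, and cancelling the factors $e^{\pm\kappa h}$ and one power of $\Gamma(\gamma)$ yields exactly \eqref{eq:temporal-asymp-matern-Q}. I expect the only genuinely delicate part to be the bookkeeping of the constants (the powers of $2$, $\kappa$ and $h$) in this last step; the separability claim and the interchange of limit and integral are routine.
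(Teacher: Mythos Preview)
Your proof is correct and follows essentially the same route as the paper: you insert $S(t)=e^{-\kappa t}I$ into \eqref{eq:covariance-operator-Zt-Zs}, factor out $Q$, reduce to a scalar integral, pass to the limit $t\to\infty$, and identify the result via an integral representation of $K_{\gamma-\frac12}$. The only cosmetic differences are the choice of substitutions (the paper uses the single change $u:=h+2(t-r)$ to reach $\int_h^{2t+h}(u^2-h^2)^{\gamma-1}e^{-\kappa u}\rd u$ directly, whereas you use $u:=t-r$ followed by $w:=u+\tfrac h2$ and $v:=2w/h$) and the reference for the Bessel identity (the paper cites a Laplace transform table in \cite{Oberhettinger1973}, you cite \cite[Formula~10.32.8]{Olver2010}).
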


\begin{remark}
	On the right-hand side 
	of~\eqref{eq:temporal-asymp-matern-Q}, one recognizes the
	Mat\'ern covariance function~\eqref{eq:Matern-Cov} 
	with smoothness parameter $\nu = \gamma-\nicefrac{1}{2}$,
	correlation length parameter~$\kappa$ and variance
	$\sigma^2 
	= 
	\kappa^{1-2\gamma} \Gamma(\gamma-\nicefrac{1}{2}) 
	\bigl[ 2 \sqrt{\pi} \Gamma(\gamma) \bigr]^{-1}$\!. 
\end{remark} 

\begin{proof}[Proof of Corollary~\ref{cor:separable-cov}] 
	For $s,t \geq 0$,
	the integral representation~\eqref{eq:covariance-operator-Zt-Zs} 
	yields 
	\[ 
	Q_{Z_\gamma}(s,t) 
	= 
	\frac{1}{\Gamma(\gamma)^2}
	\int_0^{s\wedge t}
	[(s-r)(t-r)]^{\gamma-1} e^{-\kappa(s+t-2r)}
	\rd r
	\, Q 
	= \varrho_{Z_\gamma}(s,t) \, Q,
	\] 
	where we moved the bounded operator $Q\in \mathscr L(H)$ 
	out of the integral.	
	Next, we fix $h \in (0,\infty)$, let $t \in [0,\infty)$ 
	and perform the change of 
	variables ${u :=h + 2(t-r) }$, 
	\[ 
	\varrho_{Z_\gamma}(t,t+h)   
	=
	\varrho_{Z_\gamma}(t+h,t)  
	= 
	\frac{2^{1-2\gamma}}{\Gamma(\gamma)^2}
	\int_h^{2t+h}
	[(u+h)(u-h)]^{\gamma-1} e^{-\kappa u}
	\rd u. 
	\] 
	Thus, by passing to the limit $t \to \infty$, 
	we obtain  
	\begin{align*}
	&\lim_{t\to\infty} \varrho_{Z_\gamma}(t,t+h)
	= 
	\frac{2^{1-2\gamma}}{\Gamma(\gamma)^2}
	\int_h^{\infty}
	\bigl( u^2 - h^2 \bigr)^{\gamma-1} e^{-\kappa u}
	\rd u
	\\ 
	&\quad = 
	\frac{2^{1-2\gamma}}
	{\Gamma(\gamma)^2} 
	\cL
	\bigl[
	u
	\mapsto
	\bigl( u^2-h^2 \bigr)^{\gamma-1} 
	\mathbf 1_{(h,\infty)}(u)
	\bigr]
	(\kappa) 
	= 
	\frac{2^{1-2\gamma}}
	{\Gamma(\gamma)^2} 
	\frac{(2h)^{\gamma-\frac{1}{2}} 
		\Gamma(\gamma)}{\sqrt{\pi}\kappa^{\gamma-\frac{1}{2}}} 
	K_{\gamma-\frac{1}{2}}(\kappa h), 
	\end{align*}
	where $\cL[f](\kappa)$ denotes the Laplace transform of
	the function $f \from [0,\infty)\to\R$ evaluated at $\kappa$,
	and the last identity follows 
	from~\cite[Chapter I, Formula~3.13]{Oberhettinger1973}. 
\end{proof}

\section{Spatiotemporal Whittle--Mat\'ern fields}
\label{sec:Whittle-Matern} 

In this section, 
we demonstrate how the results of the previous  
Sections~\ref{sec:analysis-zeroIC}~and~\ref{sec:covariance-structure} 
can be related to the widely used statistical models 
involving generalized Whittle--Mat\'ern operators \eqref{eq:DO} 
on ${H = L^2(\cX)}$,
where $\cX=\cD\subsetneq\R^d$ is a bounded domain 
in the Euclidean space 
(see Subsection~\ref{subsec:example:bdd-euclidean})
or a surface $\cX=\cM$ 
(see Subsection~\ref{subsec:example:surfaces}).

\subsection{Bounded Euclidean domains}
\label{subsec:example:bdd-euclidean}

Throughout this subsection, let 
${\emptyset \neq \cD \subsetneq \R^d}$ 
be a bounded, 
connected and open 
domain.  
In order to rigorously define the 
symmetric, strongly elliptic second-order  
differential operator $L$,
formally given by~\eqref{eq:DO},
as a linear operator on $L^2(\cD)$,
we make the following assumptions 
on its coefficients  
$\kappa\from \cD \to \R$ and 
$a\from \cD\to\R^{d\times d}_{\rm sym}$, 
as well as on the spatial domain 
$\cD\subsetneq \R^d$. 

\begin{assumption}[Euclidean domain---minimal conditions]\label{assumption:minimal}
	\ 
	
	\begin{enumerate}[label=(\roman*), leftmargin=1cm] 
		\item\label{assumption:minimal-D} 
		$\cD$ has a Lipschitz continuous boundary $\partial\cD$;
		\item\label{assumption:minimal-a} 
		$a \in L^\infty\bigl( \cD; \R^{d\times d}_{\rm sym} \bigr)$ 
		is strongly elliptic, i.e., 
		\[ 
		\exists \, \theta > 0 : 
		\qquad 
		\operatornamewithlimits{ess\,inf}_{x\in \cD} \, \xi^\top a(x) \xi \geq \theta \norm{\xi}{\R^d}^2
		\qquad
		\forall \xi \in \R^d;
		\] 
		\item\label{assumption:minimal-kappa}  
		$\kappa \in L^\infty(\cD)$. 
	\end{enumerate}
\end{assumption}

Under these assumptions, 
we introduce the bilinear form
\[ 
\mathfrak{a}_L \from H_0^1(\cD)\times H_0^1(\cD) \to \R, 
\qquad
\mathfrak{a}_L(u,v) := 
(a\nabla u, \nabla v)_{L^2(\cD)}
+
(\kappa^2 u,v)_{L^2(\cD)}, 
\] 
which is symmetric, continuous and coercive. 
We say that $u \in H^1_0(\cD)$~belongs to the domain 
$\mathsf D(L)$ of the differential operator $L$ if and only if 
$|\mathfrak{a}_L(u,v) | 
\lesssim_u
\norm{v}{L^2(\cD)}$
holds for all
$v \in H^1_0(\cD)$. 
In this case, we define
$Lu$ as the unique element of $L^2(\cD)$ 
which satisfies the relation  
$\mathfrak{a}_L(u,v) = (Lu,v)_{L^2(\cD)}$ 
for all $v \in H^1_0(\cD)$.

By the Lax--Milgram theorem
the inverse $L^{-1} \in \mathscr L(L^2(\cD); H_0^1(\cD)) $ exists and can be  
extended to 
$L^{-1} \in \mathscr L(H_0^1(\cD)^*; H_0^1(\cD))$. 
Moreover, 
it is a consequence of the Rellich--Kondrachov 
theorem~(see \cite[Theorem~6.3]{AdamsFournier2003}) 
that $L^{-1}$ is compact~on~$L^2(\cD)$. 
For this reason, the spectral theorem 
for self-adjoint compact operators 
is applicable and shows that there exist
an orthonormal basis $(e_j)_{j\in \N}$ for $L^2(\cD)$ and
a non-decreasing sequence $(\lambda_j)_{j\in\N}$ 
of positive real numbers accumulating only at 
infinity such that $L e_j = \lambda_j e_j$ holds for all $j\in\N$. 
Furthermore, the eigenvalues 
of $L$ satisfy the following asymptotic behavior, 
known as Weyl's law 
\cite[Theorem~6.3.1]{Davies}:
\begin{equation}\label{eq:weyl}
\lambda_j 
\eqsim
j^{\nicefrac{2}{d}}
\quad 
\forall j \in \N.
\end{equation}
In this setting, for two differential operators 
$L$ and $\widetilde{L}$ on $L^2(\cD)$ 
with coefficients~$a,\kappa$ 
and $\widetilde{a},\widetilde{\kappa}$, respectively, 
we obtain the following corollary from the regularity results in
Section~\ref{sec:analysis-zeroIC} 
for spatiotemporal Whittle--Mat\'ern 
fields, where $A:=L^\beta$ and $Q:=\widetilde{L}^{-\alpha}$\!. 

\begin{corollary}\label{cor:example-1a-zeroIC} 
	Let $\alpha,\beta,\sigma\in[0,\infty)$, 
	set 
	$r:=\frac{\alpha}{\beta} \wedge \sigma$ if $\beta > 0$ 
	and $r:=\sigma$ if $\beta=0$, 
	and suppose that
	$n \in \N_0$, $\tau \in [0,1)$  
	and 
	$\gamma \in\bigl( n + \frac{(\sigma-r)\vee 1}{2}, \infty\bigr)$ 
	are
	such that
	\begin{equation}\label{eq:conditions-exponents-example1a}
	\gamma 
	\geq 
	n + \tfrac{1+(\sigma-r)\vee(2\tau)}{2} 
	\quad 
	\text{and} 
	\quad 
	\beta\gamma 
	> 
	\tfrac{d}{4} - \tfrac{\alpha}{2} + \beta\bigl( n+\tau+\tfrac{1 + \sigma}{2} \bigr)
	.
	\end{equation}
	
	Let $L\from\mathsf{D}(L) \subseteq H^1_0(\cD) \to L^2(\cD)$ and 
	${\widetilde{L}\from\mathsf{D}(\widetilde{L}) \subseteq H^1_0(\cD) \to L^2(\cD)}$ 
	be symmetric, strongly elliptic second-order differential
	operators  
	as defined above, cf.~\eqref{eq:DO}. 
	Suppose that 
	Assumption~\textup{\ref{assumption:minimal}\ref{assumption:minimal-D}}  
	holds for $\cD\subsetneq\R^d$\!,  
	and that 
	the coefficients 
	$a,\kappa$ of $L$ and $\widetilde a, \widetilde \kappa$ of $\widetilde L$
	satisfy  Assumptions~\textup{\ref{assumption:minimal}\ref{assumption:minimal-a},\ref{assumption:minimal-kappa}}. 
	Assume further  
	that 
	$L$ and~$\widetilde L$ 
	diagonalize 
	with respect to the same orthonormal basis $(e_j)_{j\in\bbN}$ 
	for $L^2(\cD)$, i.e., 
	there exist non-decreasing sequences 
	$(\lambda_j)_{j\in\N}$, $(\widetilde{\lambda}_j)_{j\in\N}$ of
	positive real numbers
	such that ${Le_j = \lambda_j e_j}$ 
	and $\widetilde L e_j = \widetilde{\lambda}_j e_j$
	for all $j \in \N$. 
	
	Then, setting $A := L^\beta$ and $Q := \widetilde L^{-\alpha}$\!, 
	the mild solution $Z_\gamma$ 
	to~\eqref{eq:fractional-parabolic-spde-zeroIC} in the sense of 
	Definition~\textup{\ref{def:frac-mild-sol-zeroIC}}, 
	see also \eqref{eq:spde-fractional-parabolic}, 
	belongs to 
	$C^{n,\tau}([0, T]; L^p(\Omega;\Hdot \sigma))$ 
	for all~${p \in [1,\infty)}$. 
	If the above conditions hold with $n = 0$ and 
	$\tau\in(0,1)$, 
	then for every $p\in[1,\infty)$ 
	and all $\tau'\in[0,\tau)$ the mild solution 
	$Z_\gamma$ has a modification 
	$\widehat Z_\gamma \in
	L^p\bigl(\Omega; C^{0,\tau'}([0,T];\dot H^\sigma_A)\bigr)$.
\end{corollary}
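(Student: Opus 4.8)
The plan is to recognise that, with $A := L^\beta$ and $Q := \widetilde L^{-\alpha}$, the hypotheses place us in the setting of Subsection~\ref{subsubsec:simplified}, so that the simplified sufficient conditions collected in Remark~\ref{rem:easier-condition-H-infty} become available; the proof then amounts to translating those conditions into the explicit exponent inequalities of~\eqref{eq:conditions-exponents-example1a} by means of Weyl's law~\eqref{eq:weyl}.

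First I would verify the operator-theoretic hypotheses. Since $L$ is self-adjoint and, by the spectral decomposition recalled above (positive eigenvalues $(\lambda_j)_{j\in\N}$ accumulating only at infinity), strictly positive, its fractional power $A = L^\beta$ is again self-adjoint and strictly positive on $H = L^2(\cD)$ when $\beta > 0$, while $A = I$ when $\beta = 0$. In either case the spectral theorem shows that $A$ fulfils Assumptions~\textup{\ref{assumption:A}\ref{assumption:A:semigroup},\ref{assumption:A:bdd-analytic},\ref{assumption:A:bdd-Hinfty},\ref{assumption:A:bddly-inv}}; in particular $-A$ generates the bounded analytic contraction semigroup $S(t) = e^{-tA}$, the spaces $\dot H_A^\sigma$ are Hilbert spaces, and $A_\C$ admits a bounded $H^\infty$-calculus with $\omega_{H^\infty}(A_\C) = 0 < \tfrac{\pi}{2}$, as pointed out in Remark~\ref{rem:easier-condition-H-infty}. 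Moreover $Q = \widetilde L^{-\alpha}$ is self-adjoint, strictly positive and bounded, as required of the noise covariance, and since $L$ and $\widetilde L$ diagonalise with respect to the common orthonormal basis $(e_j)_{j\in\N}$, all fractional powers of $A$ and $Q$ commute and are simultaneously diagonalised by $(e_j)_{j\in\N}$.

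Next I would discharge the analytic conditions of Remark~\ref{rem:easier-condition-H-infty}. For the exponent $r$ from the statement, the operator $A^{r/2}Q^{\frac12} = L^{\beta r/2}\widetilde L^{-\alpha/2}$ acts on $e_j$ by multiplication with $\lambda_j^{\beta r/2}\widetilde\lambda_j^{-\alpha/2}$; applying Weyl's law~\eqref{eq:weyl} to both $L$ and $\widetilde L$ gives $\lambda_j \eqsim \widetilde\lambda_j \eqsim j^{2/d}$, so this multiplier is $\eqsim j^{(\beta r-\alpha)/d}$, which is bounded because $\beta r \le \alpha$ by the choice of $r$; hence $Q^{\frac12} \in \mathscr L(H;\dot H_A^r)$ with $r\in[0,\sigma]$ (the case $\beta = 0$ being trivial, as then $\dot H_A^r = H$). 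The inequalities $\gamma > n + \tfrac{(\sigma-r)\vee1}{2}$ and $\gamma \ge n + \tfrac{1+(\sigma-r)\vee(2\tau)}{2}$ are precisely the standing hypothesis on $\gamma$ and the first inequality in~\eqref{eq:conditions-exponents-example1a}. For the Hilbert--Schmidt bound, using $\|T\|_{\mathscr L_2(H;\dot H_A^\sigma)} = \|A^{\sigma/2}T\|_{\mathscr L_2(H)}$ and the common diagonalisation, one computes
\[
\bigl\| A^{n+\tau+\frac12-\gamma}Q^{\frac12} \bigr\|_{\mathscr L_2(H;\dot H_A^\sigma)}^2
= \sum_{j\in\N} \lambda_j^{\beta(\sigma+2n+2\tau+1-2\gamma)}\widetilde\lambda_j^{-\alpha}
\eqsim \sum_{j\in\N} j^{\frac{2}{d}\left[\beta(\sigma+2n+2\tau+1-2\gamma)-\alpha\right]},
\]
again by~\eqref{eq:weyl}; this series converges if and only if $\tfrac2d\left[\beta(\sigma+2n+2\tau+1-2\gamma)-\alpha\right]<-1$, which rearranges to $\beta\gamma > \tfrac d4 - \tfrac\alpha2 + \beta\bigl(n+\tau+\tfrac{1+\sigma}{2}\bigr)$, i.e.\ the second inequality in~\eqref{eq:conditions-exponents-example1a}. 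With all hypotheses of Theorem~\ref{thm:temporal-regularity} thereby verified (via Remark~\ref{rem:easier-condition-H-infty}), the membership $Z_\gamma \in C^{n,\tau}([0,T];L^p(\Omega;\dot H_A^\sigma))$ for every $p\in[1,\infty)$ follows. For the pathwise statement, in the case $n=0$ and $\tau\in(0,1)$ I would invoke Corollary~\ref{cor:temporal-regularity-pathwise}, whose hypotheses are delivered by the same spectral computations: $\gamma \ge \tau+\tfrac12$ follows from the first inequality in~\eqref{eq:conditions-exponents-example1a}, the Hilbert--Schmidt term is the one just estimated, and the integral $\int_0^T\|t^{\gamma-1}S(t)Q^{\frac12}\|_{\mathscr L_2(H;\dot H_A^\sigma)}^2\rd t$ is finite by Proposition~\ref{prop:easier-condition-H-infty} with $\delta=0$ (its spectral series being summable, since it is dominated termwise by the previous one because $\tau>0$); this produces the modification $\widehat Z_\gamma \in L^p\bigl(\Omega;C^{0,\tau'}([0,T];\dot H_A^\sigma)\bigr)$ with $\tau'$-H\"older continuous sample paths, for all $p\in[1,\infty)$ and $\tau'\in[0,\tau)$.

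The main obstacle is not conceptual but one of bookkeeping: assembling the power $\beta$ of $L$, the power $-\alpha$ of $\widetilde L$, the spatial shift $\sigma$, the derivative count $n$, the H\"older exponent $\tau$ and the fractional order $\gamma$ into the single Hilbert--Schmidt series above and confirming that its convergence threshold coincides \emph{exactly} with the second inequality in~\eqref{eq:conditions-exponents-example1a}; the degenerate case $\beta = 0$ (where $A = I$, $\dot H_A^\sigma = H$ and $r = \sigma$) must be tracked throughout but never causes difficulty.
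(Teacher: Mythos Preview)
Your proposal is correct and follows essentially the same route as the paper: verify Assumptions~\ref{assumption:A}\ref{assumption:A:semigroup}--\ref{assumption:A:bddly-inv} for $A=L^\beta$ via self-adjointness, check $Q^{\frac12}\in\mathscr L(H;\dot H_A^r)$, reduce the regularity hypothesis of Theorem~\ref{thm:temporal-regularity} to the Hilbert--Schmidt condition via Proposition~\ref{prop:easier-condition-H-infty}/Remark~\ref{rem:easier-condition-H-infty}, compute the resulting eigenvalue series with Weyl's law~\eqref{eq:weyl}, and conclude the pathwise statement via Corollary~\ref{cor:temporal-regularity-pathwise}. The only cosmetic difference is that the paper establishes $Q^{\frac12}\in\mathscr L(H;\dot H_A^r)$ by first identifying $\dot H_L^s\cong\dot H_{\widetilde L}^s$ through~\eqref{eq:weyl} and then using $\widetilde L^{-\alpha/2}\in\mathscr L(H;\dot H_{\widetilde L}^{\alpha})$, whereas you bound the multiplier $\lambda_j^{\beta r/2}\widetilde\lambda_j^{-\alpha/2}$ directly; both arguments are equivalent.
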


\begin{proof}
	By the spectral mapping theorem for fractional powers of operators, 
	see e.g.\ \cite[Section 5.3]{Carracedo2001}, we obtain that 
	$Ae_j = L^\beta e_j = \lambda_j^\beta e_j$ 
	and 
	${Qe_j =  \widetilde{L}^{-\alpha} e_j = \widetilde \lambda_j^{-\alpha} e_j}$. 
	In particular, it follows that $A$ inherits the self-adjointness and strict
	positive-definiteness
	from $L$. 
	This readily implies that $0\in \rho(A)$. 
	By~\cite[Proposition~10.2.23]{AnalysisInBanachSpacesII} 
	we see that $A_\C$ admits a bounded $H^\infty$-calculus
	of angle $\omega_{H^\infty}(A_\C)=0$,
	showing that
	Assumptions~\ref{assumption:A}\ref{assumption:A:semigroup}--\ref{assumption:A:bddly-inv}
	are satisfied for $A$. 
	
	Furthermore, we note that, 
	for every $\sigma,s\in[0,\infty)$, 
	we have that 
	$\dot H_A^\sigma = \dot H_L^{\sigma\beta}$ 
	and  
	the spaces $\dot H_L^s$ 
	and $\dot H_{\widetilde L}^s$ are isomorphic. 
	The latter fact follows from 
	the asymptotic behavior \eqref{eq:weyl}
	of the eigenvalues 
	$(\lambda_j)_{j\in\N}$ 
	and 
	$(\widetilde{\lambda}_j)_{j\in\N}$, 
	since $L$ 
	and $\widetilde{L}$ have the same eigenfunctions. 
	Thus, we obtain that 
	$Q^{\frac{1}{2}} = \widetilde{L}^{-\frac{\alpha}{2}} 
	\in \mathscr L(H;\dot H_L^{\alpha}) 
	\subseteq 
	\mathscr L(H; \dot H_A^r)$. 
	
	Since 
	$\gamma\in \bigl(\frac{1}{2}+n,\infty\bigr)\cap 
	\bigl[\frac{1}{2}+n+\frac{\sigma-r}{2},\infty\bigr)$ 
	is assumed, 
	by
	Proposition~\ref{prop:easier-condition-H-infty} 
	(see also Remark~\ref{rem:easier-condition-H-infty})
	the condition~\eqref{eq:stoch-conv-diffb-assump} 
	of Theorem~\ref{thm:temporal-regularity} 
	is equivalent to 
	requiring that 
	$A^{n+\frac{1}{2}-\gamma}Q^{\frac{1}{2}} \in 
	\mathscr L_2(H;\dot H_A^\sigma)$. 
	Since also 
	$\gamma\in\bigl( \frac{\sigma-r}{2} + n, \infty\bigr) 
	\cap \bigl[n + \tau +\frac{1}{2}, \infty\bigr)$, 
	we therefore conclude with Theorem~\ref{thm:temporal-regularity} 
	that it suffices to check that 
	the quantity
	\begin{equation}\label{eq:HS-norm-to-bound}  
	\begin{split} 
	\bigl\| & A^{\frac{\sigma}{2} + n + \tau + \frac{1}{2} - \gamma } 
	Q^{\frac{1}{2}} 
	\bigr\|_{\mathscr L_2(H)}^2 
	= 
	\bigl\| 
	L^{\beta\left( \frac{\sigma}{2}+n+\tau + \frac{1}{2}-\gamma \right)} 
	\widetilde L^{-\frac{\alpha}{2}} 
	\bigr\|_{\mathscr L_2(H)}^2 
	\\
	&\qquad = 
	\sum_{j=1}^\infty 
	\bigl\| 
	L^{\beta \left(\frac{\sigma}{2}+n+\tau + \frac{1}{2}-\gamma \right)}
	\widetilde L^{-\frac{\alpha}{2}} e_j 
	\bigr\|_H^2 
	= 
	\sum_{j=1}^\infty 
	\lambda_j^{2\beta\left( \frac{\sigma}{2}+n+\tau + \frac{1}{2}-\gamma \right)}
	\widetilde{\lambda}_j^{-\alpha} 
	\end{split} 
	\end{equation}
	is finite.
	Indeed, applying Weyl's law~\eqref{eq:weyl}
	to both $L$ and $\widetilde L$, it follows that 
	\[
	\sum_{j=1}^\infty 
	\lambda_j^{2\beta\left( \frac{\sigma}{2}+n+\tau + \frac{1}{2}-\gamma \right)}
	\widetilde{\lambda}_j^{-\alpha} 
	\eqsim_{\text(\alpha,\beta,\gamma,\sigma,n,\tau)}
	\sum_{j=1}^\infty 
	j^{\frac{4}{d} \left[ \beta\left( n+\tau + \frac{1+\sigma}{2} \right) 
		-\beta\gamma -\frac{\alpha}{2}\right]},
	\] 
	so that~\eqref{eq:HS-norm-to-bound} is finite if and only if 
	\eqref{eq:conditions-exponents-example1a} holds, as we assume.
	Then, 
	for all $p \in [1,\infty)$, 
	Theorem~\ref{thm:existence-mild}, 
	Theorem~\ref{thm:temporal-regularity} 
	and Proposition~\ref{prop:easier-condition-H-infty} 
	yield the existence of a mild solution 
	$Z_\gamma\in C^{n,\tau}([0,T]; L^p(\Omega;\Hdot \sigma))$, 
	which is unique up to modification. 
	The last assertion for $n = 0$ and 
	$\tau\in(0,1)$ follows from
	Corollary~\ref{cor:temporal-regularity-pathwise}.
\end{proof}

The spatial regularity obtained in Corollary~\ref{cor:example-1a-zeroIC} 
is measured using the spaces ${\Hdot\sigma = \dot H_L^{\beta\sigma}}$\!. 
It would be more practical to express this
in terms of fractional-order Sobolev spaces $H^s(\cD)$, $s\geq 0$. 
This raises the question 
of how $\dot H_L^s$ and $H^s(\cD)$ relate. 
The answer to this question depends on the 
smoothness of the coefficients $a,\kappa$ 
and of the boundary~$\partial \cD$.
We therefore introduce two additional sets of assumptions:  
Assumption~\ref{assumption:Lipschitz-convex}
is only slightly more restrictive than the 
minimal conditions of Assumption~\ref{assumption:minimal}, 
whereas Assumption~\ref{assumption:smooth}
requires a high degree of smoothness.

\begin{assumption}[Euclidean domain---$H^2(\cD)$-regular setting] 
	\label{assumption:Lipschitz-convex}
	\
	
	\begin{enumerate}[label=(\roman*), leftmargin=1cm] 
		\item 
		$\cD$ is convex.
		\item $a \from \overline\cD \to\R_\text{sym}^{d\times d} $
		is Lipschitz continuous, i.e.,
		\[ 
		|a_{ij}(x) - a_{ij}(y)| 
		\lesssim 
		\norm{x-y}{\R^d} 
		\quad \forall x,y\in\overline \cD,  
		\quad 
		\forall i,j \in \{1,\dots,d\}.
		\] 
	\end{enumerate}
\end{assumption}

\begin{assumption}[Euclidean domain---smooth setting]\label{assumption:smooth}
	\
	
	\begin{enumerate}[label=(\roman*), leftmargin=1cm] 
		\item 
		The boundary $\partial \cD$ is of class $C^\infty$;
		\item 
		$a_{ij} \in C^\infty(\overline\cD)$ 
		holds for all $i,j\in\{1,\ldots,d\}$, i.e., 
		for all entries of $a$; 
		\item 
		$\kappa \in C^\infty(\overline\cD)$. 
	\end{enumerate}
\end{assumption}

The results of the next lemma are taken 
from \cite[Lemma~2]{Cox2020regularity} 
and \cite[Lemma~3.4]{Bolin2023equivalence}.

\begin{lemma}\label{lem:dot-sobolev}
	Let $L\from\mathsf{D}(L) \subseteq H^1_0(\cD) \to L^2(\cD)$ 
	be a symmetric second-order 
	differential operator 
	as defined as above, cf.~\eqref{eq:DO}. 
	Then, the following assertions hold:
	\begin{enumerate}[label=(\alph*), leftmargin=1cm] 
		\item\label{lem:dot-sobolev-a}
		If Assumption~\textup{\ref{assumption:minimal}} is satisfied, 
		then $\dot H_L^s \hookrightarrow H^s(\cD)$ 
		for all ${s \in [0,1]}$. Moreover, 
		the norms $\| \,\cdot\, \|_{\dot H_L^s}$ 
		and 
		$\| \,\cdot\, \|_{H^s(\cD)}$ 
		are equivalent on 
		$\dot H_L^s$ for 
		$s \in [0,1]\setminus\{\nicefrac{1}{2}\}$; 
		\item\label{lem:dot-sobolev-b} 
		If Assumptions~\textup{\ref{assumption:minimal}} 
		and~\textup{\ref{assumption:Lipschitz-convex}} 
		are fulfilled, then
		\[ 
		\bigl( \dot H_L^s, \| \,\cdot\, \|_{\dot H_L^s} \bigr) 
		\cong 
		\bigl( 
		H^s(\cD) \cap H_0^1(\cD),
		\|\,\cdot\,\|_{H^s(\cD)} 
		\bigr) 
		\quad \forall s \in [1,2];
		\] 
		\item\label{lem:dot-sobolev-c} 
		If Assumptions~\textup{\ref{assumption:minimal}}  
		and~\textup{\ref{assumption:smooth}} are satisfied, 
		then we have $\dot H_L^s \hookrightarrow H^s(\cD)$ for all $s \in [0,\infty)$, 
		and the norms $\| \,\cdot\, \|_{\dot H_L^s}$, 
		$\|\,\cdot\,\|_{H^s(\cD)}$ 
		are equivalent on 
		$\dot H_L^s$ for every $s \in [0,\infty)\setminus \mathfrak{E}$, 
		where $\mathfrak{E} 
		:= 
		\{ 2k + \nicefrac{1}{2} : k \in \N_0 \}$ 
		is called the \emph{exclusion set}.
	\end{enumerate}
\end{lemma}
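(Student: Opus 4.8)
The plan is to reduce all three claims to two ingredients: identifying the domains of \emph{integer} (and half-integer) powers of $L$ with Sobolev spaces carrying Dirichlet-type boundary conditions, by means of elliptic regularity, and then filling in the fractional scale by complex interpolation. Since $\mathfrak a_L$ is symmetric, bounded and coercive, the operator $L$ is self-adjoint and, by strong ellipticity of $a$ together with the Poincar\'e inequality, strictly positive; in particular $0\in\rho(L)$ and $L_\C$ admits a bounded $H^\infty$-calculus of angle $0$. Consequently its fractional domain spaces are the complex interpolation spaces, $[\dot H_L^{s_0},\dot H_L^{s_1}]_\theta = \dot H_L^{(1-\theta)s_0+\theta s_1}$ for $0\le s_0\le s_1$ and $\theta\in(0,1)$, cf.\ \cite[Ch.~10]{AnalysisInBanachSpacesII}. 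This reduces each assertion to an endpoint identification plus a standard interpolation identity for the Sobolev scale on $\cD$.

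For~\ref{lem:dot-sobolev-a}, trivially $\dot H_L^0 = L^2(\cD) = H^0(\cD)$, while the first representation theorem applied to $\mathfrak a_L$ identifies $\dot H_L^1 = \mathsf D(L^{1/2})$ with the form domain $H_0^1(\cD)$, the norm equivalence $\|\,\cdot\,\|_{\dot H_L^1}\eqsim\|\,\cdot\,\|_{H^1(\cD)}$ following from boundedness and coercivity of $\mathfrak a_L$ and Poincar\'e. Interpolating and invoking the classical identification $[L^2(\cD),H_0^1(\cD)]_s \hookrightarrow H^s(\cD)$, which is a norm equivalence for $s\in[0,1]\setminus\{\nicefrac12\}$ and at $s=\nicefrac12$ yields the Lions--Magenes space $H_{00}^{1/2}(\cD)\subsetneq H^{1/2}(\cD)$, gives~\ref{lem:dot-sobolev-a}.

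For~\ref{lem:dot-sobolev-b} and~\ref{lem:dot-sobolev-c} the extra input is elliptic regularity. Under Assumption~\ref{assumption:Lipschitz-convex}, convexity of $\cD$ together with Lipschitz continuity of $a$ yields the a priori bound $\|u\|_{H^2(\cD)}\lesssim\|Lu\|_{L^2(\cD)}$ for $u\in\mathsf D(L)$, hence $\dot H_L^2 = \mathsf D(L) = H^2(\cD)\cap H_0^1(\cD)$ with equivalent norms; interpolating between $\dot H_L^1 = H_0^1(\cD)$ and $\dot H_L^2$ and using $[H_0^1(\cD),H^2(\cD)\cap H_0^1(\cD)]_{s-1} = H^s(\cD)\cap H_0^1(\cD)$ throughout $s\in[1,2]$ --- no exceptional exponent arising, since only the condition $u|_{\partial\cD}=0$ is active and it is meaningful on this whole trace range --- gives~\ref{lem:dot-sobolev-b}. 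Under the smoothness Assumption~\ref{assumption:smooth}, iterated elliptic regularity identifies, for each $k\in\N$, $\dot H_L^{2k} = \mathsf D(L^k) = \{\, u\in H^{2k}(\cD) : L^j u\in H_0^1(\cD),\ j=0,\dots,k-1 \,\}$ with equivalent norms, the constraints $L^j u|_{\partial\cD}=0$ amounting to a finite set of trace conditions on $u$; interpolating across consecutive even integers and tracking at which half-integer exponent each successive condition first becomes visible to the trace operator on $H^s(\cD)$ yields $\dot H_L^s\hookrightarrow H^s(\cD)$ for all $s\ge 0$, with equivalent norms except when $s$ lies in $\mathfrak E = \{\,2k+\nicefrac12 : k\in\N_0\,\}$, where a Lions--Magenes space intervenes exactly as at $s=\nicefrac12$ in~\ref{lem:dot-sobolev-a}. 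Full details are carried out in \cite[Lemma~2]{Cox2020regularity} and \cite[Lemma~3.4]{Bolin2023equivalence}.

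The main obstacle is the $H^2$-regularity estimate on convex domains with merely Lipschitz coefficients in~\ref{lem:dot-sobolev-b} (a refinement of Grisvard's theorem for the Laplacian on convex polytopes and domains), and, in~\ref{lem:dot-sobolev-c}, the careful bookkeeping required to locate the exclusion set precisely --- that is, to verify that the boundary conditions encoded in $\mathsf D(L^k)$ become ``active'' in the interpolation scale exactly at the exponents $2k+\nicefrac12$ and nowhere else. The interpolation identities themselves are routine, but their behaviour at half-integer exponents is exactly where the subtlety concentrates.
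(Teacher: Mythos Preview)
The paper does not prove this lemma at all: it simply records, in the sentence preceding the statement, that ``the results of the next lemma are taken from \cite[Lemma~2]{Cox2020regularity} and \cite[Lemma~3.4]{Bolin2023equivalence}'' and then moves on. Your proposal therefore already contains strictly more argument than the paper, and since you close by pointing to exactly the same two references for the full details, your treatment is entirely consistent with --- and more informative than --- the paper's own handling of the result.
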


Combining Lemma~\ref{lem:dot-sobolev} 
with the results of 
Corollary~\ref{cor:example-1a-zeroIC} shows 
that the mild solution $Z_\gamma$ is an element of 
$C^{n,\tau}([0,T]; L^p(\Omega; H^{\beta\sigma}(\cD)))$, 
provided that $\sigma\beta\in[0,s']$, 
where $s'\in[1,\infty)$ is prescribed by the 
smoothness of the coefficients 
$a,\kappa$ 
and the boundary $\partial\cD$ 
via Lemma~\ref{lem:dot-sobolev}\ref{lem:dot-sobolev-a}, 
\ref{lem:dot-sobolev-b} or \ref{lem:dot-sobolev-c}. 
Note that we do not have to take the exclusion set
$\mathfrak{E}$ into account, 
as we only need the embedding 
$\dot H_L^s \hookrightarrow H^s(\cD)$.

Lastly, we consider the covariance structure 
of the mild solution,
as treated in the abstract setting 
in Section~\ref{sec:covariance-structure}.
The most illustrative results are the asymptotic formulas
presented in 
Corollaries~\ref{cor:asymptotic-marginal-spatial-covariance}~and~\ref{cor:separable-cov},
which we translate to the current setting in
Corollary~\ref{cor:example-1a-covariance}.
We see that
(Whittle--)Mat\'ern operators 
are recovered as marginal spatial 
or temporal covariance operators. 

\begin{corollary}\label{cor:example-1a-covariance}
	Consider the setting of Corollary~\textup{\ref{cor:example-1a-zeroIC}} 
	with $L=\widetilde{L}$, i.e., ${Q := L^{-\alpha}}$\!. 
	Let  
	$\alpha,\beta\in [0,\infty)$ and  
	$\gamma \in (\nicefrac{1}{2},\infty)$
	be such that
	$\beta\gamma > \frac{1}{2}\bigl( \frac{d}{2} - \alpha +\beta\bigr)$, 
	and let $Z_\gamma$ be the mild solution 
	in the sense of Definition~\textup{\ref{def:frac-mild-sol-zeroIC}}. 
	Then the asymptotic marginal spatial covariance
	of $Z_\gamma$ 
	satisfies
	\[ 
	\lim_{t\to\infty} Q_{Z_\gamma}(t,t) 
	= 
	\Gamma(\gamma-\nicefrac{1}{2})
	\bigl[ 2\sqrt{\pi} \Gamma(\gamma) \bigr]^{-1} 
	L^{\beta(1-2\gamma)-\alpha} 
	\quad 
	\text{in}\;\;
	\mathscr L(L^2(\cD)). 
	\] 
	For $\beta = 0$, 
	the covariance of $Z_\gamma$ is separable
	in the sense that
	there exists a function 
	$\varrho_{Z_\gamma}\colon [0,\infty)\times[0,\infty) \to \R$
	such that 
	\[ 
	Q_{Z_\gamma}(s,t) 
	=
	\varrho_{Z_\gamma}(s,t) \, L^{-\alpha}
	\quad 
	\forall s,t\in[0,\infty), 
	\] 
	and for all $h \in \R \setminus \{0\}$ we have
	\[ 
	\lim_{t\to\infty}Q_{Z_\gamma}(t,t+h)
	= 
	2^{\frac{1}{2}-\gamma}
	\bigl[ \sqrt{\pi}\Gamma(\gamma) \bigr]^{-1}
	|h|^{\gamma-\frac{1}{2}} 
	K_{\gamma-\frac{1}{2}}(|h|)\, L^{-\alpha}
	\quad 
	\text{in}\;\; 
	\mathscr L(L^2(\cD)). 
	\] 
\end{corollary}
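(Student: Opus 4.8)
The plan is to derive Corollary~\ref{cor:example-1a-covariance} from the two abstract asymptotic results, Corollaries~\ref{cor:asymptotic-marginal-spatial-covariance} and~\ref{cor:separable-cov}, after checking that the operators $A = L^\beta$ and $Q = L^{-\alpha}$ fall within their hypotheses. First I would verify the standing assumptions: as established in the proof of Corollary~\ref{cor:example-1a-zeroIC}, the operator $L$ is self-adjoint and strictly positive, so $A = L^\beta$ inherits these properties and hence satisfies Assumptions~\ref{assumption:A}\ref{assumption:A:semigroup},\ref{assumption:A:bdd-analytic},\ref{assumption:A:bddly-inv}; moreover $A$ is self-adjoint and $0 \in \rho(A)$. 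Next I would confirm the square-integrability condition~\eqref{eq:mildsol-cond-fractional} with $\delta = 0$ and $T = \infty$: using the common eigenbasis $(e_j)_{j\in\N}$ and Weyl's law~\eqref{eq:weyl}, one computes $\int_0^\infty \| t^{\gamma-1} S(t) Q^{1/2} \|_{\mathscr L_2(H)}^2 \rd t \eqsim_\gamma \| A^{1/2-\gamma} Q^{1/2} \|_{\mathscr L_2(H)}^2 = \sum_j \lambda_j^{\beta(1-2\gamma)-\alpha}$ (via Proposition~\ref{prop:easier-condition-H-infty}), which by Weyl's law is comparable to $\sum_j j^{\frac{2}{d}[\beta(1-2\gamma)-\alpha]}$ and therefore finite precisely when $\frac{2}{d}[\alpha - \beta(1-2\gamma)] > 1$, i.e.\ when $\beta\gamma > \frac12(\frac{d}{2} - \alpha + \beta)$ — exactly the stated hypothesis.

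Having checked these conditions, I would address the first assertion. The operators $S(t) = e^{-tL^\beta}$ are self-adjoint (since $L^\beta$ is) and they commute with $Q = L^{-\alpha}$ because both are functions of $L$ and hence diagonal in the same basis. Thus all hypotheses of Corollary~\ref{cor:asymptotic-marginal-spatial-covariance} hold, and it yields $\lim_{t\to\infty} Q_{Z_\gamma}(t,t) = \Gamma(\gamma-\nicefrac{1}{2})[2\sqrt{\pi}\,\Gamma(\gamma)]^{-1} A^{1-2\gamma} Q$ in $\mathscr L(H)$. Substituting $A^{1-2\gamma} Q = L^{\beta(1-2\gamma)} L^{-\alpha} = L^{\beta(1-2\gamma)-\alpha}$ (again by the spectral mapping theorem for fractional powers, cf.~\cite[Section~5.3]{Carracedo2001}) gives the first displayed formula.

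For the second assertion, I would specialize to $\beta = 0$, so that $A = L^0 = I$, i.e.\ $A = \kappa I$ with $\kappa = 1$ in the notation of Corollary~\ref{cor:separable-cov}. That corollary then applies directly with this choice of $\kappa$: it gives a function $\varrho_{Z_\gamma}$ with $Q_{Z_\gamma}(s,t) = \varrho_{Z_\gamma}(s,t)\,Q = \varrho_{Z_\gamma}(s,t)\,L^{-\alpha}$, and the limit $\lim_{t\to\infty}\varrho_{Z_\gamma}(t,t+h) = 2^{\frac12-\gamma}[\sqrt{\pi}\,\Gamma(\gamma)]^{-1}|h|^{\gamma-\frac12} K_{\gamma-\frac12}(|h|)$ for all $h \neq 0$, since setting $\kappa = 1$ collapses the factors $\kappa^{1-2\gamma}$ and $(\kappa|h|)^{\gamma-\frac12} K_{\gamma-\frac12}(\kappa|h|)$ in~\eqref{eq:temporal-asymp-matern-Q} to the stated expression. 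Multiplying through by $L^{-\alpha}$ and noting that convergence of the scalars $\varrho_{Z_\gamma}(t,t+h)$ implies convergence of $\varrho_{Z_\gamma}(t,t+h) L^{-\alpha}$ in $\mathscr L(L^2(\cD))$ (as $L^{-\alpha}$ is a fixed bounded operator) finishes the proof.

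The only genuinely nontrivial step is the verification of~\eqref{eq:mildsol-cond-fractional} with $T = \infty$ via Weyl's law; everything else is a matter of matching notation and substituting $A = L^\beta$, $Q = L^{-\alpha}$ into the abstract conclusions. I expect no real obstacle, but care is needed to ensure that the hypothesis $\gamma \in (\nicefrac12,\infty)$ together with $\beta\gamma > \frac12(\frac{d}{2}-\alpha+\beta)$ is exactly what Corollaries~\ref{cor:asymptotic-marginal-spatial-covariance} and~\ref{cor:separable-cov} require — in particular that the borderline exponent inequality in Weyl's law is strict, which it is.
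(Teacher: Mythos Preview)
Your proposal is correct and follows essentially the same route as the paper: verify that $A=L^\beta$ satisfies Assumptions~\ref{assumption:A}\ref{assumption:A:semigroup}--\ref{assumption:A:bddly-inv}, check self-adjointness of $S(t)$ and commutativity with $Q=L^{-\alpha}$, and then invoke Corollaries~\ref{cor:asymptotic-marginal-spatial-covariance} and~\ref{cor:separable-cov} (the latter with $\kappa=1$). The paper's proof is terser---it obtains existence by citing Corollary~\ref{cor:example-1a-zeroIC} with $n=\tau=\sigma=0$ rather than re-deriving the Weyl-law series bound for~\eqref{eq:mildsol-cond-fractional} with $T=\infty$---but since that corollary's proof already passes through Proposition~\ref{prop:easier-condition-H-infty} (which is an $\int_0^\infty$ estimate), the substance is identical.
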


\begin{proof}
	Existence and uniqueness of the mild
	solution $Z_\gamma$ follow from Corollary~\ref{cor:example-1a-zeroIC}
	with $L = \widetilde L$ and $n = \tau = \sigma = 0$.
	Recall from its proof that $A$ satisfies
	Assumptions~\ref{assumption:A}\ref{assumption:A:semigroup}--\ref{assumption:A:bddly-inv}.
	Note also that $A = L^\beta$ is self-adjoint
	and $Q = L^{-\alpha}\in \mathscr L(L^2(\cD))$ 
	commutes with $A$, so that 
	it also commutes with $S(t)$ 
	for all $t \in [0,\infty)$,
	cf.\ \cite[Theorem~1.3.2(a)]{Haase2006}.
	All assertions follow thus 
	from Corollaries~\ref{cor:asymptotic-marginal-spatial-covariance}
	and~\ref{cor:separable-cov}.
\end{proof}

\begin{remark} 
	The asymptotic results obtained in 
	Corollary~\ref{cor:example-1a-covariance} 
	are in accordance with the marginal  
	spatial and temporal covariance functions 
	derived 
	in \cite[Section~3, Proposition~1 and Corollary~1]{Bakka2023diffusionbased} 
	for the case of the differential operator 
	$L= \gamma_s^2 - \Delta$ 
	acting on functions defined on all of $\R^2$\!, 
	where $\gamma_s\in(0,\infty)$. 
	Note that, in order to exploit 
	Fourier techniques, 
	in \cite{Bakka2023diffusionbased} the ``time'' variable  
	$t$ is an element of the whole real axis, $t\in\R$,  
	instead of only its non-negative part. 
\end{remark} 

\begin{remark}
	Corollaries~\ref{cor:example-1a-zeroIC}~and~\ref{cor:example-1a-covariance}
	explain and justify the roles of 
	the parameters
	$\alpha$, $\beta$ and $\gamma$. 
	They control three important 
	properties of spatiotemporal 
	Whittle--Mat\'{e}rn fields. 
	Besides the temporal and spatial smoothness,
	measured respectively by the quantities
	$n+\tau$ and $\sigma$,
	we identify a third degree of freedom: The
	\emph{degree of separability}, 
	expressed by the ratio $\frac{\alpha}{\beta}\in[0,\infty]$.
	Indeed, if $\frac{\alpha}{\beta} = \infty$, 
	i.e.\ $\beta = 0$, we observe that the
	covariance of the field is separable and that its
	temporal and spatial behavior are exclusively 
	governed by the parameters 
	$\gamma$ and $\alpha$, respectively.
	In contrast, if $\frac{\alpha}{\beta} = 0$, 
	i.e.\ $\alpha = 0$, 
	the SPDE is driven by spatiotemporal Gaussian white noise 
	and the ``coloring'' of its solution is fully determined by 
	the fractional parabolic differential operator 
	$\bigl(\partial_t + L^\beta \bigr)^\gamma$\!. 
\end{remark}

\subsection{Surfaces}
\label{subsec:example:surfaces}

In this subsection, we 
provide a brief demonstration of 
how the
above results can be extended to spatiotemporal 
Whittle--Mat\'ern fields on more general 
spatial domains. 
More precisely, we consider a smooth, closed, 
connected, orientable and compact $2$-surface $\cM$
immersed in $\R^3$ and endowed with
the positive surface measure~$\nu_\cM$ on $\cB(\cM)$,
induced by the first fundamental form. 
An important example of such a surface is given 
by the $2$-sphere, $\cM=\mathbb{S}^2$\!. 

On
$H := L^2(\cM)$, we consider the 
following analog of the
symmetric, strongly elliptic second-order 
differential operator from
Subsection~\ref{subsec:example:bdd-euclidean}, formally given by
\[
Lu := -\nabla_\cM \cdot (a\nabla_\cM u)
+
\kappa^2 u,
\qquad 
u \in \mathsf D(L)\subseteq L^2(\cM),
\] 
where $\nabla_{\cM}\,\cdot\,$ and 
$\nabla_{\cM}$ denote 
the surface divergence and 
the surface gradient, respectively. 
We record the precise
conditions on the surface $\cM$ and on the coefficients $a,\kappa$ 
in Assumption~\ref{assumption:surface-smooth} below; 
with regard to smoothness, they 
are analogous to the setting of Assumption~\ref{assumption:smooth}
in the case of a bounded Euclidean domain. 

\begin{assumption}[Surface---smooth setting]
	\
	
	\label{assumption:surface-smooth}
	\begin{enumerate}[label=(\roman*), leftmargin=1cm] 
		\item 
		$a$ is a symmetric tensor field,
		i.e., $a(x)\from T_x \cM\to T_x \cM$ is linear
		and symmetric for all $x \in \cM$, where
		$T_x \cM$ denotes the tangent space of~$x$. 
		Moreover, $a$ is smooth and strongly
		elliptic in the following sense:  
		\[ 
		\exists \, \theta > 0: 
		\quad 
		\forall x \in \cM, \; \forall \xi \in T_x\cM : \quad 
		\xi^\top a(x) \xi 
		\geq  
		\theta \norm{\xi}{\R^3}^2.
		\]
		\item 
		The coefficient $\kappa \from \cM \to \R$
		is smooth and 
		bounded away from zero, i.e., 
		there exists $\kappa_0 \in (0,\infty)$ such that
		$|\kappa(x)|\ge \kappa_0$ for all $x \in \cM$.
	\end{enumerate}
\end{assumption}

The conditions in Assumption~\ref{assumption:surface-smooth} 
are sufficient to ensure that
$L \from \dot H_L^1 \to (\dot H_L^{1})^*$ is 
boundedly invertible,  
and has a compact inverse on $L^2(\cM)$. 
This allows us to find an orthonormal basis $(e_j)_{j\in\N}$
for $L^2(\cM)$ and a non-decreasing sequence of positive
real eigenvalues $(\lambda_j)_{j\in\N}$ of
$L$ accumulating only at infinity, 
as in Subsection~\ref{subsec:example:bdd-euclidean}.
Moreover, fractional powers $L^\beta$ are well-defined
for all $\beta\in\R$, the sequence of eigenvalues
still satisfies Weyl's law~\eqref{eq:weyl} (with $d = 2$),
and a spectral mapping theorem holds,
cf.\ \cite[Theorems~XII.1.3~and~XII.2.1]{Taylor1981}.
These facts are sufficient to repeat the proofs of
Corollaries~\ref{cor:example-1a-zeroIC} 
and~\ref{cor:example-1a-covariance} 
yielding the analogous results, with $d = 2$ 
and other obvious modifications to the conditions.
In particular, the analog of Corollary~\ref{cor:example-1a-zeroIC} 
on the surface $\cM$ implies regularity of the solution process in  
the space 
$C^{n,\tau}([0,T]; L^p(\Omega; H^{\beta\sigma}(\cM)))$. 

An important difference from the 
(smooth) Euclidean setting of
Assumption~\ref{assumption:smooth}
is that 
under Assumption~\ref{assumption:surface-smooth},
the Sobolev space $H^s(\cM)$ and  
$\dot H_L^s$ are isomorphic for every $s \in [0,\infty)$,
see \cite[Example~XII.2.1]{Taylor1981}.
In other words, the absence of a boundary $\partial\cM$
implies that one does not need to 
exclude the exception set $\mathfrak{E}$ 
from the admissible exponents $s$
in the analog of Lemma~\ref{lem:dot-sobolev}\ref{lem:dot-sobolev-c}. 


\appendix

\section{Auxiliary results}\label{app:auxiliary} 

Throughout this section, $H$ denotes a separable Hilbert space  
which, if not specified otherwise, is considered over the 
real scalar field~$\R$. 

\subsection{Bochner counterparts}\label{app:subsec:bochner-counterpart} 

The first auxiliary result records  
relations between 
a (possibly unbounded) linear operator 
$A\from\mathsf D(A)\subseteq H \to H$ 
and its Bochner space counterpart $\cA$ 
which is defined on a subspace 
of $L^2(0,T;H)$, where $T\in(0,\infty)$.  

\begin{lemma}\label{lem:Bochner-space-operator}
	Let $T\in(0,\infty)$ and 
	$A\from \mathsf D(A)\subseteq H\to H$ 
	be a linear operator on a real or complex 
	Hilbert space $H$.  
	Consider the associated operator 
	$\mathcal A$ on $L^2(0,T;H)$ 
	as defined in \eqref{eq:def-cA}.
	Then, the following hold: 
	\begin{enumerate}[leftmargin=1cm, label={\normalfont(\alph*)}]
		\item\label{lem:Bochner-space-operator-a} 
		$\mathcal A$ is bounded if and only if 
		$A$ is bounded, 
		and in that case we have 
		\[
		\norm{\mathcal A}{\mathscr L(L^2(0,T;H))} 
		= 
		\norm{A}{\mathscr L(H)} ;
		\]  
		\item\label{lem:Bochner-space-operator-b}
		$\mathcal A$ is closed if and only if $A$ is.
	\end{enumerate}
\end{lemma}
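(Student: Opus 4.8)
The plan is to prove both equivalences by exhibiting, in each direction, how the Bochner counterpart $\cA$ inherits a property from $A$ and vice versa, using test functions of the form $v(\vartheta) = \mathbf 1_{(0,T)}(\vartheta)\,x$ (or suitable scalar multiples) to transfer information from $L^2(0,T;H)$ back to $H$.

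\emph{Part (a).}
First I would show that if $A \in \mathscr L(H)$, then $\cA \in \mathscr L(L^2(0,T;H))$ with $\|\cA\|_{\mathscr L(L^2(0,T;H))} \le \|A\|_{\mathscr L(H)}$: for $v \in L^2(0,T;H)$ one has $[\cA v](\vartheta) = A v(\vartheta)$ for a.a.\ $\vartheta$, so
\[
\norm{\cA v}{L^2(0,T;H)}^2 = \int_0^T \norm{A v(\vartheta)}{H}^2 \rd\vartheta \le \norm{A}{\mathscr L(H)}^2 \int_0^T \norm{v(\vartheta)}{H}^2 \rd\vartheta,
\]
and in particular $\mathsf D(\cA) = L^2(0,T;H)$ in this case. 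For the reverse inequality (and the reverse implication), suppose $\cA$ is bounded. Fix $x \in H$ with $\norm{x}{H}=1$ and put $v_x(\vartheta) := T^{-1/2}\mathbf 1_{(0,T)}(\vartheta)\, x$, so $\norm{v_x}{L^2(0,T;H)} = 1$ and $\norm{\cA v_x}{L^2(0,T;H)} = \norm{Ax}{H}$. Hence $\norm{Ax}{H} \le \norm{\cA}{\mathscr L(L^2(0,T;H))}$ for all unit vectors $x$, which shows $A$ is bounded with $\norm{A}{\mathscr L(H)} \le \norm{\cA}{\mathscr L(L^2(0,T;H))}$. Combining the two inequalities gives the claimed equality.

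\emph{Part (b).}
For the ``$A$ closed $\Rightarrow$ $\cA$ closed'' direction, I would take a sequence $(v_n)_{n\in\N} \subseteq \mathsf D(\cA)$ with $v_n \to v$ and $\cA v_n \to w$ in $L^2(0,T;H)$; passing to a subsequence, $v_n(\vartheta) \to v(\vartheta)$ and $[\cA v_n](\vartheta) = A v_n(\vartheta) \to w(\vartheta)$ in $H$ for a.a.\ $\vartheta \in (0,T)$, so closedness of $A$ yields $v(\vartheta) \in \mathsf D(A)$ and $A v(\vartheta) = w(\vartheta)$ for a.a.\ $\vartheta$; since $w \in L^2(0,T;H)$, this gives $v \in L^2(0,T;\mathsf D(A)) = \mathsf D(\cA)$ and $\cA v = w$. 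For the converse, suppose $\cA$ is closed and let $(x_n)_{n\in\N} \subseteq \mathsf D(A)$ satisfy $x_n \to x$ and $A x_n \to y$ in $H$. Setting $v_n(\vartheta) := \mathbf 1_{(0,T)}(\vartheta)\, x_n$ gives $v_n \in \mathsf D(\cA)$ with $v_n \to \mathbf 1_{(0,T)}(\cdot)\,x$ and $\cA v_n \to \mathbf 1_{(0,T)}(\cdot)\,y$ in $L^2(0,T;H)$; closedness of $\cA$ then forces $\mathbf 1_{(0,T)}(\cdot)\,x \in \mathsf D(\cA)$ and $A x = y$ (evaluating at any $\vartheta$ where the relation holds), so $A$ is closed.

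\emph{Main obstacle.}
The only subtlety I anticipate is the almost-everywhere bookkeeping when passing to subsequences and identifying $\mathsf D(\cA)$ with $L^2(0,T;\mathsf D(A))$: one must be careful that the a.e.\ limit of strongly measurable functions is strongly measurable and that ``$v(\vartheta) \in \mathsf D(A)$ for a.a.\ $\vartheta$ together with $\norm{Av(\cdot)}{L^2(0,T;H)} < \infty$'' is precisely the defining condition \eqref{eq:def-cA} for membership in $\mathsf D(\cA)$. This is routine but should be stated explicitly to make the argument rigorous.
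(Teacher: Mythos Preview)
Your proposal is correct and follows essentially the same approach as the paper's proof: both directions of part~(a) use the normalized constant function $T^{-1/2}\mathbf 1_{(0,T)}\otimes x$ to transfer bounds between $H$ and $L^2(0,T;H)$, and both directions of part~(b) proceed exactly as you outline, via an a.e.-convergent subsequence for $A\Rightarrow\cA$ and via the test functions $\mathbf 1_{(0,T)}\otimes x_n$ for $\cA\Rightarrow A$. The subtlety you flag about a.e.\ bookkeeping is precisely the only point requiring care, and the paper handles it in the same way.
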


\begin{proof}
	If $A$ is bounded, 
	then the inequality 
	$\norm{\mathcal A}{\mathscr L(L^2(0,T;H))} 
	\leq 
	\norm{A}{\mathscr L(H)}$ 
	is easily verified.
	Now suppose that $\mathcal A$ is bounded. 
	Then for all $x \in H$ we have
	\begin{align*}
	\norm{Ax}{H} 
	&= 
	\norm{T^{-\nicefrac{1}{2}}\mathbf 1_{(0,T)} \otimes Ax}{L^2(0,T;H)} 
	= 
	\norm{\mathcal A(T^{-\nicefrac{1}{2}}\mathbf 1_{(0,T)} \otimes x)}{L^2(0,T;H)} 
	\\
	&\leq 
	\norm{\mathcal A}{\mathscr L(L^2(0,T;H))} 
	\norm{T^{-\nicefrac{1}{2}}\mathbf 1_{(0,T)} \otimes x}{L^2(0,T;H)} 
	= \norm{\mathcal A}{\mathscr L(L^2(0,T;H))} \norm{x}{H}.
	\end{align*} 
	Here, given $f \from (0,T)\to \R$ and $x \in H$,
	the function $f \otimes x \from (0,T)\to H$ is 
	defined by $[f\otimes x](t) := f(t)x$ for all $t \in (0,T)$.
	We thus find that $A$ is bounded 
	with operator norm  
	$\norm{A}{\mathscr L(H)}
	\leq \norm{\mathcal A}{\mathscr L(L^2(0,T;H))}$,
	which finishes the proof of~\ref{lem:Bochner-space-operator-a}. 
	
	To prove 
	part~\ref{lem:Bochner-space-operator-b}, 
	first let $A$ be closed and let the sequence 
	$(v_n)_{n\in\N}$ in $ \mathsf D(\mathcal A)$ 
	be such that $v_n \to v$ and $\mathcal Av_n \to y$ 
	in $L^2(0,T;H)$. We need to prove that 
	$v \in \mathsf D(\mathcal A)$ and 
	$y = \mathcal Av$. Let $(v_{n_k})_{k\in\N}$ 
	be a subsequence such that 
	$v_{n_k} \to v$ 
	and $Av_{n_k} \to y$ in~$H$, 
	a.e.\ in $(0,T)$, 
	so that by the closedness of $A$ it follows that 
	$v(\vartheta) \in \mathsf D(A)$ and $y(\vartheta) = Av(\vartheta)$ 
	for a.a.\ $\vartheta\in(0,T)$. 
	From the latter we obtain that $y = \mathcal Av$, 
	which is meaningful since $v,y\in L^2(0,T;H)$ 
	yields that $v \in \mathsf D(\mathcal A)$.
	
	Now let $\mathcal A$ be closed and 
	let $(x_n)_{n\in \N}$ in $\mathsf D(A)$ 
	be such that $x_n \to x$ and $Ax_n \to y$ in $H$. 
	This implies the following convergences in 
	$L^2(0,T;H)$: 
	\begin{align*}
	\mathbf 1_{(0,T)} \otimes x_n 
	\to \mathbf 1_{(0,T)}\otimes x, 
	\\ 
	\mathcal A(\mathbf 1_{(0,T)} \otimes x_n) 
	= 
	\mathbf 1_{(0,T)} \otimes Ax_n  
	\to \mathbf 1_{(0,T)} \otimes y. 
	\end{align*}
	Since $\mathcal A$ is closed, 
	we deduce that 
	$\mathbf 1_{(0,T)} \otimes x \in \mathsf D(\mathcal A)$ 
	and $\mathbf 1_{(0,T)} \otimes y = \mathcal A(\mathbf 1_{(0,T)} \otimes x)$, 
	from which we may conclude $x \in \mathsf D(A)$ and $y = Ax$. 
	Hence $A$ is closed.
\end{proof}

The following lemma is generally useful for determining 
the domain of a generator of a given $C_0$-semigroup, 
and it will subsequently be used 
to show that the Bochner space counterpart 
of a $C_0$-semigroup  
is again a $C_0$-semigroup, 
see Proposition~\ref{prop:curly-A-semigroup}. 

\begin{lemma}\label{lem:generators}
	Let $(S(t))_{t\ge0}$ be a $C_0$-semigroup on $H$ 
	with infinitesimal generator 
	$\widetilde A \from \mathsf D(\widetilde A)\subseteq H \to H$. 
	If $A\from \mathsf D(A)\subseteq H\to H$ is 
	a linear operator satisfying $A \subseteq \widetilde{A}$ 
	and $\mathsf D(A)$ is dense in $\mathsf D(\widetilde{A})$ 
	with respect to the graph norm 
	$\| \cdot \|_{\mathsf D(\widetilde{A})}$, 
	then $\widetilde A = \clos A$. 
\end{lemma}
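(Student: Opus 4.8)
The plan is to establish the equality of graphs $\mathsf G(\widetilde A) = \clos{\mathsf G(A)}$, which—once we know the right-hand side is a graph—is precisely the assertion $\widetilde A = \clos A$. I would proceed by proving the two inclusions separately, preceded by a short argument that $\clos A$ exists at all.

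First I would recall that the infinitesimal generator $\widetilde A$ of a $C_0$-semigroup is closed, by~\cite[Theorem~II.1.4]{Engel1999} (see also~\cite[Chapter~1, Theorem~2.2]{Pazy1983}). Since $A \subseteq \widetilde A$ means $\mathsf G(A) \subseteq \mathsf G(\widetilde A)$, taking closures in $H\times H$ gives $\clos{\mathsf G(A)} \subseteq \clos{\mathsf G(\widetilde A)} = \mathsf G(\widetilde A)$. In particular $\clos{\mathsf G(A)}$ is contained in a graph, hence is itself the graph of a (closed) linear operator; this is exactly the statement that the closure $\clos A$ exists, and the inclusion just obtained reads $\clos A \subseteq \widetilde A$.

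For the reverse inclusion $\widetilde A \subseteq \clos A$, I would fix $x \in \mathsf D(\widetilde A)$ and invoke the density hypothesis: there is a sequence $(x_n)_{n\in\N}$ in $\mathsf D(A)$ with $\norm{x_n - x}{\mathsf D(\widetilde A)} \to 0$, that is, $x_n \to x$ and $\widetilde A x_n \to \widetilde A x$ in $H$. Because $A \subseteq \widetilde A$, we have $A x_n = \widetilde A x_n$ for each $n$, so $(x_n, A x_n) \to (x, \widetilde A x)$ in $H \times H$. Therefore $(x, \widetilde A x) \in \clos{\mathsf G(A)} = \mathsf G(\clos A)$, i.e.\ $x \in \mathsf D(\clos A)$ and $\clos A\, x = \widetilde A x$. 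As $x \in \mathsf D(\widetilde A)$ was arbitrary, $\widetilde A \subseteq \clos A$, and combining this with $\clos A \subseteq \widetilde A$ yields $\clos A = \widetilde A$.

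I do not anticipate a genuine obstacle; the proof is essentially bookkeeping with graphs. The one point worth being careful about is that one must first argue that $A$ is closable (so that $\clos A$ is meaningful) before using it, and that the hypothesis is density \emph{in the graph norm of $\widetilde A$}—this is exactly what guarantees that an approximating sequence converges simultaneously in $H$ and after applying $\widetilde A$; density merely in the norm of $H$ would be insufficient.
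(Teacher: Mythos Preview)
Your proof is correct and follows essentially the same approach as the paper: both arguments establish $\clos{\mathsf G(A)} = \mathsf G(\widetilde A)$ by using closedness of the generator $\widetilde A$ for one inclusion and the graph-norm density hypothesis together with $A\subseteq\widetilde A$ for the other. The only cosmetic difference is that you explicitly note closability of $A$ up front, whereas the paper leaves this implicit in the equality of graphs.
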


\begin{proof}
	Let $(x, \widetilde{A}x) \in \mathsf G(\widetilde{A})$  
	and choose a sequence $(x_n)_{n\in\N}$ in $\mathsf D(A)$ 
	such that ${x_n \to x}$ in 
	$\mathsf D(\widetilde{A})$.
	Using $A \subseteq \widetilde{A}$, we have
	$(x_n, Ax_n) = (x_n, \widetilde{A}x_n) \to (x, \widetilde A x)$ 
	with respect to the product norm on $H \times H$,
	which shows that $(x, \widetilde{A}x) \in \clos{\mathsf G(A)}$.
	Conversely, for any $(x, y) \in \clos{\mathsf G(A)}$
	there exists a sequence $(x_n)_{n\in\N}\subseteq \mathsf D(A)$
	such that
	$(x_n, \widetilde{A}x_n) = (x_n, Ax_n) \to (x,y)$ in $H \times H$.
	Since $\widetilde A$ is closed
	as the generator of a $C_0$-semigroup, 
	see \cite[Theorem~II.1.4]{Engel1999}, 
	we find that $(x,y) \in \mathsf G(\widetilde{A})$.
	This proves $\clos{\mathsf G(A)} = \mathsf G(\widetilde{A})$.
\end{proof}

\begin{proposition}\label{prop:curly-A-semigroup}
	Let $T\in(0,\infty)$ and 
	let Assumption~\textup{\ref{assumption:A}\ref{assumption:A:semigroup}} 
	be satisfied. 
	The family $(\mathcal S(t))_{t\ge0}$ of operators on $L^2(0,T;H)$ 
	given by \eqref{eq:def-cS} is a $C_0$-semigroup 
	with infinitesimal generator $-\cA$, as defined by \eqref{eq:def-cA}.
\end{proposition}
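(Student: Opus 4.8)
The plan is to verify the $C_0$-semigroup axioms for $(\mathcal S(t))_{t\ge0}$ directly and then to pin down its generator by combining a differentiation-under-the-integral argument with a resolvent surjectivity argument. First, each $\mathcal S(t)$ is the Bochner-space counterpart of the bounded operator $S(t)$, so $\mathcal S(t)\in\mathscr L(L^2(0,T;H))$ with $\|\mathcal S(t)\|_{\mathscr L(L^2(0,T;H))} = \|S(t)\|_{\mathscr L(H)}$ by Lemma~\ref{lem:Bochner-space-operator}\ref{lem:Bochner-space-operator-a}. The identities $\mathcal S(0) = I$ and $\mathcal S(s)\mathcal S(t) = \mathcal S(s+t)$ follow pointwise in $\vartheta\in(0,T)$ from the corresponding properties of $(S(t))_{t\ge0}$ and the definition~\eqref{eq:def-cS}. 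Strong continuity at $t = 0$ is obtained from the pointwise convergence $S(t)v(\vartheta)\to v(\vartheta)$ in $H$ for a.a.\ $\vartheta$, together with dominated convergence: the integrand $\|S(t)v(\vartheta)-v(\vartheta)\|_H^2$ is bounded by $(M_1+1)^2\|v(\vartheta)\|_H^2\in L^1(0,T)$ for $t\in[0,1]$, where $M_1 := \sup_{s\in[0,1]}\|S(s)\|_{\mathscr L(H)} < \infty$ by standard $C_0$-semigroup theory (or \eqref{eq:exp-estimate-semigroup}). This settles that $(\mathcal S(t))_{t\ge0}$ is a $C_0$-semigroup; denote its generator by $G$.

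Next I would show $-\mathcal A \subseteq G$. Fix $v\in\mathsf D(\mathcal A) = L^2(0,T;\mathsf D(A))$. For a.a.\ $\vartheta$ one has $v(\vartheta)\in\mathsf D(A)$ and $t^{-1}(S(t)v(\vartheta) - v(\vartheta))\to -Av(\vartheta)$ in $H$ by definition of the generator of $(S(t))_{t\ge0}$. Using the identity $S(t)x - x = -\int_0^t S(s)Ax\,\rd s$ for $x\in\mathsf D(A)$, one obtains the uniform bound $\|t^{-1}(S(t)v(\vartheta) - v(\vartheta))\|_H \le M_1\|Av(\vartheta)\|_H$ for $t\in(0,1]$; since $\vartheta\mapsto\|Av(\vartheta)\|_H\in L^2(0,T)$, dominated convergence yields $t^{-1}(\mathcal S(t)v - v)\to -\mathcal A v$ in $L^2(0,T;H)$ as $t\downarrow0$. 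Hence $v\in\mathsf D(G)$ and $Gv = -\mathcal A v$, i.e.\ $-\mathcal A\subseteq G$, and in particular $\mathsf D(\mathcal A)\subseteq\mathsf D(G)$.

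It remains to prove the reverse inclusion $\mathsf D(G)\subseteq\mathsf D(\mathcal A)$. Fix a real $\lambda$ with $\lambda > -w$, with $w$ as in~\eqref{eq:exp-estimate-semigroup}, so that $\lambda\in\rho(-A)$ and, since $\|\mathcal S(t)\| = \|S(t)\|$ obeys the same exponential bound, also $\lambda\in\rho(G)$. Given $g\in L^2(0,T;H)$, set $v(\vartheta) := (\lambda I + A)^{-1}g(\vartheta)$; then $v\in L^2(0,T;H)$ and, because $A(\lambda I + A)^{-1} = I - \lambda(\lambda I + A)^{-1}\in\mathscr L(H)$, also $\mathcal A v\in L^2(0,T;H)$, so $v\in\mathsf D(\mathcal A)$ with $(\lambda I + \mathcal A)v = g$; thus $\lambda I + \mathcal A$ is surjective. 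Now take any $v\in\mathsf D(G)$: by surjectivity there is $w\in\mathsf D(\mathcal A)$ with $(\lambda I + \mathcal A)w = (\lambda I - G)v$, and since $\mathsf D(\mathcal A)\subseteq\mathsf D(G)$ with $G$ extending $-\mathcal A$ we have $(\lambda I + \mathcal A)w = (\lambda I - G)w$, so $(\lambda I - G)(v - w) = 0$; injectivity of $\lambda I - G$ forces $v = w\in\mathsf D(\mathcal A)$. Therefore $\mathsf D(G) = \mathsf D(\mathcal A)$ and $G = -\mathcal A$. (Alternatively, having $-\mathcal A\subseteq G$ and $\mathcal A$ closed by Lemma~\ref{lem:Bochner-space-operator}\ref{lem:Bochner-space-operator-b}, one could invoke Lemma~\ref{lem:generators} after checking that $\mathsf D(\mathcal A)$ is dense in $\mathsf D(G)$ for the graph norm; but the resolvent argument above already gives the stronger statement that the domains coincide.) The step requiring the most care is precisely this identification of the generator as $-\mathcal A$ rather than merely an extension or restriction of it: boundedness, the semigroup law, and strong continuity are routine, whereas matching $\mathsf D(\mathcal A)$ with $\mathsf D(G)$ needs the two convergence/surjectivity arguments above.
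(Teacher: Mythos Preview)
Your proof is correct. The verification of the $C_0$-semigroup axioms and the inclusion $-\mathcal A\subseteq G$ via dominated convergence match the paper's argument essentially step for step (including the use of $S(t)x-x=-\int_0^t S(s)Ax\,\rd s$ to produce an $L^2$-dominating function).

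The only substantive difference lies in how you establish the reverse inclusion $\mathsf D(G)\subseteq\mathsf D(\mathcal A)$. The paper argues that $\mathsf D(\mathcal A)$ is $\mathcal S(t)$-invariant and dense in $L^2(0,T;H)$, then invokes \cite[Proposition~II.1.7]{Engel1999} to obtain graph-norm density of $\mathsf D(\mathcal A)$ in $\mathsf D(G)$, and finally applies Lemma~\ref{lem:generators} together with the closedness of $\mathcal A$ (Lemma~\ref{lem:Bochner-space-operator}\ref{lem:Bochner-space-operator-b}) to conclude $G=-\mathcal A$. You instead run a direct resolvent argument: pick $\lambda>-w$ so that $\lambda\in\rho(-A)\cap\rho(G)$, show $\lambda I+\mathcal A$ is surjective by applying $(\lambda I+A)^{-1}$ pointwise, and use injectivity of $\lambda I-G$ to force $\mathsf D(G)=\mathsf D(\mathcal A)$. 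Your route is more self-contained (no appeal to the Engel--Nagel core criterion) and in fact yields equality of domains directly rather than going through a closure; the paper's route is the one that motivates the auxiliary Lemma~\ref{lem:generators}, which is reused elsewhere. Either way the argument is clean; you even note the paper's alternative at the end.
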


\begin{proof}
	First note that the operators $(\cS(t))_{t\ge0}$ 
	are well-defined in the sense that they map 
	elements in $L^2(0,T;H)$ to $L^2(0,T;H)$. 
	In fact, 
	Lemma~\ref{lem:Bochner-space-operator}\ref{lem:Bochner-space-operator-a} 
	shows that 
	$\norm{\mathcal S(t)}{\mathscr L(L^2(0,T;H))} 
	= \norm{S(t)}{\mathscr L(H)}$ for all $t \geq 0$. 
	
	We now check that $(\mathcal S(t))_{t\ge0}$ 
	is a $C_0$-semigroup. Clearly,
	$\mathcal S(0)=I$ and the semigroup property holds. 
	Let $M \ge 1$ and $w \in \R$ be as 
	in \eqref{eq:exp-estimate-semigroup},
	so that
	\[ 
	\forall h \in [0,1]: 
	\quad 
	\norm{ S(h) }{\mathscr L(H)} 
	\leq 
	M e^{-wh} 
	\leq 
	Me^{(-w)\vee 0} 
	=: 
	\widetilde{M}. 
	\]
	To show strong continuity, let $x \in H$, 
	$h\in(0,1)$ and note that
	\[ 
	\norm{S(h)x - x}{H}^2 
	\leq 
	2 \norm{S(h)x}{H}^2 + 2\norm{x}{H}^2
	\leq 
	2 \bigl( \widetilde M^2 + 1 \bigr) \norm{x}{H}^2. 
	\] 
	By dominated convergence, 
	$\lim_{h \downarrow 0} \norm{\mathcal S(h)v-v}{L^2(0,T;H)}=0$ 
	for $v \in L^2(0,T;H)$. 
	
	Next we investigate the infinitesimal generator 
	of $(\cS(t))_{t\ge0}$, which we denote by 
	$-\widetilde \cA$ for the time being. 
	We wish to show that $\widetilde{\cA} = \cA$. 
	Let $x \in \mathsf D(A)$ and consider
	\[
	\biggl\| \frac{1}{h}(S(h)x-x)+ Ax \biggr\|_{H}^2 
	\leq 
	2 
	\biggl\| \frac{1}{h}( S(h)x-x) \biggr\|_{H}^2 
	+ 
	2 \norm{ Ax }{H}^2. 
	\] 
	To bound the first term, 
	we use \cite[Chapter~1, Theorem~2.4(d)]{Pazy1983} 
	and note that, for every $h\in(0,1)$, 
	we obtain  
	\[ 
	\biggl\| \frac{1}{h}(S(h)x-x) \biggr\|_{H}^2 
	= 
	\biggl\| 
	\frac{1}{h}\int_0^h S(s) A x \rd s 
	\biggr\|_H^2
	\leq 
	\frac{1}{h^2} 
	\biggl| \int_0^h \norm{S(s) A x }{H}\rd s \biggr|^2 
	\leq 
	\widetilde{M}^2 \norm{Ax}{H}^2.
	\]
	The two previous displays show that, 
	for $v \in L^2(0,T;\mathsf D(A))$ 
	and all $h\in(0,1)$, 
	\[
	\int_0^T 
	\biggl\| \frac{1}{h}(S(h)v(\vartheta)-v(\vartheta)) 
	+ Av(\vartheta) \biggr\|_{H}^2 \rd \vartheta 
	\leq 
	2 \bigl( \widetilde M^2 + 1 \bigr) \norm{\cA v}{L^2(0,T;H)}^2 
	< \infty.
	\] 
	This justifies the use of the dominated convergence theorem 
	to conclude that
	\[ 
	-\widetilde{\cA}v 
	= \lim_{h\downarrow 0} \frac{1}{h}(\mathcal S(h)v-v) 
	= -\mathcal Av 
	\qquad 
	\text{in}\;\; L^2(0,T;H), 
	\] 
	i.e., $-\cA \subseteq -\widetilde{\cA}$ 
	as $v \in \mathsf D(\cA) = L^2(0,T; \mathsf D(A))$ was arbitrary.
	Since $\mathsf D(\cA)$ is dense 
	in $L^2(0,T;H)$ (by density of $\mathsf D(A)$ in $H$), 
	and $\mathcal S(t)$ maps $\mathsf D(\cA)$ 
	to itself for each $t \geq 0$, 
	Proposition~II.1.7 of \cite{Engel1999} 
	implies that $\mathsf D(\cA)$ 
	is dense in the 
	domain $\mathsf D(\widetilde\cA)$ of the generator 
	of $(\mathcal S(t))_{t\ge0}$ with respect to 
	the graph norm $\| \cdot \|_{\mathsf D(\widetilde{\cA})}$.
	Applying Lemma~\ref{lem:generators} 
	and noting that $\mathcal A$ is closed 
	by Lemma~\ref{lem:Bochner-space-operator}\ref{lem:Bochner-space-operator-b} 
	completes the proof.
\end{proof}

\subsection{Translation operators}\label{app:subsec:translation} 

\begin{lemma}\label{lem:translations} 
	Let $U$ be a real and separable Hilbert space and
	let $J := (0,T)$ for some $T \in (0,\infty]$.
	For every $u \in L^2(J; U)$ we have that 
	\[
	\lim_{h\to 0} 
	\norm{ u(\,\cdot\, + h) - u }{ L^2(J_h ; U) } 
	= 0.
	\] 
	Here, we define for each $h \in \R$ the interval
	$J_h := ((-h) \vee 0, T\wedge (T-h)) \subseteq J$ and
	$u(\,\cdot\, + h) \from J_h \to U$ 
	denotes the function $u$ shifted to the left 
	by an increment $h$. 
\end{lemma}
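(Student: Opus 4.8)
The plan is to follow the classical density argument for continuity of translation on $L^p$, adapted to the Bochner-space and half-line setting. First I would fix $u \in L^2(J;U)$ and let $\eps > 0$. The key structural fact is that simple functions of the form $\sum_{k=1}^N \mathbf 1_{(a_k,b_k)} \otimes x_k$, with $0 < a_k < b_k < T$ (so that each has compact support in the \emph{open} interval $J$) and $x_k \in U$, are dense in $L^2(J;U)$; this follows since strongly measurable functions are a.e.\ limits of measurable simple functions, combined with the usual truncation/approximation in $L^2$ (and, when $T = \infty$, cutting off the tail). Hence I may choose such a $g$ with $\norm{u - g}{L^2(J;U)} < \eps$.

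Next I would estimate, for $h \in \R$ with $|h|$ small,
\[
\norm{u(\,\cdot\,+h) - u}{L^2(J_h;U)}
\leq
\norm{u(\,\cdot\,+h) - g(\,\cdot\,+h)}{L^2(J_h;U)}
+
\norm{g(\,\cdot\,+h) - g}{L^2(J_h;U)}
+
\norm{g - u}{L^2(J_h;U)}.
\]
The first term is bounded by $\norm{u-g}{L^2(J;U)} < \eps$, because the change of variables $\vartheta \mapsto \vartheta + h$ maps $J_h$ into $J$ and is measure-preserving; the third term is likewise bounded by $\norm{u-g}{L^2(J;U)} < \eps$ since $J_h \subseteq J$. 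For the middle term, it suffices to treat a single building block $\mathbf 1_{(a,b)} \otimes x$ with $0 < a < b < T$: for $|h|$ small one computes
\[
\norm{(\mathbf 1_{(a,b)} \otimes x)(\,\cdot\,+h) - \mathbf 1_{(a,b)} \otimes x}{L^2(J_h;U)}^2
=
\norm{x}{U}^2 \, \lambda\bigl( (a-h,b-h) \,\triangle\, (a,b) \bigr)
\leq
2|h| \, \norm{x}{U}^2,
\]
which tends to $0$ as $h \to 0$; summing the finitely many blocks (with an extra constant from the triangle inequality or Cauchy--Schwarz) shows $\norm{g(\,\cdot\,+h) - g}{L^2(J_h;U)} \to 0$. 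Therefore $\limsup_{h\to 0} \norm{u(\,\cdot\,+h) - u}{L^2(J_h;U)} \leq 2\eps$, and since $\eps > 0$ was arbitrary the claim follows.

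I do not expect any genuine obstacle here; the only points requiring a little care, rather than difficulty, are (i) verifying that compactly-supported-in-$J$ simple functions are dense even when $T = \infty$, for which the tail cutoff must be invoked, and (ii) keeping track that the domain of integration is the shrinking interval $J_h$ rather than $J$, so that no boundary contributions are missed — this is exactly why the estimates above only ever \emph{restrict} to subintervals of $J$ and use translation-invariance of Lebesgue measure in the direction that enlarges the domain. No result beyond elementary measure theory and the definition of the Bochner space $L^2(J;U)$ is needed.
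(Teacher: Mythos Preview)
Your proof is correct and follows essentially the same density argument as the paper: approximate $u$ by a function from a dense subclass with compact support in $J$, verify the claim on that subclass, and use the uniform contractivity of translation to pass to the limit. The only (inessential) difference is the choice of dense subclass: you use compactly supported step functions and compute the $L^2$-norm via the symmetric difference of intervals, whereas the paper uses $C_c^\infty(J;U)$ and uniform continuity; both yield the result with the same structure and no additional input.
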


\begin{proof}
	Let $v \in C^\infty_c(J; U)$ and 
	fix an arbitrary 
	$\varepsilon\in(0,\infty)$. 
	Choose a compact 
	interval~$[a,b]\subset [0,\infty)$ 
	such that 
	$\operatorname{supp}\bigl( 
	v(\,\cdot\, + h) - v|_{J_h} 
	\bigr) \subseteq [a,b]$ 
	for all  
	$h\in[-1,1]$. 
	By the uniform continuity of~$v$, 
	there exists a $\delta\in(0,1)$ such that, 
	for all $h\in(-\delta,\delta)$ 
	and every $t\in J_h$, 
	the estimate 
	$\norm{v(t+h)-v(t)}{U} < \sqrt{\varepsilon/(b-a)}$ 
	holds. Thus,
	\[ 
	\norm{ v(t+h)-v(t) }{L^2(J_h;U)}^2 
	< 
	\varepsilon 
	\quad 
	\forall h\in (-\delta,\delta). 
	\] 
	This shows the desired convergence 
	for functions in the space $C^\infty_c(J; U)$,
	which is dense in $L^2(J;U)$;
	indeed, since the set of $U$-valued measurable simple functions is 
	dense in $L^2(J; U)$ \cite[Lemma~1.2.19(1)]{AnalysisInBanachSpacesI}, 
	it suffices to note
	that the scalar-valued function space $C_c^\infty(J)$ is dense in $L^2(J)$
	\cite[Corollary~2.30]{AdamsFournier2003}. 
	Combined with the fact that 
	the translation operator is contractive  
	from $L^2(J; U)$ to $L^2(J_h; U)$ 
	(and thus bounded, uniformly in $h$), 
	the result extends 
	to $L^2(J; U)$.
\end{proof}

\begin{proposition}\label{prop:translation-semigroup}
	Let $T\in(0,\infty)$. 
	The family 
	$(\mathcal T(t))_{t\ge0} \subseteq \mathscr L(L^2(0,T;H))$ 
	defined in~\eqref{eq:def-cT}  
	is a $C_0$-semigroup whose infinitesimal generator 
	is given by $-\partial_t$, 
	where $\partial_t$ is the Bochner--Sobolev 
	vector-valued weak derivative 
	on $\mathsf D(\partial_t) = H^1_{0,\{0\}}(0,T; H)$.
\end{proposition}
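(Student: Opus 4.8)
The plan is to verify the three defining properties of a $C_0$-semigroup for $(\cT(t))_{t\ge0}$ and then identify its generator. First I would establish that each $\cT(t)$ is a well-defined bounded (in fact contractive) operator on $L^2(0,T;H)$: for $v \in L^2(0,T;H)$, the zero-padded shift $[\cT(t)v](\vartheta) = \widetilde v(\vartheta - t)$ satisfies $\norm{\cT(t)v}{L^2(0,T;H)}^2 = \int_t^T \norm{v(\vartheta-t)}{H}^2\rd\vartheta \le \int_0^T \norm{v(\vartheta)}{H}^2\rd\vartheta$ by the substitution $\vartheta \mapsto \vartheta - t$ (and the integral is empty for $t \ge T$, giving $\cT(t) = 0$). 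The identity $\cT(0) = I$ is immediate, and the semigroup property $\cT(s)\cT(t) = \cT(s+t)$ follows by unwinding the definition of the zero-extension: shifting by $t$ then by $s$ (re-padding with zeros in between) equals shifting by $s+t$, which one checks pointwise in $\vartheta$.

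Next I would prove strong continuity. Since $\|\cT(t)\|_{\mathscr L(L^2(0,T;H))} \le 1$ uniformly, by a standard density argument it suffices to check $\lim_{t\downarrow 0}\norm{\cT(t)v - v}{L^2(0,T;H)} = 0$ on a dense subset — the natural choice is $C_c^\infty((0,T);H)$ (or even $C_c((0,T);H)$), which is dense in $L^2(0,T;H)$ as used in the proof of Lemma~\ref{lem:translations}. For such $v$, uniform continuity of $v$ on its compact support gives the convergence directly; alternatively one can invoke Lemma~\ref{lem:translations} itself, noting that $\norm{\cT(t)v - v}{L^2(0,T;H)}$ is controlled by $\norm{v(\,\cdot\,-t) - v}{L^2(J_{-t};U)}$ plus the mass of $v$ on the vanishing interval near $0$, both of which tend to zero.

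The main task is identifying the generator. I would denote the generator by $-B$ and first show $-\partial_t \subseteq -B$, i.e.\ that every $u \in H^1_{0,\{0\}}(0,T;H)$ lies in $\mathsf D(B)$ with $Bu = \partial_t u$. For $u \in C_c^\infty((0,T);H)$ this is a direct computation: $\frac{1}{h}(\cT(h)u - u)(\vartheta) = \frac{1}{h}(\widetilde u(\vartheta - h) - u(\vartheta))$ converges to $-u'(\vartheta)$ in $H$ pointwise and is dominated (using $\|u'\|_{L^\infty}$ and the support of $u$) so that the difference quotients converge in $L^2(0,T;H)$; then one extends to all of $H^1_{0,\{0\}}(0,T;H) = \overline{C_c^\infty((0,T);H)}^{\,H^1(0,T;H)}$ using that the generator is closed and that the maps $u \mapsto u$, $u\mapsto \partial_t u$ are $L^2$-continuous on $H^1(0,T;H)$. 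For the reverse inclusion $\mathsf D(B) \subseteq H^1_{0,\{0\}}(0,T;H)$, I would argue that $\mathsf D(\partial_t) = H^1_{0,\{0\}}(0,T;H)$ is dense in $\mathsf D(B)$ with respect to the graph norm — since $C_c^\infty((0,T);H)$ is a core for the translation semigroup generator (as $\cT(t)$ maps it into $H^1_{0,\{0\}}(0,T;H)$, invoking \cite[Proposition~II.1.7]{Engel1999}) — and then apply Lemma~\ref{lem:generators} to conclude $B = \overline{\partial_t}$; since $\partial_t$ on $H^1_{0,\{0\}}(0,T;H)$ is already closed (as a weak-derivative operator, its graph being closed in $L^2 \times L^2$), this yields $B = \partial_t$ exactly.

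The step I expect to be the main obstacle is the boundary-condition bookkeeping at $t = 0$: one must be careful that the zero-padding in the definition of $\cT(t)$ is precisely what forces the generator's domain to be $H^1_{0,\{0\}}(0,T;H)$ rather than all of $H^1(0,T;H)$ — intuitively, a function in $\mathsf D(B)$ must have its left-shift (with a genuine zero inserted just to the left of $0$) remain close in $L^2$, which is exactly the vanishing-trace condition at the left endpoint. Making this rigorous requires either a careful trace argument or the clean route via Lemma~\ref{lem:generators} together with the observation that $C_c^\infty((0,T);H)$ is a $\cT(t)$-invariant core, which is why I would route the proof through that lemma rather than computing the domain by hand.
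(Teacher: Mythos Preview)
Your approach is essentially the same as the paper's: verify the semigroup axioms directly, establish strong continuity via Lemma~\ref{lem:translations}, show the inclusion $-\partial_t|_{D} \subseteq -B$ on a suitable dense subspace $D$, and then invoke \cite[Proposition~II.1.7]{Engel1999} together with Lemma~\ref{lem:generators} to identify the domain. The paper differs only in that it establishes the inclusion via the integral identity $\cT(t)v - v = -\int_0^t \cT(s)\partial_\vartheta v\,\rd s$ rather than a direct difference-quotient computation, but this is cosmetic.

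There is, however, a small but genuine gap in your core argument. You propose to apply \cite[Proposition~II.1.7]{Engel1999} to $D = C_c^\infty((0,T);H)$, justifying this by ``$\cT(t)$ maps it into $H^1_{0,\{0\}}(0,T;H)$''. But the hypothesis of that proposition is that $D$ itself be \emph{invariant} under the semigroup, not merely that it land in some larger subspace of the generator's domain. And $C_c^\infty((0,T);H)$ is \emph{not} $\cT$-invariant: if $v$ has support $[a,b]\subset(0,T)$ and $t > T-b$, then $\cT(t)v$ does not vanish near the right endpoint $T$, so it fails to be compactly supported in the open interval $(0,T)$.

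The fix is immediate. Either work with $D = C_c^\infty((0,T];H)$ (the half-open interval, as the paper does), which \emph{is} $\cT$-invariant and whose $H^1(0,T;H)$-closure is still $H^1_{0,\{0\}}(0,T;H)$; or verify directly that $H^1_{0,\{0\}}(0,T;H)$ itself is $\cT$-invariant (which it is, since $u(0)=0$ in the trace sense guarantees $\cT(t)u$ is $H^1$ with vanishing trace at $0$) and apply Proposition~II.1.7 to that space. With either correction your argument goes through.
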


\begin{proof}
	For each $t \geq 0$, it is clear that $\mathcal T(t)$ 
	is a well-defined contractive linear map on $L^2(0,T;H)$. 
	Furthermore, it follows readily from 
	the definition~\eqref{eq:def-cT} that $\mathcal T(0) = I$ 
	and that the semigroup property is satisfied, 
	since for all $s,t\geq 0$, $v \in L^2(0,T;H)$ 
	and a.a.\ $\vartheta\in[0,T]$ we have that 
	\[ 
	[\mathcal T(t)\mathcal T(s)v](\vartheta) 
	= \widetilde{[\mathcal T(s)v]}(\vartheta-t) 
	= \widetilde v(\vartheta-t-s)
	= 
	[\mathcal T(t+s)v](\vartheta).
	\] 
	The strong continuity follows 
	from 
	Lemma~\ref{lem:translations} 
	for $h \uparrow 0$. 
	
	Next, we turn to  
	the generator of $(\mathcal T(t))_{t\geq 0}$. 
	To this end, let an arbitrary ${v \in C_{c}^\infty((0,T];H)}$ 
	be given and note that its extension 
	by zero to $(-\infty,T]$, again denoted by $\widetilde v$, 
	is continuously differentiable 
	with classical (and hence weak) derivative 
	${\partial_\vartheta \widetilde v = \widetilde{\partial_\vartheta v}}$ 
	by the compact support of $v$ in $(0,T]$. 
	Fix an arbitrary $\vartheta\in[0,T]$. 
	The function $t \mapsto \widetilde v(\vartheta-t)$ 
	is continuously differentiable on $[0,\infty)$ 
	with derivative $t \mapsto -\widetilde{\partial_\vartheta v}(\vartheta-t)$ 
	by the chain rule. 
	Thus, the fundamental theorem of calculus~gives 
	\[
	\mathcal T(t)v(\vartheta) - v(\vartheta) 
	= \widetilde v(\vartheta-t) - \widetilde v(\vartheta)
	= -\int_0^t \widetilde{\partial_\vartheta v}(\vartheta-s) \rd s
	= -\int_0^t [\mathcal T(s)\partial_\vartheta v](\vartheta) \rd s
	\]
	for every $t \geq 0$.
	It follows that 
	\[ 
	\mathcal T(t)v-v = -\int_0^t \mathcal T(s)\partial_\vartheta v \rd s.
	\] 
	Furthermore, 
	we know from \cite[Chapter~1, Theorem~2.4(b)]{Pazy1983} 
	that if $R$ denotes the generator of $(\mathcal T(t))_{t\ge0}$, 
	then we have 
	\[
	\mathcal T(t)v-v=R \int_0^t \mathcal T(s)v \rd s,
	\]
	hence, combining the previous two displays yields 
	\begin{equation}\label{eq:proof-generator-T}
	R \int_0^t \mathcal T(s)v \rd s 
	= 
	-\int_0^t \mathcal T(s)\partial_\vartheta v\rd s. 
	\end{equation}
	Set $v_t := \frac{1}{t}\int_0^t \cT(s)v \rd s$ for $t \in (0,\infty)$. 
	It follows that $v_t \to \cT(0)v = v$ in $L^2(0,T;H)$ 
	as $t \downarrow 0$, 
	see e.g.\ \cite[Chapter~1, Theorem~2.4(a)]{Pazy1983}. 
	Dividing both sides of \eqref{eq:proof-generator-T} 
	by $t \in (0,\infty)$ and passing to the limit $t \downarrow 0$, 
	one obtains
	\[
	Rv_t 
	= 
	R \, \frac{1}{t}\int_0^t \mathcal T(s)v \rd s 
	= 
	- \frac{1}{t}\int_0^t \mathcal T(s)\partial_\vartheta v\rd s \to 
	- \mathcal T(0)\partial_\vartheta v = -\partial_\vartheta v.
	\] 
	Since $R$ is assumed to be the generator of a $C_0$-semigroup, 
	it is in particular closed by \cite[Proposition~II.1.4]{Engel1999}. 
	Combined with the convergence 
	$v_t \to v$ and $Rv_t \to -\partial_\vartheta v$ 
	as $t \downarrow 0$, 
	this yields $v \in \mathsf D(R)$ and ${Rv = -\partial_\vartheta v}$, 
	hence $-\partial_\vartheta|_{C_c^\infty((0,T];H)} \subseteq R$.
	
	As $C_{c}^\infty((0,T]; H)$ is dense in $L^2(0,T; H)$  
	and $\mathcal T(t)C_{c}^\infty((0,T]; H) \subseteq C_{c}^\infty((0,T]; H)$  
	for all $t \geq 0$, we have that $C_{c}^\infty((0,T]; H)$ is dense 
	in $\mathsf D(R)$ with respect to the graph norm of $R$ 
	by \cite[Proposition~II.1.7]{Engel1999}. 
	It is evident from the respective definitions that 
	$\norm{\cdot}{\mathsf D(R)} \eqsim \norm{\cdot}{H^1(0,T;H)}$.
	These observations together imply 
	\[ 
	\mathsf D(R) 
	= 
	\clos{C_c^\infty((0,T];H)}^{\mathsf D(R)} 
	= 
	\clos{C_c^\infty((0,T];H)}^{H^1(0,T;H)} 
	= 
	H^1_{0,\{0\}}(0,T; H). 
	\qedhere
	\] 
\end{proof}

\subsection{The proof of 
	\texorpdfstring{Lemma~\ref{lem:adjoint-neg-frac}}{Lemma 3.6}}
\label{app:subsec:proof-lemma-adjoint} 

\begin{proof}[Proof of Lemma~\ref{lem:adjoint-neg-frac}]
	Analogously to~\cite[Proposition~5.9]{DaPrato2014} 
	it can be shown that the operator defined
	by the right-hand side of~\eqref{eq:adjoint-neg-parabolic-formula} 
	maps functions in $L^2(0,T;H)$ to $C_{0,\{T\}}([0,T];H)$. 
	Now we prove the identity in \eqref{eq:adjoint-neg-parabolic-formula}. 
	Let ${f,g \in L^2(0,T;H)}$ be arbitrary. 
	By \eqref{eq:neg-parabolic-formula} 
	and by continuity of the inner product 
	$(\,\cdot\,,\,\cdot\,)_H$, 
	we find that
	\begin{align} 
	(\cB^{-\gamma} &f, g)_{L^2(0,T;H)} 
	= 
	\int_0^T \bigl( [\cB^{-\gamma}f](t), g(t) \bigr)_H \rd t 
	\notag 
	\\
	&= 
	\int_0^T 
	\biggl( 
	\frac{1}{\Gamma(\gamma)} 
	\int_0^t (t-s)^{\gamma-1} S(t-s) f(s) \rd s , g(t) 
	\biggr)_H \rd t 
	\notag 
	\\
	&= 
	\frac{1}{\Gamma(\gamma)} 
	\int_0^T \int_0^T 
	\bigl( \mathbf 1_{(0,t)}(s) (t-s)^{\gamma-1} S(t-s) f(s), g(t) \bigr)_H 
	\rd s \rd t . 
	\label{eq:proof:adjoint-neg-frac} 
	\end{align}
	Next, we would like to use Fubini's theorem 
	to exchange the order of integration. 
	By \eqref{eq:exp-estimate-semigroup} 
	the semigroup $(S(t))_{t\geq 0}$ is uniformly bounded 
	on the compact interval~$[0,T]$, 
	\[
	\widetilde{M}_T 
	:= 
	\sup\nolimits_{t\in[0,T]} \norm{ S(t) }{\mathscr L(H)} 
	\leq  
	M e^{(-wT) \vee 0}  
	< \infty.  
	\]
	We then use the 
	Cauchy--Schwarz inequality on $H$ and on $L^2(0,T)$ 
	as well as the fact that 
	$\gamma > \tfrac{1}{2}$ 
	to check that 
	\begin{align*}
	&\int_0^T \int_0^T 
	\bigl| 
	\bigl( \mathbf 1_{(0,t)}(s) (t-s)^{\gamma-1} S(t-s) f(s), g(t) \bigr)_H 
	\bigr| \rd s \rd t 
	\\
	&\quad\leq 
	\widetilde{M}_T 
	\int_0^T \int_0^t 
	(t-s)^{\gamma-1} \norm{ f(s) }{H}  \rd s 
	\;
	\norm{ g(t) }{H} \rd t 
	\\
	&\quad\leq 
	\widetilde{M}_T \, 
	\norm{f}{L^2(0,T;H)} 
	\int_0^T 
	\biggl(\int_0^t (t-s)^{2\gamma-2} \rd s \biggr)^{\nicefrac{1}{2}}
	\norm{ g(t) }{H} \rd t 
	\\
	&\quad = 
	\tfrac{ \widetilde{M}_T }{ \sqrt{2\gamma - 1} } 
	\, 
	\norm{f}{L^2(0,T;H)} 
	\int_0^T t^{\gamma-\frac{1}{2}} \norm{g(t)}{H} \rd t 
	\leq 
	\tfrac{ \widetilde{M}_T T^\gamma}{\sqrt{2\gamma(2\gamma - 1)}}   
	\,
	\norm{f}{L^2(0,T;H)} \norm{g}{L^2(0,T;H)} 
	\end{align*}
	is finite.  
	This justifies changing the order of integration 
	in \eqref{eq:proof:adjoint-neg-frac}, which gives 
	\begin{align*}
	(\cB^{-\gamma}f, g)_{L^2(0,T;H)} 
	&= 
	\frac{1}{\Gamma(\gamma)} 
	\int_0^T \int_0^T 
	\bigl( \mathbf 1_{(s,T)}(t) (t-s)^{\gamma-1} S(t-s) f(s), g(t) \bigr)_H 
	\rd t \rd s 
	\\
	&= 
	\frac{1}{\Gamma(\gamma)} 
	\int_0^T \int_0^T 
	\bigl( f(s), \mathbf 1_{(s,T)}(t) (t-s)^{\gamma-1} \dual{[S(t-s)]} g(t) \bigr)_H 
	\rd t \rd s 
	\\
	&= 
	\int_0^T 
	\biggl( f(s), \frac{1}{\Gamma(\gamma)} 
	\int_s^T \mathbf (t-s)^{\gamma-1} \dual{[S(t-s)]} g(t) \rd t 
	\biggr)_H \rd s, 
	\end{align*}
	where we interchanged integrals and inner products 
	as before in the last step.  
\end{proof} 

\subsection{H\"{o}lder continuity and weak derivatives}
\label{app:subsec:orbital} 

Recall from Section~\ref{sec:prelims} 
that $(W(t))_{t\geq 0}$ denotes 
an $H$-valued cylindrical Wiener process. 

\begin{lemma}\label{lemma:I-1}
	Let 
	Assumptions~\textup{\ref{assumption:A}\ref{assumption:A:semigroup},\ref{assumption:A:bdd-analytic}} 
	be satisfied, 
	let $a \in \bigl(-\frac{1}{2},\infty\bigr)$, ${b, \sigma \in [0,\infty)}$ 
	and $\tau\in\bigl( 0,a + \frac{1}{2} \bigr] \cap (0,1)$.  
	If $\sigma \neq 0$, then suppose moreover that
	Assumption~\textup{\ref{assumption:A}\ref{assumption:A:bddly-inv}} holds. 
	Let 
	$\Phi_{a,b}\from(0,\infty)\to\mathscr L(H; \Hdot{\sigma})$ 
	be defined by \eqref{eq:def:Phi-ab} 
	and let $J:=(0,T)$ 
	for some ${T \in (0,\infty]}$. 
	Then, 
	for all $p \in [1,\infty)$, 
	$t \in [0,T)$ and $h \in J$ 
	with $h\leq T-t$, 
	\begin{align*}  
	\biggl\| 
	\int_0^t \bigl[ \Phi_{a,b}(t+h-s) - \Phi_{a,b}(t-s) \bigr] \rd W(s) 
	&\biggr\|_{L^p(\Omega;\dot H_A^\sigma)} 
	\\
	&\lesssim_{(p,a,\tau)} 
	h^\tau 
	\bigl\| A^{-a-\frac{1}{2} +b +\tau} 
	Q^{\frac{1}{2}} \bigr\|_{\mathscr L_2(H;\Hdot \sigma)}. 
	\end{align*}
\end{lemma}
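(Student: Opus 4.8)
The plan is to pass from the stochastic integral to a deterministic integral estimate via the Burkholder--Davis--Gundy inequality, and then to use the smoothing of the (uniformly bounded) analytic semigroup in order to shift the fractional power $A^{-a-\frac{1}{2}+b+\tau}$ onto $Q^{\frac{1}{2}}$. First, since the integrand is deterministic, the Burkholder--Davis--Gundy inequality \cite[Theorem~6.1.2]{LiuRockner2015} — together with the embedding \eqref{eq:Lp-embedding} when $p\in[1,2)$ — gives, for every $p\in[1,\infty)$,
\[
\biggl\| \int_0^t \bigl[ \Phi_{a,b}(t+h-s) - \Phi_{a,b}(t-s) \bigr] \rd W(s) \biggr\|_{L^p(\Omega;\dot H_A^\sigma)}^2 \lesssim_p \int_0^t \bigl\| \Phi_{a,b}(t+h-s) - \Phi_{a,b}(t-s) \bigr\|_{\mathscr L_2(H;\dot H_A^\sigma)}^2 \rd s .
\]
After the substitution $r := t-s$ it remains to prove the deterministic bound $\int_0^t \| \Phi_{a,b}(r+h) - \Phi_{a,b}(r) \|_{\mathscr L_2(H;\dot H_A^\sigma)}^2 \rd r \lesssim_{(a,\tau)} h^{2\tau} \, \| A^{-a-\frac{1}{2}+b+\tau} Q^{\frac{1}{2}} \|_{\mathscr L_2(H;\dot H_A^\sigma)}^2$.

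The two pointwise ingredients are obtained as follows. Write $R := A^{-a-\frac{1}{2}+b+\tau} Q^{\frac{1}{2}}$ and note that $\| A^{\sigma/2} R \|_{\mathscr L_2(H)} = \| A^{-a-\frac{1}{2}+b+\tau} Q^{\frac{1}{2}} \|_{\mathscr L_2(H;\dot H_A^\sigma)}$, which I may assume finite (otherwise there is nothing to prove). Using $S(s)=S(s/2)^2$, the commutation of $S(s/2)$ with fractional powers of $A$, and the instantaneous smoothing $S(s/2)H\subseteq\bigcap_{k\in\N}\mathsf D(A^k)$, one has the factorization $A^{\sigma/2}\Phi_{a,b}(s) = s^a \bigl[ A^{a+\frac{1}{2}-\tau}S(s/2) \bigr] \bigl[ S(s/2) A^{\sigma/2} R \bigr]$ for every $s\in(0,\infty)$. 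Since $a+\frac{1}{2}-\tau\geq 0$ by hypothesis, \eqref{eq:analytic-est-1} gives $\| A^{a+\frac{1}{2}-\tau}S(s/2) \|_{\mathscr L(H)} \lesssim_{(a,\tau)} s^{-(a+\frac{1}{2}-\tau)}$, while $\| S(s/2) A^{\sigma/2} R \|_{\mathscr L_2(H)} \lesssim \| A^{\sigma/2} R \|_{\mathscr L_2(H)}$ by uniform boundedness of the semigroup, so that submultiplicativity of the Hilbert--Schmidt norm yields
\[
\| \Phi_{a,b}(s) \|_{\mathscr L_2(H;\dot H_A^\sigma)} \lesssim_{(a,\tau)} s^{\tau-\frac{1}{2}} \, \| R \|_{\mathscr L_2(H;\dot H_A^\sigma)} , \qquad s\in(0,\infty) .
\]
By Lemma~\ref{lem:diffbty-of-Phi}, $\Phi_{a,b}$ is differentiable on $(0,\infty)$ with $\Phi_{a,b}' = a\,\Phi_{a-1,b} - \Phi_{a,b+1}$, and applying the displayed bound to each summand (admissible because $\tau\leq a+\frac{1}{2}$) gives $\| \Phi_{a,b}'(u) \|_{\mathscr L_2(H;\dot H_A^\sigma)} \lesssim_{(a,\tau)} u^{\tau-\frac{3}{2}} \, \| R \|_{\mathscr L_2(H;\dot H_A^\sigma)}$ for all $u\in(0,\infty)$.

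With these in hand I would split $(0,t) = (0,t\wedge h) \cup (t\wedge h, t)$, the second interval being empty when $t\leq h$. On the first interval, bounding the difference by $\| \Phi_{a,b}(r+h) \|_{\mathscr L_2(H;\dot H_A^\sigma)} + \| \Phi_{a,b}(r) \|_{\mathscr L_2(H;\dot H_A^\sigma)}$ and using the first pointwise bound reduces the integral to $\| R \|_{\mathscr L_2(H;\dot H_A^\sigma)}^2 \int_0^h \bigl[(r+h)^{2\tau-1}+r^{2\tau-1}\bigr]\rd r \lesssim_\tau h^{2\tau}\| R \|_{\mathscr L_2(H;\dot H_A^\sigma)}^2$, the $r$-integral converging near the origin because $\tau>0$. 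On the second interval, writing $\Phi_{a,b}(r+h)-\Phi_{a,b}(r) = \int_r^{r+h}\Phi_{a,b}'(u)\rd u$ and using the derivative bound together with the monotonicity of $u\mapsto u^{\tau-\frac{3}{2}}$ gives $\| \Phi_{a,b}(r+h)-\Phi_{a,b}(r) \|_{\mathscr L_2(H;\dot H_A^\sigma)} \lesssim_{(a,\tau)} h\, r^{\tau-\frac{3}{2}}\| R \|_{\mathscr L_2(H;\dot H_A^\sigma)}$, so that the integral reduces to $h^2\| R \|_{\mathscr L_2(H;\dot H_A^\sigma)}^2\int_h^\infty r^{2\tau-3}\rd r\lesssim_\tau h^{2\tau}\| R \|_{\mathscr L_2(H;\dot H_A^\sigma)}^2$, the $r$-integral converging at infinity because $\tau<1$. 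Adding the two contributions, taking square roots, and inserting the result into the Burkholder--Davis--Gundy estimate finishes the proof.

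The hard part will be the factorization in Step~2: the power $A^{-a-\frac{1}{2}+b+\tau}$ may be positive, hence unbounded, and it has to be transferred onto $Q^{\frac{1}{2}}$ without leaving $\mathscr L_2(H;\dot H_A^\sigma)$. This is handled exactly as in the proof of \eqref{eq:moving-b-to-a}: one splits $S(s)=S(s/2)^2$, places the \emph{free} nonnegative power $A^{a+\frac{1}{2}-\tau}$ on one semigroup factor (controlled by \eqref{eq:analytic-est-1}), and keeps $S(s/2)A^{\sigma/2}R$, which is Hilbert--Schmidt whenever the right-hand side of the asserted inequality is finite, on the other; the smoothing $S(s/2)H\subseteq\bigcap_{k\in\N}\mathsf D(A^k)$ and the commutation of $S(s/2)$ with powers of $A$ make the rearrangement rigorous, and the uniform boundedness built into Assumption~\ref{assumption:A}\ref{assumption:A:bdd-analytic} is what permits $T=+\infty$. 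The three standing restrictions on $\tau$ then enter with distinct roles: $\tau\leq a+\frac{1}{2}$ keeps the smoothing exponent $a+\frac{1}{2}-\tau$ nonnegative, $\tau>0$ makes $\int_0 r^{2\tau-1}\rd r$ converge near the origin, and $\tau<1$ makes $\int^\infty r^{2\tau-3}\rd r$ converge at infinity; when $\sigma\neq0$, the invertibility of $A$ (Assumption~\ref{assumption:A}\ref{assumption:A:bddly-inv}) is additionally needed for $\dot H_A^\sigma$ and the powers $A^{\sigma/2}$ to be well defined.
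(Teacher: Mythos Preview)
Your proof is correct. The starting point---the BDG reduction, the change of variable, and the factorization of $A^{\sigma/2}\Phi_{a,b}(s)$ so as to peel off $A^{a+\frac{1}{2}-\tau}S(\,\cdot\,)$ via \eqref{eq:analytic-est-1} and leave $A^{\sigma/2}R$ with $R=A^{-a-\frac{1}{2}+b+\tau}Q^{\frac{1}{2}}$---coincides with the paper's; the difference lies in how the resulting integral $\int_0^t\|\Phi_{a,b}(r+h)-\Phi_{a,b}(r)\|_{\mathscr L_2(H;\Hdot\sigma)}^2\rd r$ is estimated. The paper applies the fundamental theorem of calculus $\Phi_{a,b}(u+h)-\Phi_{a,b}(u)=\int_0^h\Phi_{a,b}'(u+r)\rd r$ uniformly on all of $(0,t)$ and then swaps the $r$- and $u$-integrals by Minkowski's integral inequality, after which $\int_0^h\bigl[\int_0^t(u+r)^{2\tau-3}\rd u\bigr]^{1/2}\rd r\leq\int_0^h\bigl[\int_r^\infty\bar u^{\,2\tau-3}\rd\bar u\bigr]^{1/2}\rd r$ yields the factor $h^\tau$ directly, with no case distinction. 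Your splitting $(0,t\wedge h)\cup(t\wedge h,t)$ trades Minkowski for a more elementary monotonicity bound on the second piece and a direct triangle-inequality bound on the first; the three restrictions on $\tau$ enter in exactly the same places in both arguments.

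One small wording remark: when you write ``applying the displayed bound to each summand'', you are not literally substituting $(a,b)\leadsto(a-1,b)$ or $(a,b+1)$ into that bound with a new $R$, but rather redoing the same factorization with the \emph{same} target operator $R=A^{-a-\frac{1}{2}+b+\tau}Q^{\frac{1}{2}}$; this is precisely what keeps the admissibility condition at $a+\frac{1}{2}-\tau\geq 0$ (rather than the stronger $a-\frac{1}{2}-\tau\geq 0$) and produces the common exponent $u^{\tau-\frac{3}{2}}$ for both summands of $\Phi_{a,b}'$.
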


\begin{proof}
	We first use the Burkholder--Davis--Gundy inequality 
	(combined with nestedness of the 
	$L^p$ spaces if $p<2$) 
	to bound the quantity of interest $I_\star$, 
	\begin{align}
	I_\star 
	:= 
	\biggl\| 
	\int_0^t \bigl[ \Phi_{a,b}(t+h-s) &- \Phi_{a,b}(t-s) \bigr] 
	\rd W(s) 
	\biggr\|_{L^p(\Omega;\dot H_A^\sigma)} 
	\notag 
	\\ 
	&\lesssim_p 
	\biggl[ 
	\int_0^t 
	\norm{ \Phi_{a,b}(t+h-s) - \Phi_{a,b}(t-s) }{\mathscr L_2(H;\dot H_A^\sigma)}^2 
	\rd s \biggr]^{\nicefrac{1}{2}} 
	\notag 
	\\ 
	&= 
	\biggl[ 
	\int_0^t 
	\norm{ \Phi_{a,b}(u+h) - \Phi_{a,b}(u) }{\mathscr L_2(H;\dot H_A^\sigma)}^2 
	\rd u \biggr]^{\nicefrac{1}{2}}, 
	\label{eq:proof:emma:I-2:1} 
	\end{align}
	where we also applied  
	the change of variables $u := t-s$. 
	For every $u \in (0,t)$, 
	Lemma~\ref{lem:diffbty-of-Phi} implies that
	$\Phi_{a,b}( u + \,\cdot\, )$ is differentiable as a function from 
	$(0,h)$ to $\mathscr L(H;\Hdot\sigma)$ with derivative 
	$\Phi_{a,b}'( u + \,\cdot\, )$ 
	and, moreover,
	$r \mapsto \norm{\Phi'_{a,b}(u+r)}{\mathscr L(H;\Hdot\sigma)}$ 
	is bounded on $[0,h]$. 
	We conclude that 
	$\Phi_{a,b}( u + \,\cdot\, ) \in H^1(0,h; \mathscr L(H;\Hdot\sigma))$,
	so that by \cite[\S5.9.2, Theorem~2]{Evans2010} 
	the identity 
	\[ 
	\Phi_{a,b}(u+h) - \Phi_{a,b}(u) 
	= 
	\int_0^h \Phi'_{a,b}(u+r) \rd r 
	\] 
	holds 
	as operators in $\mathscr L(H; \Hdot\sigma)$.
	We now estimate \eqref{eq:proof:emma:I-2:1} 
	by exploiting this relation, 
	moving the norm inside the integral,  
	applying formula \eqref{eq:deriv-operator-suggestive} 
	for the derivative of~$\Phi_{a,b}$ 
	and using the triangle and Minkowski inequalities, 
	which gives 
	\begin{align}
	I_\star 
	&\lesssim_p 
	\biggl[ 
	\int_0^t 
	\biggl|\int_0^h 
	\norm{\Phi_{a,b}'(u+r)}{\mathscr L_2(H;\dot H_A^\sigma)} 
	\rd r\biggr|^2 \rd u 
	\biggr]^{\nicefrac{1}{2}}
	\notag 
	\leq 
	\biggl[ \int_0^t \bigl| |a| F(u) + G(u) \bigr|^2 \rd u 
	\biggr]^{\nicefrac{1}{2}} 
	\\
	&\leq 
	|a| 
	\biggl[ \int_0^t |F(u)|^2 \rd u \biggr]^{\nicefrac{1}{2}} 
	+ 
	\biggl[ 
	\int_0^t |G(u)|^2 \rd u 
	\biggr]^{\nicefrac{1}{2}} \!, 
	\label{eq:proof:emma:I-2:2} 
	\end{align}
	where 
	\begin{align*} 
	F(u) 
	&:= 
	\int_0^h 
	\norm{\Phi_{a-1,b}(u+r)}{\mathscr L_2(H;\dot H_A^\sigma)} \rd r , 
	\\ 
	G(u) 
	&:= 
	\int_0^h 
	\norm{\Phi_{a,b+1}(u+r)}{\mathscr L_2(H;\dot H_A^\sigma)} \rd r. 
	\end{align*} 
	Using Minkowski's integral inequality 
	(see e.g.~\cite[\S{}A.1]{Stein1970}), 
	we obtain 
	\begin{align*}
	\biggl[\int_0^t &|F(u)|^2 \rd u \biggr]^{\nicefrac{1}{2}} 
	= 
	\biggl[\int_0^t \,
	\biggl| \int_0^h 
	\| \Phi_{a-1,b}(u+r) \|_{\mathscr L_2(H;\Hdot\sigma)} \rd r \biggr|^2 
	\rd u\biggr]^{\nicefrac{1}{2}} 
	\\
	&\leq 
	\int_0^h \biggl[ 
	\int_0^t 
	\| \Phi_{a-1,b}(u+r) \|_{\mathscr L_2(H;\Hdot\sigma)}^2 \rd u
	\biggr]^{\nicefrac{1}{2}} \rd r 
	\\
	&= 
	\int_0^h \biggl[ 
	\int_0^t (u+r)^{2(a-1)} 
	\bigl\| A^{a+\frac{1}{2}-\tau} S(u+r) A^{\frac{\sigma}{2}-a-\frac{1}{2}+b+\tau} Q^{\frac{1}{2}} \bigr\|_{ 
		\mathscr L_2(H)}^2 
	\rd u\biggr]^{\nicefrac{1}{2}} \rd r 
	\end{align*} 
	Since the semigroup $(S(t))_{t\geq 0}$ is assumed to be analytic, 
	by~\eqref{eq:analytic-est-1} the estimate 
	\begin{equation}\label{eq:proof:emma:I-2:3} 
	\begin{split} 
	\bigl\| A^{a+\frac{1}{2}-\tau} S(u+r) &A^{\frac{\sigma}{2} -a-\frac{1}{2}+b+\tau} 
	Q^{\frac{1}{2}} \bigr\|_{ \mathscr L_2(H) } 
	\\
	&\lesssim_{(a,\tau)}
	(u+r)^{-a-\frac{1}{2}+\tau} 
	\bigl\| A^{\frac{\sigma}{2} -a-\frac{1}{2}+b+\tau} Q^{\frac{1}{2}} \bigr\|_{ 
		\mathscr L_2(H)} 
	\end{split} 
	\end{equation} 
	follows, where we also used the assumption that $a+\frac{1}{2}-\tau\geq 0$. 
	We conclude that 
	\begin{align*} 
	\biggl[\int_0^t &|F(u)|^2 \rd u \biggr]^{\nicefrac{1}{2}} 
	\lesssim_{(a,\tau)} 
	\bigl\| A^{-a-\frac{1}{2}+b+\tau} Q^{\frac{1}{2}} \bigr\|_{  
		\mathscr L_2(H;\Hdot\sigma)} 
	\int_0^h \biggl[ 
	\int_0^t 
	(u+r)^{2\tau-3} 
	\rd u\biggr]^{\nicefrac{1}{2}} \rd r 
	\\
	&\leq 
	\bigl\| A^{-a-\frac{1}{2}+b+\tau} Q^{\frac{1}{2}} \bigr\|_{  
		\mathscr L_2(H;\Hdot\sigma)} 
	\int_0^h \biggl[ 
	\int_r^\infty 
	\bar{u}^{2\tau-3} 
	\rd \bar{u}\biggr]^{\nicefrac{1}{2}} \rd r 
	\\
	&=  
	\frac{1}{\sqrt{2 - 2\tau}} \, 
	\bigl\| A^{-a-\frac{1}{2}+b+\tau} Q^{\frac{1}{2}} \bigr\|_{ 
		\mathscr L_2(H;\Hdot\sigma)} 
	\int_0^h r^{\tau-1}\rd r 
	\\
	&= 
	\frac{1}{\tau \sqrt{2 - 2\tau}} \, 
	h^\tau \, 
	\bigl\| A^{-a-\frac{1}{2}+b+\tau} Q^{\frac{1}{2}} \bigr\|_{ 
		\mathscr L_2(H;\Hdot\sigma)} . 
	\end{align*}

	Similarly, we can bound the integral $\int_0^t |G(u)|^2 \rd u$ 
	in \eqref{eq:proof:emma:I-2:2}. 
	Again by Minkowski's integral inequality 
	and analogously to \eqref{eq:proof:emma:I-2:3}, 
	noting that $a+\frac{3}{2}-\tau > a + \frac{1}{2} -\tau \geq 0$, we find that   
	\begin{align*}
	\biggl[\int_0^t &|G(u)|^2 \rd u \biggr]^{\nicefrac{1}{2}} 
	= 
	\biggl[\int_0^t \,
	\biggl| \int_0^h 
	\| \Phi_{a,b+1}(u+r) \|_{ \mathscr L_2(H;\Hdot\sigma) } \rd r \biggr|^2 
	\rd u\biggr]^{\nicefrac{1}{2}} 
	\\
	&\leq 
	\int_0^h \biggl[ 
	\int_0^t 
	\| \Phi_{a,b+1}(u+r) \|_{\mathscr L_2(H;\Hdot\sigma)}^2 \rd u
	\biggr]^{\nicefrac{1}{2}} \rd r 
	\\
	&= 
	\int_0^h \biggl[ 
	\int_0^t (u+r)^{2a} 
	\bigl\| A^{a+\frac{3}{2}-\tau} S(u+r) 
	A^{ \frac{\sigma}{2}-a-\frac{1}{2}+b+\tau} Q^{\frac{1}{2}} \bigr\|_{ 
		\mathscr L_2(H)}^2 
	\rd u\biggr]^{\nicefrac{1}{2}} \rd r 
	\\
	&\lesssim_{(a,\tau)}
	\bigl\| A^{-a-\frac{1}{2}+b+\tau} Q^{\frac{1}{2}} \bigr\|_{ 
		\mathscr L_2(H;\Hdot\sigma)} 
	\int_0^h \biggl[ 
	\int_0^t 
	(u+r)^{2\tau-3} 
	\rd u\biggr]^{\nicefrac{1}{2}} \rd r 
	\\
	&\leq 
	\frac{1}{\tau \sqrt{2 - 2\tau}} \, 
	h^\tau \, 
	\bigl\| A^{-a-\frac{1}{2}+b+\tau} Q^{\frac{1}{2}} \bigr\|_{ 
		\mathscr L_2(H;\Hdot\sigma)} ,
	\end{align*}
	which completes the proof. 
\end{proof}

\begin{lemma}\label{lemma:I-2}
	Let 
	Assumptions~\textup{\ref{assumption:A}\ref{assumption:A:semigroup},\ref{assumption:A:bdd-analytic}} 
	be satisfied, let $a \in \bigl(-\frac{1}{2},\infty\bigr)$, 
	${b, \sigma \in [0,\infty)}$ 
	and $\tau\in \bigl( 0, 1 \wedge \bigl( a+\frac{1}{2} \bigr) \bigr]$. 
	If $\sigma \neq 0$, then suppose 
	furthermore that
	Assumption~\textup{\ref{assumption:A}\ref{assumption:A:bddly-inv}} 
	holds. 
	Let $J:=(0,T)$ 
	for some $T \in (0,\infty]$. 
	Then, 
	for all $p \in [1,\infty)$, 
	$t \in [0,T)$ and $h \in J$ 
	with $h\leq T-t$, 
	the function 
	$\Phi_{a,b}\from(0,\infty)\to\mathscr L(H; \Hdot{\sigma})$ 
	in \eqref{eq:def:Phi-ab} 
	satisfies 
	\[ 
	\biggl\| 
	\int_t^{t+h} \Phi_{a,b}(t+h-s) \rd W(s) 
	\biggr\|_{L^p(\Omega;\dot H_A^\sigma)} 
	\lesssim_{(p,a,\tau)}  
	h^\tau  
	\bigl\| A^{-a-\frac{1}{2}+b+\tau } 
	Q^{\frac{1}{2}}\bigr\|_{ \mathscr L_2(H;\Hdot \sigma) }. 
	\] 
\end{lemma}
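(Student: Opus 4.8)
The plan is to reduce the stochastic estimate to a purely deterministic one via the Burkholder--Davis--Gundy inequality, and then to exploit analyticity of $(S(t))_{t\ge0}$ exactly as in the bound for $F(u)$ in the proof of Lemma~\ref{lemma:I-1}, the present situation being in fact simpler. First I would invoke \cite[Theorem~6.1.2]{LiuRockner2015} (combined with the nestedness of the $L^p$ spaces in case $p\in[1,2)$, cf.\ \eqref{eq:Lp-embedding}) and the change of variables $u:=t+h-s$ to obtain
\[
\biggl\| \int_t^{t+h} \Phi_{a,b}(t+h-s) \rd W(s) \biggr\|_{L^p(\Omega;\dot H_A^\sigma)}
\lesssim_p
\biggl[ \int_t^{t+h} \norm{\Phi_{a,b}(t+h-s)}{\mathscr L_2(H;\dot H_A^\sigma)}^2 \rd s \biggr]^{\nicefrac{1}{2}}
=
\biggl[ \int_0^h \norm{\Phi_{a,b}(u)}{\mathscr L_2(H;\dot H_A^\sigma)}^2 \rd u \biggr]^{\nicefrac{1}{2}}\!.
\]

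Next I would rewrite the integrand. Since $A^{\nicefrac{\sigma}{2}}$, $A^b$ and $S(u)$ commute and $\norm{\,\cdot\,}{\dot H_A^\sigma}=\norm{A^{\nicefrac{\sigma}{2}}\,\cdot\,}{H}$, one has $\norm{\Phi_{a,b}(u)}{\mathscr L_2(H;\dot H_A^\sigma)} = u^a\bigl\| A^{b+\frac{\sigma}{2}} S(u) Q^{\frac{1}{2}} \bigr\|_{\mathscr L_2(H)}$. Splitting the power of $A$ as $A^{b+\frac{\sigma}{2}} = A^{a+\frac{1}{2}-\tau} A^{\frac{\sigma}{2}-a-\frac{1}{2}+b+\tau}$, commuting the second factor past $S(u)$, using the ideal property of Hilbert--Schmidt operators, and applying the analytic estimate~\eqref{eq:analytic-est-1} with exponent $c:=a+\frac{1}{2}-\tau\geq0$ (nonnegative thanks to $\tau\leq a+\frac{1}{2}$) gives
\[
\norm{\Phi_{a,b}(u)}{\mathscr L_2(H;\dot H_A^\sigma)}
\lesssim_{(a,\tau)}
u^a\, u^{-a-\frac{1}{2}+\tau}\, \bigl\| A^{-a-\frac{1}{2}+b+\tau} Q^{\frac{1}{2}} \bigr\|_{\mathscr L_2(H;\dot H_A^\sigma)}
=
u^{\tau-\frac{1}{2}}\, \bigl\| A^{-a-\frac{1}{2}+b+\tau} Q^{\frac{1}{2}} \bigr\|_{\mathscr L_2(H;\dot H_A^\sigma)}\!.
\]

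Inserting this into the integral and using $\tau>0$ to integrate $u^{2\tau-1}$ over $(0,h)$, I obtain
\[
\int_0^h \norm{\Phi_{a,b}(u)}{\mathscr L_2(H;\dot H_A^\sigma)}^2 \rd u
\lesssim_{(a,\tau)}
\bigl\| A^{-a-\frac{1}{2}+b+\tau} Q^{\frac{1}{2}} \bigr\|_{\mathscr L_2(H;\dot H_A^\sigma)}^2
\int_0^h u^{2\tau-1}\rd u
=
\frac{h^{2\tau}}{2\tau}\, \bigl\| A^{-a-\frac{1}{2}+b+\tau} Q^{\frac{1}{2}} \bigr\|_{\mathscr L_2(H;\dot H_A^\sigma)}^2\!,
\]
and taking square roots yields the asserted bound. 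I do not expect a genuine obstacle here; the only points requiring a little care are the commutation and additivity of the fractional powers of $A$ (which is precisely why Assumption~\textup{\ref{assumption:A}\ref{assumption:A:bddly-inv}} is imposed when $\sigma\neq0$, so that $\dot H_A^\sigma$ is well-defined and $A^{\nicefrac{\sigma}{2}}A^b=A^{b+\nicefrac{\sigma}{2}}$), and the nonnegativity of the exponent $c$ in~\eqref{eq:analytic-est-1}, guaranteed exactly by the hypothesis $\tau\leq a+\frac{1}{2}$. Note also that the inequality is stated unconditionally: if the Hilbert--Schmidt norm on the right-hand side is infinite, there is nothing to prove.
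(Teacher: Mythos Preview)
Your proposal is correct and follows essentially the same route as the paper: Burkholder--Davis--Gundy (with nestedness for $p<2$), the change of variables $u:=t+h-s$, the splitting $A^{b+\frac{\sigma}{2}}=A^{a+\frac{1}{2}-\tau}A^{\frac{\sigma}{2}-a-\frac{1}{2}+b+\tau}$ combined with the analytic estimate~\eqref{eq:analytic-est-1} at exponent $c=a+\frac{1}{2}-\tau\ge0$, and the explicit integration of $u^{2\tau-1}$ over $(0,h)$. Your additional remarks on commutation of fractional powers and on the trivial case where the right-hand side is infinite are sound and do not depart from the paper's argument.
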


\begin{proof} 
	We apply the Burkholder--Davis--Gundy inequality 
	(combined with nestedness of the 
	$L^p$ spaces if $p<2$), 
	the change of variables $u := t+h-s$,  
	and obtain  
	\begin{align*}
	\biggl\| 
	&\int_t^{t+h} \Phi_{a,b}(t+h-s) \rd W(s) 
	\biggr\|_{L^p(\Omega;\dot H_A^\sigma)}^2 
	\\
	&\lesssim_p 
	\int_t^{t+h} 
	\norm{\Phi_{a,b}(t+h-s)}{\mathscr L_2(H;\Hdot{\sigma})}^2 
	\rd s 
	= 
	\int_0^h \norm{\Phi_{a,b}(u)}{\mathscr L_2(H;\Hdot{\sigma})}^2 
	\rd u 
	\\
	&= 
	\int_0^h 
	u^{ 2a }
	\bigl\| A^{a+\frac{1}{2}-\tau} S(u) A^{\frac{\sigma}{2} -a -\frac{1}{2} + b +\tau }
	Q^{\frac{1}{2}} \bigr\|_{ \mathscr L_2(H)}^2 
	\rd u 
	\\
	&\lesssim_{(a,\tau)} 
	\bigl\| A^{ -a -\frac{1}{2} + b +\tau }Q^{\frac{1}{2}} \bigr\|_{ 
		\mathscr L_2(H; \dot H_A^\sigma)}^2 
	\int_0^h 
	u^{ 2\tau - 1}
	\rd u 
	=
	\frac{h^{ 2\tau }}{2\tau} \, 
	\bigl\| A^{-a -\frac{1}{2} + b +\tau }Q^{\frac{1}{2}} \bigr\|_{ 
		\mathscr L_2(H; \dot H_A^\sigma)}^2 ,
	\end{align*} 
	where we could proceed as in \eqref{eq:proof:emma:I-2:3}, 
	since $a+\frac{1}{2}-\tau \geq 0$ is assumed. 
	This completes the proof of the assertion. 
\end{proof}

Proposition~\ref{prop:Dh-converges-in-Lp} provides 
a useful relation between the weak derivative and the difference quotient.
\begin{proposition}\label{prop:Dh-converges-in-Lp} 
	Let $U$ be a real and separable Hilbert space
	and 
	let $J:=(0,T)$ 
	for some $T \in (0,\infty]$. 
	Suppose that $\Psi \in H^1(J; U)$ 
	and let $\Psi'\in L^2(J;U)$ denote the 
	weak derivative of $\Psi$.  
	For $h\in \R\setminus\{0\}$, let $J_h\subseteq J$ be as in
	Proposition~\textup{\ref{lem:translations}} and
	define the \emph{difference quotient} 
	$D_h \Psi \from J_h \to U$ 
	of $\Psi$ by 
	\begin{equation}\label{eq:def:Dh} 
	[D_h \Psi](t) 
	:= 
	\frac{ \Psi(t+h) - \Psi(t) }{h} 
	\quad 
	\text{for a.a.~$t \in J_h$}.
	\end{equation} 
	Then, we have 
	$\lim_{h\to 0} 
	\norm{D_h \Psi - \Psi'}{L^2(J_h;U)} 
	=0$. 
\end{proposition}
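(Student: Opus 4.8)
The plan is to rewrite the difference quotient as an average of translates of the weak derivative and then to invoke the $L^2$-continuity of translations provided by Lemma~\ref{lem:translations}. First I would use that, since $\Psi \in H^1(J;U)$, by \cite[\S5.9.2, Theorem~2]{Evans2010} the function $\Psi$ admits an absolutely continuous representative with $\Psi(t+h) - \Psi(t) = \int_t^{t+h} \Psi'(s)\,\rd s = \int_0^h \Psi'(t+r)\,\rd r$ for a.a.\ $t \in J_h$ and every $h\in\R\setminus\{0\}$. Dividing by $h$ and writing $\Psi'(t) = \frac1h\int_0^h \Psi'(t)\,\rd r$ yields the key identity
\[
[D_h\Psi](t) - \Psi'(t)
=
\frac{1}{h}\int_0^h \bigl[ \Psi'(t+r) - \Psi'(t) \bigr]\,\rd r
\quad \text{for a.a.\ } t \in J_h.
\]

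Next I would take the $L^2(J_h;U)$-norm of both sides of this identity and apply Minkowski's integral inequality (see e.g.\ \cite[\S A.1]{Stein1970}) to move the norm inside the $r$-integral. A short inspection of the endpoints shows that for every $r$ lying between $0$ and $h$ one has $J_h \subseteq J_r$, and $t, t+r \in J$ whenever $t \in J_h$, so that $\norm{\Psi'(\,\cdot\,+r) - \Psi'}{L^2(J_h;U)} \le \norm{\Psi'(\,\cdot\,+r) - \Psi'}{L^2(J_r;U)}$. Combining these observations gives the estimate
\[
\norm{D_h\Psi - \Psi'}{L^2(J_h;U)}
\leq
\sup_{0 < |r| \le |h|}
\norm{\Psi'(\,\cdot\,+r) - \Psi'}{L^2(J_r;U)}.
\]

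Finally I would apply Lemma~\ref{lem:translations} to $u := \Psi' \in L^2(J;U)$, which gives $\norm{\Psi'(\,\cdot\,+r) - \Psi'}{L^2(J_r;U)} \to 0$ as the increment $r \to 0$; since the right-hand side above is the supremum over all such increments of absolute value at most $|h|$, it too tends to $0$ as $h \to 0$, which completes the argument. I do not anticipate a genuine obstacle here: the analytic content is entirely contained in Lemma~\ref{lem:translations} and the fundamental theorem of calculus for Sobolev functions, and the only care needed will be the bookkeeping with the shrinking domains $J_h$ and the sign of $h$, i.e.\ keeping track of which interval each translate lives on so that Lemma~\ref{lem:translations} applies verbatim.
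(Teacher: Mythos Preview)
Your proof is correct and takes a genuinely different route from the paper's. The paper proceeds by a density argument: it first establishes the claim for smooth $\Psi$ (in $C^\infty([0,T];U)$ or $C_c^\infty([0,\infty);U)$) using uniform continuity of $\Psi'$, derives an $h$-uniform bound $\|D_h\Psi\|_{L^2(J_h;U)} \le \|\Psi\|_{H^1(J;U)}$ via Cauchy--Schwarz and Fubini, and then extends to general $\Psi \in H^1(J;U)$ by a $3\varepsilon$ argument. You bypass the smooth approximation entirely: you invoke the fundamental theorem for Sobolev functions directly, apply Minkowski's integral inequality to the resulting average of translates, and reduce the problem to the $L^2$-continuity of translations of $\Psi'$, which is precisely Lemma~\ref{lem:translations}. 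Your argument is shorter and makes the role of Lemma~\ref{lem:translations} transparent; the paper's version, by contrast, does not appeal to Lemma~\ref{lem:translations} at all in this proof, trading that for the density step and the uniform operator bound. Both are valid; yours is the more efficient, and the bookkeeping you flag with $J_h \subseteq J_r$ for $r$ between $0$ and $h$ is indeed all that is needed.
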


\begin{proof} 
	Suppose that 
	$\Psi \in E$, 
	where  the space $E$ is given by 
	$E:=C^\infty([0,T];U)$ if $T<\infty$ 
	and 
	$E:=C_c^\infty([0,\infty);U)$ if $T=\infty$, 
	and fix $h \in \R\setminus \{0\}$. 
	Then, 
	\begin{equation}\label{eq:FTC-Dh} 
	[D_h \Psi](t) 
	= 
	\frac{1}{h} 
	\int_{0}^h \Psi'(t+s) \rd s 
	\quad 
	\forall 
	t \in J_h
	\end{equation} 
	holds by the fundamental theorem of calculus, 
	where we use the convention that 
	$\int_0^h = -\int_h^0$ 
	whenever $h \in (-t, 0)$. 
	Applying the Cauchy--Schwarz inequality gives 
	\begin{align*}
	\norm{ [D_h \Psi](t) }{U}^2 
	\leq 
	\biggl| 
	\frac{1}{h} 
	\int_0^h \norm{ \Psi'(t+s) }{U} \rd s 
	\biggr|^2 
	&\leq 
	\biggl| 
	\frac{1}{h} 
	\int_0^h \norm{ \Psi'(t+s) }{U}^2 \rd s 
	\biggr| 
	\\ 
	&=  
	\frac{1}{h} 
	\int_0^h \norm{ \Psi'(t+s) }{U}^2 \rd s 
	\qquad 
	\forall t \in J_h. 
	\end{align*} 
	The absolute value can be removed 
	in the last step by the integral sign convention. 
	Integrating this expression 
	over $t \in J_h$ 
	and using Fubini's theorem, 
	we obtain that 
	\begin{equation}\label{eq:proof-Dh-integral-bound} 
	\begin{split} 
	\| D_h &\Psi \|_{L^2(J_h;U)}^2  
	=
	\int_{J_h} \norm{ [D_h \Psi](t) }{U}^2 \rd t 
	\\
	&\leq 
	\frac{1}{h} 
	\int_{J_h} \int_0^h \norm{ \Psi'(t+s) }{U}^2 \rd s \rd t 
	= 
	\frac{1}{h} 
	\int_0^h\int_{J_h} \norm{\Psi'(t+s)}{U}^2 \rd t \rd s.
	\end{split} 
	\end{equation}
	For all $s\in(0,h)$ (resp., $s\in (h,0)$ if $h<0$), 
	the change of variables $r := t+s$ gives  
	\[ 
	\int_{J_h} \norm{\Psi'(t+s)}{U}^2 \rd t 
	= 
	\int_{J_h + s} \norm{\Psi'(r)}{U}^2 \rd r 
	\leq 
	\int_J \norm{\Psi'(r)}{U}^2 \rd r 
	= 
	\norm{\Psi'}{L^2(J;U)}^2.
	\]
	Hence, we can bound the inner integral 
	in~\eqref{eq:proof-Dh-integral-bound} independently of $s$, 
	which implies 
	\begin{equation}\label{eq:DH-proof-h-uniform-bound} 
	\norm{D_h \Psi}{L^2(J_h;U)}^2 
	\leq 
	\norm{\Psi'}{L^2(J;U)}^2 
	\leq 
	\norm{\Psi}{H^1(J;U)}^2. 
	\end{equation}
	This estimate shows that the linear operator $D_h$ 
	is bounded from $\bigl( E, \norm{\,\cdot\,}{H^1(J;U)} \bigr)$ 
	to ${L^2(J_h;U)}$ for all $h \in \R\setminus\{0\}$. 
	By density of $E$ in $H^1(J;U)$ 
	(see \cite[XVIII.\S{1}.2, Lemma~1]{DautrayLionsVol5}), 
	the above estimate holds for all $\Psi \in H^1(J;U)$.
	
	Suppose again that 
	$\Psi\in E$. 
	We recall \eqref{eq:FTC-Dh} and find  
	\begin{equation}\label{eq:DH-proof-h-diff-equality}  
	[D_h \Psi](t) - \Psi'(t) 
	= 
	\frac{1}{h}\int_0^h 
	\bigl( \Psi'(t+s) - \Psi'(t) \bigr)\rd s 
	\quad 
	\forall \,
	t \in J_h.
	\end{equation}
	By the compact support  
	of $D_h \Psi$ and $\Psi'$, 
	there exists a bounded interval 
	$K \subset [0,\infty)$ 
	such that 
	$\operatorname{supp}(D_h \Psi - \Psi'|_{J_h}) \subseteq K$ 
	for all $h\in[-1,1]$. 
	Furthermore, 
	by uniform continuity of 
	$\Psi' \in C^\infty([0,T];U)$ 
	(resp., $\Psi' \in C_c^\infty([0,\infty);U)$), 
	for every $\varepsilon\in(0,\infty)$, 
	there exists 
	some $\delta\in(0,1)$ such that 
	$\norm{ \Psi'(\xi) - \Psi'(\eta)}{U} < \varepsilon$ 
	if $|\xi-\eta|<\delta$. 
	Thus, 
	\[ 
	\norm{[D_h \Psi](t) - \Psi'(t)}{U} 
	< 
	\varepsilon 
	\quad 
	\forall t \in J_h
	\] 
	follows 
	for all $h\in(-\delta,\delta)$ 
	by~\eqref{eq:DH-proof-h-diff-equality}
	and, consequently,
	\[ 
	\norm{D_h \Psi - \Psi'}{L^2(J_h;U)} 
	\lesssim_{K} 
	\norm{D_h \Psi - \Psi'}{L^\infty(J_h;U)} 
	\to 0 
	\quad \text{as $h \to 0$}.
	\] 
	This proves the assertion  
	for functions $\Psi\in E$. 
	The general case for $\Psi\in H^1(J;U)$ 
	follows then from density of $E$ and the $h$-uniform 
	bound~\eqref{eq:DH-proof-h-uniform-bound}:  
	Given $\varepsilon\in(0,\infty)$, 
	we may choose $v \in E$ 
	such that ${\norm{\Psi-v}{H^1(J;U)} < \frac{\varepsilon}{3}}$, 
	and $h_0 \in(0,\infty)$ such that 
	$\norm{D_h v - v'}{L^2(J_h;U)}<\frac{\varepsilon}{3}$ 
	for all $h\in(-h_0, h_0)$, 
	Thus, we obtain for all $h\in(-h_0, h_0)$ 
	\begin{align*}
	\| D_h  \Psi - \Psi' &\|_{L^2(J_h;U)} 
	\\
	&\leq 
	\norm{D_h (\Psi - v)}{L^2(J_h;U)}
	+ 
	\norm{D_h v - v'}{L^2(J_h;U)}
	+ 
	\norm{ v' - \Psi'}{L^2(J_h;U)} 
	\\
	&\leq 
	2\norm{\Psi - v}{H^1(J;U)} 
	+ 
	\norm{D_h v - v}{L^2(J_h;U)} 
	< \varepsilon . 
	\qedhere 
	\end{align*}
\end{proof}

\begin{lemma}\label{lem:embedding-weak-derivs}
	Let $J := (0,T)$ for some $T \in (0,\infty]$. 
	Let $E$ and $F$ be real separable Banach spaces 
	such that
	$E \hookrightarrow F$. 
	If $u \in H^1_{ 0,\{0\} }(J; F)$ and
	$u, u'\in L^2(J; E)$, where $u'$ denotes the
	$F$-valued weak derivative of $u$,
	then $u \in H^1_{ 0,\{0\} }(J; E)$ 
	and its $E$-valued weak derivative
	coincides with $u'$ 
	almost everywhere in $J$. 
\end{lemma}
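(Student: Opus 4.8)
The plan is to use the density of a suitable class of smooth functions in $H^1_{0,\{0\}}(J;F)$ and exploit continuity of the embedding $E\hookrightarrow F$ together with completeness of $E$. First I would recall that $u\in H^1_{0,\{0\}}(J;F)$ means $u$ is the limit, in the $H^1(J;F)$-norm, of a sequence $(u_n)_{n\in\bbN}$ in $C_c^\infty((0,\infty);F)$ restricted to $J$; actually, since I also want to approximate in a way that controls the $E$-valued derivative, the cleaner route is to mollify $u$ itself. The key observation is the defining property of the weak derivative: for all $\phi\in C_c^\infty(J;\bbR)$,
\[
\int_J u(t)\,\phi'(t)\,\rd t = -\int_J u'(t)\,\phi(t)\,\rd t,
\]
an identity a priori in $F$. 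Since $u,u'\in L^2(J;E)$ by hypothesis and the Bochner integrals on both sides are $E$-valued (the integrands being $E$-valued and integrable), and since $E\hookrightarrow F$ is injective and continuous, this same identity holds verbatim as an identity in $E$: the $F$-valued and $E$-valued Bochner integrals of an $E$-valued integrable function agree under the embedding. Hence $u$ has an $E$-valued weak derivative and it equals $u'$ a.e., giving $u\in H^1(J;E)$ with norm $\norm{u}{H^1(J;E)}^2 = \norm{u}{L^2(J;E)}^2 + \norm{u'}{L^2(J;E)}^2$.

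Next I would address the boundary condition, i.e.\ that $u$ actually lies in the \emph{closed} subspace $H^1_{0,\{0\}}(J;E)$, not merely in $H^1(J;E)$. Here I would use that $u\in H^1_{0,\{0\}}(J;F)$ provides a sequence $(u_n)_{n\in\bbN}\subseteq C_c^\infty((0,\infty);F)$ with $u_n|_J\to u$ in $H^1(J;F)$. Mollifying in a translation-compatible way (or applying a fixed family of mollifiers to $u$ and truncating near $0$) produces a sequence $(v_m)_{m\in\bbN}\subseteq C_c^\infty((0,\infty);E)$ — smoothness being inherited from the mollifier while the values stay in $E$ — such that $v_m|_J\to u$ in $H^1(J;E)$. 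Since mollification commutes with the embedding and with differentiation, the convergence $v_m\to u$ and $v_m'\to u'$ in $L^2(J;E)$ follows from the $L^2(J;E)$-convergence of the standard mollified approximants of $u$ and $u'$, together with the Poincaré-type control that pins $v_m(0)=0$. This exhibits $u$ as an $H^1(J;E)$-limit of elements of $C_c^\infty((0,\infty);E)|_J$, hence $u\in H^1_{0,\{0\}}(J;E)$ by definition of the latter space as the completion of $C_c^\infty((0,\infty);E)$ in the $H^1$-norm, restricted to $J$.

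The main obstacle I anticipate is the technical bookkeeping for the mollification argument on the interval $(0,\infty)$ (or its truncation to $J$) in the Bochner-space setting: one must ensure that the mollified functions genuinely take values in $E$, that they vanish near $t=0$ so that the zero-trace condition is preserved, and that the approximation converges in $H^1(J;E)$ rather than only in $H^1(J;F)$. This is where the hypothesis $u'\in L^2(J;E)$ (and not just $u'\in L^2(J;F)$) is essential. A clean way to sidestep part of this is to first extend $u$ by zero to $(-\infty,T)$ — which is legitimate precisely because $u\in H^1_{0,\{0\}}$ — obtaining $\widetilde u\in H^1((-\infty,T);F)$ with $\widetilde u,\widetilde u'\in L^2((-\infty,T);E)$, then mollify this extension (for which standard results on mollification of Bochner–Sobolev functions apply without boundary complications), and finally restrict back to $J$. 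The convergence of mollified approximants to a given function in $L^p$-Bochner spaces is classical, so modulo this reduction the proof reduces to the identity-of-integrals argument of the first paragraph.
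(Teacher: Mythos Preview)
Your first step---reducing the claim that $u\in H^1(J;E)$ to the observation that the $E$-valued and $F$-valued Bochner integrals of an $E$-valued integrable function agree under the embedding---is exactly what the paper does. The paper spells this out by approximating $f\in L^1(J;E)$ with $E$-valued simple functions and noting that these are also $F$-valued simple functions with identical integrals, but the content is the same.

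For the boundary condition, the paper takes a different and considerably shorter route than your mollification argument. Rather than constructing a sequence in $C_c^\infty((0,\infty);E)$ converging to $u$ in $H^1(J;E)$, the paper invokes the trace characterization: membership in $H^1_{0,\{0\}}(J;F)$ is equivalent to the (unique) continuous representative $\widetilde u\in C(\overline J;F)$ satisfying $\widetilde u(0)=0$. Since you have already shown $u\in H^1(J;E)$, there is also a continuous representative $\widehat u\in C(\overline J;E)\hookrightarrow C(\overline J;F)$; by uniqueness of continuous representatives in $F$, $\widehat u=\widetilde u$, and in particular $\widehat u(0)=0$, which gives $u\in H^1_{0,\{0\}}(J;E)$ by the same characterization. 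This bypasses all of the bookkeeping you anticipate (translation to keep supports inside $(0,\infty)$, truncation at infinity when $T=\infty$, and verification that mollification respects the $E$-values). Your approach is not wrong, and it would go through with the zero-extension-then-translate-then-mollify strategy you outline, but the trace argument accomplishes the same thing in two lines once the embedding $H^1(J;E)\hookrightarrow C(\overline J;E)$ is on the table.
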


\begin{proof}
	Let $\mathscr I_E \from L^1(J;E)\to E$ and 
	$\mathscr I_F \from L^1(J;F) \to F$ denote, respectively,
	the $E$-valued and $F$-valued Bochner integrals over 
	the interval $J$. 
	Given an arbitrary $\phi \in C_c^\infty(J)$, 
	the assumption
	$u \in H^1(J;F$)
	implies 
	$
	\mathscr I_F(\phi u') 
	= 
	-\mathscr I_F(\phi' u)
	$, and
	we wish to show
	$
	\mathscr I_E(\phi u') 
	= 
	-\mathscr I_E(\phi' u).
	$
	To this end, we claim that
	$\mathscr I_E$ and $\mathscr I_F$ coincide on
	$L^1(J;E) \hookrightarrow L^1(J;F)$
	and we apply this fact to 
	$\phi u'$ and $\phi' u$.
	To verify the claim, fix $f \in L^1(J;E)$.
	By definition
	of $\mathscr I_E$, there exist $E$-valued 
	measurable simple functions $(f_n)_{n\in\N}$
	satisfying $f_n \to f$ in $L^1(J;E)$ and 
	$\mathscr I_E(f_n) \to \mathscr I_E(f)$ in~$E$. For all $n \in \N$, 
	it readily follows from the
	respective definitions and the inclusion $E \subseteq F$ that
	$f_n$ is an $F$-valued measurable simple function and 
	$\mathscr I_F(f_n) = \mathscr I_E(f_n)$. 
	Since $E \hookrightarrow F$, we observe that
	$f_n \to f$ in $L^1(J;F)$ and 
	$\mathscr I_F(f_n) = \mathscr I_E(f_n) \to \mathscr I_E(f)$ in $F$, 
	hence $\mathscr I_F(f) = \mathscr I_E(f)$.
	We conclude that $u\in H^1(J;E)$ 
	and the $E$-valued weak derivative 
	coincides with $u'$ a.e.\ in $J$. 
	Now it remains to prove that $u \in H^1_{0,\{0\}}(J; E)$. 
	Note that
	$u \in H^1_{0,\{0\}}(J;F)$ is equivalent to 
	the statement that the unique continuous representative
	$\widetilde u \in C(\overline J; F)$ of $u$, 
	which exists 
	by virtue of \cite[\S5.9.2, Theorem~2]{Evans2010}, 
	vanishes at zero,
	cf.~\cite[\S5.5, Theorem~2]{Evans2010}.
	Similarly, from $u \in H^1(J; E)$ we obtain a function 
	$\widehat u \in C(\overline J; E) \hookrightarrow C(\overline J; F)$ 
	such that $u = \widehat u$ a.e.,
	hence $\widehat u = \widetilde u$ by uniqueness.
	In particular, $\widehat u(0) = 0$ and thus $u \in H_{0,\{0\}}(J;E)$. 
\end{proof}

\section{Sectorial linear operators and functional calculus}
\label{app:functional-calc} 

In this appendix, several definitions and results 
regarding sectorial linear operators, 
semigroups and functional calculus 
are recorded. 
We refer the reader to \cite{Engel1999,Haase2006,AnalysisInBanachSpacesII,Pazy1983} 
for more details on these topics. 

Throughout this section,
$A \from \mathsf D(A)\subseteq H \to H$ denotes a
linear operator whose negative $-A$ generates
a $C_0$-semigroup $(S(t))_{t\geq 0}$ on a 
separable Hilbert space $H$. 
The corresponding scalar field is given by the 
complex numbers $\C$ 
in Subsection~\ref{subsec:complex-func-calc} 
and the real numbers $\R$ 
in Subsection~\ref{subsec:functional-calc:real}. 

\subsection{Sectoriality and 
	\texorpdfstring{$H^\infty$-calculus}{H-infinity-calculus}}
\label{subsec:complex-func-calc}

Let $H$ be a Hilbert space 
over the \emph{complex} scalar field $\C$. 
\begin{definition}\label{def:sectorial}
	We say that $\lambda\in \C$ belongs to the 
	\emph{resolvent set} $\rho(A)$ if and only if
	$R(\lambda,A) := (\lambda I - A)^{-1}$ exists in $\mathscr L(H)$.
	The set $\sigma(A) := \C\setminus\rho(A)$ is called the \emph{spectrum}.
	$A$ is said to be \emph{sectorial} if there 
	exists an $\omega \in (0,\pi)$ such that
	\begin{equation}\label{eq:sectorial} 
	\sigma(A) 
	\subseteq 
	\clos \Sigma_\omega
	\quad 
	\text{ and } 
	\quad 
	\sup \{ 
	\norm{\lambda R(\lambda, A)}{\mathscr L(H)} 
	:
	\lambda \in \C\setminus\clos\Sigma_\omega
	\}
	< \infty, 
	\end{equation}
	where 
	$\Sigma_\omega:= \{\lambda\in\C\setminus \{0\}: \arg\lambda \in (-\omega,\omega)\}$. 
	The \emph{angle of sectoriality} $\omega(A)$ is defined as 
	the infimum of all $\omega$ for which \eqref{eq:sectorial} holds.
\end{definition}

\begin{theorem}\label{thm:sectoriality-analytic-semigroups}
	The operator $A$ is sectorial with $\omega(A)\in [0,\nicefrac{\pi}{2})$  
	if and only if
	$(S(t))_{t\ge0}$ is (uniformly) bounded analytic.
\end{theorem}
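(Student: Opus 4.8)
This equivalence is a classical fact; see e.g.\ \cite[Chapter~II, Section~4]{Engel1999}, \cite[Theorem~2.5.2]{Pazy1983} or \cite[Section~3.4]{Haase2006}. The plan is to prove the two implications by means of contour integral representations, using throughout that $\sigma(-A) = -\sigma(A)$ and $R(\mu,-A) = (\mu I + A)^{-1}$, so that sectoriality of $A$ with $\omega(A) < \tfrac{\pi}{2}$ corresponds to a resolvent bound for $-A$ on a sector of half-opening $\pi - \omega(A) > \tfrac{\pi}{2}$ about the negative real axis.

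\emph{Sectoriality implies bounded analyticity.} Fix $\theta \in \bigl(\omega(A),\tfrac{\pi}{2}\bigr)$ and $\beta \in \bigl(0,\tfrac{\pi}{2}-\theta\bigr)$. For $z \in \Sigma_\beta$ one defines a candidate extension
\[
\mathfrak{S}(z) := \frac{1}{2\pi i}\int_{\Gamma_z} e^{-z\mu}\,R(\mu,A)\rd\mu,
\]
where $\Gamma_z$ is the standard Dunford contour consisting of the two rays $\{r e^{\pm i\theta} : r \ge |z|^{-1}\}$ together with the circular arc $\{|z|^{-1} e^{i\vartheta} : |\vartheta| \le \theta\}$, oriented so as to wind once around $\clos{\Sigma_\theta} \supseteq \sigma(A)$. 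Since $|\arg(z\mu)| \le \beta + \theta < \tfrac{\pi}{2}$ for $z \in \Sigma_\beta$ and $\mu$ on the rays, one has $\Re(z\mu) \gtrsim_{\beta,\theta} |z|\,|\mu|$ there, and the sectorial estimate $\norm{\mu R(\mu,A)}{\mathscr L(H)} \lesssim 1$ from \eqref{eq:sectorial} gives, after the substitution $s = |z|\,|\mu|$, an absolutely convergent integral with $\sup_{z\in\Sigma_\beta}\norm{\mathfrak{S}(z)}{\mathscr L(H)} < \infty$ (the arc contributes an $O(1)$ term thanks to the $|z|^{-1}$ choice of its radius); holomorphy of $z\mapsto\mathfrak{S}(z)$ follows by differentiation under the integral sign. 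To see that $\mathfrak{S}(t) = S(t)$ for $t > 0$ one computes $\int_0^\infty e^{-\lambda t}\,\mathfrak{S}(t)\rd t = (\lambda I + A)^{-1} = \int_0^\infty e^{-\lambda t}\,S(t)\rd t$ for $\Re\lambda$ large and invokes uniqueness of Laplace transforms. Letting $\theta\downarrow\omega(A)$ and $\beta$ range over $\bigl(0,\tfrac{\pi}{2}-\theta\bigr)$, this shows that $(S(t))_{t\ge0}$ extends to a bounded holomorphic function on $\Sigma_\beta$ for every $\beta < \tfrac{\pi}{2}-\omega(A)$, i.e.\ it is (uniformly) bounded analytic.

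\emph{Bounded analyticity implies sectoriality.} Suppose $z\mapsto S(z)$ extends holomorphically to $\Sigma_\delta$ for some $\delta \in \bigl(0,\tfrac{\pi}{2}\bigr)$ with $M := \sup_{z\in\Sigma_\delta}\norm{S(z)}{\mathscr L(H)} < \infty$. For $\Re\lambda$ large one has $(\lambda I + A)^{-1} = \int_0^\infty e^{-\lambda t}\,S(t)\rd t$. Fix $\psi \in (-\delta,\delta)$; by Cauchy's theorem applied to the sector bounded by $[0,\infty)$ and the ray $e^{i\psi}[0,\infty)$ — legitimate because $S(\,\cdot\,)$ is bounded on $\Sigma_\delta$ and $e^{-\lambda t}$ decays along the ray whenever $\Re(\lambda e^{i\psi}) > 0$ — one obtains $(\lambda I + A)^{-1} = e^{i\psi}\int_0^\infty e^{-\lambda e^{i\psi} r}\,S(e^{i\psi} r)\rd r$, which is defined and bounded in norm by $M/\Re(\lambda e^{i\psi})$ for all $\lambda$ with $\Re(\lambda e^{i\psi}) > 0$. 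Letting $\psi$ range over $(-\delta,\delta)$ shows that every $\lambda$ with $|\arg\lambda| < \tfrac{\pi}{2}+\delta$ belongs to $\rho(-A)$ and satisfies $\norm{\lambda(\lambda I + A)^{-1}}{\mathscr L(H)} \lesssim 1$; translating back via $\sigma(A) = -\sigma(-A)$, this says $A$ is sectorial with $\omega(A) \le \tfrac{\pi}{2}-\delta < \tfrac{\pi}{2}$.

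Since the argument is entirely routine, in the write-up I would simply refer to the monographs cited above; the only points requiring care — and hence the (minor) main obstacle — are the $z$-dependent choice of the Dunford contour that makes the bound in the first implication uniform, the handling of $0 \in \sigma(A)$ along that contour, and the rigorous justification of the two contour rotations via Cauchy's theorem together with the decay/growth estimates at $0$ and at $\infty$.
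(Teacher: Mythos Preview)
Your sketch is correct and is the standard contour-integral proof of this classical equivalence. The paper, however, does not reproduce any of it: its proof consists solely of the citation \cite[Theorem~G.5.2]{AnalysisInBanachSpacesII} together with the remark that $\mathsf D(A)$ is dense in $H$ because $-A$ generates a $C_0$-semigroup (invoking \cite[Theorem~II.1.4]{Engel1999}). Since you yourself conclude that ``in the write-up I would simply refer to the monographs cited above,'' your final version would coincide with the paper's approach; the only difference is that the paper singles out a specific reference and explicitly records the density of $\mathsf D(A)$ needed to match that reference's hypotheses, whereas you list several sources and leave density implicit.
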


\begin{proof}
	The claim 
	follows from \cite[Theorem~G.5.2]{AnalysisInBanachSpacesII},
	noting that $\mathsf D(A)$ is dense in~$H$ 
	since $-A$ generates a $C_0$-semigroup,
	see~\cite[Theorem~II.1.4]{Engel1999}.
\end{proof}

Given $\varphi\in(0,\pi)$, 
we say that a holomorphic function
$f\from \Sigma_{\varphi}\to \C$
belongs to $H^\infty_0(\Sigma_\varphi)$ 
if and only if there exist constants $\alpha\in(0,\infty)$
and $M \in [0,\infty)$
such that
$|f(z)| \leq M \bigl(|z|^\alpha \wedge |z|^{-\alpha} \bigr)$ for
all $z \in \Sigma_\varphi$.
If $A$ is sectorial
and $\varphi \in (\omega(A), \pi)$, then 
\[
f(A)
:= 
\frac{1}{2\pi i}
\int_{\partial \Sigma_{\nu}} f(\zeta) R(\zeta,A) \rd \zeta, 
\qquad 
f \in H^\infty_0(\Sigma_\varphi),
\]
is well-defined, i.e.,
the $\mathscr L(H)$-valued Bochner integral 
is convergent and independent of $\nu \in (\omega(A),\varphi)$. 
We call the mapping $f \mapsto f(A)$ the \emph{Dunford calculus}
for $A$;
it is an algebra homomorphism 
from $H^\infty_0(\Sigma_\varphi)$ to $\mathscr L(H)$, 
see~\cite[Lemma~2.3.1(a)]{Haase2006}.

\begin{definition}\label{def:bdd-Hinfty-calc}
	Let $A$ be a sectorial operator and $\varphi \in (\omega(A),\pi)$. 
	Then $A$ is said to have a 
	\emph{bounded $H^\infty(\Sigma_\varphi)$-calculus} if
	there exists a constant $C \in (0,\infty)$ such that
	$\norm{f(A)}{\mathscr L(H)} 
	\leq  
	C
	\sup_{z\in \Sigma_\varphi}|f(z)|$
	for all $f \in H^\infty_0(\Sigma_\varphi)$.
	The angle $\omega_{H^\infty}(A)$ is defined as the infimum over 
	all admissable $\varphi \in (\omega(A),\pi)$ in the above definition.
\end{definition}

For operators acting on a (complex) Hilbert space, the admissibility of 
a bounded $H^\infty$-calculus
can be characterized by the following theorem. 
It is taken from~\cite[Theorem~7.3.1]{Haase2006}; 
see~\cite[Theorem~10.4.21]{AnalysisInBanachSpacesII} for a generalization
to non-injective $A$.

\begin{theorem}\label{thm:mcintosh}
	Let $A\from \mathsf D(A)\subseteq H \to H$ be injective and sectorial.
	Then
	\[
	\norm{x}{H}^2 
	\eqsim_f
	\int_0^\infty \norm{f(tA) x}{H}^2 \, \frac{\rd t}{t}
	\quad \forall x \in H
	\]
	holds for all non-zero
	$f \in \bigcup_{\varphi\in(\omega(A),\pi)} H_0^\infty(\Sigma_{\varphi})$
	if and only if
	$A$ admits a bounded $H^\infty(\Sigma_\varphi)$-calculus
	for some (or, equivalently, for all)
	$\varphi\in(\omega(A),\pi)$.
\end{theorem}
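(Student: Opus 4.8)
The statement is McIntosh's characterisation of the bounded $H^\infty$-calculus; a fully detailed proof is \cite[Theorem~7.3.1]{Haase2006} (or \cite[Theorem~10.4.21]{AnalysisInBanachSpacesII} in greater generality), and the plan is to reproduce its structure. The running tool is the \emph{Calder\'on reproducing formula}: for a non-zero $\psi\in H_0^\infty(\Sigma_\varphi)$ one picks $\widetilde\psi\in H_0^\infty(\Sigma_\varphi)$ with $\int_0^\infty \psi(r)\widetilde\psi(r)\,\tfrac{\rd r}{r}=1$; this identity persists for real $r$ replaced by $z\in\Sigma_\varphi$ by holomorphy, and, using that $A$ is injective and sectorial with $\mathsf D(A)$ dense (the latter since $-A$ generates a $C_0$-semigroup, \cite[Theorem~II.1.4]{Engel1999}, whence also $\mathsf R(A)$ is dense and $\mathsf D(A)\cap\mathsf R(A)$ is a dense core), a dominated-convergence argument applied to the Dunford integral gives $\int_0^\infty \psi(tA)\widetilde\psi(tA)x\,\tfrac{\rd t}{t}=x$ for all $x\in\mathsf D(A)\cap\mathsf R(A)$. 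I would also record three standing facts, all elementary on a Hilbert space: (a) $A^*$ is again injective and sectorial with $\omega(A^*)=\omega(A)$, $\psi(tA)^*=\psi^\star(tA^*)$ where $\psi^\star(z):=\overline{\psi(\bar z)}\in H_0^\infty(\Sigma_\varphi)$, and the square function estimate for $A$ is equivalent to that for $A^*$ (via the reproducing formula and Cauchy--Schwarz); (b) if $A$ has a bounded $H^\infty(\Sigma_\varphi)$-calculus then so does $A^*$, since $\|f(A^*)\|_{\mathscr L(H)}=\|f^\star(A)\|_{\mathscr L(H)}$ and $\|f^\star\|_{H^\infty(\Sigma_\varphi)}=\|f\|_{H^\infty(\Sigma_\varphi)}$; (c) (McIntosh's reduction) if $\int_0^\infty\|f(tA)x\|_H^2\,\tfrac{\rd t}{t}\lesssim\|x\|_H^2$ for one non-zero $f\in H_0^\infty$, then $\int_0^\infty\|h_t(A)x\|_H^2\,\tfrac{\rd t}{t}\lesssim C^2\|x\|_H^2$ for every family $(h_t)_{t>0}\subseteq H_0^\infty(\Sigma_{\varphi'})$ with $|h_t(z)|\le C\,(|tz|^\eta\wedge|tz|^{-\eta})$, the proof of (c) being a Schur test on the multiplicative group $\bigl((0,\infty),\tfrac{\rd t}{t}\bigr)$ after inserting the reproducing formula.

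\emph{Square function estimates $\Rightarrow$ bounded $H^\infty$-calculus.} Given $g\in H_0^\infty(\Sigma_\varphi)$, $x\in\mathsf D(A)\cap\mathsf R(A)$ and $y\in H$, insert the reproducing formula into $(g(A)x,y)_H$ and apply Cauchy--Schwarz in $L^2\bigl((0,\infty),\tfrac{\rd t}{t};H\bigr)$,
\[
|(g(A)x,y)_H|\le\Bigl(\int_0^\infty\|\widetilde\psi(tA)g(A)x\|_H^2\,\tfrac{\rd t}{t}\Bigr)^{1/2}\Bigl(\int_0^\infty\|\psi^\star(tA^*)y\|_H^2\,\tfrac{\rd t}{t}\Bigr)^{1/2}.
\]
The first factor is $\lesssim\sup_{z\in\Sigma_\varphi}|g(z)|\cdot\|x\|_H$ by fact (c) applied to $h_t:=g\cdot\psi_t$, $\psi_t(z):=\widetilde\psi(tz)$, since $|g(z)\widetilde\psi(tz)|\le\|g\|_{H^\infty(\Sigma_\varphi)}\,C(|tz|^\eta\wedge|tz|^{-\eta})$; the second factor is $\lesssim\|y\|_H$ by the square function estimate for $A^*$, which follows from the assumption via fact (a). Letting $x$ range over the dense set $\mathsf D(A)\cap\mathsf R(A)$ gives $\|g(A)\|_{\mathscr L(H)}\lesssim\sup_{z\in\Sigma_\varphi}|g(z)|$, i.e.\ the bounded $H^\infty(\Sigma_\varphi)$-calculus; as $\varphi\in(\omega(A),\pi)$ was arbitrary this also yields the ``for all $\varphi$'' clause.

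\emph{Bounded $H^\infty$-calculus $\Rightarrow$ square function estimate.} This is the step I expect to be the main obstacle, and I would prove it by the Rademacher dyadic-decomposition trick. Fix non-zero $\psi\in H_0^\infty(\Sigma_\varphi)$, say $|\psi(z)|\le C(|z|^\alpha\wedge|z|^{-\alpha})$, and decompose $\int_0^\infty\|\psi(tA)x\|_H^2\,\tfrac{\rd t}{t}=\int_1^2\sum_{n\in\mathbb Z}\|\psi(2^n s A)x\|_H^2\,\tfrac{\rd s}{s}$. For fixed $s\in[1,2]$ and $N\in\mathbb N$, the orthogonality of Rademacher variables $(\varepsilon_n)$ gives $\sum_{|n|\le N}\|\psi(2^n sA)x\|_H^2=\mathbb E_\varepsilon\bigl\|\sum_{|n|\le N}\varepsilon_n\psi(2^n sA)x\bigr\|_H^2=\mathbb E_\varepsilon\|g_{s,\varepsilon,N}(A)x\|_H^2$, where $g_{s,\varepsilon,N}(z):=\sum_{|n|\le N}\varepsilon_n\psi(2^n s z)\in H_0^\infty(\Sigma_\varphi)$. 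The decisive observation is that $\sup_{z\in\Sigma_\varphi}|g_{s,\varepsilon,N}(z)|\le\sum_{n\in\mathbb Z}C\bigl(|2^n s z|^\alpha\wedge|2^n s z|^{-\alpha}\bigr)$ is bounded by a constant $C_\alpha$ independent of $s$, $\varepsilon$, $N$; hence the bounded $H^\infty(\Sigma_\varphi)$-calculus yields $\|g_{s,\varepsilon,N}(A)x\|_H\le C\,C_\alpha\,\|x\|_H$ and therefore $\sum_{|n|\le N}\|\psi(2^n sA)x\|_H^2\le(CC_\alpha)^2\|x\|_H^2$. Letting $N\to\infty$ and integrating over $s\in[1,2]$ produces $\int_0^\infty\|\psi(tA)x\|_H^2\,\tfrac{\rd t}{t}\lesssim\|x\|_H^2$. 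The reverse inequality $\|x\|_H^2\lesssim\int_0^\infty\|\psi(tA)x\|_H^2\,\tfrac{\rd t}{t}$ follows by applying this forward estimate to $A^*$ — which has a bounded $H^\infty$-calculus by fact (b) — and then running the reproducing-formula-plus-Cauchy--Schwarz argument of the previous paragraph; finally fact (c) promotes both inequalities from $\psi$ to an arbitrary non-zero $f\in H_0^\infty$, which completes the proof.
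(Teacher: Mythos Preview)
The paper does not actually prove this theorem: it is stated in Appendix~\ref{app:functional-calc} as a known result, with the single-line attribution ``It is taken from~\cite[Theorem~7.3.1]{Haase2006}; see~\cite[Theorem~10.4.21]{AnalysisInBanachSpacesII} for a generalization to non-injective $A$.'' There is therefore no proof in the paper to compare your proposal against.

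That said, your outline is a faithful reconstruction of the standard McIntosh argument as presented in \cite[Theorem~7.3.1]{Haase2006}, with the Calder\'on reproducing formula, duality via $A^*$, the Schur-test reduction (your fact~(c)), and the Rademacher dyadic decomposition for the forward square-function bound. If the intent were to supply a self-contained proof in the paper, your sketch would be appropriate in structure; but for the paper's purposes the citation suffices, and no proof is expected of you here.
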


\begin{remark}\label{rem:Hinfty-sectoriality-angles}
	Since $\omega_{H^\infty}(A)$ is defined as an infimum
	over angles contained in the interval $(\omega(A),\pi)$, any
	operator admitting a bounded $H^\infty$-calculus satisfies
	$\omega_{H^\infty}(A) \ge \omega(A)$. This inequality is also true
	for operators on a Banach space, defined analogously to Definition~\ref{def:bdd-Hinfty-calc}.
	Theorem~\ref{thm:mcintosh} implies that reverse inequality holds
	for operators on a Hilbert space with a bounded $H^\infty(\Sigma_\varphi)$-calculus
	for some $\varphi\in(\omega(A),\pi)$. Indeed, in this case, the same holds
	for all $\varphi \in (\omega(A),\pi)$, hence 
	$\omega_{H^\infty}(A) \le \omega(A)$ upon taking the infimum.
	We thus have $\omega_{H^\infty}(A)=\omega(A)$.
\end{remark}

\subsection{Complexifications, semigroups and fractional powers}
\label{subsec:functional-calc:real}

In this subsection, 
$H$ denotes a \emph{real} Hilbert space. 

\subsubsection{Complexifications}\label{subsubsec:functional-calc:complexification} 

The complexified Hilbert space
$H_\C$ is defined by equipping the set $H\times H$
with
component-wise addition and the respective scalar and 
inner products
\begin{align}
(a+bi)(x,y) &:= (a x- by, bx + ay), 
&&x,y\in H;\; a,b\in \R, 
\notag \\
((x,y),(u,v))_{H_\C}
&:= 
(x,u)_H
+
(y,v)_H
+
i
[
(y,u)_H
-
(x,v)_H]
, &&x,y,u,v\in H.
\label{eq:inner-product-HC}
\end{align}
In the sequel, we will write $x + iy := (x,y) \in H_\C$. 

A linear operator $A$ on $H$ similarly gives
rise to a complexified counterpart $A_\C$ on $H_\C$
by defining 
$A_\C(x+iy) := Ax + i Ay$ on
$\mathsf D(A_\C) = \{x + iy : x, y \in \mathsf D(A)\}$. 
It
follows readily from the above definitions that
$T \mapsto T_\C \in \mathscr L(\mathscr L(H);\mathscr L(H_\C))$
is an inverse-preserving and isometric algebra homomorphism.
Analogous results hold for unbounded operators, 
taking natural domains into account. 
We have the following relation between semigroups and complexifications.

\begin{lemma}\label{lem:complexifications-semigroups}
	The family
	$(S(t))_{t\ge0} \subseteq \mathscr L(H)$ 
	is a $C_0$-semigroup on $H$ if and only if
	$(S_\C(t))_{t\ge0} \subseteq \mathscr L(H_\C)$ 
	is a $C_0$-semigroup on $H_\C$.
	In this case, their respective generators
	$-A \from \mathsf D(A) \subseteq H \to H$ and 
	$-\widehat A \from \mathsf D(\widehat A) \subseteq H_\C \to H_\C$ 
	satisfy $A_\C = \widehat A$.
\end{lemma}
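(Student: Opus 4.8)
The plan is to transfer each defining property of a $C_0$-semigroup between $H$ and $H_\C$ using two facts already recorded in Subsection~\ref{subsubsec:functional-calc:complexification}: first, that $T\mapsto T_\C$ is an isometric, inverse-preserving algebra homomorphism from $\mathscr L(H)$ into $\mathscr L(H_\C)$, so that $I_\C = I_{H_\C}$, $(ST)_\C = S_\C T_\C$, and $\norm{T}{\mathscr L(H)} = \norm{T_\C}{\mathscr L(H_\C)}$; and second, the norm identity $\norm{x+iy}{H_\C}^2 = \norm{x}{H}^2 + \norm{y}{H}^2$ for $x,y\in H$, which follows immediately from~\eqref{eq:inner-product-HC} upon using that $(\,\cdot\,,\,\cdot\,)_H$ is symmetric on the real space $H$. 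Note also that, by the isometry, the standing hypotheses $S(t)\in\mathscr L(H)$ and $S_\C(t)\in\mathscr L(H_\C)$ are themselves equivalent for each $t\ge0$.

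First I would prove the equivalence of being a $C_0$-semigroup. If $(S(t))_{t\ge0}$ is a $C_0$-semigroup, then $S_\C(0) = I_\C = I_{H_\C}$ and $S_\C(t+s) = [S(t+s)]_\C = [S(t)S(s)]_\C = S_\C(t)S_\C(s)$ by the homomorphism property, while strong continuity of $(S_\C(t))_{t\ge0}$ follows from the norm identity applied to $z = x+iy$, since $\norm{S_\C(t)z - S_\C(s)z}{H_\C}^2 = \norm{S(t)x - S(s)x}{H}^2 + \norm{S(t)y - S(s)y}{H}^2$. Conversely, if $(S_\C(t))_{t\ge0}$ is a $C_0$-semigroup, one reads these identities backwards: from $[S(0)]_\C = S_\C(0) = I_{H_\C} = [I_H]_\C$ and the injectivity of $T\mapsto T_\C$ one gets $S(0) = I_H$, and similarly $[S(t+s)]_\C = [S(t)S(s)]_\C$ gives $S(t+s) = S(t)S(s)$; strong continuity of $(S(t))_{t\ge0}$ comes from the norm identity with $y=0$.

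For the generator statement, assume both families are $C_0$-semigroups with generators $-A$ and $-\widehat A$, respectively, and show $A_\C\subseteq \widehat A$ and $\widehat A\subseteq A_\C$. For the first inclusion, take $z = x+iy\in\mathsf D(A_\C)$, so $x,y\in\mathsf D(A)$; the norm identity gives $\norm{\tfrac1t(S_\C(t)z - z) + A_\C z}{H_\C}^2 = \norm{\tfrac1t(S(t)x-x)+Ax}{H}^2 + \norm{\tfrac1t(S(t)y-y)+Ay}{H}^2 \to 0$ as $t\downarrow0$, whence $z\in\mathsf D(\widehat A)$ and $\widehat A z = A_\C z$. For the reverse inclusion, take $z = x+iy\in\mathsf D(\widehat A)$ and write $\lim_{t\downarrow0}\tfrac1t(S_\C(t)z - z) = u+iv$ with $u,v\in H$; the same identity forces each of $\norm{\tfrac1t(S(t)x-x)-u}{H}$ and $\norm{\tfrac1t(S(t)y-y)-v}{H}$ to vanish as $t\downarrow0$, so $x,y\in\mathsf D(A)$ with $Ax = -u$, $Ay = -v$, and therefore $z\in\mathsf D(A_\C)$ with $A_\C z = Ax + iAy = -(u+iv) = \widehat A z$. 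Combining the two inclusions yields $A_\C = \widehat A$.

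There is no real obstacle here; the one point requiring care is the domain bookkeeping for the generators, and everything hinges on the fact that the norm identity $\norm{x+iy}{H_\C}^2 = \norm{x}{H}^2 + \norm{y}{H}^2$ makes every limit decouple cleanly into its real and imaginary parts.
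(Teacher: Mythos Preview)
Your proof is correct and follows essentially the same approach as the paper: both use the algebra-homomorphism property of complexification for the algebraic semigroup axioms and the norm identity $\norm{x+iy}{H_\C}^2 = \norm{x}{H}^2 + \norm{y}{H}^2$ to transfer strong continuity and the generator limits between $H$ and $H_\C$. Your treatment of the generator equality via the two inclusions $A_\C\subseteq\widehat A$ and $\widehat A\subseteq A_\C$ is slightly more explicit than the paper's, which compresses both directions into a single equivalence statement, but the content is identical.
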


\begin{proof}
	If $(S(t))_{t\ge0}$ is a $C_0$-semigroup, then clearly $S_\C(0) = I$ and $S_\C(t)S_\C(s) = [S(t)S(s)]_\C = S_\C(t+s)$ for $s,t\ge0$. Moreover,
	$\norm{S_\C(t)\widehat x - \widehat x}{H_\C}^2 
	=
	\norm{S(t)x-x}{H}^2 + \norm{S(t)y-y}{H}^2 \to 0
	$
	as $t \downarrow 0$
	for $\widehat x = x+iy\in H_\C$.
	The reverse implication is 
	readily established by identifying 
	every $x \in H$ 
	with $x + i0 \in H_\C$. 
	
	Suppose that
	$(S(t))_{t\ge0}$ 
	and 
	$(S_\C(t))_{t\ge0}$
	are $C_0$-semigroups with respective generators $-A$ and $-\widehat A$.
	Then 
	$\widehat x = x + iy \in \mathsf D(A_\C)$
	is equivalent to the existence
	of the limits 
	$-Ax = \lim_{t\downarrow 0} \frac{1}{t}(S(t)x - x)$
	and
	$-Ay = \lim_{t\downarrow 0} \frac{1}{t}(S(t)y - y)$
	in $H$. Thus,
	\[ 
	A_\C \, \widehat x
	= Ax + iAy
	= \lim_{t\downarrow 0} 
	\biggl[
	\frac{1}{t} \bigl(x - S(t)x \bigr)
	+ \frac{i}{t} \bigl( y - S(t)y \bigr) 
	\biggr]
	= \lim_{t\downarrow 0} \frac{1}{t} 
	\bigl( \widehat x - S_\C(t)\widehat x \bigr)
	= \widehat A \widehat x,
	\] 
	where the limits in the previous display are taken with respect to 
	$\|\cdot\|_{H_\C}$.
\end{proof}

\subsubsection{Fractional powers} 
\label{subsec:appendix-functional-calc:frac-powers}

Let $\alpha\in(0,\infty)$ be given. If
$(S(t))_{t\ge0}$ is exponentially stable, 
that is, \eqref{eq:exp-estimate-semigroup}  
holds for some $w\in(0,\infty)$, 
then we define negative fractional powers
of $A$ by 
\begin{equation}\label{eq:fractional-power-exp-bdd-semigroup}  
A^{-\alpha} := \frac{1}{\Gamma(\alpha)}\int_0^\infty t^{\alpha-1}S(t)\rd t,
\end{equation} 
non-negative powers by $A^\alpha := (A^{-\alpha})^{-1}$  
and $A^0:=I$.
By using Lemma~\ref{lem:complexifications-semigroups}
and interchanging the bounded operator $[\,\cdot\,]_\C$
with the Bochner integral, 
we find 
\begin{equation*}
[A^{-\alpha}]_\C 
= 
\frac{1}{\Gamma(\alpha)}\int_0^\infty t^{\alpha-1}S_\C(t)\rd t 
= 
A_\C^{-\alpha} \quad \forall \alpha\in(0,\infty),
\end{equation*}
and the same relation can be derived for arbitrary powers $\alpha\in\R$.
This definition of a fractional-order operator $A_\C^{\alpha}$ is adopted in
\cite[Chapter~2, Section~6]{Pazy1983} and is 
equivalent to the Dunford-type definition 
used in \cite{Haase2006}, 
see Corollary~3.3.6 therein.

\subsubsection{A square function estimate}\label{subsubsec:square-estimate}  

The following square function estimate is central to the proof of
Proposition~\ref{prop:easier-condition-H-infty}. 

\begin{lemma}\label{lem:square-func-est}
	Let $A$ satisfy 
	Assumptions~\textup{\ref{assumption:A}\ref{assumption:A:semigroup},\ref{assumption:A:bdd-Hinfty},\ref{assumption:A:bddly-inv}}.
	Then, for $a \in (0,\infty)$,
	\[
	\int_0^\infty 
	\bigl\| t^{a-\frac{1}{2}}A^a S(t) x \bigr\|_{H}^2 \rd t 
	\eqsim_{a}
	\norm{x}{H}^2 
	\quad \forall x \in H.
	\]
\end{lemma}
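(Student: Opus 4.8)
The plan is to recognize the integral as a square function of exactly the type covered by McIntosh's theorem (Theorem~\ref{thm:mcintosh}), after passing to the complexification and choosing an appropriate auxiliary function $f \in H_0^\infty(\Sigma_\varphi)$. First I would observe that under Assumptions~\ref{assumption:A}\ref{assumption:A:semigroup},\ref{assumption:A:bdd-Hinfty},\ref{assumption:A:bddly-inv}, the operator $A$ is injective (having a bounded inverse) and sectorial with $\omega(A) = \omega_{H^\infty}(A) < \tfrac{\pi}{2}$ by Theorem~\ref{thm:sectoriality-analytic-semigroups} and Remark~\ref{rem:Hinfty-sectoriality-angles}; moreover $A_\C$ admits a bounded $H^\infty(\Sigma_\varphi)$-calculus for every $\varphi \in (\omega(A),\pi)$, so by Lemma~\ref{lem:complexifications-semigroups} all the relevant objects transfer between $H$ and $H_\C$ isometrically, and it suffices to prove the estimate on $H_\C$.

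The key step is to fix $a \in (0,\infty)$ and define $f_a \colon \Sigma_\varphi \to \C$ by $f_a(z) := z^a e^{-z}$, where $\varphi \in (\omega(A),\pi)$ is chosen with $\varphi < \tfrac{\pi}{2}$ so that $\Re z > 0$ on $\Sigma_\varphi$. Then $|f_a(z)| = |z|^a e^{-\Re z} \lesssim_a |z|^a \wedge |z|^{-a}$ on $\Sigma_\varphi$ (using $e^{-\Re z} \le e^{-c|z|}$ for $|z|$ large with $c = \cos\varphi > 0$, and $|f_a(z)| \le |z|^a$ for $|z|$ small), so $f_a \in H_0^\infty(\Sigma_\varphi)$ and $f_a$ is non-zero. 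By the definition of fractional powers in~\eqref{eq:fractional-power-exp-bdd-semigroup} and the compatibility of the semigroup with the Dunford calculus (cf.~\cite[Chapter~2, Section~6]{Pazy1983} and \cite[Corollary~3.3.6]{Haase2006}), one has $f_a(tA_\C) = (tA_\C)^a S_\C(t) = t^a A_\C^a S_\C(t)$ for $t > 0$; here I would need to verify that the semigroup $(S(t))_{t\ge0}$ is indeed exponentially stable so that $A^a$ is well-defined in the sense of~\eqref{eq:fractional-power-exp-bdd-semigroup}, which follows from Assumptions~\ref{assumption:A}\ref{assumption:A:semigroup},\ref{assumption:A:bdd-analytic},\ref{assumption:A:bddly-inv} as noted after~\eqref{eq:exp-estimate-semigroup}. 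Consequently,
\[
\int_0^\infty \bigl\| t^{a-\frac{1}{2}} A^a S(t) x \bigr\|_H^2 \,\rd t
= \int_0^\infty \bigl\| f_a(tA_\C) x \bigr\|_{H_\C}^2 \, \frac{\rd t}{t}
\eqsim_a \norm{x}{H_\C}^2 = \norm{x}{H}^2,
\]
where the middle equivalence is exactly the conclusion of Theorem~\ref{thm:mcintosh} applied to $A_\C$ with the function $f_a$, valid since $A_\C$ admits a bounded $H^\infty$-calculus.

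The main obstacle I anticipate is the bookkeeping around the identification $f_a(tA_\C) = t^a A_\C^a S_\C(t)$: one must be careful that the Dunford calculus definition of $f_a(A_\C)$ for $f_a(z) = z^a e^{-z}$ agrees with the product of the fractional power $A_\C^a$ (defined via~\eqref{eq:fractional-power-exp-bdd-semigroup}) and the semigroup operator $S_\C(t)$, and that the scaling $z \mapsto tz$ interacts correctly with both factors — i.e.\ $f_a(tA_\C) = (tA_\C)^a e^{-tA_\C} = t^a A_\C^a S_\C(t)$. This is a routine but slightly delicate composition property of the $H^\infty$-functional calculus (the product rule $e^{-z} \cdot z^a$ corresponds to the composition $S_\C(t) A_\C^a$, which commute), and the equivalence of the two definitions of fractional powers cited above (\cite[Corollary~3.3.6]{Haase2006}) is precisely what makes this rigorous. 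Once this identity is in place, the result is immediate from McIntosh's theorem, with the implied constants depending only on $a$ (through the $H_0^\infty$-bounds on $f_a$) and on the $H^\infty$-calculus constant of $A_\C$, which is fixed.
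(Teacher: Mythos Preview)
Your proposal is correct and follows essentially the same route as the paper: define $f_a(z) = z^a e^{-z}$, check that $f_a \in H_0^\infty(\Sigma_\varphi)$ for some $\varphi \in (\omega(A_\C),\tfrac{\pi}{2})$, identify $f_a(tA_\C) = t^a A_\C^a S_\C(t)$, and invoke McIntosh's theorem (Theorem~\ref{thm:mcintosh}) on $H_\C$ before transferring back to $H$. The paper cites \cite[Proposition~3.4.3]{Haase2006} for the identification step you flagged as the main obstacle, which is exactly the composition-of-calculi argument you outlined.
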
 

\begin{proof}
	Given $a \in(0,\infty)$ 
	and $\varphi\in (\omega(A),\nicefrac{\pi}{2})$, 
	the function
	$f(z) := z^a e^{-z}$ belongs to 
	$H^\infty_0(\Sigma_{\varphi})$ 
	and we have the identity 
	$f(t A_\C)
	=
	t^a A_\C^a S_\C(t)
	=
	[t^a A^a S(t)]_\C$; 
	see the proof of \cite[Proposition~3.4.3]{Haase2006},
	which is applicable to our definition of fractional powers 
	as remarked
	in Subsection~\ref{subsec:appendix-functional-calc:frac-powers}. 
	By invoking Theorem~\ref{thm:mcintosh}, we thus find 
	\[
	\int_0^\infty 
	\bigl\| [t^a A^a S(t)]_\C \, x \bigr\|_{H_\C}^2 \, \frac{\rd t}{t} 
	= \int_0^\infty 
	\bigl\| f(tA_\C) x \bigr\|_{H_\C}^2 \, \frac{\rd t}{t} 
	\eqsim_{a}
	\| x \|_{H_\C}^2 
	\quad 
	\forall x \in H_\C.
	\]
	Applying this equivalence to $x + i0$ for all $x \in H$ finishes the proof.
\end{proof}


\section*{Acknowledgments}

K.K.\ thanks David Bolin and 
Mih\'aly Kov\'acs for several valuable discussions 
which led to the choice and the interpretation of 
the SPDE models \eqref{eq:spde-fractional-parabolic} and 
\eqref{eq:fractional-parabolic-spde-zeroIC}. 
Furthermore, the authors thank 
Mark Veraar for pointing out the 
square function estimate 
of Lemma~\ref{lem:square-func-est} 
needed for the simplified conditions 
in Proposition~\ref{prop:easier-condition-H-infty}, 
and an anonymous reviewer
for valuable comments. 

K.K.\ acknowledges support 
of the research project 
\emph{Efficient spatiotemporal statistical modelling 
	with stochastic PDEs} 
(with project number  
VI.Veni.212.021) 
by the talent programme \emph{Veni}  
which is financed by 
the Dutch Research Council (NWO). 

\bibliographystyle{siam}
\bibliography{kw-bib.bib}

\vspace{-0.15\baselineskip}

\end{document}